\setlist[itemize]{itemindent=0ex,itemsep=-0.5ex,leftmargin=3ex,topsep=5pt}
\setlist[enumerate]{label={\arabic*)},itemindent=0ex,itemsep=-0.5ex,leftmargin=3ex,topsep=5pt}
\newtheorem{theorem}{Theorem}[section]
\newtheorem{proposition}{Proposition}[section]
\newtheorem{lemma}{Lemma}[section]
\newtheorem{corollary}{Corollary}[section]
\newtheorem{assumption}{Assumption}
\theoremstyle{definition}
\newtheorem{definition}{Definition}
\newtheorem{example}{Example}
\theoremstyle{remark}
\newtheorem{remark}{Remark}
\crefname{equation}{}{}
\crefname{theorem}{Theorem}{Theorems}
\crefname{corollary}{Corollary}{Corollaries}
\crefname{example}{Example}{Examples}
\crefname{assumption}{Assumption}{Assumptions}
\crefname{lemma}{Lemma}{Lemmas}
\crefname{proposition}{Proposition}{Propositions}
\crefname{figure}{Figure}{Figures}
\crefname{table}{Table}{Tables}
\crefname{section}{Section}{Sections}
\crefname{appendix}{Appendix}{Appendices}
\Crefname{equation}{}{}
\Crefname{theorem}{Theorem}{Theorems}
\Crefname{corollary}{Corollary}{Corollaries}
\Crefname{example}{Example}{Examples}
\Crefname{lemma}{Lemma}{Lemma}
\Crefname{proposition}{Proposition}{Proposition}
\Crefname{figure}{Figure}{Figures}
\Crefname{table}{Table}{Tables}
\Crefname{section}{Section}{Sections}
\Crefname{appendix}{Appendix}{Appendices}
\newcommand{\tr}{{{\mathsf T}}}
\newcommand{\mK}{{\mathsf{K}}}
\newcommand{\mZ}{{\mathsf{Z}}}
\newcommand{\removelatexerror}{\let\@latex@error\@gobble}
\title{\bf Analysis of the Optimization Landscape of Linear Quadratic Gaussian (LQG) Control
\thanks{Y. Zheng and Y. Tang contributed to this work equally. This work is supported by NSF career 1553407, AFOSR Young Investigator Program, and ONR Young Investigator Program. Emails: zhengy@g.harvard.edu; yujietang@seas.harvard.edu; nali@seas.harvard.edu.}
}
\author[1]{Yang Zheng}
\author[1]{Yujie Tang}
\author[1]{Na Li}
\affil[1]{\small School of Engineering and Applied Sciences,
Harvard University}
\date{\today} 
\begin{document}

\maketitle
\begin{abstract}

This paper revisits the classical Linear Quadratic Gaussian (LQG) control from a modern optimization perspective. We analyze two aspects of the optimization landscape of the LQG problem: 1) connectivity of the set of stabilizing controllers $\mathcal{C}_n$; and 2) structure of stationary points.  It is known that similarity transformations do not change the input-output behavior of a dynamical controller or LQG cost. This inherent symmetry by similarity transformations makes the landscape of LQG very rich. We show that 1) the set of stabilizing controllers $\mathcal{C}_n$ has at most two path-connected components and they are diffeomorphic under a mapping defined by a similarity transformation; 2) there might exist many \emph{strictly suboptimal stationary points} of the LQG cost function over $\mathcal{C}_n$ and these stationary points are always \emph{non-minimal}; 3) all \emph{minimal} stationary points are globally optimal and they are identical up to a similarity transformation. These results shed some light on the performance analysis of direct policy gradient methods for solving the LQG problem.

\end{abstract}

\section{Introduction}

As one of the most fundamental optimal control problems, Linear Quadratic Gaussian (LQG) control has been studied for decades. Many structural properties of the LQG problem have been established in the literature, such as existence of the optimal controller, separation principle of the controller structure, and no guaranteed stability margin of closed-loop LQG systems~\citep{zhou1996robust,bertsekas1995dynamic,doyle1978guaranteed}. 
Despite the non-convexity of the LQG problem, a globally optimal controller can be found by solving two algebraic Riccati equations~\citep{zhou1996robust}, or a convex semidefinite program based on a change of variables~\citep{gahinet1994linear,scherer1997multiobjective}.

While extensive results on LQG have been obtained in classical control, its optimization landscape is less studied, i.e., viewing the LQG cost as a function of the controller parameters and studying its analytical and geometrical properties. On the other hand, recent advances in reinforcement learning (RL) have revealed that the landscape analysis of another benchmark optimal control problem, linear quadratic regulator (LQR), can lead to fruitful and profound results, especially for model-free controller synthesis~\citep{fazel2018global,malik2019derivative,mohammadi2019convergence,tu2019gap,li2019distributed,umenberger2019robust,zhang2019policy}. For instance, it is shown that the set of static stabilizing feedback gains for LQR is connected, and that the LQR cost function is coercive and enjoys an interesting property of gradient dominance~\citep{fazel2018global,bu2019topological}. These properties are fundamental to establish convergence guarantees for gradient-based algorithms for solving LQR and their model-free extensions for RL \citep{malik2019derivative,mohammadi2019convergence}. We note that recent studies have also contributed to establishing performance guarantees of model-based RL techniques for LQR (see e.g.,~\cite{dean2019sample,wang2020exact}) as well as LQG control~\citep{tu2017non,boczar2018finite,zheng2020sample,simchowitz2020improper}.

This paper aims to analyze the optimization landscape of the LQG problem. Unlike LQR that deals with \emph{fully observed} linear systems whose optimal solution is a static feedback policy, the LQG problem concerns \emph{partially observed} linear systems driven by additive Gaussian noise and its optimal controller is no longer static.  We need to search over dynamical controllers for LQG problems. This makes its optimization landscape richer and yet much more complicated than LQR. Indeed, the set of stabilizing static state feedback policies is connected, but the set of stabilizing static output feedback policies can be highly disconnected~\citep{feng2020connectivity}. The connectivity of stabilizing dynamical output feedback policies, i.e., the feasible region of LQG control, remains unclear. 
Furthermore, LQG has a natural symmetry structure induced by similarity transformations that do not change the input-output behavior of dynamical controllers, which is not the case for LQR.

Some recent studies~\citep{sun2018geometric,chi2019nonconvex,li2019symmetry,qu2019analysis,ge2017optimization} have demonstrated that symmetry properties play a key role in rendering a large class of non-convex optimization problems in machine learning tractable; see also~\cite{zhang2020symmetry} for a recent review. For the LQG problem, we can expect the inherent symmetry by similarity transformations to bring some important properties of its non-convex optimization landscape. We also note that the notion of \emph{minimal controllers} (a.k.a.~controllable and observable controllers; see \cref{App:control_basics}) is a unique feature in controller synthesis of \emph{partially observed} dynamical systems, making the optimization landscape of LQG distinct from many machine learning problems.

\subsection{Our contributions}

In this paper, we view the classical LQG problem from a modern optimization perspective, and study two aspects of its optimization landscape. First,
we characterize the connectivity of the feasible region of the LQG problem, i.e., the set of strictly proper stabilizing dynamical controllers, denoted by $\mathcal{C}_n$ ($n$ is the state dimension). We prove that 
$\mathcal{C}_n$ can be disconnected, but has at most two path-connected components (\cref{Theo:disconnectivity}) that are diffeomorphic under a similarity transformation (\cref{Theo:Cn_homeomorphic}). This brings positive news to gradient-based local search algorithms for the LQG problem, since it makes no difference to search over either path-connected component even if $\mathcal{C}_n$ is disconnected.
 We further present a sufficient condition under which $\mathcal{C}_n$ is always connected, and this condition becomes necessary for LQG problems with a single input or a single output (\cref{Theo:connectivity_conditions}). The sufficient condition naturally holds for any open-loop stable system, thus its set of strictly proper stabilizing dynamical controllers is always connected (\cref{corollary:stable}).  

Second, we investigate structural properties of the stationary points of the LQG cost function. 
%
When characterizing the stationary points, the notion of \emph{minimal controllers} plays an important role. By exploiting the symmetry induced by similarity transformations, we show that the LQG cost is very likely to have many \emph{strictly suboptimal stationary points}, and these stationary points are always non-minimal (\cref{theorem:non_globally_optimal_stationary_point}). For LQG with an open-loop stable plant, we explicitly construct a family of non-minimal stationary points and further establishes a criterion for checking whether the corresponding Hessian is indefinite or vanishing (\cref{theorem:zero_stationary_hessian}).
In contrast, we prove that all \emph{minimal} stationary points are globally optimal to the LQG problem (\cref{theo:stationary_points_globally_optimal}), and form a submanifold of dimension $n^2$ that has two path-connected components (\cref{proposition:sim_trans_submanifold}). These minimal stationary points are identical up to similarity transformations. 
This result implies that if local search iterates converge to a stationary point that corresponds to a controllable and observable controller, then the algorithm has found a globally optimal solution to the LQG problem (\cref{{corollary:Gradient_Descent_Convergence}}).
Finally, we construct an example showing that the second-order shape of the LQG cost function can be ill-behaved around a minimal stationary point in the sense that its Hessian has a huge condition number (see~\cref{proposition:Hessian_example}).

\subsection{Related work}

\paragraph{Optimization landscape of LQR} The classical Linear-Quadratic Regulator (LQR), one of the most well-studied optimal control problems, has re-attracted increasing interest~\citep{fazel2018global, dean2019sample, malik2019derivative, umenberger2019robust,tu2018gap, recht2019tour}
in the study of RL techniques for control systems.
For model-free policy optimization methods, the optimization landscape of LQR is essential to establish their performance guarantees. In~\cite{fazel2018global,malik2019derivative,mohammadi2019convergence}, it is shown that both continuous-time and discrete-time LQR problems enjoy the gradient dominance property, and that model-free gradient-based algorithms converge to the optimal LQR controller under mild conditions. 
The authors in~\cite{zhang2019policy} have examined the optimization landscape of a class of risk-sensitive state-feedback control problems and the convergence of corresponding policy optimization methods. Furthermore, it is shown in \cite{furieri2020learning} that a class of finite-horizon output-feedback linear quadratic control problems also satisfies the gradient dominance property. 
%
Some recent studies have examined the connectivity of stabilizing static output feedback policies~\cite{feng2020connectivity, bu2019topological,fatkhullin2020optimizing}. It is shown in~\cite{feng2020connectivity} that the set of stabilizing static output feedback policies can be highly disconnected,
which poses a significant challenge for decentralized LQR problems. 
For general decentralized LQR, policy optimization methods can only be guaranteed to reach some stationary points~\cite{li2019distributed}.

We note that many landscape properties of LQR are derived using classical control tools~\cite{mohammadi2019convergence,zhang2019policy,furieri2020learning,fatkhullin2020optimizing}. 
Our work leverages ideas from classical control tools~\cite{zhou1996robust,gahinet1994linear,scherer1997multiobjective} to analyze the optimization landscape of the LQG problem.

\paragraph{Reinforcement learning for LQG and controller parameterization} 
Recent studies have also started to investigate 
LQG with unknown dynamics, including offline robust control~\cite{tu2017non,boczar2018finite,zheng2020sample} and online adaptive control~\cite{simchowitz2020improper,lale2020logarithmic,lale2020regret}. The line of studies on offline robust control first estimates a system model as well as a bound on the estimation error 
(see, e.g.,~\cite{tu2017non,oymak2019non,zheng2020non}), and then design a robust LQG controller that stabilizes the plant against model uncertainty. 
For online adaptive control, the recent work~\cite{simchowitz2020improper} has introduced an online gradient descent algorithm to update LQG controller parameters with a sub-linear regret; 
see~\cite{lale2020logarithmic,lale2020regret} for further developments. For both lines of works, a convex reformulation of the LQG problem is essential for algorithm design as well as performance analysis. For example, the works~\cite{simchowitz2020improper,lale2020logarithmic,lale2020regret} employ the classical Youla parameterization~\cite{youla1976modern}, while the works~\cite{boczar2018finite,zheng2020sample} adopt the recent system-level parameterization (SLP)~\cite{wang2019system} and input-output parameterization (IOP)~\cite{furieri2019input}, respectively. The Youla parameterization, SLP, and IOP recast the LQG problem into equivalent convex formulations in the frequency domain~\cite{zheng2020equivalence}, but they all rely on the underlying system dynamics explicitly. Thus, a system identification procedure is required \emph{a priori} in~\cite{tu2017non,boczar2018finite,zheng2020sample, simchowitz2020improper}, and these methods are all model-based.

In this work, we consider a natural model-free controller parameterization  for the LQG problem in the state-space domain. This parameterization does not depend on the system dynamics explicitly but leads to a non-convex formulation. Our results contribute to the understanding of this non-convex optimization landscape, which shed light on performance analysis of model-free RL methods for solving LQG control.
%
%
%
%
%

\paragraph{Non-convex optimization with symmetry} 
Recent works~\cite{zhang2020symmetry,li2019symmetry} have revealed the significance of symmetry properties in understanding the geometry of many non-convex optimization problems in machine learning. For example, the phase retrieval~\cite{sun2018geometric} and low-rank matrix factorization~\cite{chi2019nonconvex,li2019symmetry} problems have rotational symmetries, while sparse dictionary learning~\cite{qu2019analysis} and tensor decomposition~\cite{ge2017optimization} exhibit discrete symmetries; see~\cite{zhang2020symmetry} for a recent survey. 
These symmetries enable identifying the local curvature of stationary points, and contribute to the tractability of the associated non-convex optimization problems. In this paper, we highlight a new symmetry defined by similarity transformations in the LQG problem. This symmetry appears only in dynamical output-feedback controller synthesis. 
In addition, a notion of \emph{minimal controllers} is unique in control problems, making the study of the landscape of LQG distinct from other machine learning problems~\cite{sun2018geometric,chi2019nonconvex,qu2019analysis,ge2017optimization,zhang2020symmetry}.

\subsection{Paper outline}
The rest of this paper is organized as follows. \cref{section:problem_statement} presents the problem statement of Linear Quadratic Gaussian (LQG) control. We introduce our main results on the connectivity of stabilizing controllers in~\cref{sec:connectivity}, and present our main results on the structure of stationary points of LQG problems in~\cref{sec:stationary_points}. Some numerical results on gradient descent algorithms for LQG are reported in~\cref{section:numerical_results}. We conclude the paper in~\cref{section:conclusion}. Appendices contain preliminaries in control and differential geometry, proofs of auxiliary results, a connectivity result of proper stabilizing controllers, and analogous results for discrete-time systems.

\subsection*{Notations}
We use $\mathbb{R}$ and $\mathbb{N}$ to denote the set of real and natural numbers, respectively. The set of  $k\times k$ real symmetric matrices is denoted by $\mathbb{S}^k$, and the determinant of a square matrix $M$ is denoted by $\det M$. We denote the set of $k \times k$ real invertible matrices by $\mathrm{GL}_k=\{T\in\mathbb{R}^{k\times k} \mid \det T\neq 0\}$. Given a matrix $M \in \mathbb{R}^{k_1 \times k_2}$, $M^\tr$ denotes the transpose of $M$, and $\|M\|_F$ denotes the Frobenius norm of $M$. For any $M_1,M_2\in\mathbb{S}^k$, we use $M_1\prec M_2$ and $M_2\succ M_1$ to mean that $M_2-M_1$ is positive definite, and use $M_1\preceq M_2$ and $M_2\succeq M_1$ to mean that $M_2-M_1$ is positive semidefinite. We use $I_k$ to denote the $k\times k$ identity matrix, and use $0_{k_1\times k_2}$ to denote the $k_1\times k_2$ zero matrix; we sometimes omit their subscripts if the dimensions can be inferred from the context.

\section{Problem Statement} \label{section:problem_statement}
In this section, we first introduce the Linear Quadratic Gaussian control problem, and then present the problem statement of our work.

\subsection{The Linear Quadratic Gaussian (LQG) problem}
Consider a continuous-time\footnote{We only consider the continuous-time case in the main text. The results for discrete-time systems can be found in~\cref{appendix:discrete_time}.} linear dynamical system
\begin{equation}\label{eq:Dynamic}
\begin{aligned}
\dot{x}(t) &= Ax(t)+Bu(t)+w(t), \\
y(t) &= Cx(t)+v(t),
\end{aligned}
\end{equation}
where $x(t) \in \mathbb{R}^n$ represents the vector of state variables, $u(t)\in \mathbb{R}^m$ the vector of control inputs, $y(t) \in \mathbb{R}^p$ the vector of measured outputs available for feedback, and $w(t) \in \mathbb{R}^n, v(t)  \in \mathbb{R}^p$ are system process and measurement noises at time $t$. It is assumed that $w(t)$ and $v(t)$ are white Gaussian noises with intensity matrices $W \succeq 0$ and $V \succ 0$. For notational simplicity, we will drop the argument $t$ when it is clear in the context.

The classical linear quadratic Gaussian (LQG) problem is defined as
\begin{equation} \label{eq:LQG}
    \begin{aligned}
        \min_{u(t)} \quad & J := \lim_{T \rightarrow \infty }\frac{1}{T}\mathbb{E} \left[\int_{t=0}^T \left(x^\tr Q x + u^\tr R u\right)dt\right] \\
        \text{subject to} \quad & ~\eqref{eq:Dynamic},
    \end{aligned}
\end{equation}
where $Q \succeq 0$ and $R\succ 0$. In~\eqref{eq:LQG}, the input $u(t)$ is allowed to depend on all past observation $y(\tau)$ with $\tau < t$.
Throughout the paper, we make the following standard assumption of \emph{minimal} systems in the sense of Kalman (see~\cref{App:control_basics} for a review of these notions).
\begin{assumption} \label{assumption:stabilizability}
$(A,B)$ and $(A,W^{1/2})$ are controllable, and $(C,A)$ and $(Q^{1/2},A)$ are observable.
\end{assumption}

Unlike the problem of linear quadratic regulator (LQR), static feedback policies in general do not achieve optimal values of the cost function, and we need to consider a class of dynamical controllers in the form of
\begin{equation}\label{eq:Dynamic_Controller}
    \begin{aligned}
        \dot \xi(t) &= A_{\mK}\xi(t) + B_{\mK}y(t), \\
        u(t) &= C_{\mK}\xi(t),
    \end{aligned}
\end{equation}
where $\xi(t) \in \mathbb{R}^q$ is the internal state of the controller, and $A_{\mK},B_{\mK},C_{\mK}$ are matrices of proper dimensions that specify the dynamics of the controller. We refer to the dimension $q$ of the internal control variable $\xi$ as the order of the dynamical controller~\eqref{eq:Dynamic_Controller}. A dynamical controller is called a \emph{full-order dynamical controller} if its order is the same as the system dimension, i.e., $q = n$; if $q < n$, we call~\eqref{eq:Dynamic_Controller} a \emph{reduced-order} or \emph{lower-order controller}. We shall see later that it is unnecessary to consider dynamical controllers with order beyond the system dimension $n$.

The LQG problem~\eqref{eq:LQG} admits the celebrated separable principle and has a closed-form solution by solving two algebraic Riccati equations~\cite[Theorem 14.7]{zhou1996robust}. Indeed, the optimal solution to~\eqref{eq:LQG} is $u(t) = -K \xi(t)$ with a fixed $p \times n$ matrix $K$ and $\xi(t)$ is the state estimation based on the Kalman filter. Precisely, the optimal controller is given by
\begin{equation} \label{eq:LQGcontroller}
\begin{aligned}
    \dot \xi &= (A-BK) \xi + L(y - C\xi), \\
    u &= -K \xi.
\end{aligned}
\end{equation}
In~\eqref{eq:LQGcontroller}, the matrix $L$ is called the \textit{Kalman gain}, computed as $L = PC^\tr V^{-1}$ where $P$ is the unique {positive semidefinite} solution (see e.g.,~\cite[Corollary 13.8]{zhou1996robust}) to
\begin{subequations}\label{eq:Riccati}
\begin{equation} \label{eq:Riccati_P}
    AP + PA^\tr - PC^\tr V^{-1}CP + W = 0,
\end{equation}
and the matrix $K$ is called the \textit{feedback gain}, computed as $K = R^{-1}B^\tr S$ where $S$ is the unique {positive semidefinite} solution to
\begin{equation} \label{eq:Riccati_S}
    A^\tr S + SA - SB R^{-1}B^\tr S + Q = 0.
\end{equation}
\end{subequations}

We can see that the optimal LQG controller~\eqref{eq:LQGcontroller} can be written into the form of~\eqref{eq:Dynamic_Controller} with
    \begin{equation} \label{eq:LQGstatespace}
     A_{\mK} = A - BK - LC,\quad  B_{\mK} = L, \quad C_{\mK} = -K.
    \end{equation}
     Thus, the solution from Ricatti equations~\eqref{eq:Riccati} is always full-order, i.e., $q = n$. We note that two dynamical controllers with the same transfer function
     $
     \mathbf{K}(s)=
     C_{\mK}(sI-A_{\mK})^{-1}B_{\mK}
     $
     lead to the same LQG cost. In general, the optimal LQG controller is only unique in the frequency domain~\cite[Theorem 14.7]{zhou1996robust} but not unique in the state-space domain~\eqref{eq:Dynamic_Controller}; any similarity transformation on~\eqref{eq:LQGstatespace} leads to another optimal solution that achieves the global minimum cost\footnote{This is a well-known fact and can be verified easily; see~\cref{lemma:Jn_invariance}.}.

\subsection{Parameterization of Dynamical Controllers and the LQG Cost Function}
The controller~\eqref{eq:LQGcontroller} explicitly depends on the plant's parameters $A, B, C$, and it may not be straightforward to compute~\eqref{eq:LQGcontroller} if $A,B$ and $C$ are not available. Recently, model-free policy gradient methods have been applied in a range of control problems, such as LQR in discrete-time~\cite{fazel2018global} and continuous-time~\cite{mohammadi2019convergence}, finite-horizon discrete-time LQG problem~\cite{furieri2020learning}, and state-feedback risk-sensitive control~\cite{zhang2019policy}. These methods view classical control problems from a modern optimization perspective, and directly optimize control policies based on system observations, without explicit knowledge of the underlying model. To avoid the explicit dependence on model parameters $A, B, C$, we consider the class of dynamical controllers in~\eqref{eq:Dynamic_Controller}, parameterized by $(A_{\mK}, B_{\mK}, C_{\mK})$. As we will see later, this allows us to view LQG~\eqref{eq:LQG} from a model-free optimization perspective.


In order to formulate the cost in \eqref{eq:LQG} as a function of the parameterized dynamical controller $(A_{\mK}, B_{\mK}, C_{\mK})$, we first need to specify its domain. By combining~\eqref{eq:Dynamic_Controller} with~\eqref{eq:Dynamic}, we get the closed-loop system
\begin{equation} \label{eq:closed-loop_LQG}
\begin{aligned}
    \frac{d}{dt} \begin{bmatrix} x \\ \xi \end{bmatrix} &= \begin{bmatrix}
    A & BC_{\mK} \\
    B_{\mK}C & A_{\mK}
    \end{bmatrix} \begin{bmatrix} x \\ \xi \end{bmatrix}  +  \begin{bmatrix} I & 0 \\ 0 & B_{\mK}  \end{bmatrix}\begin{bmatrix} w \\ v \end{bmatrix}, \\
    \begin{bmatrix} y \\ u \end{bmatrix}& = \begin{bmatrix} C & 0 \\ 0& C_{\mK} \end{bmatrix} \begin{bmatrix} x \\ \xi \end{bmatrix} + \begin{bmatrix} v \\0 \end{bmatrix}.
\end{aligned}
\end{equation}
It is known from classical control theory~\cite[Chapter 13]{zhou1996robust} that under~\cref{assumption:stabilizability}, the LQG cost is finite if the closed-loop matrix
\begin{equation}
\label{eq:closedloopmatrix}
\begin{bmatrix}
    A & BC_{\mK} \\
    B_{\mK}C & A_{\mK}
\end{bmatrix} = \begin{bmatrix}
    A & 0 \\
    0 & 0
\end{bmatrix} + \begin{bmatrix}
    B & 0 \\
    0 & I
\end{bmatrix}\begin{bmatrix}
    0 & C_{\mK} \\
    B_{\mK} & A_{\mK}
\end{bmatrix}\begin{bmatrix}
    C & 0 \\
    0 & I
\end{bmatrix}
\end{equation}
is stable~\cite{zhou1996robust}, i.e., the real parts of all its eigenvalues are negative; dynamical controllers satisfying this condition is said to \emph{internally stabilize} the plant~\eqref{eq:Dynamic}. Furthermore, it is a known fact in control theory that the optimal controller \eqref{eq:LQGstatespace} obtained by solving the Riccati equations internally stabilizes the plant. We therefore parameterize the set of stabilizing controllers with order $q\in\mathbb{N}$ by\footnote{
We explicitly include the zero matrix $D_{\mK}$ in the definition of $\mathcal{C}_q$, which corresponds to the set of \emph{strictly proper} dynamical controllers. If we allow $D_{\mK}$ to be non-zero, we will have a \emph{proper} dynamical controller; see \cref{appendix:proper_controller}. In this definition, when $q=0$, we have $\mathcal{C}_q=\{0_{m\times p}\}$ if the plant \eqref{eq:Dynamic} is open-loop stable, and $\mathcal{C}_q=\varnothing$ otherwise.
}\textsuperscript{,}\footnote{In~\eqref{eq:internallystabilizing}, for notational simplicity, we lumped the controller parameters into a single matrix; but it should be interpreted as a dynamical controller, represented by~\eqref{eq:Dynamic_Controller}. Note that this definition allows us to apply block-wise matrix operations; see \emph{e.g.},~\eqref{eq:def_sim_transform}.}
\begin{equation} \label{eq:internallystabilizing}
    \mathcal{C}_{q} := \left\{
    \left.\mK=\begin{bmatrix}
    D_{\mK} & C_{\mK} \\
    B_{\mK} & A_{\mK}
    \end{bmatrix}
    \in \mathbb{R}^{(m+q) \times (p+q)} \right|\;  D_{\mK} = 0_{m\times p},~\eqref{eq:closedloopmatrix}~\text{is stable}\right\},
\end{equation}
and let $J_q(\mK):\mathcal{C}_q\rightarrow\mathbb{R}$ denote the function that maps a parameterized dynamical controller in $\mathcal{C}_q$ to its corresponding LQG cost for each $q\in\mathbb{N}$.
%
%
It can be shown that the set of \emph{full-order} stabilizing controllers $\mathcal{C}_{n}$ is nonempty as long as~\cref{assumption:stabilizability} holds~\cite{zhou1996robust}, and since it also contains the optimal LQG controller to~\eqref{eq:LQG}, we will mainly focus on the set of \emph{full-order} stabilizing controllers $\mathcal{C}_{n}$ in this paper. We will abbreviate $J_n(\mK)$ as $J(\mK)$ when no confusions occur.

The following lemma shows that the set $\mathcal{C}_q$ can be treated as an open set when it is nonempty.
This is a direct consequence of the fact that the Routh–Hurwitz stability criterion returns a set of strict polynomial inequalities in terms of the elements of $(A_{\mK}, B_{\mK}, C_{\mK})$.
\begin{lemma}
Let $q\geq 1$ such that $\mathcal{C}_q$ is nonempty. Then, $\mathcal{C}_q$ is an open subset of the linear space
\begin{equation}\label{eq:def_Vq}
\mathcal{V}_q\coloneqq \left\{
\left.\begin{bmatrix}
    D_{\mK} & C_{\mK} \\
    B_{\mK} & A_{\mK}
    \end{bmatrix}
    \in \mathbb{R}^{(m+q) \times (p+q)} \right|\;  D_{\mK} = 0_{m\times p}\right\}.
\end{equation}
\end{lemma}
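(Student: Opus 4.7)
The plan is to realize $\mathcal{C}_q$ as the preimage of an open set under a continuous map, so openness follows by topology. The continuous map I have in mind is the closed-loop assignment
\[
\Phi:\mathcal{V}_q \longrightarrow \mathbb{R}^{(n+q)\times(n+q)},\qquad
\Phi(\mK) = \begin{bmatrix} A & BC_{\mK} \\ B_{\mK} C & A_{\mK} \end{bmatrix},
\]
which is affine linear in the entries of $\mK\in\mathcal{V}_q$ (note the constraint $D_{\mK}=0$ is already baked into $\mathcal{V}_q$). Then $\mathcal{C}_q = \Phi^{-1}(\mathcal{H})$, where $\mathcal{H}\subset\mathbb{R}^{(n+q)\times(n+q)}$ denotes the set of Hurwitz (stable) matrices. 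Since $\Phi$ is continuous, it suffices to show that $\mathcal{H}$ is open in $\mathbb{R}^{(n+q)\times(n+q)}$.

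The main step, and really the only substantive one, is showing $\mathcal{H}$ is open. I would do this via the Routh--Hurwitz criterion, as the paper suggests. For a matrix $M\in\mathbb{R}^{N\times N}$, write its characteristic polynomial as $p_M(s)=\det(sI-M)=s^N+c_{N-1}(M)s^{N-1}+\cdots+c_0(M)$. By the Leibniz expansion of the determinant, each coefficient $c_i(M)$ is a polynomial in the entries of $M$, hence continuous in $M$. The Routh--Hurwitz criterion states that $p_M$ is Hurwitz (all roots in the open left half plane) if and only if a finite collection of principal minors of the Hurwitz matrix built from $c_0,\ldots,c_{N-1}$ are all strictly positive. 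Each such minor is itself a polynomial in the $c_i$'s and therefore a polynomial in the entries of $M$, so $\mathcal{H}$ is cut out by finitely many strict polynomial inequalities and is thus open (as a finite intersection of preimages of $(0,\infty)$ under continuous functions).

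Putting the pieces together: $\mathcal{C}_q = \Phi^{-1}(\mathcal{H})$ is the preimage of an open set under a continuous map between finite-dimensional normed spaces, hence is open in $\mathcal{V}_q$. As a sanity check, one could alternatively invoke continuity of eigenvalues of a matrix with respect to its entries (a standard perturbation result) to conclude that $\{M : \max_i \mathrm{Re}\,\lambda_i(M) < 0\}$ is open, bypassing Routh--Hurwitz; both give the same conclusion. There is no real obstacle here beyond assembling these standard facts, since the nonemptiness hypothesis rules out the degenerate case $\mathcal{C}_q=\varnothing$.
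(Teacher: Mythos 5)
Your proposal is correct and follows exactly the route the paper indicates: the paper gives no written proof but attributes the result to the Routh--Hurwitz criterion yielding strict polynomial inequalities in the entries of $(A_{\mK},B_{\mK},C_{\mK})$, which is precisely your argument that the Hurwitz set is open and that $\mathcal{C}_q$ is its preimage under the continuous closed-loop map. Nothing further is needed.
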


The following two lemmas give useful characterizations of the LQG cost function $J_q$. These results are known in the literature; we provide a short proof in~\cref{appendix:real_analytic} for completeness.
\begin{lemma}\label{lemma:LQG_cost_formulation1}
Fix $q\in\mathbb{N}$ such that $\mathcal{C}_q\neq\varnothing$. Given $\mK\in\mathcal{C}_q$, we have
\begin{equation}\label{eq:LQG_cost_formulation1}
J_q(\mK)
=
\operatorname{tr}
\left(
\begin{bmatrix}
Q & 0 \\ 0 & C_{\mK}^\tr R C_{\mK}
\end{bmatrix} X_\mK\right)
=
\operatorname{tr}
\left(
\begin{bmatrix}
W & 0 \\ 0 & B_{\mK} V B_{\mK}^\tr
\end{bmatrix} Y_\mK\right),
\end{equation}
where $X_{\mK}$ and $Y_{\mK}$ are the unique positive semidefinite
solutions to the following Lyapunov equations
\begin{subequations}
\begin{align}
\begin{bmatrix} A &  BC_{\mK} \\ B_{\mK} C & A_{\mK} \end{bmatrix}X_{\mK} + X_{\mK}\begin{bmatrix} A &  BC_{\mK} \\ B_{\mK} C & A_{\mK} \end{bmatrix}^\tr +  \begin{bmatrix} W & 0 \\ 0 & B_{\mK}VB_{\mK}^\tr  \end{bmatrix}
& = 0, \label{eq:LyapunovX}
\\
\begin{bmatrix} A &  BC_{\mK} \\ B_{\mK} C & A_{\mK} \end{bmatrix}^\tr Y_{\mK} +  Y_{\mK}\begin{bmatrix} A &  BC_{\mK} \\ B_{\mK} C & A_{\mK} \end{bmatrix} +   \begin{bmatrix} Q & 0 \\ 0 & C_{\mK}^\tr R C_{\mK} \end{bmatrix}
& = 0. \label{eq:LyapunovY}
\end{align}
\end{subequations}
\end{lemma}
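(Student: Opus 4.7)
The plan is to interpret both equalities through the stationary covariance of the closed-loop system. Let me write the closed-loop dynamics \eqref{eq:closed-loop_LQG} compactly as $\dot z = \mathcal{A}_\mK z + \mathcal{B}_\mK \eta$ with $z = (x^\tr,\xi^\tr)^\tr$, where
\[
\mathcal{A}_\mK = \begin{bmatrix} A & BC_\mK \\ B_\mK C & A_\mK\end{bmatrix},\qquad
\mathcal{B}_\mK \eta\eta^\tr \mathcal{B}_\mK^\tr = \begin{bmatrix} W & 0 \\ 0 & B_\mK V B_\mK^\tr\end{bmatrix} =: \mathcal{W}_\mK,
\]
and let $\mathcal{Q}_\mK := \operatorname{diag}(Q,\, C_\mK^\tr R C_\mK)$ denote the block-diagonal state-weighting so that $x^\tr Q x + u^\tr R u = z^\tr \mathcal{Q}_\mK z$.

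First I would invoke the standard fact from stochastic linear systems: since $\mK \in \mathcal{C}_q$ makes $\mathcal{A}_\mK$ Hurwitz, the stationary state covariance $X_\mK = \lim_{T\to\infty}\tfrac{1}{T}\int_0^T \mathbb{E}[z(t)z(t)^\tr]\,dt$ exists and equals the unique positive semidefinite solution of the Lyapunov equation \eqref{eq:LyapunovX}. Interchanging the trace and expectation then gives
\[
J_q(\mK) = \lim_{T\to\infty}\frac{1}{T}\int_0^T \operatorname{tr}\bigl(\mathcal{Q}_\mK \,\mathbb{E}[z(t)z(t)^\tr]\bigr) dt = \operatorname{tr}(\mathcal{Q}_\mK X_\mK),
\]
which is the first equality of \eqref{eq:LQG_cost_formulation1}. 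Uniqueness and positive semidefiniteness of $X_\mK$ follow from Hurwitzness of $\mathcal{A}_\mK$ together with $\mathcal{W}_\mK \succeq 0$, via the explicit representation $X_\mK = \int_0^\infty e^{\mathcal{A}_\mK t}\mathcal{W}_\mK e^{\mathcal{A}_\mK^\tr t}\,dt$.

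For the second equality, I would use the classical trace duality for Lyapunov equations with a Hurwitz matrix $\mathcal{A}$: if $\mathcal{A} X + X\mathcal{A}^\tr + \mathcal{W} = 0$ and $\mathcal{A}^\tr Y + Y\mathcal{A} + \mathcal{Q} = 0$, then
\[
\operatorname{tr}(\mathcal{Q} X) = \int_0^\infty \operatorname{tr}\bigl(\mathcal{Q}\, e^{\mathcal{A} t}\mathcal{W}\, e^{\mathcal{A}^\tr t}\bigr) dt = \operatorname{tr}(\mathcal{W} Y),
\]
by plugging in the integral forms of $X$ and $Y$ and using cyclicity of the trace. Applied to $\mathcal{A}_\mK$, $\mathcal{Q}_\mK$, $\mathcal{W}_\mK$ with $Y_\mK$ the unique PSD solution of \eqref{eq:LyapunovY}, this yields $\operatorname{tr}(\mathcal{Q}_\mK X_\mK) = \operatorname{tr}(\mathcal{W}_\mK Y_\mK)$, which is precisely the second equality.

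No step is genuinely hard; the main issue is bookkeeping. The only subtle point worth stating carefully is why the time-averaged cost $J_q(\mK)$ coincides with the instantaneous stationary expectation $\operatorname{tr}(\mathcal{Q}_\mK X_\mK)$, which reduces to a dominated-convergence argument exploiting the exponential decay $\|e^{\mathcal{A}_\mK t}\| \le c\,e^{-\alpha t}$ guaranteed by $\mK\in\mathcal{C}_q$. Everything else is algebraic manipulation of Lyapunov integrals and the trace cyclicity, and uniqueness of the PSD solutions follows directly from Hurwitzness.
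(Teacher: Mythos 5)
Your proposal is correct and follows essentially the same route as the paper's proof in the appendix: identify $X_\mK$ with the stationary closed-loop state covariance to get $J_q(\mK)=\operatorname{tr}(\mathcal{Q}_\mK X_\mK)$, then obtain the second equality from the controllability/observability Gramian duality, which you spell out via the integral representations and cyclicity of the trace. No gaps.
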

\begin{lemma}\label{lemma:LQG_cost_analytical}
    Fix $q \in \mathbb{N}$ such that $\mathcal{C}_q\neq\varnothing$. Then, $J_q$ is a real analytic function on $\mathcal{C}_q$. 
\end{lemma}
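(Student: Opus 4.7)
The plan is to express $X_\mK$ explicitly as a rational function of the entries of $\mK$ (with a nowhere-vanishing denominator on $\mathcal{C}_q$), and then observe that the first trace formula in \eqref{eq:LQG_cost_formulation1} is polynomial in the entries of $\mK$ and $X_\mK$, so real-analyticity follows at once.

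Concretely, I would first invoke \cref{lemma:LQG_cost_formulation1} to write
\begin{equation*}
J_q(\mK)=\operatorname{tr}\!\left(\mathcal{Q}_\mK X_\mK\right),\qquad \mathcal{Q}_\mK:=\begin{bmatrix} Q & 0 \\ 0 & C_{\mK}^\tr R C_{\mK}\end{bmatrix},\qquad \mathcal{A}_\mK:=\begin{bmatrix} A & BC_{\mK} \\ B_{\mK} C & A_{\mK}\end{bmatrix},\qquad \mathcal{W}_\mK:=\begin{bmatrix} W & 0 \\ 0 & B_{\mK}VB_{\mK}^\tr\end{bmatrix},
\end{equation*}
where $X_\mK$ is the unique solution of the Lyapunov equation $\mathcal{A}_\mK X_\mK+X_\mK\mathcal{A}_\mK^\tr+\mathcal{W}_\mK=0$. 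The entries of $\mathcal{Q}_\mK$, $\mathcal{A}_\mK$ and $\mathcal{W}_\mK$ are polynomials in the entries of $\mK$, hence real-analytic on the whole ambient space $\mathcal{V}_q$.

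Next, I would vectorize the Lyapunov equation using the Kronecker sum $\mathcal{A}_\mK\oplus\mathcal{A}_\mK:=I\otimes\mathcal{A}_\mK+\mathcal{A}_\mK\otimes I$ to obtain
\begin{equation*}
\operatorname{vec}(X_\mK)=-\bigl(\mathcal{A}_\mK\oplus\mathcal{A}_\mK\bigr)^{-1}\operatorname{vec}(\mathcal{W}_\mK).
\end{equation*}
Since $\mK\in\mathcal{C}_q$ means $\mathcal{A}_\mK$ is Hurwitz, every eigenvalue of $\mathcal{A}_\mK\oplus\mathcal{A}_\mK$ is a sum $\lambda_i(\mathcal{A}_\mK)+\lambda_j(\mathcal{A}_\mK)$ and therefore has strictly negative real part, so this Kronecker sum is invertible throughout $\mathcal{C}_q$. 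By Cramer's rule, each entry of $(\mathcal{A}_\mK\oplus\mathcal{A}_\mK)^{-1}$ is a rational function of the entries of $\mK$ whose denominator $\det(\mathcal{A}_\mK\oplus\mathcal{A}_\mK)$ never vanishes on $\mathcal{C}_q$. Since rational functions are real-analytic wherever their denominators are nonzero, each entry of $X_\mK$ is real-analytic in $\mK$ on $\mathcal{C}_q$.

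Finally, $\operatorname{tr}(\mathcal{Q}_\mK X_\mK)$ is a polynomial in the entries of $\mathcal{Q}_\mK$ (which are polynomials in $\mK$) and in the entries of $X_\mK$ (which are real-analytic in $\mK$ on $\mathcal{C}_q$). Sums and products of real-analytic functions are real-analytic, so $J_q$ is real-analytic on $\mathcal{C}_q$. There is no real obstacle here; the only point that deserves a line of justification is the non-vanishing of $\det(\mathcal{A}_\mK\oplus\mathcal{A}_\mK)$, which is precisely the Hurwitz condition encoded in the definition of $\mathcal{C}_q$ via \eqref{eq:closedloopmatrix}.
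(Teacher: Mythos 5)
Your proof is correct and follows essentially the same route as the paper's: vectorize the Lyapunov equation, invert the Kronecker sum $I\otimes \mathcal{A}_\mK+\mathcal{A}_\mK\otimes I$ (invertible by stability), and conclude that $X_\mK$ and hence $J_q$ are rational in the entries of $\mK$ with nonvanishing denominator on $\mathcal{C}_q$, therefore real-analytic. Your added remark on the eigenvalues of the Kronecker sum just makes explicit a step the paper leaves implicit.
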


Now, given the dimension $n$ of the plant's state variable, the LQG problem~\eqref{eq:LQG} can be reformulated into a constrained optimization problem:
\begin{equation} \label{eq:LQG_reformulation_KX}
    \begin{aligned}
        \min_{\mK} \quad &J_n(\mK) \\
        \text{subject to} \quad& \mK \in \mathcal{C}_n. 
    \end{aligned}
\end{equation}
%
After reformulating the LQG~\eqref{eq:LQG} into~\eqref{eq:LQG_reformulation_KX}, it is possible to estimate the gradient of $J_n(\mK)$ from system trajectories, and one may further derive model-free policy gradient algorithms to find a solution to~\eqref{eq:LQG_reformulation_KX}. To characterize the performance of policy gradient algorithms, it is necessary to understand the landscape of~\eqref{eq:LQG_reformulation_KX}. It is well-known that $\mathcal{C}_{n}$ is in general non-convex.~\Cref{lemma:LQG_cost_analytical} indicates that $J_n$ is a real analytical function. However, little is known about their further geometrical and analytical properties, especially those that are fundamental for establishing convergence of gradient-based algorithms. In this paper, we focus on the following two topics of the set $\mathcal{C}_n$ and the LQG cost function $J_n$:
\begin{enumerate}
\item \emph{The connectivity of $\mathcal{C}_n$ and its implications}, which will be studied in~\cref{sec:connectivity}. Connectivity of stabilizing controllers has received increasing attention, but most recent results focus on state-feedback controllers or static output-feedback controllers~\cite{fazel2018global,mohammadi2019convergence,bu2019topological,feng2020connectivity}. It is known that the set of stabilizing state-feedback policies is in general non-convex but connected, and this connectivity is fundamental for gradient-based local search algorithms to find a good solution. It is also known that the set of stabilizing static output-feedback policies can be highly disconnected (there exist cases with an exponential number of connected components~\cite{feng2020connectivity}). The connectivity of dynamical controllers $\mathcal{C}_n$, however, is unknown and has not been discussed before in the literature.

\item \emph{The structure of the stationary points and the global optimum of $J_n$}, which will be studied in~\cref{sec:stationary_points}. A classical result in control is that the solution to the LQR problem is unique under mild technical assumptions, which is an important fact in
establishing the \emph{gradient dominant} property of the LQR cost function \cite{fazel2018global,mohammadi2019convergence}. In addition, it has been recently shown that a class of output-feedback controller design problem in finite-time horizon also has a unique stationary point~\cite{furieri2020learning}. However, it is expected that the stationary points of the LQG problem~\eqref{eq:LQG_reformulation_KX} are not unique due to the non-uniqueness of globally optimal solutions in the state-space domain. We aim to reveal further structural properties of stationary points of the LQG problem~\eqref{eq:LQG_reformulation_KX}.
\end{enumerate}

\section{Connectivity of the Set of Stabilizing Controllers}
\label{sec:connectivity}

In this section, we characterize the connectivity of the set of full-order stabilizing controllers $\mathcal{C}_{n}$. 
%
%
We first have the following observation.
\begin{lemma} \label{lemma:unbounded_proper}
    Under \cref{assumption:stabilizability}, the set ${\mathcal{C}}_{n}$ is non-empty, unbounded, and can be non-convex.
\end{lemma}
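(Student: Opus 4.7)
The plan is to verify the three claims separately, with non-emptiness reducing to a classical citation and the other two both exploiting similarity-transformation invariance of the closed-loop dynamics. For non-emptiness, I would simply note that under \cref{assumption:stabilizability} the LQG controller~\eqref{eq:LQGstatespace} obtained from the Riccati equations~\eqref{eq:Riccati} is full-order ($q=n$), strictly proper ($D_{\mK}=0$), and internally stabilizes the plant by~\cite[Thm.~14.7]{zhou1996robust}; this provides an explicit element of $\mathcal{C}_n$.

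For unboundedness, the key auxiliary fact I would establish (and reuse for the third claim) is that for any $\mK=(A_{\mK},B_{\mK},C_{\mK})\in\mathcal{C}_n$ and any invertible $T\in\mathrm{GL}_n$, the similarity-transformed controller $\mK^T:=(T^{-1}A_{\mK}T,\,T^{-1}B_{\mK},\,C_{\mK}T)$ also lies in $\mathcal{C}_n$. This is a one-line block computation: the closed-loop matrix of $\mK^T$ is conjugate to that of $\mK$ via $\mathrm{diag}(I_n,T^{-1})$, so the two matrices share their spectrum and hence the stability property. Taking the LQG controller above (for which $C_{\mK}=-K\neq 0$) together with $T=t I_n$ and letting $t\to\infty$ drives $\|C_{\mK}T\|_F\to\infty$, so $\mathcal{C}_n$ is unbounded.

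For non-convexity, I would reuse the same similarity trick with $T=-I_n$: from any $\mK\in\mathcal{C}_n$ we obtain a second controller $\mK'=(A_{\mK},-B_{\mK},-C_{\mK})\in\mathcal{C}_n$, and their midpoint $\tfrac{1}{2}(\mK+\mK')=(A_{\mK},0,0)$ has closed-loop matrix $\mathrm{diag}(A,A_{\mK})$. Whenever the plant is open-loop unstable, $A$ has an eigenvalue with nonnegative real part, so the midpoint lies outside $\mathcal{C}_n$; it then suffices to exhibit any minimal open-loop unstable plant satisfying \cref{assumption:stabilizability}, e.g.\ the scalar instance $n=m=p=1$ with $A=B=C=W=V=Q=R=1$. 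The main (and really only) obstacle is the conceptual one of recognizing that the similarity transformation is the right tool for both the second and third claims; once that is identified, each verification reduces to a single block identity and a carefully chosen $T$, with no serious technical difficulty.
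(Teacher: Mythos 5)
Your proof is correct, but it is organized around a different mechanism than the paper's. The paper handles unboundedness by exhibiting the family $\mathcal{S}_n$ of observer-based controllers $(A-BK-LC,\,L,\,-K)$ inside $\mathcal{C}_n$ and importing the known fact that $\{K \mid A-BK \text{ is stable}\}$ is unbounded; it handles non-convexity by simply pointing to the explicit scalar counterexample of \cref{example:disconnectivity}. You instead derive both claims from a single structural observation — that conjugating the closed-loop matrix by $\mathrm{diag}(I_n,T^{-1})$ shows similarity transformations preserve membership in $\mathcal{C}_n$ — which is exactly the content of the paper's \cref{lemma:Cq_invariant} (stated only after this lemma, and with the convention $T\leftrightarrow T^{-1}$ swapped relative to yours). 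Scaling $T=tI_n$ gives unboundedness provided $C_{\mK}\neq 0$, which does hold for the Riccati controller since $S\succ 0$ and $B\neq 0$ under \cref{assumption:stabilizability}; and $T=-I_n$ gives non-convexity for every open-loop unstable plant, since the midpoint $(A_{\mK},0,0)$ has closed-loop matrix $\mathrm{diag}(A,A_{\mK})$. Your route is more self-contained (it avoids the citation for unboundedness of stabilizing state-feedback gains) and it exposes the structural reason behind the paper's counterexample — indeed the two controllers $\mK^{(1)},\mK^{(2)}$ in \cref{example:disconnectivity} are precisely related by $T=-1$. What it costs is the small extra checks that $C_{\mK}\neq 0$ and that a minimal unstable instance of \cref{assumption:stabilizability} exists, both of which you supply. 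Either approach is acceptable.
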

\begin{proof}
     It is a well-known fact in control theory that $\mathcal{C}_{n} \neq \varnothing$ under~\cref{assumption:stabilizability}. In particular, any pole assignment algorithm or solving the Ricatti equations~\eqref{eq:Riccati_P} and~\eqref{eq:Riccati_S} can find a feasible point in $\mathcal{C}_{n}$.
     To show the unboundedness of $\mathcal{C}_n$, we introduce the following set
     $$
        \mathcal{S}_n = \left\{ \mK=\begin{bmatrix}
    0 & C_{\mK} \\
    B_{\mK} & A_{\mK}
    \end{bmatrix}
    \in \mathbb{R}^{(m+n) \times (p+n)} \; \left| \; \begin{aligned}A_{\mK} = A - BK - LC, \; B_{\mK} = L, \; C_{\mK} = -K, \\ A-BK \; \text{and}\; A - LC \; \text{are stable}\end{aligned} \right. \right\}.
        $$
    It has been established in classical control theory that $\mathcal{S}_n \subset \mathcal{C}_{n}$~\cite[Chapter 3.5]{zhou1996robust} and the set $\{K \mid A - BK\; \text{is stable}\}$ is unbounded (see, e.g., \cite[Observation 3.6]{bu2019topological}). Thus, the set $\mathcal{S}_n$ is unbounded, and so is ${\mathcal{C}}_{n}$. Non-convexity of $\mathcal{C}_{n}$ is also known and can be illustrated by the explicit counterexample in~\cref{example:disconnectivity}.
\end{proof}

\begin{example}[Non-convexity of stabilizing controllers] \label{example:disconnectivity}
Consider a dynamical system~\eqref{eq:Dynamic} with
\begin{equation*} 
   A = 1, \;\; B = 1, \;\;  C = 1.
\end{equation*}
The set of stabilizing controllers $\mathcal{C}_n=\mathcal{C}_1$ is given by
$$
\mathcal{C}_n
=\left\{
\left.
\mK = \begin{bmatrix} 0 & C_{\mK} \\
                          B_{\mK} & A_{\mK}\end{bmatrix} \in \mathbb{R}^{2 \times 2}
\right|
\begin{bmatrix}
1 & C_{\mK} \\
B_{\mK} & A_{\mK}
\end{bmatrix}
\text{ is stable}
\right\}.
$$
It is easy to verify that the following dynamical controllers
$$
    \mK^{(1)} = \begin{bmatrix} 0 & 2 \\
                          -2 & -2\end{bmatrix}, \qquad     \mK^{(2)} = \begin{bmatrix} 0 & -2 \\
                          2 & -2\end{bmatrix}
$$
internally stabilize the plant and thus belong to $\mathcal{C}_1$. However,
$
   \hat{\mK} = \frac{1}{2}\left(\mK^{(1)} + \mK^{(2)}\right) = \begin{bmatrix} 0 & 0 \\
                          0 & -2\end{bmatrix}
$
fails to stabilize the plant. \hfill\qed
%
\end{example}

\subsection{Main Results}

We first introduce the notion of similarity transformation that has been widely-used in control theory. Given $q\geq 1$ such that $\mathcal{C}_q\neq\varnothing$, we define the mapping $\mathscr{T}_q:\mathrm{GL}_q\times\mathcal{C}_q\rightarrow\mathcal{C}_q$ that represents similarity transformations on $\mathcal{C}_q$ by
\begin{equation}\label{eq:def_sim_transform}
\mathscr{T}_q(T,\mK)
\coloneqq \begin{bmatrix}
I_m & 0 \\
0 & T
\end{bmatrix}\begin{bmatrix}
D_{\mK} & C_{\mK} \\
B_{\mK} & A_{\mK}
\end{bmatrix}\begin{bmatrix}
I_p & 0 \\
0 & T
\end{bmatrix}^{-1} = \begin{bmatrix}
D_{\mK} & C_{\mK}T^{-1} \\
TB_{\mK} & TA_{\mK}T^{-1}
\end{bmatrix}.
\end{equation}
It is not hard to verify that for any invertible matrix $T\in\mathrm{GL}_q$ and $\mK\in\mathcal{C}_q$, $\mathscr{T}_q(T,\mK)$ is indeed a stabilizing controller of order $q$ and thus is in $\mathcal{C}_q$. We can also check that $\mathscr{T}_q$ is indefinitely differentiable on $\mathrm{GL}_q\times \mathcal{C}_q$, and that
\begin{equation}\label{eq:group_action}
\mathscr{T}_q(T_2,\mathscr{T}_q(T_1,\mK))
=
\mathscr{T}_q(T_2T_1,\mK)
\end{equation}
for any invertible $T_1,T_2 \in\mathrm{GL}_q$. This implies that for any fixed $T\in\mathrm{GL}_n$, the map
$$
\mK\mapsto \mathscr{T}_q(T,\mK)
$$
admits an inverse given by $\mK\mapsto\mathscr{T}_q(T^{-1},\mK)$. Therefore, we have the following result (see~\cref{app:manifold} for a review of manifolds and diffeomorphism).
\begin{lemma} \label{lemma:Cq_invariant}
Given $q\geq 1$ such that $\mathcal{C}_q\neq\varnothing$, for any invertible matrix $T \in\mathrm{GL}_q$, the map
$$
\mK\mapsto \mathscr{T}_q(T,\mK)
$$
is a diffeomorphism from $\mathcal{C}_q$ to itself.
\end{lemma}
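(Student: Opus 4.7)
The plan is to verify the three defining properties of a diffeomorphism in turn: (i) $\mathscr{T}_q(T,\cdot)$ sends $\mathcal{C}_q$ into itself; (ii) it is smooth; and (iii) it admits a smooth two-sided inverse. Each of these is essentially assembled from remarks already recorded immediately before the lemma statement, so the proof should be short.

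First I would establish (i). The key observation is that the closed-loop matrix associated with $\mathscr{T}_q(T,\mK)$ is obtained from that of $\mK$ by conjugation with $\operatorname{diag}(I_n,T)$. Concretely, substituting the explicit formula for $\mathscr{T}_q(T,\mK)$ from \eqref{eq:def_sim_transform} into the closed-loop block in \eqref{eq:closedloopmatrix}, one reads off
\begin{equation*}
\begin{bmatrix} A & BC_{\mK}T^{-1} \\ TB_{\mK}C & TA_{\mK}T^{-1} \end{bmatrix}
=
\begin{bmatrix} I_n & 0 \\ 0 & T \end{bmatrix}
\begin{bmatrix} A & BC_{\mK} \\ B_{\mK}C & A_{\mK} \end{bmatrix}
\begin{bmatrix} I_n & 0 \\ 0 & T \end{bmatrix}^{-1}.
\end{equation*}
Similar matrices share their spectrum, so internal stability is preserved; the $D_{\mK}=0$ block is also unaltered by $\mathscr{T}_q(T,\cdot)$. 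Hence $\mathscr{T}_q(T,\mK)\in\mathcal{C}_q$ whenever $\mK\in\mathcal{C}_q$.

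Next I would dispatch (ii) and (iii) together. For fixed $T\in\mathrm{GL}_q$, every entry of $\mathscr{T}_q(T,\mK)=\operatorname{diag}(I_m,T)\,\mK\,\operatorname{diag}(I_p,T)^{-1}$ is a linear function of the entries of $\mK$, so the map is $C^\infty$ (indeed real analytic) on the open set $\mathcal{C}_q\subset\mathcal{V}_q$. For the inverse, I would invoke the group-action identity \eqref{eq:group_action} already recorded in the excerpt. Taking $T_1=T$ and $T_2=T^{-1}$ yields $\mathscr{T}_q(T^{-1},\mathscr{T}_q(T,\mK))=\mathscr{T}_q(I_q,\mK)=\mK$, and analogously with the order reversed. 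Thus $\mathscr{T}_q(T^{-1},\cdot)$ is a two-sided inverse of $\mathscr{T}_q(T,\cdot)$; since $T^{-1}\in\mathrm{GL}_q$, the same linearity argument shows that this inverse is also smooth, and by step (i) it maps $\mathcal{C}_q$ into itself.

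Combining these observations, $\mK\mapsto\mathscr{T}_q(T,\mK)$ is a smooth bijection of $\mathcal{C}_q$ onto itself with smooth inverse, i.e., a diffeomorphism in the sense of Appendix~\ref{app:manifold}. I do not anticipate any real obstacle: the only conceptual content is the block-conjugation identity displayed above, which makes stability invariance transparent; everything else is bookkeeping on the explicit formula \eqref{eq:def_sim_transform} together with the already-established composition rule \eqref{eq:group_action}.
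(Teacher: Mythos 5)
Your proposal is correct and follows essentially the same route as the paper, which establishes the lemma via the same three observations (invariance of $\mathcal{C}_q$ under $\mathscr{T}_q(T,\cdot)$, smoothness from linearity in $\mK$, and the composition rule \eqref{eq:group_action} yielding the inverse $\mathscr{T}_q(T^{-1},\cdot)$); the only difference is that you spell out the block-conjugation identity for the closed-loop matrix, which the paper leaves as "not hard to verify."
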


Our main technical results in this section are on the path-connectivity of ${\mathcal{C}}_{n}$. Recall that $\mathscr{T}_n(T,\mK)$ is defined by \eqref{eq:def_sim_transform}. For notational simplicity, for any fixed $T\in\mathrm{GL}_n$, we let $\mathscr{T}_T:\mathcal{C}_n\rightarrow\mathcal{C}_n$ denote the mapping given by
$$
\mathscr{T}_T(\mK)
\coloneqq
\mathscr{T}_n(T,\mK)
=\begin{bmatrix}
D_{\mK} & C_{\mK}T^{-1} \\
TB_{\mK} & TA_{\mK}T^{-1}
\end{bmatrix}.
$$
We are now ready to present the main technical results.

\begin{theorem} \label{Theo:disconnectivity}
Under~\cref{assumption:stabilizability}, the set ${\mathcal{C}}_{n}$ has at most two path-connected components.
\end{theorem}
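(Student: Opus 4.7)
The strategy is to exploit the $\mathrm{GL}_n$-action by similarity transformations $\mathscr{T}_T$ (see \cref{lemma:Cq_invariant}) together with the fact that $\mathrm{GL}_n$ has exactly two path-connected components $\mathrm{GL}_n^\pm \coloneqq \{T \in \mathrm{GL}_n : \pm\det T > 0\}$. Since $(T,\mK) \mapsto \mathscr{T}_T(\mK)$ is continuous, for any path-connected $\mathcal{W} \subseteq \mathcal{C}_n$ the saturations $\mathrm{GL}_n^{\pm} \cdot \mathcal{W} \coloneqq \{\mathscr{T}_T(\mK) : T \in \mathrm{GL}_n^\pm,\, \mK \in \mathcal{W}\}$ are each path-connected (continuous images of path-connected products), so $\mathrm{GL}_n \cdot \mathcal{W}$ has at most two path-connected components. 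It therefore suffices to identify a path-connected $\mathcal{W} \subseteq \mathcal{C}_n$ such that every $\mK \in \mathcal{C}_n$ is joined by a path in $\mathcal{C}_n$ to some element of $\mathrm{GL}_n \cdot \mathcal{W}$.

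A natural candidate for $\mathcal{W}$ is the observer-based family $\mathcal{S}_n$ already introduced in the proof of \cref{lemma:unbounded_proper}, whose elements $\mK(K, L) = \bigl[\begin{smallmatrix} 0 & -K \\ L & A - BK - LC \end{smallmatrix}\bigr]$ are parametrized by $(K, L)$ with $A - BK$ and $A - LC$ Hurwitz. Because $(K, L) \mapsto \mK(K, L)$ is continuous, path-connectivity of $\mathcal{S}_n$ reduces to that of $\{K : A - BK \text{ Hurwitz}\}$ and $\{L : A - LC \text{ Hurwitz}\}$: the former is the standard LQR stabilizing-gain connectivity fact (see e.g.\ \cite{bu2019topological,fazel2018global}), and the latter follows by duality applied to $(A^\tr, C^\tr)$.

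The heart of the plan is to show that any $\mK \in \mathcal{C}_n$ can be joined to $\mathrm{GL}_n \cdot \mathcal{S}_n$ by a path inside $\mathcal{C}_n$. I would proceed in two stages. First, since minimality of $(A_{\mK}, B_{\mK}, C_{\mK})$ is a generic open condition and $\mathcal{C}_n$ is open, $\mK$ can be connected by an arbitrarily short straight-line perturbation to a nearby minimal (controllable and observable) controller in $\mathcal{C}_n$. Second, leverage the Youla parametrization: the set of internally stabilizing controllers is in bijection with an affine/convex set of Youla parameters $Q \in \mathcal{RH}_\infty$, and observer-based controllers correspond to $Q \equiv 0$; the straight-line homotopy $Q(t) = (1-t)\,Q_0$ yields a continuous path of stabilizing transfer functions from $\mathbf{K}(s)$ to that of some $\mK(K, L) \in \mathcal{S}_n$. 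Lifting this path to a continuous family of order-$n$ state-space realizations (allowing stable uncontrollable or unobservable modes at intermediate times) produces the desired path in $\mathcal{C}_n$.

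The main obstacle is precisely this lift: along the Youla homotopy the McMillan degree of the intermediate transfer functions can fluctuate, so one cannot always pick \emph{minimal} order-$n$ realizations throughout, and a careful construction is needed to keep every intermediate realization stabilizing and of fixed order $n$. Two ways to circumvent this are (i) to first rewrite $\mK$ in an order-$n$ observer-plus-Youla form so that shrinking $Q$ to zero is manifestly an order-$n$ path, or (ii) to bypass Youla altogether and construct the homotopy directly in state space by a carefully scaled deformation of $(A_{\mK}, B_{\mK}, C_{\mK})$ that keeps the closed-loop matrix in~\eqref{eq:closedloopmatrix} Hurwitz. Granting the lift, combining the two stages with the $\mathrm{GL}_n^\pm$-saturation argument from the first paragraph shows that every $\mK \in \mathcal{C}_n$ lies in the same path-component of $\mathcal{C}_n$ as either $\mathrm{GL}_n^+ \cdot \mathcal{S}_n$ or $\mathrm{GL}_n^- \cdot \mathcal{S}_n$, yielding the claimed bound of at most two path-connected components.
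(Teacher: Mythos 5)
Your overall architecture --- pick a path-connected anchor set $\mathcal{W}$, note that the saturations $\mathrm{GL}_n^{\pm}\cdot\mathcal{W}$ are path-connected because $\mathrm{GL}_n$ has exactly two components, and then connect every $\mK\in\mathcal{C}_n$ by a path in $\mathcal{C}_n$ to $\mathrm{GL}_n\cdot\mathcal{W}$ --- is sound in outline, and both the saturation step and the path-connectivity of $\mathcal{S}_n$ (via the stabilizing-gain sets for $(A,B)$ and, by duality, $(A^\tr,C^\tr)$) are fine. The genuine gap is the step you yourself call ``the heart of the plan'': joining an arbitrary $\mK\in\mathcal{C}_n$ to $\mathrm{GL}_n\cdot\mathcal{S}_n$. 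The straight-line Youla homotopy $Q(t)=(1-t)Q_0$ does not merely make the lift delicate; it can make it impossible. For a plant of order $n$, the controller associated with a Youla parameter $Q$ generically has McMillan degree $n+\deg Q$, and an order-$n$ controller $\mK$ generally corresponds to a $Q_0$ of positive degree whose specific interaction with the coprime factors produces the cancellations that bring the degree back down to at most $n$ at the endpoint. Scaling $Q_0$ by the constant $1-t$ destroys those cancellations, so for intermediate $t$ the transfer function $\mathbf{K}_t(s)$ can have McMillan degree strictly greater than $n$ and therefore admits \emph{no} order-$n$ realization at all, stabilizing or otherwise --- there is nothing in $\mathcal{C}_n$ to lift the path to. Your proposed fix (i) is not available in general, since the observer-plus-Youla realization of a controller with $Q\neq 0$ has order $n+\deg Q>n$, so there is no ``order-$n$ observer-plus-Youla form'' to shrink; fix (ii) is a restatement of the goal rather than an argument. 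A secondary issue is that even where the degree bound holds one must exhibit a \emph{continuous} selection of order-$n$ realizations along the path, which is nontrivial once minimality is lost.

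The paper sidesteps all of this by working in the LMI coordinates of Scherer's change of variables: it constructs a set $\mathcal{G}_n$ that is a continuous bijective image of $\mathcal{F}_n\times\mathrm{GL}_n$ with $\mathcal{F}_n$ convex, and shows that the map $\Phi$ of \cref{proposition:Phi_surjective} is a continuous surjection from $\mathcal{G}_n$ onto $\mathcal{C}_n$; since $\mathcal{G}_n$ has exactly two path-connected components (\cref{lemma:Gn_connected_components}), $\mathcal{C}_n$ has at most two. The surjectivity of $\Phi$ is, in effect, the rigorous substitute for your missing connecting step: it certifies stability and fixed order along every path simultaneously, which is precisely what the Youla homotopy fails to do.
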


\begin{theorem} \label{Theo:Cn_homeomorphic}
{If ${\mathcal{C}}_{n}$ has two path-connected components $\mathcal{C}_n^{(1)}$ and $\mathcal{C}_n^{(2)}$, then $\mathcal{C}_n^{(1)}$ and $\mathcal{C}_n^{(2)}$ are diffeomorphic under the mapping $\mathscr{T}_T$, for any invertible matrix $T\in\mathbb{R}^{n\times n}$ with $\det T<0$. 
}
\end{theorem}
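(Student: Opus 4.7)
The plan is to exploit two facts: $\mathscr{T}_T$ is a self-diffeomorphism of $\mathcal{C}_n$ by \cref{lemma:Cq_invariant}, and $\mathrm{GL}_n$ has exactly two path-connected components $\mathrm{GL}_n^+$ and $\mathrm{GL}_n^-$, distinguished by $\mathrm{sign}(\det T)$. Because any diffeomorphism must map path-connected components to path-connected components, $\mathscr{T}_T$ necessarily induces a permutation of $\{\mathcal{C}_n^{(1)},\mathcal{C}_n^{(2)}\}$, so the only two possibilities are that $\mathscr{T}_T$ fixes both components or swaps them. The theorem therefore amounts to showing that the first case occurs exactly when $\det T>0$ and the second exactly when $\det T<0$.

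First I would show that $\mathscr{T}_T$ fixes both components whenever $\det T>0$: since $\mathrm{GL}_n^+$ is path-connected, I pick a continuous path $T(\cdot):[0,1]\to\mathrm{GL}_n^+$ with $T(0)=I_n$ and $T(1)=T$, and then for any $\mK\in\mathcal{C}_n$ the curve $s\mapsto\mathscr{T}_{T(s)}(\mK)$ is a continuous path in $\mathcal{C}_n$ from $\mK$ to $\mathscr{T}_T(\mK)$, forcing them into the same component. Next I use the group action property \eqref{eq:group_action} to reduce to a single negative-determinant matrix: for any $T_1,T_2\in\mathrm{GL}_n^-$, $\det(T_2T_1^{-1})>0$ and $\mathscr{T}_{T_2}=\mathscr{T}_{T_2T_1^{-1}}\circ\mathscr{T}_{T_1}$, so by the preceding step $\mathscr{T}_{T_1}$ and $\mathscr{T}_{T_2}$ induce the same permutation of components. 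Thus it suffices to exhibit one $T_0\in\mathrm{GL}_n^-$ whose $\mathscr{T}_{T_0}$ swaps $\mathcal{C}_n^{(1)}$ and $\mathcal{C}_n^{(2)}$; granted this, the restriction $\mathscr{T}_{T_0}|_{\mathcal{C}_n^{(1)}}:\mathcal{C}_n^{(1)}\to\mathcal{C}_n^{(2)}$ is automatically a diffeomorphism with smooth inverse $\mathscr{T}_{T_0^{-1}}|_{\mathcal{C}_n^{(2)}}$, and the same conclusion then extends to every $T\in\mathrm{GL}_n^-$ by the reduction above.

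The hard part will be producing a swapping $T_0$. The most natural route is to revisit the proof of \cref{Theo:disconnectivity} and identify a $\mathrm{GL}_n$-equivariant sign invariant $\rho:\mathcal{C}_n\to\{+,-\}$ whose two level sets are exactly $\mathcal{C}_n^{(1)}$ and $\mathcal{C}_n^{(2)}$ and which obeys the transformation law $\rho(\mathscr{T}_T(\mK))=\mathrm{sign}(\det T)\cdot\rho(\mK)$. A clean candidate in the SISO case ($m=p=1$) is the determinant of the $n\times n$ controllability matrix $\bigl[B_{\mK}\;A_{\mK}B_{\mK}\;\cdots\;A_{\mK}^{n-1}B_{\mK}\bigr]$, which under $\mathscr{T}_T$ is left-multiplied by $T$ and hence scales by $\det T$; the observability matrix of $(C_{\mK},A_{\mK})$ is right-multiplied by $T^{-1}$ and gives a symmetric alternative. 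For general MIMO systems one needs to extract an equivariant $n\times n$ block from the controllability or observability data of a minimal controller and verify that its sign is constant on each component. The cleanest implementation is to fold the definition of $\rho$ directly into the proof of \cref{Theo:disconnectivity}, so that \cref{Theo:Cn_homeomorphic} follows immediately from the first two steps above once the case of two components is isolated.
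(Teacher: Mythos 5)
Your reduction in the first two paragraphs is correct and is a genuinely different framing from the paper's: since $\mathscr{T}_T$ is a self-diffeomorphism of $\mathcal{C}_n$ it permutes the two components, the path argument in $\mathrm{GL}_n^+$ shows positive-determinant transformations fix both components, and the group-action identity \eqref{eq:group_action} shows all negative-determinant transformations induce the same permutation. This cleanly reduces the theorem to exhibiting a single $T_0$ with $\det T_0<0$ that swaps the components. The gap is that you never actually produce such a $T_0$. Your candidate invariant $\rho=\operatorname{sign}\det\bigl[B_{\mK}\;A_{\mK}B_{\mK}\;\cdots\;A_{\mK}^{n-1}B_{\mK}\bigr]$ can be made to work in the SISO case (after first arguing that in the disconnected case every $\mK\in\mathcal{C}_n$ must be controllable, since otherwise a reduced-order realization would give $\mathcal{C}_{n-1}\neq\varnothing$ and hence connectivity by \cref{Theo:connectivity_conditions}), but it does not extend to MIMO: for $m>1$ the controllability matrix is $n\times nm$ and carries no determinant, and the set of full-row-rank rectangular matrices is path-connected, so neither the controllability nor the observability data yields a sign. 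Since the theorem is not restricted to SISO plants, the argument is incomplete as stated.

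The paper closes exactly this gap by working with the lifted set $\mathcal{G}_n$ from the proof of \cref{Theo:disconnectivity}: the two components are \emph{defined} as $\mathcal{C}_n^{\pm}=\Phi(\mathcal{G}_n^{\pm})$ with $\mathcal{G}_n^{\pm}$ distinguished by $\operatorname{sign}\det\Pi$, and a direct computation shows that replacing $(\Pi,\Xi)$ by $(T\Pi,\Xi T^{-1})$ flips $\operatorname{sign}\det\Pi$ while transforming $\Phi(\mZ)$ into $\mathscr{T}_T(\Phi(\mZ))$. In other words, $\det\Pi$ is precisely the $\mathrm{GL}_n$-equivariant sign invariant you were looking for, valid for arbitrary $m$ and $p$; it lives on $\mathcal{G}_n$ rather than on $\mathcal{C}_n$, and the equivariance of $\Phi$ is what transports it down. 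Your closing suggestion to ``fold the definition of $\rho$ directly into the proof of \cref{Theo:disconnectivity}'' is indeed the right move, but carrying it out is the substantive content of the proof, not a detail, so as written the proposal does not yet establish the theorem.
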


\Cref{Theo:Cn_homeomorphic} shows that even if ${\mathcal{C}}_{n}$ has two path-connected components, there exists a linear bijection mapping defined by a similarity transformation $\mathscr{T}_T$ between these two components.  In the following theorem, we present a sufficient condition under which $\mathcal{C}_n$ is path-connected. This condition becomes necessary for a class of dynamical systems with single input or single output.

\begin{theorem} \label{Theo:connectivity_conditions}
Under~\cref{assumption:stabilizability}, the following statements are true.
\begin{enumerate}
    \item
$\mathcal{C}_n$ is path-connected if there exists a reduced-order stabilizing controller, \emph{i.e.}, ${\mathcal{C}}_{n-1} \neq \varnothing$.

\item Suppose the plant \eqref{eq:Dynamic} is single-input or single-output, i.e., $m=1$ or $p=1$. Then the set ${\mathcal{C}}_{n}$ is path-connected if and only if ${\mathcal{C}}_{n-1} \neq \varnothing$.
\end{enumerate}
\end{theorem}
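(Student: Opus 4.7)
The plan is to exhibit a controller $\mK\in\mathcal{C}_n$ fixed by some similarity transformation $\mathscr{T}_T$ with $\det T<0$; by \cref{Theo:Cn_homeomorphic} such a fixed point cannot lie in two disjoint components of $\mathcal{C}_n$, which forces path-connectivity. Given $\widetilde{\mK}\in\mathcal{C}_{n-1}$ with data $(\widetilde{A},\widetilde{B},\widetilde{C})$, I would lift it to a full-order controller by appending a decoupled stable mode $-\alpha<0$:
\[
A_{\mK}=\begin{bmatrix}\widetilde{A} & 0 \\ 0 & -\alpha\end{bmatrix},\qquad B_{\mK}=\begin{bmatrix}\widetilde{B} \\ 0\end{bmatrix},\qquad C_{\mK}=\begin{bmatrix}\widetilde{C} & 0\end{bmatrix}.
\]
The zeros in the last row of $B_{\mK}$ and the last column of $C_{\mK}$ make the appended mode both uncontrollable and unobservable, so the resulting closed-loop matrix is block-diagonal with the (stable) plant--$\widetilde{\mK}$ interconnection in one block and the scalar $-\alpha$ in the other; hence $\mK\in\mathcal{C}_n$. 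Choosing $T=\operatorname{diag}(I_{n-1},-1)$ with $\det T=-1<0$, the sign flip on the last coordinate preserves the decoupled mode and leaves the zero cross-blocks unchanged, so a direct block computation yields $\mathscr{T}_T(\mK)=\mK$, completing the argument.

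\textbf{Part 2 (necessity under SISO).} Sufficiency is Part 1, so assume $m=1$ or $p=1$ and $\mathcal{C}_{n-1}=\varnothing$; the plan is to show $\mathcal{C}_n$ is disconnected. The first step is a realization-theoretic reduction: if some $\mK\in\mathcal{C}_n$ had McMillan degree $r<n$, its transfer function would admit a minimal realization of order $r$ that still internally stabilizes the plant, and padding it with $n-1-r\geq 0$ decoupled stable modes (as in Part 1) would produce an element of $\mathcal{C}_{n-1}$, contradicting the assumption. Hence every $\mK\in\mathcal{C}_n$ must be minimal. The SISO hypothesis enters at the second step: either $C_{\mK}\in\mathbb{R}^{1\times n}$ or $B_{\mK}\in\mathbb{R}^{n\times 1}$, so the corresponding observability matrix $\mathcal{O}(C_{\mK},A_{\mK})$ or controllability matrix $\mathcal{P}(A_{\mK},B_{\mK})$ is square of size $n\times n$, and minimality makes its determinant nonzero throughout $\mathcal{C}_n$. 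A direct computation shows $\mathscr{T}_T$ acts on these matrices by $\mathcal{O}\mapsto\mathcal{O}T^{-1}$ (respectively $\mathcal{P}\mapsto T\mathcal{P}$), so the continuous function $\phi(\mK)\coloneqq\operatorname{sign}\det\mathcal{O}(\mK)$ (respectively $\operatorname{sign}\det\mathcal{P}(\mK)$) takes values in $\{\pm 1\}$ and is flipped by any similarity with $\det T<0$. Thus $\phi$ attains both values on $\mathcal{C}_n$, giving at least two path-components; by \cref{Theo:disconnectivity}, exactly two.

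\textbf{Main obstacle.} The subtle step is the realization-theoretic claim in Part 2 that $\mathcal{C}_{n-1}=\varnothing$ forces every $\mK\in\mathcal{C}_n$ to be minimal: this couples the McMillan-degree identity (minimum state-space dimension over all realizations) with the same padding construction used in Part 1, and is what allows both parts of the theorem to rest on a single structural idea. The SISO hypothesis is invoked precisely to upgrade the rank-$n$ minimality condition into a nonvanishing \emph{scalar} invariant $\phi$; in a genuinely multivariable setting the controllability/observability matrices are rectangular, no scalar sign persists under similarity, and the necessity direction is expected to fail.
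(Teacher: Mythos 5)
Your proof is correct and follows essentially the same route as the paper: Part 1 is the identical construction (append a decoupled stable mode and note that $T=\operatorname{diag}(I_{n-1},-1)$ fixes the augmented controller while, by the proof of \cref{Theo:Cn_homeomorphic}, any $\mathscr{T}_T$ with $\det T<0$ swaps $\mathcal{C}_n^{+}$ and $\mathcal{C}_n^{-}$), and Part 2 rests on the same key invariant, namely that the square controllability (resp.\ observability) matrix of the single-input (resp.\ single-output) controller transforms as $\mathcal{P}\mapsto T\mathcal{P}$ so its determinant changes sign under an orientation-reversing similarity. The only difference is organizational: the paper runs Part 2 as an intermediate-value argument along a path from $\mK$ to $\mathscr{T}_T(\mK)$, locating a non-controllable controller and reducing its order, whereas you prove the contrapositive by first showing every element of $\mathcal{C}_n$ is minimal when $\mathcal{C}_{n-1}=\varnothing$ and then exhibiting $\operatorname{sign}\det\mathcal{P}$ as a continuous non-constant $\{\pm 1\}$-valued function — logically equivalent, and both hinge on the same realization-theoretic fact that a stabilizing controller with a hidden (necessarily stable) mode yields a stabilizing controller of order $n-1$.
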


One main idea in our proofs is based on a classical change of variables for dynamical controllers (see, e.g.,~\cite{scherer1997multiobjective}). We adopt the change of variables to construct a set with a convex projection and a surjective mapping from that set to $\mathcal{C}_n$, and then path-connectivity
results generally follow from the fact that a convex set is path-connected.
The potential disconnectivity of $\mathcal{C}_n$ comes from the fact that the set of real invertible matrices $\mathrm{GL}_n=\{\Pi\in\mathbb{R}^{n\times n}\mid\,\det \Pi\neq 0\}$ has two path-connected components \citep{lee2013introduction}:
$
\mathrm{GL}^+_n=\{\Pi\in\mathbb{R}^{n\times n}\mid\,\det \Pi> 0\},
\,
\mathrm{GL}^-_n=\{\Pi\in\mathbb{R}^{n\times n}\mid\,\det \Pi< 0\}.
$
The full proofs are technically involved, and we postpone them to~\Cref{subsection:proof_connectivity}---~\ref{subsection:proof_connectivity_2}.

Here, we note that given any open-loop unstable first-order dynamical system, \emph{i.e.}, $n = 1$, and $A > 0$ in~\eqref{eq:Dynamic}, it is easy to see that there exist no reduced-order stabilizing controllers, \emph{i.e.},   $\mathcal{C}_{n-1} = \varnothing$. Thus,~\cref{Theo:connectivity_conditions} indicates that its associated set of stabilizing controllers $\mathcal{C}_n$ is not path-connected.
%
We provide an explicit single-input and single-output (SISO) example below. 

\begin{example}[Disconectivity of stabilizing controllers]\label{example:SISO1}
 Consider the dynamical system in~\cref{example:disconnectivity}:
$$
A=1,\quad B=1,\quad C=1.
$$
Since it is open-loop unstable and only has state of dimension $n = 1$, we know $\mathcal{C}_{n-1} = \varnothing$. Thus, \cref{Theo:connectivity_conditions} indicates that its associated set of stabilizing controllers $\mathcal{C}_n$ is not path-connected.

Indeed, using the Routh--Hurwitz stability criterion, it is straightforward to derive that
\begin{equation}\label{eq:region_example}
    \begin{aligned}
    \mathcal{C}_1 &= \left\{\mK = \begin{bmatrix} 0 & C_{\mK} \\
                          B_{\mK} & A_{\mK}\end{bmatrix} \in \mathbb{R}^{2 \times 2} \left| \begin{bmatrix}
    A & BC_{\mK} \\
    B_{\mK}C & A_{\mK}
\end{bmatrix}\; \text{is stable}\right. \right\} \\
&= \left\{ \left. \mK = \begin{bmatrix} 0 & C_{\mK} \\
                          B_{\mK} & A_{\mK}\end{bmatrix} \in \mathbb{R}^{2 \times 2} \right| A_{\mK} < -1, \; B_{\mK}C_{\mK} < A_{\mK} \right\}. \\
\end{aligned}
\end{equation}
This set has two path-connected components: $\mathcal{C}_1 = \mathcal{C}_1^+ \cup \mathcal{C}_1^-$ with $\mathcal{C}_1^+ \cap \mathcal{C}_1^- = \emptyset$, where
$$
\begin{aligned}
    \mathcal{C}_1^+ &:= \left\{ \left. \mK = \begin{bmatrix} 0 & C_{\mK} \\
                          B_{\mK} & A_{\mK}\end{bmatrix} \in \mathbb{R}^{2 \times 2} \right| A_{\mK} < -1,\, B_{\mK}C_{\mK} < A_{\mK}, \, B_{\mK} > 0 \right\},
                          \\
                          \mathcal{C}_1^- &:= \left\{ \left. \mK = \begin{bmatrix} 0 & C_{\mK} \\
                          B_{\mK} & A_{\mK}\end{bmatrix} \in \mathbb{R}^{2 \times 2} \right| A_{\mK} < -1, \, B_{\mK}C_{\mK} < A_{\mK}, \, B_{\mK} < 0 \right\}. \\
\end{aligned}
$$
In addition, as expected by \cref{Theo:Cn_homeomorphic}, it is easy to verify that $\mathcal{C}_1^{+}$ and $\mathcal{C}_1^{-}$ are homeomorphic under the mapping $\mathscr{T}_T$, for any $T<0$. \cref{fig:feasible_region_disconnected} illustrates the region of the set $\mathcal{C}_1$ in~\eqref{eq:region_example}. \hfill\qed
\end{example}

\begin{figure}
\centering
\begin{subfigure}{.4\textwidth}
    \includegraphics[width =\textwidth]{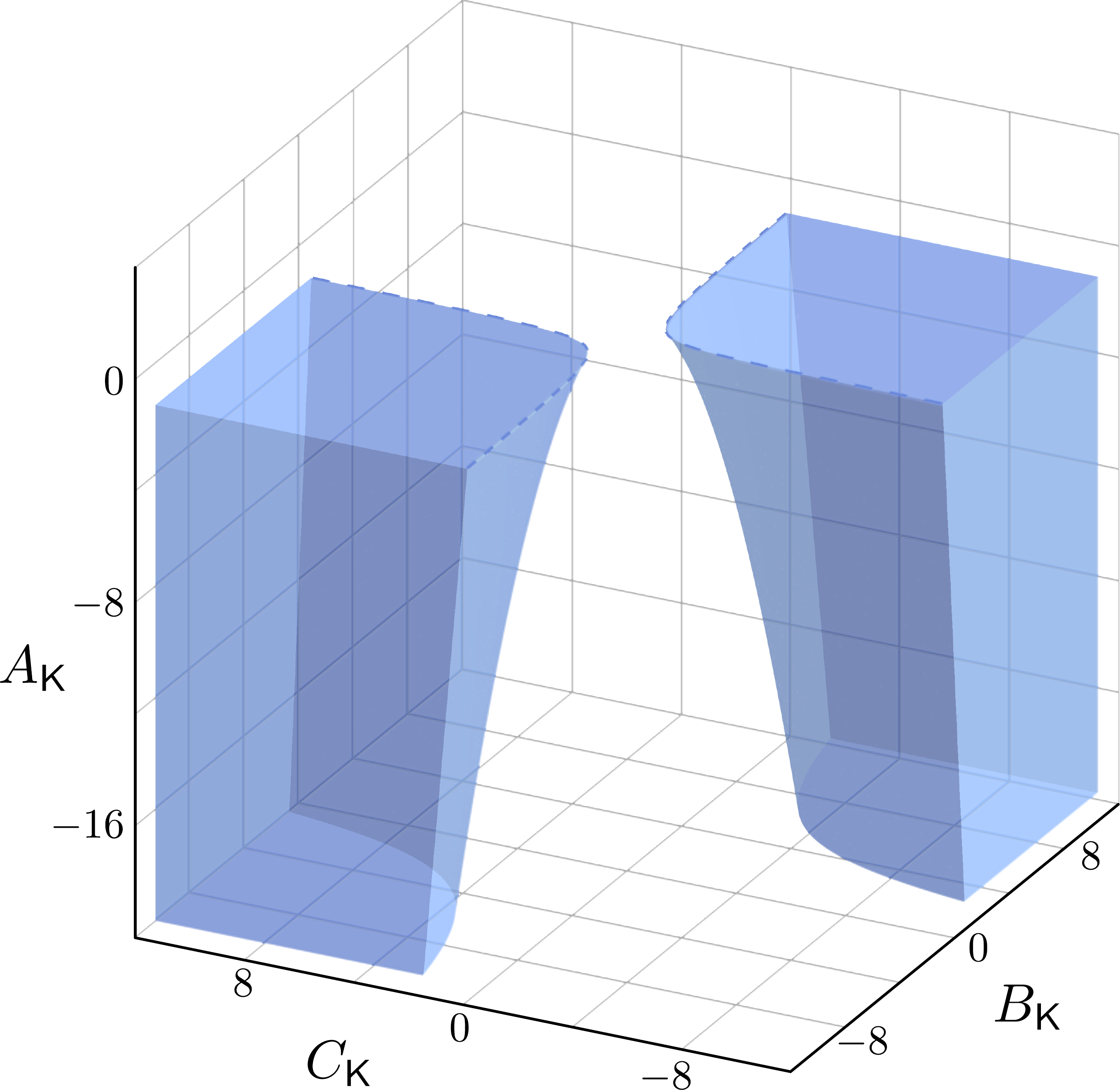}
    \caption{$\mathcal{C}_1$ for \cref{example:SISO1}}
    \label{fig:feasible_region_disconnected}
\end{subfigure}
    \hspace{10mm}
\begin{subfigure}{.4\textwidth}
\includegraphics[width=\textwidth]{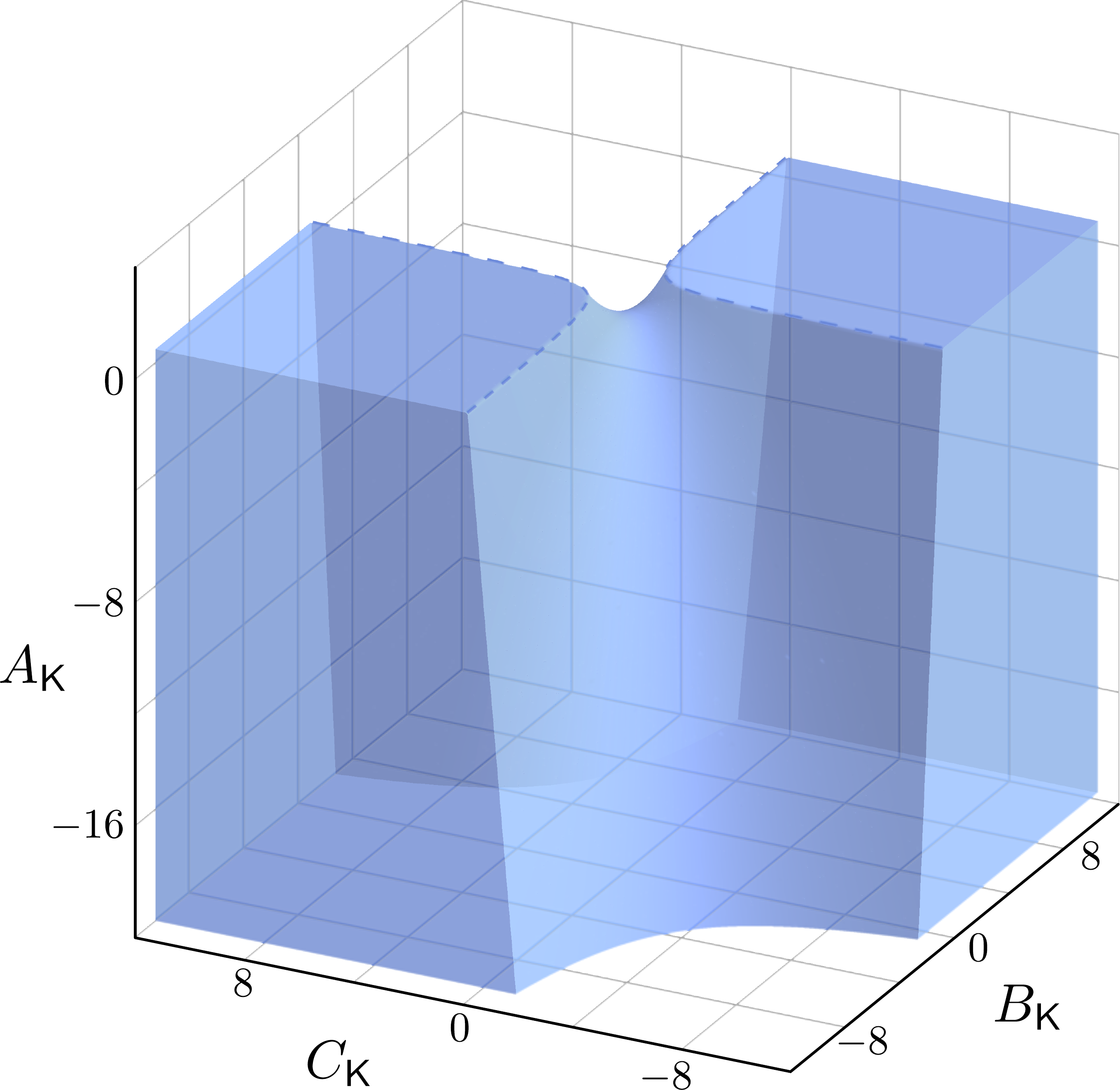}
\caption{$\mathcal{C}_1$ for \cref{example:SISO_connected}}
\label{fig:feasible_region_connected}
\end{subfigure}
    \caption{The set of stabilizing controllers $\mathcal{C}_1$ for Examples~\ref{example:SISO1} and~\ref{example:SISO_connected}: (a) For~\cref{example:SISO1}, the set $\mathcal{C}_1$ given by~\eqref{eq:region_example} has two path-connected components; (b) For~\cref{example:SISO_connected}, the set $\mathcal{C}_1$ given by~\eqref{eq:region_example_connected} is path-connected.}
    \label{fig:feasible_region}
\end{figure}

In \cref{appendix:eg_disconnectivity}, we present a nontrivial second-order SISO system, for which $\mathcal{C}_{n-1} = \varnothing$ and $\mathcal{C}_{n}$ is disconnected. \cref{Theo:connectivity_conditions} also suggests the following corollary.

\begin{corollary} \label{corollary:stable}
    Given any open-loop stable dynamical system~\eqref{eq:Dynamic}, \emph{i.e.}, $A$ is stable, we have that $\mathcal{C}_n$ is path-connected.
\end{corollary}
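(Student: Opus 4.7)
The plan is to reduce the corollary directly to Theorem 4.3 (part 1), which asserts that $\mathcal{C}_n$ is path-connected whenever $\mathcal{C}_{n-1}\neq\varnothing$. So it suffices to exhibit, under the hypothesis that $A$ is stable, at least one strictly proper dynamical controller of order $n-1$ that internally stabilizes the plant \eqref{eq:Dynamic}.

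For $n\geq 2$, I would construct such a controller explicitly by \emph{decoupling} the plant from the controller dynamics. Choose any Hurwitz matrix $A_{\mathsf{K}}\in\mathbb{R}^{(n-1)\times(n-1)}$, for instance $A_{\mathsf{K}}=-I_{n-1}$, and set $B_{\mathsf{K}}=0_{(n-1)\times p}$ and $C_{\mathsf{K}}=0_{m\times(n-1)}$. Then the closed-loop matrix \eqref{eq:closedloopmatrix} becomes block diagonal,
\begin{equation*}
\begin{bmatrix} A & BC_{\mathsf{K}} \\ B_{\mathsf{K}}C & A_{\mathsf{K}} \end{bmatrix}
=\begin{bmatrix} A & 0 \\ 0 & A_{\mathsf{K}} \end{bmatrix},
\end{equation*}
whose spectrum is the union of the spectra of $A$ and $A_{\mathsf{K}}$. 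Since both are stable by construction, this closed-loop matrix is stable, so the constructed $\mathsf{K}\in\mathcal{V}_{n-1}$ lies in $\mathcal{C}_{n-1}$, witnessing $\mathcal{C}_{n-1}\neq\varnothing$. The edge case $n=1$ is handled by the footnote after \eqref{eq:internallystabilizing}: when $A$ is stable, the zero controller of order $0$ trivially belongs to $\mathcal{C}_0$, so again $\mathcal{C}_{n-1}=\mathcal{C}_0\neq\varnothing$.

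Applying Theorem 4.3(1) then yields path-connectivity of $\mathcal{C}_n$. I do not anticipate a real obstacle here: the only subtlety is verifying that the decoupled choice of $(A_{\mathsf{K}},B_{\mathsf{K}},C_{\mathsf{K}})$ is admissible under the definition \eqref{eq:internallystabilizing} (it is, since $D_{\mathsf{K}}=0$ and the closed-loop matrix is stable) and cleanly covering the degenerate case $n=1$ via the $q=0$ convention in the paper.
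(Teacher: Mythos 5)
Your proof is correct and matches the paper's own argument: the paper exhibits exactly the same reduced-order controller $\mK=\begin{bmatrix}0_{m\times p} & 0_{m\times(n-1)}\\ 0_{(n-1)\times p} & -I_{n-1}\end{bmatrix}\in\mathcal{C}_{n-1}$ and invokes \cref{Theo:connectivity_conditions}. Your additional care with the block-diagonal closed-loop spectrum and the $n=1$ edge case via the $q=0$ convention is fine but not a different route.
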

\begin{proof}
    Since the dynamical system~\eqref{eq:Dynamic} is open-loop stable, thus
    $$
        \mK = \begin{bmatrix} 0_{m\times p} & 0_{m\times (n-1)} \\ 0_{(n-1)\times p} & -I_{n-1} \end{bmatrix} \in \mathcal{C}_{n-1},
    $$
    and $\mathcal{C}_{n-1} \neq \varnothing$. By \cref{Theo:connectivity_conditions}, $\mathcal{C}_n$ is path-connected.
\end{proof}

\begin{example}[Stabilizing controllers for an open-loop stable system]\label{example:SISO_connected}
Consider an open-loop stable dynamical system~\eqref{eq:Dynamic} with
$$
A=-1,\;\;  B=1, \;\; C=1.
$$
Since it is open-loop stable, \cref{corollary:stable} indicates that its associated set of stabilizing controllers $\mathcal{C}_n$ is path-connected. Using the Routh--Hurwitz stability criterion, it is straightforward to derive that
\begin{equation}\label{eq:region_example_connected}
    \begin{aligned}
    \mathcal{C}_1 &= \left\{ \left. \mK = \begin{bmatrix} 0 & C_{\mK} \\
                          B_{\mK} & A_{\mK}\end{bmatrix} \in \mathbb{R}^{2 \times 2} \right| A_{\mK} < 1, B_{\mK}C_{\mK} < -A_{\mK} \right\}. \\
\end{aligned}
\end{equation}
This set is path-connected, as illustrated in~\cref{fig:feasible_region_connected}. 
\end{example}

Before presenting the technical proofs, we note that the controllers of $\mathcal{C}_n$ in~\eqref{eq:internallystabilizing} are always \emph{strictly proper}, which is sufficient for the LQG problem~\eqref{eq:LQG}. For closed-loop stability, we can also consider \emph{proper} dynamical controllers. We provide this discussion in \cref{appendix:proper_controller}: Unlike $\mathcal{C}_n$ that might be disconnected, the set of proper stabilizing dynamical controllers is always connected (see \cref{prop:connectivity_Cn}).

\begin{remark}[Connectivity of the feasible region of LQR/LQG and gradient-based algorithms]
Motivated by the success of data-driven RL techniques, some recent studies revisited the classical LQR problem from a modern optimization perspective and designed policy gradient algorithms~\cite{fazel2018global, mohammadi2019convergence, zhang2019policy}. The connectivity of feasible region (i.e., the set of stabilizing controllers) becomes important to local search algorithms (e.g., policy gradient) since they typically cannot jump between different connected components. It is known that the set of stabilizing static state-feedback policies $\{K\in \mathbb{R}^{m \times n} \mid A - BK \text{~is stable}\}$ is connected \cite{bu2019topological}, and this is one important factor in justifying the performance of the algorithms in~\cite{fazel2018global,mohammadi2019convergence, zhang2019policy}. On the other hand, the set of stabilizing static output feedback policies $\{D_{\mK} \in \mathbb{R}^{m \times p} \mid A - BD_{\mK}C \; \text{is stable}\}$ can be highly disconnected~\cite{feng2020connectivity}, posing a significant challenge for local search algorithms. In Theorems~\ref{Theo:disconnectivity},~\ref{Theo:Cn_homeomorphic} and~\ref{Theo:connectivity_conditions}, we have shown that the set of stabilizing controllers $\mathcal{C}_n$ in LQG problem has at most two path-connected components that are diffeomorphic to each other under a particular similarity transformation. Since similarity transformation does not change the input/output behavior of a controller (see \cref{App:control_basics}), it makes no difference to search over any path-connected component in $\mathcal{C}_n$ even if $\mathcal{C}_n$ is not path-connected. This brings positive news to gradient-based local search algorithms for the LQG problem.
\end{remark}

\subsection{Proof of \cref{Theo:disconnectivity}}
\label{subsection:proof_connectivity}
The following Lyapunov stability criterion~\cite{boyd1994linear} plays a central role in our proof: A square real matrix $M$ is stable if and only if the Lyapunov inequality
$$
    MP+PM^\tr \prec 0
$$
has a positive definite solution $P\succ 0$.

The analysis of the path-connectivity of $\mathcal{C}_n$ is similar with analyzing the connectivity of the set of stabilizing static state feedback policies: We first adopt a classical change of variables that has been used for developing convex reformulation of controller synthesis problems, and then path-connectivity results generally follow from the fact that \emph{a convex set is path-connected}; see~\Cref{remark:connectivity} for details.

\begin{remark}[Connectivity of stabilizing static state-feedback policies] \label{remark:connectivity}
The path-connectivity of the set of stabilizing static state-feedback policies $\{K\in \mathbb{R}^{m \times n} \mid A - BK \text{~is stable}\}$ is easy to show:
    \begin{equation} \label{eq:staticpolicies_connect}
        \begin{aligned}
         &\{K\in \mathbb{R}^{m \times n} \mid A - BK \text{~is stable}\} \\
         \; \Longleftrightarrow \; &\{K \in \mathbb{R}^{m \times n} \mid \exists P \succ 0,  P(A-BK)^\tr+ (A - BK)P \prec 0 \} \\
        \; \Longleftrightarrow \; &\{K \in \mathbb{R}^{m \times n} \mid \exists P \succ 0,  PA^\tr- L^\tr B^\tr + AP - BL \prec 0, L = KP \} \\
        \; \Longleftrightarrow \; &\{K = LP^{-1} \in \mathbb{R}^{m \times n} \mid \exists P \succ 0,  PA^\tr- L^\tr B^\tr + AP - BL \prec 0\}.
        \end{aligned} \\
    \end{equation}
    Since the set
    \begin{equation} \label{eq:convexPL}
        \{(P,L)\mid P \succ 0,  PA^\tr - L^\tr B^\tr + AP - BL \prec 0\}
    \end{equation}
    is convex and the map
     $   K = LP^{-1}$
    is continuous for the elements in~\eqref{eq:convexPL}, we know $\{K\in \mathbb{R}^{m \times n} \mid A - BK \text{~is stable}\}$ is path-connected.
    The second equivalence in~\eqref{eq:staticpolicies_connect} utilizes a well-known \emph{change of variables}
     $K = LP^{-1}.$
    This trick is essential to derive \emph{convex reformulations} for designing state-feedback policies in various setups~\cite{boyd1994linear}. 
    We note that the trick~\eqref{eq:staticpolicies_connect} has been used in~\cite{bu2019topological,mohammadi2019convergence}.

    The main strategy in the proof of \cref{Theo:disconnectivity} is similar to~\eqref{eq:staticpolicies_connect}, but we need to use a more complicated change of variables for dynamical controllers in the state-space domain~\cite{scherer1997multiobjective}. To see the difficulty, applying the Lyapunov stability result leads to\footnote{We explicitly include the matrix $D_{\mK}$ in the Lypuanov inequality~\eqref{eq:LyapunovOutput}: $D_{\mK} = 0$ corresponds to strictly proper controllers and $D_{\mK} \neq 0$ corresponds to proper controllers; see \cref{appendix:proper_controller}.}
    \begin{equation} \label{eq:LyapunovOutput}
    \begin{aligned}
    &\begin{bmatrix}
    A+BD_{\mK}C & BC_{\mK} \\ B_{\mK}C & A_{\mK}
    \end{bmatrix}\; \text{is stable }  \\
    \Longleftrightarrow \quad &\exists P \succ 0, \, P\begin{bmatrix}
    A+BD_{\mK}C & BC_{\mK} \\ B_{\mK}C & A_{\mK}
    \end{bmatrix}^\tr + \begin{bmatrix}
    A+BD_{\mK}C & BC_{\mK} \\ B_{\mK}C & A_{\mK}
    \end{bmatrix}P \prec 0,
    \end{aligned}
    \end{equation}
    where the coupling between the auxiliary variable $P$ and the controller parameters $A_{\mK}, B_{\mK}, C_{\mK}, D_{\mK}$ are much more involved.
\end{remark}

In our proof, we adopt the change of variables presented in \cite{scherer1997multiobjective}. Given the system dynamics $(A, B, C)$ in~\eqref{eq:Dynamic}, we first introduce the following convex set\footnote{
We explicitly include the zero matrix $G$ in the definition of $\mathcal{F}_n$, for which the purpose will become clear when studying the set of proper stabilizing controllers; see \cref{appendix:proper_controller}.
}
\begin{equation} \label{eq:SetF}
\begin{aligned}
\mathcal{F}_n
\coloneqq
\bigg\{
(X,Y,M,&\,G,H,F) \mid
X,Y\in\mathbb{S}^n,\
M \in \mathbb{R}^{n \times n}, G=0_{m\times p}, H\in \mathbb{R}^{n \times p}, F \in \mathbb{R}^{m \times n},\\
& \begin{bmatrix}
X & I \\ I & Y
\end{bmatrix} \! \succ 0,\
\begin{bmatrix}
AX \!+\! BF & A \!+\! BGC \\ M & YA \!+\! HC
\end{bmatrix}
+
\begin{bmatrix}
AX \!+\! BF & A \!+\! BGC \\ M & YA \!+\! HC
\end{bmatrix}^{\!\top}
\!\! \prec 0
\bigg\},
\end{aligned}
\end{equation}
and the extended set
\begin{equation} \label{eq:SetG}
\mathcal{G}_n
:=\left\{\mZ=(X,Y,M,G,H,F,\Pi,\Xi) \left|\; \begin{aligned}(X,Y,M,G,H,F)\in\mathcal{F}_n, \\
\Pi,\Xi\in\mathbb{R}^{n\times n},\
\Xi\Pi =I-YX
\end{aligned}
\right. \right\}.
\end{equation}
We shall later see that there exists a continuous surjective map from $\mathcal{G}_n$ to $\mathcal{C}_n$, and the path-connectivity of the convex set $\mathcal{F}_n$ plays a key role in analyzing the path-connected components of $\mathcal{C}_n$. Before proceeding, we note the following observation for each element in $\mathcal{G}_n$.
\begin{lemma}\label{lemma:connectivity_preliminary}
For any $(X,Y,M,G,H,F,\Pi,\Xi)\in\mathcal{G}_n$, $\Pi$ and $\Xi$ are always invertible, and consequently, the block triangular matrices
$
\begin{bmatrix} I & 0 \\ YB & \Xi
\end{bmatrix}$ and $\begin{bmatrix} I & CX\\
0 & \Pi  \end{bmatrix}
$
are invertible.
\end{lemma}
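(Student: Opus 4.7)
The plan is to extract invertibility of $I-YX$ from the positive definiteness condition $\begin{bmatrix} X & I \\ I & Y \end{bmatrix}\succ 0$ built into $\mathcal{F}_n$, and then propagate invertibility through the factorization $\Xi\Pi = I - YX$.

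First, I would apply the Schur complement to the block positive definite matrix $\begin{bmatrix} X & I \\ I & Y \end{bmatrix}\succ 0$. This yields two facts simultaneously: $X\succ 0$ (hence invertible) and $Y - X^{-1}\succ 0$. The second inequality is the key structural input. Note that $Y-X^{-1}$ is symmetric and positive definite, hence in particular invertible; however $I - YX$ is not symmetric in general, so positive definiteness cannot be invoked for it directly.

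Next, I would write the identity
\begin{equation*}
I - YX \;=\; -(Y - X^{-1})\,X.
\end{equation*}
Since $Y - X^{-1}\succ 0$ is invertible and $X\succ 0$ is invertible, the right-hand side is a product of two invertible $n\times n$ matrices, so $I - YX$ is invertible. Because $\Xi\Pi = I - YX$ by the definition of $\mathcal{G}_n$, and the product of two square matrices is invertible only if both factors are invertible (over $\mathbb{R}^{n\times n}$ with equal sizes, $\det(\Xi)\det(\Pi) = \det(I-YX)\neq 0$ forces both determinants nonzero), we conclude that both $\Pi$ and $\Xi$ are invertible.

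Finally, for the two block triangular matrices, I would simply use the standard determinant formula for block triangular matrices:
\begin{equation*}
\det\!\begin{bmatrix} I & 0 \\ YB & \Xi \end{bmatrix} = \det(I)\cdot\det(\Xi) = \det(\Xi)\neq 0,
\qquad
\det\!\begin{bmatrix} I & CX \\ 0 & \Pi \end{bmatrix} = \det(I)\cdot\det(\Pi) = \det(\Pi)\neq 0,
\end{equation*}
so both are invertible. There is no real obstacle here; the only subtle point is remembering that $I-YX$ is not symmetric and so one must factor it through $X$ and $Y-X^{-1}$ rather than argue via a spectral/definiteness statement about $I-YX$ itself.
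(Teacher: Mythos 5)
Your proof is correct and follows essentially the same route as the paper: both arguments hinge on the Schur complement of the positive definite block matrix $\begin{bmatrix} X & I \\ I & Y \end{bmatrix}$ to conclude $\det(I-YX)=\det X\det(Y-X^{-1})\cdot(-1)^n\neq 0$, and then read off invertibility of $\Pi$ and $\Xi$ from $\Xi\Pi=I-YX$. Your explicit factorization $I-YX=-(Y-X^{-1})X$ is just the matrix-level version of the determinant identity the paper writes down directly, so the two proofs are the same in substance.
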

\begin{proof}
 By definition,  for all $ (X,Y,W,G,H,F,\Pi,\Xi)\in\mathcal{G}_n$, we have
 $\begin{bmatrix} X & I \\ I & Y \end{bmatrix}\succ 0$,
 implying that
$$
\det(YX-I)
=\det X\det(Y-X^{-1})
=\det\begin{bmatrix}
X & I \\ I & Y
\end{bmatrix}>0.
$$
Thus, $\text{det}(\Pi) \neq 0$ and $\text{det}(\Xi) \neq 0$, indicating they are both invertible. The invertibility of the other two block triangular matrices is straightforward.
\end{proof}

We now define a mapping from $\mathcal{G}_n$ to a subset of $\mathbb{R}^{(m+n)\times(p+n)}$. 
\begin{definition}[Change of variables via nonlinear mapping]
    For each $\mZ=(X,Y,M,G,H,F,\Pi,\Xi)$ in $\mathcal{G}_n$, let
    \begin{equation} \label{eq:change_of_variables_output}
        \Phi(\mZ)=
        \begin{bmatrix}
        \Phi_D(\mZ) & \Phi_C(\mZ) \\
        \Phi_B(\mZ) & \Phi_A(\mZ)
        \end{bmatrix}
        \coloneqq
        \begin{bmatrix} I & 0 \\ YB & \Xi
        \end{bmatrix}^{-1} \begin{bmatrix}
        G & H \\
        F & M-YAX
        \end{bmatrix}\begin{bmatrix} I & CX\\
        0 & \Pi  \end{bmatrix}^{-1}.
    \end{equation}
\end{definition}
It is easy to see that $\Phi_D(\mZ) \equiv G \equiv 0$ for $\mZ\in\mathcal{G}_n$. 
 We point out that this mapping~\eqref{eq:change_of_variables_output} is derived from the change of variables presented in \cite{scherer1997multiobjective}, which is essential to obtain equivalent convex reformulations for a range of output-feedback controller synthesis, including $\mathcal{H}_\infty$ and $\mathcal{H}_2$ optimal control. 
The following result builds an explicit connection between $\mathcal{G}_n$ and $\mathcal{C}_n$ via the mapping $\Phi$, and its proof is provided in \cref{app:surjective_proof}.
\begin{proposition}\label{proposition:Phi_surjective}
The mapping $\Phi$ in~\eqref{eq:change_of_variables_output} is a continuous and surjective mapping from $\mathcal{G}_n$ to $\mathcal{C}_n$.
\end{proposition}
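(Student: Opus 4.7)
The plan is to prove Proposition~\ref{proposition:Phi_surjective} in three stages---continuity, well-definedness ($\Phi(\mathcal{G}_n)\subseteq\mathcal{C}_n$), and surjectivity---by adapting the classical change-of-variables argument of \cite{scherer1997multiobjective}. Continuity is immediate: the map \eqref{eq:change_of_variables_output} is a composition of matrix multiplications and two matrix inversions, and by \cref{lemma:connectivity_preliminary} the two outer factors are invertible on all of $\mathcal{G}_n$; hence $\Phi$ is real analytic, and in particular continuous.

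For well-definedness, the $(1,1)$-block of the product in \eqref{eq:change_of_variables_output} yields $\Phi_D(\mZ)=G=0$ directly, so the block-zero constraint in the definition of $\mathcal{C}_n$ is satisfied. To show stability of the closed-loop matrix $A_{\mathrm{cl}}\coloneqq\bigl[\begin{smallmatrix} A & B\Phi_C(\mZ)\\ \Phi_B(\mZ)C & \Phi_A(\mZ)\end{smallmatrix}\bigr]$, I would construct a Lyapunov certificate $P\succ 0$ whose block structure encodes $(X,Y,\Xi,\Pi)$: take the top-left block of $P$ to be $Y$ and its top-right block to be $\Xi$, with the bottom-right block chosen so that $P^{-1}$ has top-left block $X$ and bottom-left block $\Pi$; the defining identity $PP^{-1}=I$ then reduces precisely to $\Xi\Pi=I-YX$. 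Set $\Pi_2\coloneqq\bigl[\begin{smallmatrix} X & I\\ \Pi & 0\end{smallmatrix}\bigr]$, which is invertible by \cref{lemma:connectivity_preliminary}. A direct block computation shows
\[
\Pi_2^\tr P\,\Pi_2=\begin{bmatrix} X & I\\ I & Y\end{bmatrix}\succ 0,
\]
so $P\succ 0$; moreover, the same congruence sends $A_{\mathrm{cl}}P+PA_{\mathrm{cl}}^\tr$ to the symmetrized form of the second matrix in~\eqref{eq:SetF}, which is negative definite by the definition of $\mathcal{F}_n$. Hence $A_{\mathrm{cl}}P+PA_{\mathrm{cl}}^\tr\prec 0$, and Lyapunov's theorem gives $\Phi(\mZ)\in\mathcal{C}_n$.

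For surjectivity, fix $\mK\in\mathcal{C}_n$ and let $A_{\mathrm{cl}}$ be its closed-loop matrix. Since $A_{\mathrm{cl}}$ is Hurwitz, choose $P\succ 0$ with $A_{\mathrm{cl}}P+PA_{\mathrm{cl}}^\tr\prec 0$; both conditions are open in $P$, and by density one can perturb $P$ slightly so that its off-diagonal block $P_{12}$ is additionally invertible. Define $Y\coloneqq P_{11}$, $\Xi\coloneqq P_{12}$, $X\coloneqq(P^{-1})_{11}$, and $\Pi\coloneqq(P^{-1})_{21}$; then $PP^{-1}=I$ forces $\Xi\Pi=I-YX$, and a Schur complement on $P\succ 0$ (together with invertibility of $\Xi$ for strictness) gives $\bigl[\begin{smallmatrix} X & I\\ I & Y\end{smallmatrix}\bigr]\succ 0$. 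Setting $G\coloneqq 0$, the equation $\Phi(\mZ)=\mK$ becomes an invertible linear system that determines $(F,H,M)$ uniquely. Finally, applying the congruence by $\Pi_2$ in reverse sends $A_{\mathrm{cl}}P+PA_{\mathrm{cl}}^\tr\prec 0$ to exactly the matrix inequality in~\eqref{eq:SetF}, so $(X,Y,M,G,H,F)\in\mathcal{F}_n$ and $\mZ\in\mathcal{G}_n$ with $\Phi(\mZ)=\mK$.

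The main obstacle is the congruence identity used in both the forward and reverse directions---the purely algebraic claim that conjugating $A_{\mathrm{cl}}P+PA_{\mathrm{cl}}^\tr$ by $\Pi_2$ produces the symmetrized block matrix in~\eqref{eq:SetF}. The computation itself is standard but bookkeeping-heavy, and the transpose conventions linking $(\Xi,\Pi)$ to the partitions of $P$ and $P^{-1}$ are the primary source of error. A secondary technical point is the perturbation argument in the surjectivity step, which requires the subset of $P\succ 0$ satisfying the strict Lyapunov inequality and having invertible $P_{12}$ to be nonempty whenever $A_{\mathrm{cl}}$ is Hurwitz; this follows from openness plus the density of matrices with nonsingular off-diagonal blocks, but must be stated carefully.
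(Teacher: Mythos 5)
Your overall architecture is the same as the paper's: continuity from the rationality of $\Phi$ together with \cref{lemma:connectivity_preliminary}, and both inclusions via the Scherer-type congruence that transports the closed-loop Lyapunov inequality to the LMI defining $\mathcal{F}_n$ in \eqref{eq:SetF}. However, the central congruence identity is false as you have stated it, and it fails for precisely the reason you flagged as the likely failure mode. Write $N=\bigl[\begin{smallmatrix} AX+BF & A+BGC \\ M & YA+HC\end{smallmatrix}\bigr]$ for the matrix in \eqref{eq:SetF}, $A_{\mathrm{cl}}$ for the closed-loop matrix, $T=\bigl[\begin{smallmatrix} I & Y \\ 0 & \Xi^\tr\end{smallmatrix}\bigr]$ and $\Pi_2=\bigl[\begin{smallmatrix} X & I \\ \Pi & 0\end{smallmatrix}\bigr]$. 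Unwinding the change of variables gives $N=T^\tr A_{\mathrm{cl}}\Pi_2$, and with your block identification ($Y=P_{11}$, $\Xi=P_{12}$, $X=(P^{-1})_{11}$, $\Pi=(P^{-1})_{21}$) one has $T=P\Pi_2$. Therefore
$$
\Pi_2^\tr\bigl(A_{\mathrm{cl}}^\tr P+PA_{\mathrm{cl}}\bigr)\Pi_2
=\Pi_2^\tr A_{\mathrm{cl}}^\tr T+T^\tr A_{\mathrm{cl}}\Pi_2
=N^\tr+N,
$$
whereas the quantity you actually conjugate, $A_{\mathrm{cl}}P+PA_{\mathrm{cl}}^\tr$, is sent to $\Pi_2^\tr A_{\mathrm{cl}}T+T^\tr A_{\mathrm{cl}}^\tr\Pi_2$. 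These differ: taking $G=0$, the $(1,1)$ block of the latter is $XA+A^\tr X+\Pi^\tr B_{\mK}C+C^\tr B_{\mK}^\tr\Pi$, while \eqref{eq:SetF} requires $AX+XA^\tr+BC_{\mK}\Pi+\Pi^\tr C_{\mK}^\tr B^\tr$. So the congruence you describe does not land in (or come from) the set $\mathcal{F}_n$ as defined.

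The fix is mechanical, and you have two consistent options. Either keep your identification $Y=P_{11}$, $X=(P^{-1})_{11}$ and use the dual Lyapunov inequality $A_{\mathrm{cl}}^\tr P+PA_{\mathrm{cl}}\prec 0$ throughout (both when certifying stability of $\Phi(\mZ)$ and when extracting a certificate from a given $\mK\in\mathcal{C}_n$); or do what the paper does: take $\hat P\succ 0$ satisfying $A_{\mathrm{cl}}\hat P+\hat PA_{\mathrm{cl}}^\tr\prec 0$, read $X=\hat P_{11}$ and $\Pi=\hat P_{21}$ off $\hat P$ itself and $Y=(\hat P^{-1})_{11}$, $\Xi=(\hat P^{-1})_{12}$ off its inverse, and conjugate by $T$ rather than by $\Pi_2$, using $\hat PT=\Pi_2$. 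Once this pairing is corrected, the remaining steps you outline --- the computation $\Pi_2^\tr P\,\Pi_2=\bigl[\begin{smallmatrix} X & I \\ I & Y\end{smallmatrix}\bigr]$, the perturbation making the off-diagonal block invertible (which, note, must be applied to the block of the certificate that becomes $\Pi$, since invertibility of $\Pi$ is what makes $\Pi_2$ and hence $T$ invertible), and the invertible linear solve for $(F,H,M)$ --- all coincide with the paper's proof in \cref{app:surjective_proof}.
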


After establishing the continuous surjection from $\mathcal{G}_n$ to $\mathcal{C}_n$, it is now clear that we can study the path-connectivity of $\mathcal{C}_n$ via the path-connectivity of $\mathcal{G}_n$: Any continuous path in $\mathcal{G}_n$ will be mapped to a continuous path in $\mathcal{C}_n$, and thus any path-connected component of $\mathcal{G}_n$ has a path-connected image under the mapping $\Phi$. Consequently, the number of path-connected components of $\mathcal{C}_n$ will be no more than the number of path-connected components of $\mathcal{G}_n$.

We now proceed to provide results on the path-connectivity of the set $\mathcal{G}_n$.
\begin{proposition}\label{lemma:Gn_connected_components}
The set $\mathcal{G}_n$ has two path-connected components, given by
$$
\begin{aligned}
\mathcal{G}_n^+
=\ &
\left\{(X,Y,M,G,H,F,\Pi,\Xi)\in\mathcal{G}_n \mid \, \det \Pi>0\right\}, \\
\mathcal{G}_n^-
=\ &
\left\{(X,Y,M,G,H,F,\Pi,\Xi)\in\mathcal{G}_n \mid \, \det \Pi<0\right\}.
\end{aligned}
$$
\end{proposition}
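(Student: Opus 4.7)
The plan is to establish three facts: (i) $\mathcal{G}_n = \mathcal{G}_n^+\cup\mathcal{G}_n^-$ and $\mathcal{G}_n^+\cap\mathcal{G}_n^-=\varnothing$; (ii) $\mathcal{G}_n^+$ and $\mathcal{G}_n^-$ are open in $\mathcal{G}_n$; and (iii) both $\mathcal{G}_n^+$ and $\mathcal{G}_n^-$ are path-connected. Claim (i) is immediate from \cref{lemma:connectivity_preliminary}, which guarantees $\det\Pi\neq 0$ for every $\mZ\in\mathcal{G}_n$, so the two subsets partition $\mathcal{G}_n$ according to the sign of $\det\Pi$. Claim (ii) follows because the map $\mZ\mapsto\det\Pi$ is continuous on $\mathcal{G}_n$; by the intermediate value theorem applied along any continuous path, one cannot connect $\mathcal{G}_n^+$ to $\mathcal{G}_n^-$ without passing through a non-invertible $\Pi$, which is excluded. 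Combined with (iii), this will give exactly two path-connected components.

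For (iii), I would construct paths in two stages, using the convexity of $\mathcal{F}_n$ and the known fact that $\mathrm{GL}_n^+$ and $\mathrm{GL}_n^-$ are each path-connected \citep{lee2013introduction}. Consider two points $\mZ_i=(X_i,Y_i,M_i,G_i,H_i,F_i,\Pi_i,\Xi_i)\in\mathcal{G}_n^+$ for $i=1,2$. In the first stage, fix $\Pi(t)\equiv \Pi_1$ and linearly interpolate the tuple in $\mathcal{F}_n$:
\begin{equation*}
(X(t),Y(t),M(t),G(t),H(t),F(t))=(1-t)(X_1,Y_1,M_1,G_1,H_1,F_1)+t(X_2,Y_2,M_2,G_2,H_2,F_2),
\end{equation*}
setting $\Xi(t)=(I-Y(t)X(t))\Pi_1^{-1}$. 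Convexity of $\mathcal{F}_n$ keeps $(X(t),\dots,F(t))\in\mathcal{F}_n$, and by construction the constraint $\Xi(t)\Pi(t)=I-Y(t)X(t)$ holds, with $\det\Pi(t)=\det\Pi_1>0$; hence this is a continuous path in $\mathcal{G}_n^+$ from $\mZ_1$ to $(X_2,Y_2,M_2,G_2,H_2,F_2,\Pi_1,(I-Y_2X_2)\Pi_1^{-1})$. In the second stage, I would hold the first six coordinates fixed at the $\mZ_2$-values, and choose a continuous path $\Pi(t)$ in $\mathrm{GL}_n^+$ connecting $\Pi_1$ to $\Pi_2$ (possible because $\mathrm{GL}_n^+$ is path-connected), again setting $\Xi(t)=(I-Y_2X_2)\Pi(t)^{-1}$, which is continuous since $\Pi(t)$ stays invertible. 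Concatenating the two stages yields a continuous path in $\mathcal{G}_n^+$ from $\mZ_1$ to $\mZ_2$. An identical argument using $\mathrm{GL}_n^-$ handles $\mathcal{G}_n^-$.

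The routine verifications are the convex interpolation in $\mathcal{F}_n$ and continuity of $\Xi(t)$; the main conceptual point to confirm is that the constraint $\Xi\Pi=I-YX$ can be maintained while moving $\Pi$ freely within $\mathrm{GL}_n^+$ (resp.\ $\mathrm{GL}_n^-$), which follows since $\Xi$ is entirely determined as a continuous function of $(\Pi,X,Y)$ whenever $\Pi$ is invertible. The only subtle obstacle is ensuring the first-stage path does not accidentally leave $\mathcal{G}_n$: because $\Pi$ is held constant and $\mathcal{F}_n$ is convex, the LMI constraints stay satisfied and invertibility of $\Pi$ is automatic, so the construction is safe.
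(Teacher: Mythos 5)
Your proposal is correct and rests on exactly the same two pillars as the paper's proof: the convexity of $\mathcal{F}_n$ and the fact that $\mathrm{GL}_n$ splits into the two path-connected pieces $\mathrm{GL}_n^{\pm}$, with $\Xi=(I-YX)\Pi^{-1}$ recovered as a continuous function of the remaining coordinates. The paper packages this as a continuous bijection (with continuous inverse given by dropping $\Xi$) from $\mathcal{F}_n\times\mathrm{GL}_n$ onto $\mathcal{G}_n$, whereas you unpack the same idea into an explicit two-stage path plus the $\det\Pi\neq 0$ separation argument; the content is the same.
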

\begin{proof}
First, the convexity of $\mathcal{F}_n$ implies that set $\mathcal{F}_n$ is path-connected. We then notice that the set of real invertible matrices $\mathrm{GL}_n=\{\Pi\in\mathbb{R}^{n\times n}\mid\,\det \Pi\neq 0\}$ has two path-connected components \cite{lee2013introduction}
$$
\mathrm{GL}^+_n=\{\Pi\in\mathbb{R}^{n\times n}\mid\,\det \Pi> 0\},
\qquad
\mathrm{GL}^-_n=\{\Pi\in\mathbb{R}^{n\times n}\mid\,\det \Pi< 0\}.
$$
Therefore the Cartesian product $\mathcal{F}_n\times \mathrm{GL}_n$ has two path-connected components. Finally, it is not hard to verify that the following mapping
$$
(X,Y,M,G,H,F,\Pi)
\mapsto
(X,Y,M,G,H,F,\Pi,(I-YX)\Pi^{-1})
$$
is a continuous bijection from $\mathcal{F}_n\times \mathrm{GL}_n$ to $\mathcal{G}_n$.

Therefore $\mathcal{G}_n$ also has two path-connected components, and their expressions are evident.
\end{proof}

\cref{lemma:Gn_connected_components} then implies that $\mathcal{C}_n$ has at most two path-connected components. Precisely, upon defining
$$
\mathcal{C}_n^{+}=\Phi(\mathcal{G}_n^+),
\qquad
\mathcal{C}_n^{-}=\Phi(\mathcal{G}_n^-),
$$
the two path-connected components of $\mathcal{C}_n$ are just given by $\mathcal{C}_n^{+}$ and $\mathcal{C}_n^{-}$, if $\mathcal{C}_n$ is not path-connected. This completes the proof of \cref{Theo:disconnectivity}.

\subsection{Proof of \cref{Theo:Cn_homeomorphic}}

In the previous subsection, we have already shown that $\mathcal{C}_n^{+}$ and $\mathcal{C}_n^{-}$ are the two path-connected components if $\mathcal{C}_n$ is not connected. In order to prove \cref{Theo:Cn_homeomorphic}, it suffices to show that, regardless of the path-connectivity of $\mathcal{C}_n$,
for any $T\in\mathbb{R}^{n\times n}$ with $\det T<0$, the mapping $\mathscr{T}_T$ restricted on $\mathcal{C}_n^{+}$ gives a diffeomorphism from $\mathcal{C}_n^{+}$ to $\mathcal{C}_n^{-}$.

Since $\mathscr{T}_T$ is a diffeomorphism from $\mathcal{C}_n$ to itself with inverse $\mathscr{T}_{T^{-1}}$, and $\mathcal{C}_n^+$ and $\mathcal{C}_n^-$ are two open subsets of $\mathcal{C}_n$, to complete the proof, we only need to show that
$$\mathscr{T}_T(\mathcal{C}_n^+)\subseteq \mathcal{C}_n^-, \qquad \text{and} \qquad  \mathscr{T}_{T^{-1}}(\mathcal{C}_n^-)\subseteq \mathcal{C}_n^+$$
when $\det T<0$. Consider an arbitrary point
$$
\mK =
\begin{bmatrix}
0 & C_{\mK} \\
B_{\mK} & A_{\mK}
\end{bmatrix}\in\mathcal{C}_n^+.
$$
By the definition of $\mathcal{C}_n^+$, there exists $\mZ=(X,Y,M,G,H,F,\Pi,\Xi)\in\mathcal{G}_n^+$ such that $\Phi(\mZ)=\mK$. Now let
$$
\hat \Pi = T\Pi,\quad \hat \Xi = \Xi T^{-1},
\quad
\hat \mZ = (X,Y,M,G,H,F,\hat\Pi,\hat\Xi).
$$
It is not difficult to verify that $\hat{\mZ}\in\mathcal{G}_n$. Since $\det\hat{\Pi}=\det T\cdot\det \Pi < 0$, we have $\hat\mZ\in \mathcal{G}_n^-$. Then,
$$
\begin{aligned}
\Phi(\hat{\mZ})= \ &
\begin{bmatrix}
\Phi_D(\hat\mZ) & \Phi_C(\hat\mZ) \\
\Phi_B(\hat\mZ) & \Phi_A(\hat\mZ)
\end{bmatrix} \\
=\ &
\begin{bmatrix} I & 0 \\
YB & \hat{\Xi} \end{bmatrix}^{-1} \begin{bmatrix}
G & F \\
H & M-YAX \end{bmatrix}\begin{bmatrix} I & CX \\ 0 & \hat \Pi \end{bmatrix}^{-1} \\
=\ &
\begin{bmatrix}
I & 0 \\ 0 & T
\end{bmatrix}
\begin{bmatrix} \Xi & YB\\ 0 & I \end{bmatrix}^{-1} \begin{bmatrix}
G & F \\
H & M-YAX \end{bmatrix}
\begin{bmatrix} I & CX \\ 0 & \Pi\end{bmatrix}^{-1}
\begin{bmatrix}
I & 0 \\ 0 & T^{-1}
\end{bmatrix} \\
=\ &
\begin{bmatrix}
I & 0 \\ 0 & T
\end{bmatrix}
\begin{bmatrix}
0 & C_\mK \\
 B_\mK & A_\mK
\end{bmatrix}
\begin{bmatrix}
I & 0 \\ 0 & T^{-1}
\end{bmatrix} \\
= \ &
\begin{bmatrix}
0 & {C_\mK} T^{-1} \\
T {B_\mK} & T {A_\mK}T^{-1}
\end{bmatrix} \\
=\ & \mathscr{T}_T(\mK),
\end{aligned}
$$
which implies that $\mathscr{T}_T(\mK)\in \Phi(\mathcal{G}_n^-)=\mathcal{C}_n^-$ and consequently $\mathscr{T}_{T}(\mathcal{C}_n^+)\subseteq \mathcal{C}_n^-$.

The proof of $\mathscr{T}_{T^{-1}}(\mathcal{C}_n^-)\subseteq \mathcal{C}_n^+$ is similar by noting that $\det T^{-1}<0$ if and only if $\det T<0$.

\subsection{Proof of \cref{Theo:connectivity_conditions}}
\label{subsection:proof_connectivity_2}

We first show that the non-emptiness of $\mathcal{C}_{n-1}$ implies the path-connectivity of $\mathcal{C}_{n}$. Indeed, suppose there exists $\tilde{\mK}\in\mathcal{C}_{n-1}$. Then it can be augmented to be a full-order controller in $\mathcal{C}_n$ by
$$
\mK =
\begin{bmatrix}
0 & \tilde{C}{_\mK} & 0 \\
\tilde{B}{_\mK} & \tilde{A}{_\mK} & 0 \\
0 & 0 & -1
\end{bmatrix}
$$
Now define a similarity transformation matrix
$$
T=\begin{bmatrix}
I_{n-1} & 0 \\ 0 & -1
\end{bmatrix}.
$$
By the proof of \cref{Theo:Cn_homeomorphic}, we can see that $\mK\in\mathcal{C}_n^\pm$ implies $\mathscr{T}_T(\mK)\in\mathcal{C}_n^\mp$. On the other hand, we can directly check that $\mathscr{T}_T(\mK)=\mK$. Therefore we have
$$\mK\in \mathcal{C}_n^+\cap \mathcal{C}_n^-,$$
indicating that $\mathcal{C}_n^+\cap \mathcal{C}_n^-$ is nonempty. Consequently, $\mathcal{C}_n$ is path-connected.

We then carry out the analysis for the case when the plant is single-input or single-output. The goal is to find a reduced-order controller in $\mathcal{C}_{n-1}$ when $\mathcal{C}_n$ is connected. Here we only prove the single-out case; the single-input case can be proved similarly, \emph{i.e.}, using the observability matrix or by the duality between controllability and observability.

Let $T$ be any real $n\times n$ matrix with $\det T<0$. Let $\mK^{(0)}\in \mathcal{C}_n$ be arbitrary, and let
$
\mK^{(1)}
= \mathscr{T}_T(\mK^{(0)})
$.
If $\mathcal{C}_n$ is path-connected, then there exists a continuous path
$$
\mK(t)=\begin{bmatrix}
0 & C_\mK(t) \\ B_\mK(t) & A_\mK(t)
\end{bmatrix},\quad t\in[0,1]
$$
in $\mathcal{C}_n$ such that
$$
    \mK(0)=\mK^{(0)}, \qquad  \text{and} \qquad \mK(1)=\mK^{(1)}.
$$
Now for each $t\in[0,1]$, let $\mathsf{C}(t)$ denote the controllability matrix for $(A_{\mK}(t),B_{\mK}(t))$, i.e.,
$$
\mathsf{C}(t) = \begin{bmatrix}
B_{\mK}(t) & A_\mK(t)B_{\mK}(t)
& \cdots & A_\mK(t)^{n-1}B_{\mK}(t)
\end{bmatrix}
\in\mathbb{R}^{n\times n},
$$
where the dimension of $\mathsf{C}(t)$ is $n \times n$ since the plant is single-output (i.e., the controller is single-input).

We then have $\mathsf{C}(1)= T \mathsf{C}(0)$, and thus
$$
    \det \mathsf{C}(1)\cdot \det\mathsf{C}(0)<0.
$$
On the other hand, it can be seen that $\det \mathsf{C}(t)$ is a continuous function over $t\in[0,1]$. Therefore
$$
\det \mathsf{C}(\tau)=0
$$
for some $\tau\in(0,1)$, implying that $(A_{\mK}(\tau),B_{\mK}(\tau))$ is not controllable. This indicates that the transfer function $C_\mK(\tau)(sI_n-A_\mK(\tau))^{-1}B_\mK(\tau)$ can be realized by a state-space representation with dimension at most $n-1$ (see \cref{App:control_basics}), and consequently $\mathcal{C}_{n-1}\neq\varnothing$.

\section{Structure of Stationary Points}
\label{sec:stationary_points}

We have shown that the set of stabilizing controllers $\mathcal{C}_{n}$ might be disconnected, and that the potential disconnectivity has no harm to gradient-based local search algorithms. In this section, we proceed to characterize the stationary points of the cost function in the LQG problem~\eqref{eq:LQG}, which is another important factor for establishing the convergence of gradient-based algorithms.

\cref{subsection:invariance_LQG} discusses the invariance of the LQG cost $J_q$ under similarity transformation and its implications. \cref{subsection:LQG_grad_Hessian} shows how to compute the gradient and the Hessian of the LQG cost $J_q$. In \cref{subsection:nonminimal_stationary}, some results related to non-minimal stationary points are provided. We characterize the minimal stationary points for LQG over~$\mathcal{C}_n$ in \cref{subsection:gradient_LQG}. Finally, in \cref{subsection:hessaion_LQG_minimal}, we discuss the second-order behavior of $J_n(\mK)$ around its minimal stationary points.

\subsection{Invariance of LQG Cost under Similarity Transformation} \label{subsection:invariance_LQG}

As shown in \cref{lemma:Cq_invariant}, the similarity transformation $\mathscr{T}_q(T,\cdot)$ is a diffeomorphism from $\mathcal{C}_q$ to itself for any invertible matrix $T \in \operatorname{GL}_q$. Then together with \eqref{eq:group_action}, we can see that the set of similarity transformation is a group that is isomorphic to $\mathrm{GL}_q$. We can therefore define the \emph{orbit} of $\mK\in\mathcal{C}_q$ by
$$
\mathcal{O}_\mK
\coloneqq \{\mathscr{T}_q(T,\mK)\mid T\in\mathrm{GL}_q\}.
$$
It is known that the LQG cost is invariant under the same similarity transformation, and thus is a constant over an orbit $\mathcal{O}_{\mK}$ for any $\mK\in\mathcal{C}_q$.
\begin{lemma} \label{lemma:Jn_invariance}
   Let $q\geq 1$ such that $\mathcal{C}_{q}\neq\varnothing$. Then we have
   $$
   J_q(\mK) = J_q\!\left(\mathscr{T}_q(T,\mK)\right)
   $$
   for any $\mK\in\mathcal{C}_q$ and any invertible matrix $T\in\mathrm{GL}_q$.
\end{lemma}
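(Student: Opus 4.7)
The plan is to use the Lyapunov characterization of the LQG cost from \cref{lemma:LQG_cost_formulation1}. Fix $T \in \mathrm{GL}_q$ and let $\mK' = \mathscr{T}_q(T,\mK)$. Introduce the block-diagonal invertible matrix
$$
\mathcal{T} = \begin{bmatrix} I_n & 0 \\ 0 & T \end{bmatrix}.
$$
The strategy is to (i) express the closed-loop data associated with $\mK'$ as a congruence/similarity of the data associated with $\mK$ through $\mathcal{T}$, (ii) deduce that $X_{\mK'} = \mathcal{T} X_{\mK} \mathcal{T}^{\tr}$, and (iii) conclude by a cyclic-trace computation.

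First, using the explicit form of $\mathscr{T}_q(T,\mK)$ in \eqref{eq:def_sim_transform}, a direct block-matrix calculation gives
$$
\begin{bmatrix} A & BC_{\mK'} \\ B_{\mK'}C & A_{\mK'}\end{bmatrix}
= \mathcal{T}\begin{bmatrix} A & BC_{\mK} \\ B_{\mK}C & A_{\mK}\end{bmatrix}\mathcal{T}^{-1},
\qquad
\begin{bmatrix} W & 0 \\ 0 & B_{\mK'}VB_{\mK'}^{\tr}\end{bmatrix}
= \mathcal{T}\begin{bmatrix} W & 0 \\ 0 & B_{\mK}VB_{\mK}^{\tr}\end{bmatrix}\mathcal{T}^{\tr}.
$$
Substituting these into the Lyapunov equation \eqref{eq:LyapunovX} for $X_{\mK'}$, then left-multiplying by $\mathcal{T}^{-1}$ and right-multiplying by $\mathcal{T}^{-\tr}$, shows that $\mathcal{T}^{-1}X_{\mK'}\mathcal{T}^{-\tr}$ is a positive semidefinite solution to the Lyapunov equation that uniquely characterizes $X_{\mK}$. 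By uniqueness,
$$
X_{\mK'} = \mathcal{T}\, X_{\mK}\, \mathcal{T}^{\tr}.
$$

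Finally, using $C_{\mK'} = C_{\mK}T^{-1}$, I would write
$$
\begin{bmatrix} Q & 0 \\ 0 & C_{\mK'}^{\tr} R C_{\mK'}\end{bmatrix}
= \mathcal{T}^{-\tr}\begin{bmatrix} Q & 0 \\ 0 & C_{\mK}^{\tr} R C_{\mK}\end{bmatrix}\mathcal{T}^{-1},
$$
and then compute $J_q(\mK')$ via the trace formula \eqref{eq:LQG_cost_formulation1}, applying the cyclic property of the trace to cancel $\mathcal{T}^{-1}\mathcal{T}$ and $\mathcal{T}^{\tr}\mathcal{T}^{-\tr}$, yielding $J_q(\mK') = J_q(\mK)$.

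This argument is essentially bookkeeping; there is no real obstacle beyond keeping the block structure straight. The only point that deserves care is invoking uniqueness of the positive semidefinite Lyapunov solution, which is valid because the closed-loop matrix associated with $\mK$ (equivalently $\mK'$) is stable by virtue of $\mK \in \mathcal{C}_q$. An alternative (and equally short) path would be to note that the closed-loop transfer function from $(w,v)$ to $(Q^{1/2}x,R^{1/2}u)$ is unchanged by the controller similarity transformation, so the $\mathcal{H}_2$-norm squared, which equals $J_q$, is the same; however, the Lyapunov-based derivation is more self-contained given what has been set up in the excerpt.
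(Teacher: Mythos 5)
Your proposal is correct and follows essentially the same route as the paper: both use the Lyapunov characterization of the cost from \cref{lemma:LQG_cost_formulation1}, identify the unique solution for $\mathscr{T}_q(T,\mK)$ as $\mathcal{T} X_{\mK}\mathcal{T}^{\tr}$ with $\mathcal{T}=\operatorname{diag}(I_n,T)$, and conclude by cyclicity of the trace. The only difference is that you spell out the congruence/similarity bookkeeping that the paper leaves as ``not difficult to verify,'' which is fine.
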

 \begin{proof}
 Given any $\mK \in \mathcal{C}_q$ and any invertible $T\in\mathbb{R}^{q\times q}$, we know that $\mathscr{T}(T,\mK) \in \mathcal{C}_q$. Thus, the Lyapunov equation~\eqref{eq:LyapunovX} admits a unique positive semidefinite solution for each of $\mK$ and $\mathscr{T}_q(T,\mK)$ (see \cref{lemma:Lyapunov}).

 Suppose that the solution of~\eqref{eq:LyapunovX} for  $\mK$ is $X_{\mK}$. Then, it is not difficult to verify that the unique solution of~\eqref{eq:LyapunovX} for  $\mathscr{T}_q(T,\mK)$ is
 $$
   \begin{bmatrix}
    I & 0 \\0 &T
    \end{bmatrix}X_{\mK}\begin{bmatrix}
    I & 0 \\0 &T
    \end{bmatrix}^{\tr}.
 $$
 Therefore, we have
 $$
 \begin{aligned}
    J_q(\mathscr{T}_q(T,\mK)) = &
\operatorname{tr}
\left(
\begin{bmatrix}
Q & 0 \\ 0 & (C_{\mK}T^{-1})^\tr R C_{\mK}T^{-1}
\end{bmatrix} \begin{bmatrix}
    I & 0 \\0 &T
    \end{bmatrix}X_{\mK}\begin{bmatrix}
    I & 0 \\0 &T
    \end{bmatrix}^{\tr}\right) \\
    =&
\operatorname{tr}
\left(
\begin{bmatrix}
Q & 0 \\ 0 & C_{\mK}^\tr R C_{\mK}
\end{bmatrix} X_\mK\right) \\
= &J_q(\mK),
\end{aligned}
 $$
 where the second identity applies the trace property $\operatorname{tr}(AB) = \operatorname{tr}(BA)$ for $A, B$ with compatible dimensions.
\end{proof}

The following proposition shows that every orbit $\mathcal{O}_\mK$ corresponding to controllable and observable controllers has dimension $q^2$ with two path-connected components. The proof is given in \cref{app:proof_sim_trans_submanifold}.

\begin{proposition}\label{proposition:sim_trans_submanifold}
Suppose $\mK\in\mathcal{C}_q$ represents a controllable and observable controller. Then the orbit $\mathcal{O}_{\mK}$ is a submanifold of $\mathcal{C}_q$ of dimension $q^2$, and has two path-connected components, given by
$$
\begin{aligned}
\mathcal{O}_{\mK}^+
=\ &
\{\mathscr{T}_q(T,\mK)\mid T\in\mathrm{GL}_q,\det T>0\}, \\
\mathcal{O}_{\mK}^-
=\ &
\{\mathscr{T}_q(T,\mK)\mid T\in\mathrm{GL}_q,\det T<0\}.
\end{aligned}
$$
\end{proposition}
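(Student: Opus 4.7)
The plan is to analyze the \emph{orbit map}
\begin{equation*}
\psi_\mK : \mathrm{GL}_q \longrightarrow \mathcal{C}_q, \qquad \psi_\mK(T) \coloneqq \mathscr{T}_q(T, \mK),
\end{equation*}
and to show that, when $\mK$ is controllable and observable, $\psi_\mK$ is a smooth embedding of $\mathrm{GL}_q$ into $\mathcal{C}_q$. The proposition then follows immediately, since $\mathrm{GL}_q$ is a Lie group of dimension $q^2$ whose path-connected components are $\mathrm{GL}_q^\pm = \{T\in\mathrm{GL}_q : \pm\det T > 0\}$, and an embedding preserves dimension and path-components.

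First, I would verify that $\psi_\mK$ is injective. Suppose $\mathscr{T}_q(T, \mK) = \mK$; then $T B_\mK = B_\mK$ and $T A_\mK = A_\mK T$, so iteration gives $T A_\mK^k B_\mK = A_\mK^k B_\mK$ for all $k\geq 0$. Controllability of $(A_\mK, B_\mK)$ implies that $[B_\mK,\,A_\mK B_\mK,\,\ldots,\,A_\mK^{q-1} B_\mK]$ has rank $q$, forcing $T = I_q$. This recovers the classical fact that minimal realizations of a given transfer function are unique up to similarity.

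Second, I would show that $\psi_\mK$ is a smooth immersion. Smoothness is clear since the entries of $\psi_\mK(T)$ are rational in the entries of $T$. The cocycle relation $\psi_\mK(T_0 T) = \mathscr{T}_q(T_0, \psi_\mK(T))$ together with \cref{lemma:Cq_invariant} imply that $\operatorname{rank} d\psi_\mK(T_0) = \operatorname{rank} d\psi_\mK(I_q)$ for every $T_0\in\mathrm{GL}_q$, so it suffices to analyze the differential at $I_q$. A direct calculation gives
\begin{equation*}
d\psi_\mK(I_q)[\dot T] = \begin{bmatrix} 0 & -C_\mK \dot T \\ \dot T B_\mK & \dot T A_\mK - A_\mK \dot T \end{bmatrix}.
\end{equation*}
If this vanishes, then $\dot T B_\mK = 0$ and $\dot T A_\mK = A_\mK \dot T$, so $\dot T A_\mK^k B_\mK = 0$ for all $k$, and controllability again forces $\dot T = 0$.

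Third, and this is the main technical point, I would show that $\psi_\mK$ is a topological embedding, i.e., its inverse on $\mathcal{O}_\mK$ is continuous. Suppose a sequence $T_n \in \mathrm{GL}_q$ satisfies $\psi_\mK(T_n) \to \psi_\mK(T^\star)$ in $\mathcal{C}_q$. Then $T_n B_\mK$, $T_n A_\mK T_n^{-1}$, and $C_\mK T_n^{-1}$ all converge. Writing $T_n A_\mK^k B_\mK = (T_n A_\mK T_n^{-1})^k (T_n B_\mK)$ shows that $T_n\cdot[B_\mK,\,A_\mK B_\mK,\,\ldots,\,A_\mK^{q-1} B_\mK]$ converges; since controllability endows this matrix with a right inverse, $T_n$ converges in $\mathbb{R}^{q\times q}$. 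An analogous argument using the observability matrix of $(C_\mK, A_\mK)$, which has a left inverse, shows that $T_n^{-1}$ also converges, so the limit $\lim_n T_n$ is invertible. Injectivity of $\psi_\mK$ and continuity then identify $\lim_n T_n = T^\star$, proving $T_n \to T^\star$ in $\mathrm{GL}_q$.

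Combining the three steps, $\psi_\mK$ is a smooth injective immersion and a topological embedding, hence a smooth embedding. This identifies $\mathcal{O}_\mK = \psi_\mK(\mathrm{GL}_q)$ with $\mathrm{GL}_q$ diffeomorphically, giving $\mathcal{O}_\mK$ the structure of an embedded submanifold of $\mathcal{C}_q$ of dimension $q^2$ whose two path-connected components are $\mathcal{O}_\mK^\pm = \psi_\mK(\mathrm{GL}_q^\pm)$, as claimed. The main obstacle is Step 3: upgrading the smooth injective immersion to a smooth embedding requires showing properness in an essential way, and it is precisely here that \emph{both} controllability (giving convergence of $T_n$) and observability (giving convergence of $T_n^{-1}$, and hence invertibility of the limit) are used.
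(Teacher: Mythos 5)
Your proof is correct, and its overall skeleton (show the orbit map $T\mapsto\mathscr{T}_q(T,\mK)$ is a smooth embedding of $\mathrm{GL}_q$, then transport dimension and path-components) matches the paper's. The difference lies in how the crucial step --- upgrading the injective immersion to an embedding --- is carried out. The paper proves that $\mathcal{O}_{\mK}$ is \emph{closed} in $\mathcal{C}_q$ by a transfer-function argument (any limit of $\mathscr{T}_q(T_j,\mK)$ realizes the same order-$q$ transfer function, hence by uniqueness of minimal realizations lies in the orbit), and then invokes a general theorem on topological transformation groups (Montgomery--Zippin) to conclude the orbit map is a homeomorphism. You instead prove continuity of the inverse directly: from convergence of $T_nB_{\mK}$, $T_nA_{\mK}T_n^{-1}$ and $C_{\mK}T_n^{-1}$ you extract convergence of $T_n$ via a right inverse of the controllability matrix and convergence of $T_n^{-1}$ via a left inverse of the observability matrix, so the limit is invertible and injectivity pins it down. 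Your route is more elementary and self-contained --- it avoids both the closedness argument and the citation of a nontrivial result on transformation groups --- at the cost of a slightly longer hands-on computation; it also makes explicit where controllability and observability each enter, whereas in the paper observability is used only implicitly through minimality in the closedness step. One small remark: your Step 1 establishes triviality of the isotropy group, from which injectivity of $\psi_{\mK}$ on all of $\mathrm{GL}_q$ follows by the group action property \eqref{eq:group_action}; it is worth stating that reduction explicitly, since injectivity at a general pair $T_1\neq T_2$ is what you actually invoke at the end of Step 3.
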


\begin{figure}
    \centering
        \centering
\begin{subfigure}{.4\textwidth}
    \includegraphics[width = 0.8\textwidth]{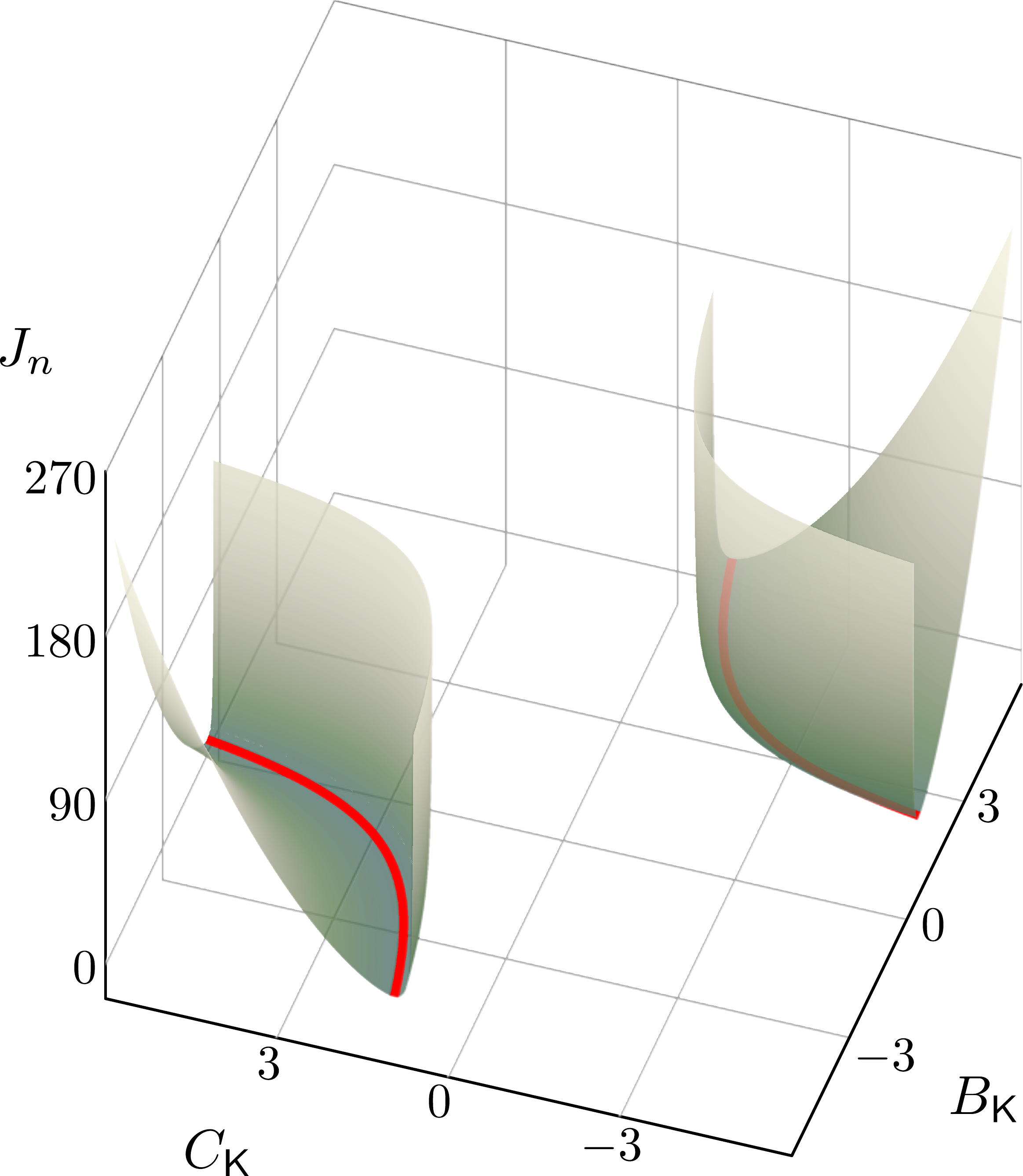}
     \caption{Open-loop unstable system in \cref{example:SISO1}}
    \end{subfigure}
    \hspace{15mm}
\begin{subfigure}{.4\textwidth}
    \includegraphics[width = 0.8\textwidth]{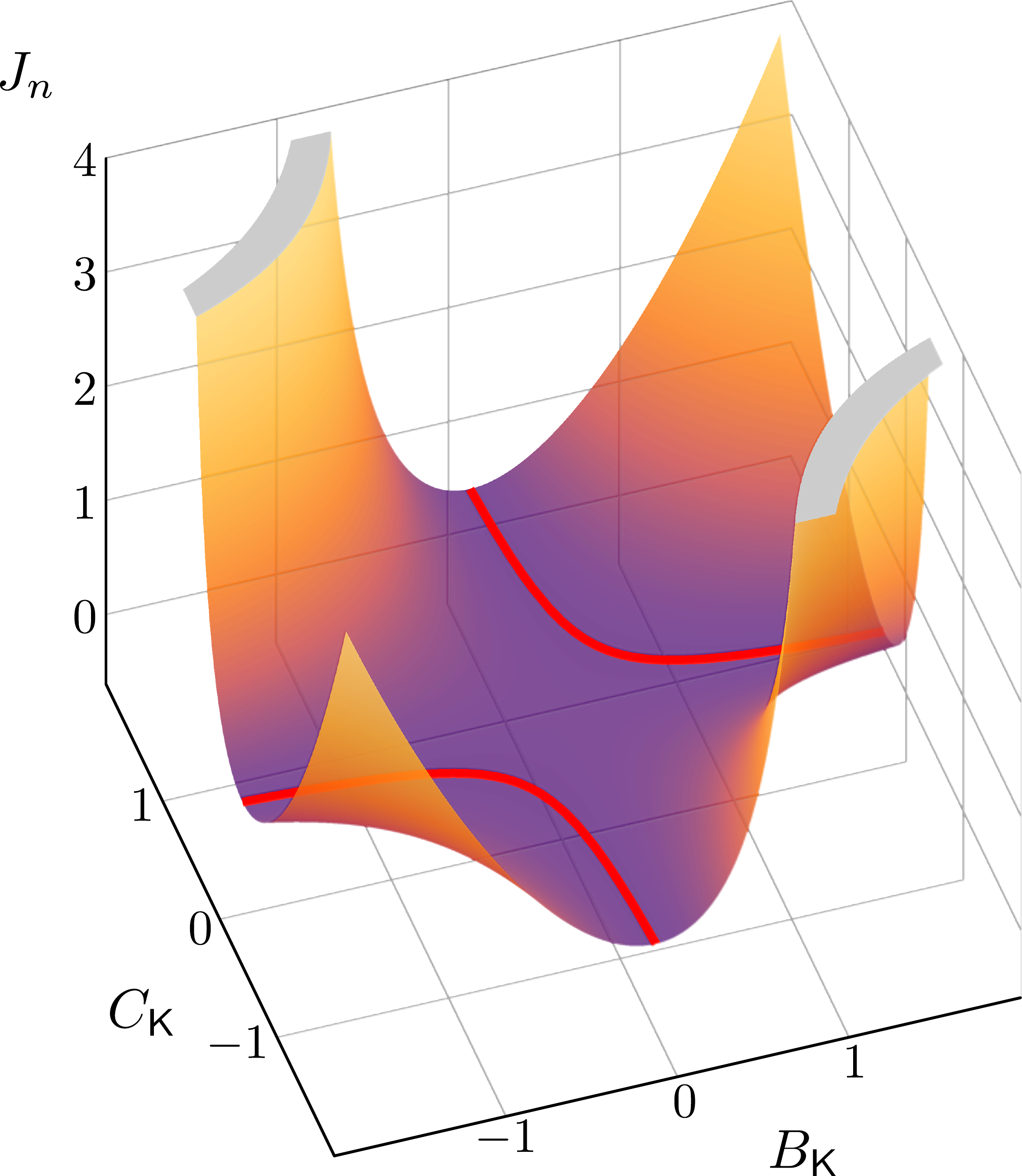}
         \caption{Open-loop stable system in \cref{example:SISO_connected}}
\end{subfigure}
    \caption{Non-isolated and disconnected globally optimal LQG controllers. In both cases, we set $Q = 1, R = 1, V = 1, W = 1 $. (a) LQG cost for the open-loop unstable SISO system in \cref{example:SISO1} when fixing $A_{\mK} = -1-2\sqrt{2}$, for which the set of globally optimal points $\left\{(B_{\mK}, C_{\mK}) \mid B_{\mK} = (1+\sqrt{2})\frac{1}{T}, C_{\mK}=-(1+\sqrt{2})T,\; T \neq 0\right\}$ has two connected components. (b) LQG cost for the open-loop stable SISO system in \cref{example:SISO_connected} when fixing $A_{\mK} = 1-2\sqrt{2}$, for which the set of globally optimal points $\left\{(B_{\mK}, C_{\mK}) \mid B_{\mK} = (-1+\sqrt{2})\frac{1}{T}, C_{\mK}=(1-\sqrt{2})T,\; T \neq 0\right\}$ has two connected components.}
    \label{fig:LQG_landscape_eg3}
\end{figure}

    From \cref{lemma:Jn_invariance} and \cref{proposition:sim_trans_submanifold}, one interesting consequence is that given a globally optimal LQG controller $\mK^* \in \mathcal{C}_n$, then any controller in following orbit is globally optimal
$$
\mathcal{O}_{\mK^*}
\coloneqq \{\mathscr{T}_n(T,\mK^*)\mid T\in\mathrm{GL}_n\}.
$$
If $\mK^*$ is minimal (i.e., controllable and observable), the orbit $\mathcal{O}_{\mK^*}$ is a submanifold in $\mathcal{V}_n$ of dimension $n^2$, and it has two path-connected components. \cref{fig:LQG_landscape_eg3}  demonstrates the orbit of globally optimal LQG controllers for an open-loop unstable system and another open-loop stable system, which shows that the set of globally optimal LQG controllers are non-isolated and disconnected in $\mathcal{C}_n$.  



\begin{figure}[t]
    \centering
    \includegraphics[scale = 0.32]{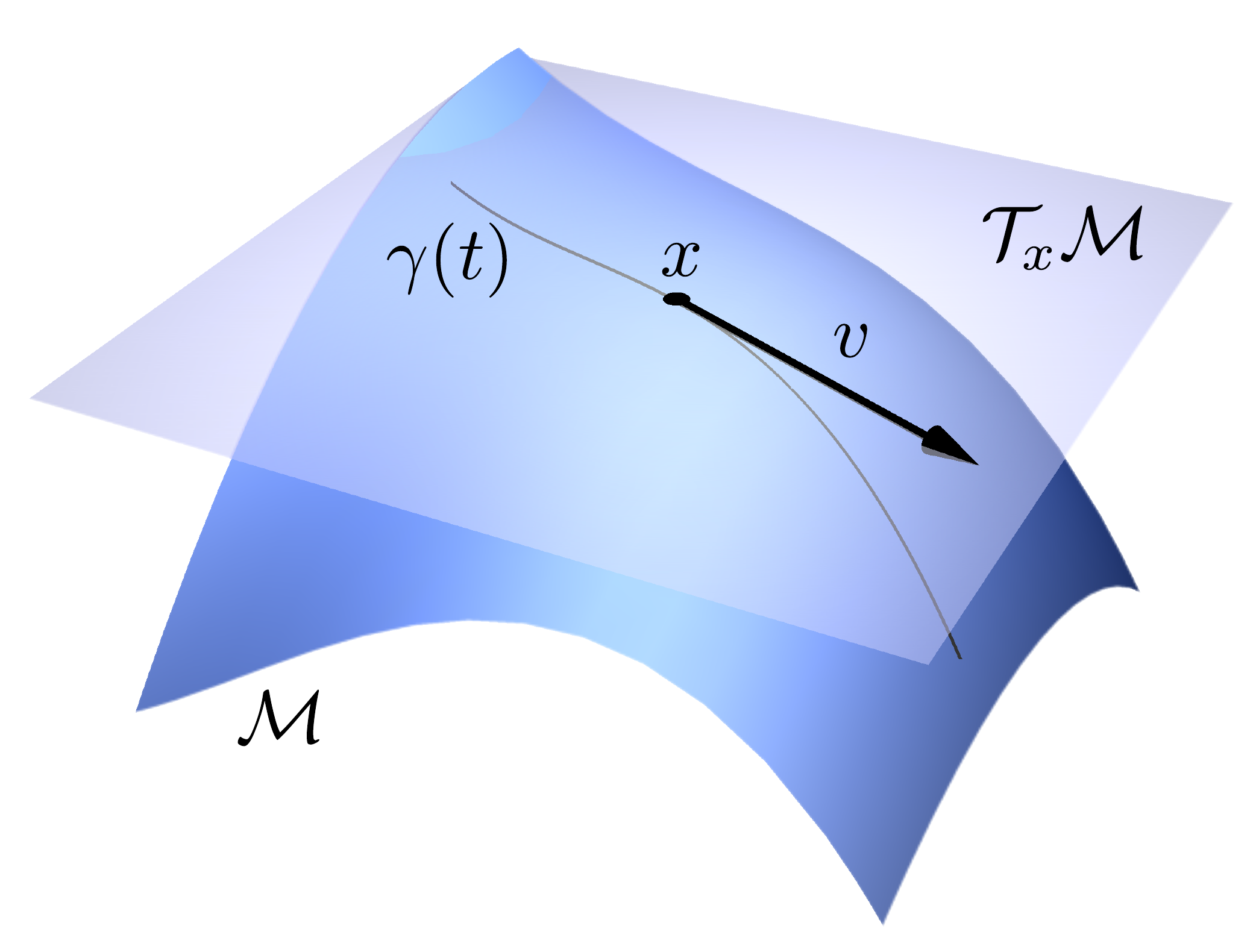}
    \caption{A graphical illustration of a manifold $\mathcal{M}$ and its tanget space $\mathcal{T}_x\mathcal{M}$ at some point $x\in\mathcal{M}$. Here $\gamma(t)$ is an arbitrary $C^\infty$ curve in $\mathcal{M}$ that passes through $x$, and $v$ is the tangent vector of $\gamma(t)$ at $x$. The tangent space $\mathcal{T}_x\mathcal{M}$ consists of all such vectors $v$.}
    \label{fig:tangent_space}
\end{figure}

\cref{proposition:sim_trans_submanifold} guarantees that for any controllable and observable $\mK\in\mathcal{C}_q$, the orbit $\mathcal{O}_{\mK}$ is a submanifold of dimension $q^2$ in $\mathcal{C}_q$, which allows us to define the tangent space of $\mathcal{O}_{\mK}$.\footnote{See \cref{app:manifold} for the definition of tangent spaces. A visualization of a manifold $\mathcal{M}$ and its tangent space $\mathcal{T}_x\mathcal{M}$ at one point $x\in\mathcal{M}$ is provided in \cref{fig:tangent_space}.} For each minimal $\mK\in\mathcal{C}_q$, we use $\mathcal{TO}_{\mK}$ to denote the tangent space of $\mathcal{O}_{\mK}$ at $\mK$, and treat it as a subspace of $\mathcal{V}_q$; recall that $\mathcal{V}_q$ is defined by \eqref{eq:def_Vq}. The dimension of $\mathcal{TO}_{\mK}$ is then
$$
\dim \mathcal{TO}_{\mK}
=\dim \mathcal{O}_{\mK}=q^2.
$$
%
%
%
We denote the orthogonal complement of $\mathcal{TO}_{\mK}$ in $\mathcal{V}_q$ by $\mathcal{TO}_{\mK}^\perp$. The following proposition characterizes the tangent space $\mathcal{TO}_{\mK}$ and its orthogonal complement $\mathcal{TO}_{\mK}^\perp$ at a minimal controller $\mK \in \mathcal{C}_q$.

\begin{proposition}\label{proposition:tangent_orbit}
Let $\mK\in\mathcal{C}_q$ represent a controllable and observable controller. Then
\begin{align*}
\mathcal{TO}_{\mK}
=\ &
\left\{
\left.\begin{bmatrix}
0 & -C_{\mK} H \\
H B_{\mK} & HA_{\mK} - A_{\mK}H
\end{bmatrix}\,
\right| H\in\mathbb{R}^{q\times q}
\right\}, \\
\mathcal{TO}_{\mK}^\perp
=\ &
\left\{
\left.\Delta=\begin{bmatrix}
0 & \Delta_{B_\mK} \\
\Delta_{C_\mK} & \Delta_{A_\mK}
\end{bmatrix}
\in\mathcal{V}_q
\,\right|
\Delta_{A_{\mK}} A_{\mK}^\tr
-A_{\mK}^\tr \Delta_{A_{\mK}}
+ \Delta_{B_{\mK}} B_{\mK}^\tr
- C_{\mK}^\tr \Delta_{C_{\mK}}=0
\right\}.
\end{align*}
\end{proposition}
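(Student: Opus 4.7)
The plan is to parameterize the orbit by the smooth map $T\mapsto \mathscr{T}_q(T,\mK)$ from $\mathrm{GL}_q$ to $\mathcal{V}_q$, whose image is $\mathcal{O}_{\mK}$. By \cref{proposition:sim_trans_submanifold}, $\mathcal{O}_{\mK}$ is a submanifold of $\mathcal{V}_q$ of dimension $q^2$, so $\mathcal{TO}_{\mK}$ coincides with the image of the differential of this map at $T=I$, provided that differential is injective. Once $\mathcal{TO}_{\mK}$ is identified in this way, I will read off $\mathcal{TO}_{\mK}^\perp$ by taking the Frobenius inner product of an arbitrary element of $\mathcal{V}_q$ against a generic tangent vector and requiring the result to vanish for every choice of parameter.

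To evaluate the differential, I substitute $T(t)=I+tH$ with $H\in\mathbb{R}^{q\times q}$ into $\mathscr{T}_q(T(t),\mK)=\begin{bmatrix} 0 & C_\mK T(t)^{-1} \\ T(t)B_\mK & T(t)A_\mK T(t)^{-1} \end{bmatrix}$ and differentiate each block at $t=0$, using $\tfrac{d}{dt}(I+tH)^{-1}\big|_{t=0}=-H$. The nonzero blocks of the resulting derivative are $-C_\mK H$, $HB_\mK$, and $HA_\mK-A_\mK H$, so the image of the differential is exactly the set appearing on the right-hand side of the claimed formula for $\mathcal{TO}_{\mK}$, and this image is contained in $\mathcal{TO}_{\mK}$ by definition. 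To promote this containment to equality, I verify that $H\mapsto(\text{derivative})$ is injective: if $H$ is sent to zero, then $HB_\mK=0$ and $HA_\mK=A_\mK H$, so by induction $HA_\mK^kB_\mK=0$ for every $k\geq 0$; hence $H$ annihilates the controllability matrix $[B_\mK\;A_\mK B_\mK\;\cdots\;A_\mK^{q-1}B_\mK]$, which has full row rank by controllability of $(A_\mK,B_\mK)$, forcing $H=0$. The image therefore has dimension $q^2=\dim\mathcal{TO}_{\mK}$, giving equality.

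For the orthogonal complement, I let $\Delta\in\mathcal{V}_q$ have nonzero blocks $\Delta_{A_\mK}$, $\Delta_{B_\mK}$, $\Delta_{C_\mK}$ and require $\langle\Delta,V_H\rangle_F=0$ for every $H$, where $V_H$ denotes the generic tangent vector computed above. Expanding the trace and using cyclicity to factor $H$ to the right rewrites the three cross terms as a single Frobenius pairing, namely $\langle\Delta_{A_\mK}A_\mK^\tr-A_\mK^\tr\Delta_{A_\mK}+\Delta_{B_\mK}B_\mK^\tr-C_\mK^\tr\Delta_{C_\mK},\,H\rangle_F = 0$. Since this must hold for all $H$, the matrix in the first slot must vanish, which is precisely the equation in the stated characterization of $\mathcal{TO}_{\mK}^\perp$.

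The calculation is entirely algebraic and mechanical; the only step requiring more than trace bookkeeping is the injectivity argument, which pulls in controllability of $(A_\mK,B_\mK)$ from the minimality hypothesis. I do not anticipate any serious obstacle beyond the careful index matching inside the Frobenius pairings.
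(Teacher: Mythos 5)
Your proposal is correct and follows essentially the same route as the paper: differentiate $T\mapsto\mathscr{T}_q(T,\mK)$ at the identity to obtain the generators of $\mathcal{TO}_{\mK}$, then characterize $\mathcal{TO}_{\mK}^\perp$ by a trace/cyclicity computation. The only (harmless) difference is that where the paper invokes the diffeomorphism from $\mathrm{GL}_q$ onto $\mathcal{O}_{\mK}$ established in \cref{proposition:sim_trans_submanifold} to conclude the tangent map is an isomorphism, you re-derive injectivity of the differential directly from controllability of $(A_{\mK},B_{\mK})$ and then match dimensions, which is just the linearized version of the paper's isotropy-group argument.
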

\begin{proof}
Let $H\in\mathbb{R}^{q\times q}$ be arbitrary. Then for sufficiently small $\epsilon$, we have
$$
\begin{aligned}
\mathscr{T}_q(I+\epsilon H,\mK)
=\ &
\begin{bmatrix}
0 & C_{\mK}(I+\epsilon H)^{-1} \\
(I+\epsilon H)B_{\mK} &
(I+\epsilon H)A_{\mK} (I+\epsilon H)^{-1}
\end{bmatrix} \\
=\ &
\mK
+\epsilon\begin{bmatrix}
0 & -C_{\mK} H \\
HB_{\mK} & HA_{\mK} - A_{\mK} H
\end{bmatrix}
+o(\epsilon),
\end{aligned}
$$
implying that the tangent map of $\mathscr{T}_q(\cdot,\mK)$ at the identity is given by
$$
H\mapsto \begin{bmatrix}
0 & -C_{\mK} H \\
HB_{\mK} & HA_{\mK} - A_{\mK} H
\end{bmatrix}.
$$
Then since $\mathscr{T}_q(\cdot,\mK)$ is a diffeomorphism from $\mathrm{GL}_q$ to $\mathcal{O}_\mK$, the tangent map of $\mathscr{T}_q(\cdot,\mK)$ at the identity is an isomorphism from $\mathbb{R}^{q\times q}$ (the tangent space of $\mathrm{GL}_q$ at the identity) to the tangent space $\mathcal{TO}_{\mK}$. Thus
$$
\mathcal{TO}_{\mK}
=
\left\{
\left.\begin{bmatrix}
0 & -C_{\mK} H \\
H B_{\mK} & HA_{\mK} - A_{\mK}H
\end{bmatrix}\,
\right| H\in\mathbb{R}^{q\times q}
\right\}.
$$
Then the orthogonal complement $\mathcal{TO}_{\mK}^\perp$ is given by
$$
\begin{aligned}
\mathcal{TO}_{\mK}^\perp
= &
\left\{
\Delta\in\mathcal{V}_q
\left|\, \operatorname{tr}(U^\tr \Delta)=0
\textrm{ for all }U\in \mathcal{TO}_{\mK}\right.
\right\} \\
= &
\left\{
\Delta \!=\! \begin{bmatrix}
0 & \Delta_{B_\mK} \\
\Delta_{C_\mK} & \Delta_{A_\mK}
\end{bmatrix}
\!\in\!\mathcal{V}_q
\left|\, \operatorname{tr}\left(
\begin{bmatrix}
0 & -C_{\mK} H \\
HB_{\mK} & HA_{\mK} - A_{\mK} H
\end{bmatrix}^\tr\Delta\right)=0,
\forall H\in \mathbb{R}^{q\times q}
\right.
\right\} \\
= &
\left\{
\left.\Delta \!=\! \begin{bmatrix}
0 & \Delta_{B_\mK} \\
\Delta_{C_\mK} & \Delta_{A_\mK}
\end{bmatrix}
\!\in\!\mathcal{V}_q\,
\right|\operatorname{tr}
H^\tr
\!\left(
\Delta_{A_\mK}A_{\mK}^\tr
-A_{\mK}^\tr\Delta_{A_\mK}
+ \Delta_{B_\mK}B_{\mK}^\tr-C_{\mK}^\tr \Delta_{C_\mK}
\right)\!=0,
\forall H\in \mathbb{R}^{q\times q}
\right\} \\
= &
\left\{
\left.\Delta \!=\! \begin{bmatrix}
0 & \Delta_{B_\mK} \\
\Delta_{C_\mK} & \Delta_{A_\mK}
\end{bmatrix}
\!\in\!\mathcal{V}_q\,
\right|
\Delta_{A_\mK}A_{\mK}^\tr
-A_{\mK}^\tr\Delta_{A_\mK}
+ \Delta_{B_\mK}B_{\mK}^\tr-C_{\mK}^\tr \Delta_{C_\mK}=0
\right\}.
\end{aligned}
$$
This completes the proof.
\end{proof}

We conclude this subsection by noting that the LQG cost function $J_q(\mK)$ is not coercive in the sense that there might exist sequences of stabilizing controllers $\mK_j \in \mathcal{C}_q$ where $\lim_{j \rightarrow \infty} \mK_j = \hat{\mK} \in \partial \mathcal{C}_q$ such that
$$
    \lim_{j \rightarrow \infty} J_q(\mK_j) < \infty, \qquad
$$
and sequences of stabilizing controllers $\mK_j \in \mathcal{C}_q$ where $\lim_{j \rightarrow \infty} \|\mK_j\|_F = \infty$ such that
$$
    \lim_{j \rightarrow \infty} J_q(\mK_j) < \infty.
$$
The latter fact is easy to see from \cref{proposition:sim_trans_submanifold} since the orbit $\mathcal{O}_{\mK}$ can be unbounded and $J_q(\mK)$ is constant for any controller in the same orbit. The following example shows that the LQG cost might converge to a finite value even when the controller $\mK$ goes to the boundary of $\mathcal{C}_q$.

\begin{example}[Non-coercivity of the LQG cost] \label{example:non-coercivity}
    Consider the open-loop stable SISO system in \cref{example:SISO_connected}, and we fix $Q=1, R= 1, V = 1, W =1$ in the LQG formulation. The set of full-order stabilizing controllers $\mathcal{C}_1$ is shown in~\eqref{eq:region_example_connected}. We consider the following stabilizing controller
    $$
    \mK_\epsilon = \begin{bmatrix} 0 & \epsilon \\ -\epsilon & 0\end{bmatrix} \in \mathcal{C}_1, \qquad  \forall \epsilon \neq 0.
    $$
    It is not hard to see that
    $
    \lim_{\epsilon \rightarrow 0} \mK_\epsilon \in \partial\mathcal{C}_1.
    $
    By solving the Lyapunov equation~\eqref{eq:LyapunovX}, we get the unique solution as
    $$
        X_{\mK_{\epsilon}}= \begin{bmatrix} \displaystyle \frac{\epsilon^2+1}{2} &    \displaystyle \frac{\epsilon}{2} \\
        \displaystyle \frac{\epsilon}{2} & \displaystyle  \frac{\epsilon^2}{2}+1 \end{bmatrix},
    $$
    and the corresponding LQG cost as
    $$
        J(\mK_{\epsilon}) =  \frac{1+3\epsilon^2 + \epsilon^4}{2}.
    $$
    Therefore, we have
    $
        \lim_{\epsilon \rightarrow 0} J(\mK_{\epsilon}) = 1/2,
    $
    while
    $
        \lim_{\epsilon \rightarrow 0} \mK_\epsilon \in \partial\mathcal{C}_1.
    $\hfill\qed
\end{example}


\subsection{The Gradient and the Hessian of the LQG Cost}\label{subsection:LQG_grad_Hessian}

The following lemma gives a closed-loop form for the gradient of the LQG cost function $J_q$, and its proof is given in \cref{app:gradien_Jq}.
\begin{lemma}[Gradient of LQG cost $J_q$] \label{lemma:gradient_LQG_Jn}
    Fix $q \geq 1$ such that $\mathcal{C}_q \neq \varnothing$.
    For every $\mK = \begin{bmatrix} 0 & C_{\mK} \\B_{\mK} & A_{\mK} \end{bmatrix} \in \mathcal{C}_q$, the gradient of $J_q(\mK)$ is given by
    $$
    \nabla J_q(\mK)
    =\left[\!\begin{array}{cc}
    0 & \mfrac{\partial J_q(\mK)}{\partial C_{\mK}} \\[6pt]
    \mfrac{\partial J_q(\mK)}{\partial B_{\mK}} & \mfrac{\partial J_q(\mK)}{\partial A_{\mK}}
    \end{array}\!\right],
    $$
    with
    \begin{subequations} \label{eq:gradient_Jn}
        \begin{align}
         \frac{\partial J_q(\mK)}{\partial A_{\mK}} &= 2\left(Y_{12}^\tr X_{12} + Y_{22}X_{22}\right), \label{eq:partial_Ak}\\
        \frac{\partial J_q(\mK)}{\partial B_{\mK}} &= 2\left(Y_{22}B_{\mK}V + Y_{22}X_{12}^\tr C^\tr + Y_{12}^\tr X_{11} C^\tr\right), \label{eq:partial_Bk}\\
        \frac{\partial J_q(\mK)}{\partial C_{\mK}} &= 2\left(RC_{\mK}X_{22} + B^\tr Y_{11}X_{12} + B^\tr Y_{12} X_{22}\right), \label{eq:partial_Ck}
        \end{align}
    \end{subequations}
    where $X_{\mK}$ and $Y_{\mK}$, partitioned as
    \begin{equation} \label{eq:LyapunovXY_block}
        X_{\mK} = \begin{bmatrix}
        X_{11} & X_{12} \\ X_{12}^\tr & X_{22}
        \end{bmatrix},  \qquad Y_{\mK} = \begin{bmatrix}
        Y_{11} & Y_{12} \\ Y_{12}^\tr & Y_{22}
        \end{bmatrix}
    \end{equation}
    are the unique positive semidefinite
    solutions to~\eqref{eq:LyapunovX} and~\eqref{eq:LyapunovY}, respectively.
\end{lemma}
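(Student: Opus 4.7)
The plan is to use the trace representation of the LQG cost from \cref{lemma:LQG_cost_formulation1} together with an ``adjoint trick'' that exchanges the implicit dependence of $X_{\mK}$ on $\mK$ for an explicit dependence through $Y_{\mK}$. Introduce the shorthand
$$
A_{\mathrm{cl}}(\mK) = \begin{bmatrix} A & BC_{\mK} \\ B_{\mK}C & A_{\mK}\end{bmatrix},
\quad
W_{\mathrm{cl}}(\mK) = \begin{bmatrix} W & 0 \\ 0 & B_{\mK}VB_{\mK}^\tr\end{bmatrix},
\quad
Q_{\mathrm{cl}}(\mK) = \begin{bmatrix} Q & 0 \\ 0 & C_{\mK}^\tr R C_{\mK}\end{bmatrix},
$$
so that $J_q(\mK) = \operatorname{tr}(Q_{\mathrm{cl}} X_{\mK})$ with $X_{\mK}$ the unique positive semidefinite solution of \eqref{eq:LyapunovX}, and $Y_{\mK}$ the unique positive semidefinite solution of \eqref{eq:LyapunovY}.

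First I would take a smooth one-parameter variation $\mK(s)$ in $\mathcal{C}_q$ and differentiate $J_q$ with respect to $s$, producing
$$
\dot J_q = \operatorname{tr}\!\left(\dot Q_{\mathrm{cl}} X_{\mK}\right) + \operatorname{tr}\!\left(Q_{\mathrm{cl}} \dot X_{\mK}\right),
$$
where dots denote derivatives in $s$. Differentiating \eqref{eq:LyapunovX} yields a Lyapunov equation for $\dot X_{\mK}$ driven by $\dot A_{\mathrm{cl}} X_{\mK} + X_{\mK}\dot A_{\mathrm{cl}}^\tr + \dot W_{\mathrm{cl}}$. To eliminate $\dot X_{\mK}$, I multiply this Lyapunov equation by $Y_{\mK}$ and take the trace, then use \eqref{eq:LyapunovY} in the form $A_{\mathrm{cl}}^\tr Y_{\mK} + Y_{\mK} A_{\mathrm{cl}} = -Q_{\mathrm{cl}}$ together with the cyclic and transpose invariance of the trace and the symmetry of $X_{\mK}$ and $Y_{\mK}$. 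This gives the clean identity
$$
\operatorname{tr}(Q_{\mathrm{cl}}\dot X_{\mK})
= 2\operatorname{tr}(\dot A_{\mathrm{cl}}\, X_{\mK} Y_{\mK}) + \operatorname{tr}(Y_{\mK}\,\dot W_{\mathrm{cl}}),
$$
so that altogether
$$
\dot J_q
=
\operatorname{tr}(\dot Q_{\mathrm{cl}} X_{\mK})
+ 2\operatorname{tr}(\dot A_{\mathrm{cl}}\, X_{\mK} Y_{\mK})
+ \operatorname{tr}(Y_{\mK}\,\dot W_{\mathrm{cl}}).
$$

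Next I would specialize to the three coordinate directions. Partitioning $X_{\mK}$ and $Y_{\mK}$ as in \eqref{eq:LyapunovXY_block} and letting $(XY)_{22} = X_{12}^\tr Y_{12}+X_{22}Y_{22}$, $(XY)_{12} = X_{11}Y_{12}+X_{12}Y_{22}$, $(XY)_{21} = X_{12}^\tr Y_{11}+X_{22}Y_{12}^\tr$, the variation of $A_{\mathrm{cl}}$ in the $A_{\mK}$ direction is $\begin{bmatrix} 0 & 0 \\ 0 & \dot A_{\mK}\end{bmatrix}$, while $\dot W_{\mathrm{cl}}=\dot Q_{\mathrm{cl}}=0$; using $dJ=\operatorname{tr}((\partial J/\partial A_{\mK})^\tr dA_{\mK})$ this immediately yields \eqref{eq:partial_Ak}. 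For the $B_{\mK}$ direction, $\dot A_{\mathrm{cl}}$ has nonzero lower-left block $\dot B_{\mK}C$ and $\dot W_{\mathrm{cl}}$ has lower-right block $\dot B_{\mK}V B_{\mK}^\tr + B_{\mK}V\dot B_{\mK}^\tr$; combining the two trace contributions and taking the corresponding transpose gives \eqref{eq:partial_Bk}. The $C_{\mK}$ direction is completely analogous, now with $\dot A_{\mathrm{cl}}$ having upper-right block $B\dot C_{\mK}$ and $\dot Q_{\mathrm{cl}}$ having lower-right block $\dot C_{\mK}^\tr R C_{\mK} + C_{\mK}^\tr R\dot C_{\mK}$, producing \eqref{eq:partial_Ck}.

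The main obstacle is purely bookkeeping: reliably tracking transposes and block positions through the adjoint manipulation, and making sure the convention $dJ=\operatorname{tr}((\partial J/\partial M)^\tr dM)$ is applied consistently so that the final expressions come out with the correct orientation and factor of $2$. Once the general formula $\dot J_q = \operatorname{tr}(\dot Q_{\mathrm{cl}} X_{\mK}) + 2\operatorname{tr}(\dot A_{\mathrm{cl}} X_{\mK}Y_{\mK}) + \operatorname{tr}(Y_{\mK}\dot W_{\mathrm{cl}})$ is in hand, the three partial derivatives are obtained by direct block arithmetic.
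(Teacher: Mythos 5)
Your proposal is correct and follows essentially the same route as the paper's proof in \cref{app:gradien_Jq}: perturb $\mK$, differentiate the Lyapunov equation \eqref{eq:LyapunovX} to get a Lyapunov equation for $\dot X_{\mK}$ driven by $\dot A_{\mathrm{cl}}X_{\mK}+X_{\mK}\dot A_{\mathrm{cl}}^\tr+\dot W_{\mathrm{cl}}$, and then use the adjoint solution $Y_{\mK}$ to convert $\operatorname{tr}(Q_{\mathrm{cl}}\dot X_{\mK})$ into $2\operatorname{tr}(\dot A_{\mathrm{cl}}X_{\mK}Y_{\mK})+\operatorname{tr}(Y_{\mK}\dot W_{\mathrm{cl}})$, followed by block bookkeeping. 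The only cosmetic difference is that the paper establishes the adjoint identity via the integral representation $X=\int_0^\infty e^{A_{\mathrm{cl}}s}Me^{A_{\mathrm{cl}}^\tr s}\,ds$ while you multiply the Lyapunov equation by $Y_{\mK}$ and use trace cyclicity directly; both yield the same identity, and your block computations reproduce \eqref{eq:partial_Ak}--\eqref{eq:partial_Ck} exactly.
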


We next consider the Hessian of $J_q(\mK)$. 
Let $\mK$ be any controller in $\mathcal{C}_q$, and we use $\operatorname{Hess}_{\,\mK}:\mathcal{V}_q\times\mathcal{V}_q\rightarrow\mathbb{R}$ to denote the bilinear form of the Hessian of $J_q$ at $\mK$, so that for any $\Delta\in\mathcal{V}_q$, we have
$$
J_n(\mK+\Delta)
=J_n(\mK)+
\operatorname{tr}
\left(
\nabla J_q(\mK)^\tr \Delta\right)
+
\frac{1}{2}
\operatorname{Hess}_{\,\mK}(\Delta,\Delta)
+o(\|\Delta\|_F^2)
$$
as $\|\Delta\|_F\rightarrow 0$. Obviously, $\operatorname{Hess}_{\,\mK}$ is symmetric in the sense that $\operatorname{Hess}_{\,\mK}(x,y)=\operatorname{Hess}_{\,\mK}(y,x)$ for all $x,y\in\mathcal{V}_n$.
The following lemma shows how to compute $\operatorname{Hess}_{\,\mK}(\Delta,\Delta)$ for any $\Delta\in\mathcal{V}_q$ by solving three Lyapunov equations, whose proof is given in \cref{app:gradien_Jq}.
\begin{lemma}\label{lemma:Jn_Hessian}
Fix $q\geq 1$ such that $\mathcal{C}_q\neq\varnothing$.
Let $\mK=\begin{bmatrix}
0 & C_{\mK} \\
B_{\mK} & A_{\mK}
\end{bmatrix} \in \mathcal{C}_q$. Then for any $\Delta=\begin{bmatrix}
0 & \Delta_{C_\mK} \\
\Delta_{B_\mK} & \Delta_{A_\mK}
\end{bmatrix}\in\mathcal{V}_q$, we have
$$
\begin{aligned}
\operatorname{Hess}_{\,\mK}(\Delta,\Delta)
=\ &
2\operatorname{tr}
\Bigg(
2
\begin{bmatrix}
0 & B\Delta_{C_\mK} \\
\Delta_{B_\mK} C & \Delta_{A_\mK}
\end{bmatrix}
X'_{\mK^\ast,\Delta}\cdot Y_{\mK^\ast}
+2\begin{bmatrix}
0 & 0 \\ 0 & {C_{\mK}^\ast}^\tr R \Delta_{C_\mK}
\end{bmatrix}\cdot X'_{\mK^\ast,\Delta}
\\
& \qquad\qquad
+\begin{bmatrix}
0 & 0 \\
0 & \Delta_{B_\mK}V\Delta_{B_\mK}^\tr
\end{bmatrix} Y_{\mK^*}
+
\begin{bmatrix}
0 & 0 \\ 0 & \Delta_{C_\mK}^\tr R \Delta_{C_\mK}
\end{bmatrix}X_{\mK^\ast}
\Bigg),
\end{aligned}
$$
where $X_{\mK^\ast}$ and $Y_{\mK^\ast}$ are the solutions to the Lyapunov equations~\eqref{eq:LyapunovX} and~\eqref{eq:LyapunovY}, and $X'_{\mK^\ast,\Delta}\in\mathbb{R}^{(n+q)\times(n+q)}$ is the solution to the following Lyapunov equation
\begin{equation} \label{eq:Lyapunov_hessian}
    \begin{bmatrix}
A & BC_{\mK}^\ast \\
B_{\mK}^\ast C & A_{\mK}^\ast
\end{bmatrix} X'_{\mK^\ast,\Delta}
+X'_{\mK^\ast,\Delta}
\begin{bmatrix}
A & BC_{\mK}^\ast \\
B_{\mK}^\ast C & A_{\mK}^\ast
\end{bmatrix}^\tr
+M_1(X_{\mK^*},\Delta) = 0,
\end{equation}
with
\begin{align*}
M_1(X_{\mK^*},\Delta)
\coloneqq
\begin{bmatrix}
0 & B\Delta_{C_\mK} \\
\Delta_{B_\mK} C & \Delta_{A_\mK}
\end{bmatrix} X_{\mK^\ast}
+ X_{\mK^\ast}\begin{bmatrix}
0 & B\Delta_{C_\mK} \\
\Delta_{B_\mK} C & \Delta_{A_\mK}
\end{bmatrix}^\tr
\!+
\begin{bmatrix}
0 & 0 \\
0 & B_{\mK}^\ast V\Delta_{B_\mK}^\tr
\!+\!
\Delta_{B_\mK} V {B_{\mK}^\ast}^\tr
\end{bmatrix}.
\end{align*}
\end{lemma}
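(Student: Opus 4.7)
\textbf{Proof Plan for \cref{lemma:Jn_Hessian}.}
The plan is to compute the second-order Taylor expansion of $J_q(\mK+\Delta)$ about $\mK$ by expanding the Lyapunov solution $X_{\mK+\Delta}$ to second order in $\Delta$, and then to use the dual Lyapunov equation for $Y_\mK$ to eliminate the troublesome second-order piece of $X$. For notational convenience, I will write
\[
\mathcal{A}_\mK=\begin{bmatrix} A & BC_\mK \\ B_\mK C & A_\mK\end{bmatrix},\qquad
\mathcal{A}'(\Delta)=\begin{bmatrix} 0 & B\Delta_{C_\mK} \\ \Delta_{B_\mK}C & \Delta_{A_\mK}\end{bmatrix},
\]
and use $\mathcal{W}_\mK,\mathcal{Q}_\mK$ for the $(2,2)$-block matrices appearing in \eqref{eq:LyapunovX}--\eqref{eq:LyapunovY}. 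The perturbations $\mathcal{W}_{\mK+\Delta}=\mathcal{W}_\mK+\mathcal{W}'(\Delta)+\mathcal{W}''(\Delta)$ and $\mathcal{Q}_{\mK+\Delta}=\mathcal{Q}_\mK+\mathcal{Q}'(\Delta)+\mathcal{Q}''(\Delta)$ split cleanly into linear and quadratic parts, with $\mathcal{W}''(\Delta)$ and $\mathcal{Q}''(\Delta)$ being precisely the last two ``pure'' terms in the statement.

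First, I would show that $X_{\mK+\Delta}=X_\mK+X'_{\mK,\Delta}+X''_{\mK,\Delta}+o(\|\Delta\|_F^2)$ is real analytic in $\Delta$ (by \cref{lemma:LQG_cost_analytical} and implicit differentiation of the Lyapunov equation, since $\mathcal{A}_\mK$ remains stable for small $\Delta$). Substituting into the perturbed Lyapunov equation and matching orders gives $\mathcal{A}_\mK X'_{\mK,\Delta}+X'_{\mK,\Delta}\mathcal{A}_\mK^\tr + M_1(X_\mK,\Delta)=0$, which is exactly \eqref{eq:Lyapunov_hessian}, and at second order
\[
\mathcal{A}_\mK X''_{\mK,\Delta}+X''_{\mK,\Delta}\mathcal{A}_\mK^\tr = -\bigl(\mathcal{A}'(\Delta)X'_{\mK,\Delta}+X'_{\mK,\Delta}\mathcal{A}'(\Delta)^\tr+\mathcal{W}''(\Delta)\bigr).
\]

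Next, I would expand $J_q(\mK+\Delta)=\operatorname{tr}\!\bigl(\mathcal{Q}_{\mK+\Delta} X_{\mK+\Delta}\bigr)$ and collect the quadratic-in-$\Delta$ part, which yields
\[
\tfrac{1}{2}\operatorname{Hess}_{\,\mK}(\Delta,\Delta)
=\operatorname{tr}\!\bigl(\mathcal{Q}_\mK X''_{\mK,\Delta}\bigr)+\operatorname{tr}\!\bigl(\mathcal{Q}'(\Delta)X'_{\mK,\Delta}\bigr)+\operatorname{tr}\!\bigl(\mathcal{Q}''(\Delta)X_\mK\bigr).
\]
The decisive step is to handle the first term, which still contains the unknown $X''_{\mK,\Delta}$. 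Using the dual Lyapunov equation $\mathcal{A}_\mK^\tr Y_\mK+Y_\mK\mathcal{A}_\mK+\mathcal{Q}_\mK=0$ together with cyclicity and the symmetry of $X''_{\mK,\Delta}$ gives
\[
\operatorname{tr}\!\bigl(\mathcal{Q}_\mK X''_{\mK,\Delta}\bigr)
=-\operatorname{tr}\!\bigl(Y_\mK(\mathcal{A}_\mK X''_{\mK,\Delta}+X''_{\mK,\Delta}\mathcal{A}_\mK^\tr)\bigr)
=2\operatorname{tr}\!\bigl(\mathcal{A}'(\Delta)X'_{\mK,\Delta}Y_\mK\bigr)+\operatorname{tr}\!\bigl(\mathcal{W}''(\Delta)Y_\mK\bigr),
\]
where I have used symmetry of $X'_{\mK,\Delta}$ and $Y_\mK$ to combine the two cross terms. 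An analogous symmetrization reduces $\operatorname{tr}(\mathcal{Q}'(\Delta)X'_{\mK,\Delta})$ to $2\operatorname{tr}\!\bigl(\bigl[\begin{smallmatrix}0&0\\ 0&C_\mK^\tr R\Delta_{C_\mK}\end{smallmatrix}\bigr]X'_{\mK,\Delta}\bigr)$. Substituting these identities back and multiplying by two recovers precisely the stated Hessian formula.

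The main obstacle is the bookkeeping in the $X''$-elimination step: one must track that the symmetric-pair terms coming from $\mathcal{A}'(\Delta)X'_{\mK,\Delta}+X'_{\mK,\Delta}\mathcal{A}'(\Delta)^\tr$ and from $C_\mK^\tr R\Delta_{C_\mK}+\Delta_{C_\mK}^\tr R C_\mK$ collapse into the factor of $2$ in the lemma's formula, which relies on symmetry of both $X'_{\mK,\Delta}$ and $Y_\mK$. Everything else is a direct application of trace cyclicity and of matching Taylor coefficients in the Lyapunov equation, using the continuity-of-stability argument to justify the smooth dependence of $X_{\mK+\Delta}$ on $\Delta$.
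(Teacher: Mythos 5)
Your proposal is correct and follows essentially the same route as the paper's proof in \cref{app:gradien_Jq}: Taylor-expand the Lyapunov solution along the perturbation, match coefficients to obtain the Lyapunov equations for $X'_{\mK,\Delta}$ and $X''_{\mK,\Delta}$, and then eliminate the second-order term via the identity $\operatorname{tr}(\mathcal{Q}_\mK X)=\operatorname{tr}(Y_\mK M)$ together with the symmetry of $X'_{\mK,\Delta}$ and $Y_\mK$. The only (cosmetic) difference is that the paper derives this trace identity from the integral representation $X=\int_0^\infty e^{A_{\mathrm{cl},\mK}s}Me^{A_{\mathrm{cl},\mK}^\tr s}\,ds$, whereas you obtain it directly from the dual Lyapunov equation \eqref{eq:LyapunovY} and cyclicity of the trace.
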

From \cref{lemma:Jn_Hessian}, one can further compute $\operatorname{Hess}_{\,\mK}(\Delta_1,\Delta_2)$ for any $\Delta_1,\Delta_2\in\mathcal{V}_n$ by
$$
\begin{aligned}
\operatorname{Hess}_{\,\mK}(\Delta_1,\Delta_2)
=\ &
\frac{1}{4}
\left(\operatorname{Hess}_{\,\mK}(\Delta_1+\Delta_2,\Delta_1+\Delta_2)
-\operatorname{Hess}_{\,\mK}(\Delta_1-\Delta_2,\Delta_1-\Delta_2)\right) \\
=\ &
\frac{1}{2}
\left(\operatorname{Hess}_{\,\mK}(\Delta_1+\Delta_2,\Delta_1+\Delta_2)
-\operatorname{Hess}_{\,\mK}(\Delta_1,\Delta_1)
-\operatorname{Hess}_{\,\mK}(\Delta_2,\Delta_2)\right).
\end{aligned}
$$


\subsection{Non-minimal Stationary Points}
\label{subsection:nonminimal_stationary}

In this part, we show that the LQG cost $J_n(\mK)$ over the full-order stabilizing controller $\mathcal{C}_n$ may have many non-minimal stationary points that might be strict saddle points.

We first investigate the gradient of $J_q(\mK)$ under similarity transformation. Given any $T \in \mathrm{GL}_q$, recall the definition of the linear map of similarity transformation $\mathscr{T}_q
\left(T,\mK
\right)$ in~\eqref{eq:def_sim_transform}. The following lemma gives an explicit relationship among the gradients of $J_q(\cdot)$ at $\mK$ and $\mathscr{T}_q
\left(T,\mK
\right)$.

\begin{lemma} \label{lemma:gradient_simi_tran_linear}
Let $\mK=\begin{bmatrix}
0 & C_{\mK} \\ B_{\mK} & A_{\mK}
\end{bmatrix}\in \mathcal{C}_q$ be arbitrary.
For any $T\in\mathrm{GL}_q$, we have

\begin{equation} \label{eq:Gradient_sim_transformation}
\left.\nabla J_q\right|_{\mathscr{T}_q \left(T,\mK \right)}
=\begin{bmatrix}
I_m & 0 \\
0 & T^{-\tr}
\end{bmatrix}
\cdot \left.\nabla J_q\right|_{\mK}
\cdot \begin{bmatrix}
I_p & 0 \\
0 & T^{\tr}
\end{bmatrix}.
\end{equation}
\end{lemma}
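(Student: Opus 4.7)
The plan is to derive the identity by tracking how every quantity entering the explicit gradient formula of \cref{lemma:gradient_LQG_Jn} transforms under the similarity transformation $\mathscr{T}_q(T,\cdot)$. The starting point is the observation that the closed-loop dynamics matrix transforms by a block-diagonal similarity:
\begin{equation*}
\begin{bmatrix}
A & B(C_{\mK}T^{-1}) \\ (TB_{\mK})C & TA_{\mK}T^{-1}
\end{bmatrix}
=
\begin{bmatrix} I_n & 0 \\ 0 & T \end{bmatrix}
\begin{bmatrix}
A & BC_{\mK} \\ B_{\mK}C & A_{\mK}
\end{bmatrix}
\begin{bmatrix} I_n & 0 \\ 0 & T^{-1} \end{bmatrix}.
\end{equation*}

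First, I would use this together with the uniqueness of positive semidefinite solutions to the Lyapunov equations~\eqref{eq:LyapunovX} and~\eqref{eq:LyapunovY} to conclude
\begin{equation*}
X_{\mathscr{T}_q(T,\mK)}
=
\begin{bmatrix} I & 0 \\ 0 & T \end{bmatrix}
X_{\mK}
\begin{bmatrix} I & 0 \\ 0 & T^{\tr} \end{bmatrix},
\qquad
Y_{\mathscr{T}_q(T,\mK)}
=
\begin{bmatrix} I & 0 \\ 0 & T^{-\tr} \end{bmatrix}
Y_{\mK}
\begin{bmatrix} I & 0 \\ 0 & T^{-1} \end{bmatrix},
\end{equation*}
where the transpose/inverse arrangement for $Y$ comes from the fact that the transpose of the closed-loop matrix appears in \eqref{eq:LyapunovY} together with the input weighting transforming as $C_{\mK}^{\tr}RC_{\mK}\mapsto T^{-\tr}(C_{\mK}^{\tr}RC_{\mK})T^{-1}$. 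Block-partitioning according to \eqref{eq:LyapunovXY_block}, this yields the transformations
\begin{equation*}
X_{11}'=X_{11},\ X_{12}'=X_{12}T^{\tr},\ X_{22}'=TX_{22}T^{\tr},\
Y_{11}'=Y_{11},\ Y_{12}'=Y_{12}T^{-1},\ Y_{22}'=T^{-\tr}Y_{22}T^{-1}.
\end{equation*}

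Next, I would substitute these into the three formulas \eqref{eq:partial_Ak}--\eqref{eq:partial_Ck} evaluated at $\mathscr{T}_q(T,\mK)$, recalling that the controller parameters themselves transform as $A_{\mK}\mapsto TA_{\mK}T^{-1}$, $B_{\mK}\mapsto TB_{\mK}$, and $C_{\mK}\mapsto C_{\mK}T^{-1}$. A direct computation gives
\begin{equation*}
\left.\frac{\partial J_q}{\partial A_{\mK}}\right|_{\mathscr{T}_q(T,\mK)}
=T^{-\tr}\!\left.\frac{\partial J_q}{\partial A_{\mK}}\right|_{\mK}\!T^{\tr},
\quad
\left.\frac{\partial J_q}{\partial B_{\mK}}\right|_{\mathscr{T}_q(T,\mK)}
=T^{-\tr}\!\left.\frac{\partial J_q}{\partial B_{\mK}}\right|_{\mK},
\quad
\left.\frac{\partial J_q}{\partial C_{\mK}}\right|_{\mathscr{T}_q(T,\mK)}
=\left.\frac{\partial J_q}{\partial C_{\mK}}\right|_{\mK}\!T^{\tr},
\end{equation*}
where the inner $T,T^{-1}$ factors cancel cleanly in each case (for example, $Y_{22}'B_{\mK}' = T^{-\tr}Y_{22}T^{-1}\cdot TB_{\mK} = T^{-\tr}Y_{22}B_{\mK}$). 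Reassembling these three blocks into the matrix $\nabla J_q$, with the zero $(1,1)$-block untouched, gives exactly the sandwich identity \eqref{eq:Gradient_sim_transformation}.

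The main obstacle is purely bookkeeping: keeping the transpose versus inverse placements straight for $X$ versus $Y$, and verifying that the block-diagonal conjugation on the outside absorbs precisely the residual $T$ and $T^{-\tr}$ factors produced by the transformations of $B_\mK$, $C_\mK$, and the Gramian blocks. No new analytic ingredient is needed beyond \cref{lemma:gradient_LQG_Jn} and the uniqueness of stable Lyapunov solutions. A conceptual alternative worth mentioning is to differentiate the invariance relation $J_q(\mathscr{T}_q(T,\mK))=J_q(\mK)$ from \cref{lemma:Jn_invariance} with respect to $\mK$ via the chain rule; since $\mathscr{T}_q(T,\cdot)$ is a linear map on $\mathcal{V}_q$ given by left/right multiplication with $\operatorname{diag}(I_m,T)$ and $\operatorname{diag}(I_p,T^{-1})$, the induced action on the gradient is exactly the adjoint sandwich appearing in \eqref{eq:Gradient_sim_transformation}.
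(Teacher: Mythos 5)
Your proof is correct, but it takes a different route from the paper. You work from the explicit gradient formulas of \cref{lemma:gradient_LQG_Jn}: you first establish that $X_{\mathscr{T}_q(T,\mK)}=\operatorname{diag}(I,T)\,X_{\mK}\operatorname{diag}(I,T)^{\tr}$ and $Y_{\mathscr{T}_q(T,\mK)}=\operatorname{diag}(I,T)^{-\tr}\,Y_{\mK}\operatorname{diag}(I,T)^{-1}$ by uniqueness of the stable Lyapunov solutions, then substitute block by block into \eqref{eq:partial_Ak}--\eqref{eq:partial_Ck}; I verified each of the three cancellations and the reassembly into the sandwich identity, and they all check out. The paper instead uses exactly the argument you relegate to a closing remark: it differentiates the invariance $J_q(\mathscr{T}_q(T,\mK+\Delta))=J_q(\mK+\Delta)$ from \cref{lemma:Jn_invariance}, exploiting that $\Delta\mapsto\mathscr{T}_q(T,\Delta)$ is the linear map $\Delta\mapsto\operatorname{diag}(I_m,T)\,\Delta\,\operatorname{diag}(I_p,T^{-1})$, and reads off the adjoint action on the gradient by comparing the two first-order expansions under the trace. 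The paper's route is shorter and needs neither the explicit gradient formula nor the transformation law for $Y_{\mK}$ (only the invariance of the cost and linearity of the similarity action), so it generalizes immediately to any cost invariant under the group action; your route is more computational but has the virtue of being a concrete consistency check on \cref{lemma:gradient_LQG_Jn} itself and of exhibiting exactly how each Gramian block transforms, which is information the paper reuses elsewhere (e.g.\ in the proof of \cref{theo:stationary_points_globally_optimal}). Either argument is acceptable; just be aware that if you lead with the substitution route you are implicitly re-deriving the $Y$-transformation law, which the paper's proof of \cref{lemma:Jn_invariance} states only for $X$.
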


\begin{proof}
Let $\Delta\in\mathcal{V}_q$ be arbitrary. We have
$$
\begin{aligned}
& J_q(\mathscr{T}_q\left(T,\mK + \Delta\right))
-J_q(\mathscr{T}_q \left(T,\mK \right)) \\
=\ &
J_q(\mathscr{T}_q\left(T,\mK)+ \mathscr{T}_q(T,\Delta\right))
-J_q(\mathscr{T}_q \left(T,\mK \right)) \\
=\ &
\operatorname{tr}
\left[\left(\left.\nabla J_q\right|_{\mathscr{T}_q \left(T,\mK \right)}\right)^{\!\tr}
\cdot \mathscr{T}_q \left(T,\Delta \right)
\right]
+o(\|\Delta\|)\\
=\ &
\operatorname{tr}
\left[\left(\left.\nabla J_q\right|_{\mathscr{T}_q \left(T,\mK \right)}\right)^{\!\tr}
\cdot \begin{bmatrix}
I_m & 0 \\
0 & T
\end{bmatrix}\Delta
\begin{bmatrix}
I_p & 0 \\
0 & T^{-1}
\end{bmatrix}
\right]
+o(\|\Delta\|) \\
=\ &
\operatorname{tr}
\left[
\left(\begin{bmatrix}
I_m & 0 \\
0 & T
\end{bmatrix}^\tr\cdot \left.\nabla J_q\right|_{\mathscr{T}_q \left(T,\mK \right)}
\cdot \begin{bmatrix}
I_p & 0 \\
0 & T^{-1}
\end{bmatrix}^\tr\right)^{\!\!\tr}
\Delta
\right]
+o(\|\Delta\|).
\end{aligned}
$$
On the other hand, \cref{lemma:Jn_invariance} shows that the LQG cost stays the same when applying similarity transformation. Thus, we have
$$
\begin{aligned}
J_q(\mathscr{T}_T \left(\mK + \Delta\right))
-J_q(\mathscr{T}_T \left(\mK \right))
=\ &
J_q(K+\Delta) - J_q(\mK) \\
=\
&
\operatorname{tr}
\left[\left(\left.\nabla J_q\right|_{\mK}\right)^\tr
\cdot \Delta
\right]
+o(\|\Delta\|).
\end{aligned}
$$
By comparing the two equations, we get
$$
\left.\nabla J_q\right|_{\mK}
=\begin{bmatrix}
I_m & 0 \\
0 & T
\end{bmatrix}^\tr\cdot \left.\nabla J_q\right|_{\mathscr{T}_q \left(T,\mK \right)}
\cdot \begin{bmatrix}
I_p & 0 \\
0 & T^{-1}
\end{bmatrix}^\tr,
$$
which then leads to the relationship~\eqref{eq:Gradient_sim_transformation}.
\end{proof}

As expected, a direct consequence of \cref{lemma:gradient_simi_tran_linear} is that, if $\mK\in\mathcal{C}_q$ is a stationary point of $J_q$, then any controller in the orbit $\mathcal{O}_{\mK}$ is also a stationary point of $J_q$. In addition, \cref{lemma:gradient_simi_tran_linear} allows us to establish an interesting result that any stationary point of $J_{q}$ can be transferred to stationary points of $J_{q+q'}$ for any $q'>0$ with the same objective value.

\begin{theorem} \label{theorem:non_globally_optimal_stationary_point}
Let $q\geq 1$ be arbitrary. Suppose there exists $\mK^\star=\begin{bmatrix}
0 & C_{\mK}^\star \\ B_{\mK}^\star & A_{\mK}^\star
\end{bmatrix}
\in \mathcal{C}_{q}$ such that $\nabla J_{q}(\mK^\star)=0$.
Then for any $q'\geq 1$ and any stable $\Lambda\in\mathbb{R}^{q'\times q'}$, the following controller
\begin{equation} \label{eq:gradient_nonglobally_K}
        \tilde{\mK}^\star
    =\left[\begin{array}{c:cc}
    0 & C_{\mK}^\star &  0 \\[2pt]
    \hdashline
    B_{\mK}^\star & A_{\mK}^\star & 0 \\[-2pt]
    0 & 0 & \Lambda
    \end{array}\right] \in \mathcal{C}_{q+q'}
\end{equation}
is a stationary point of $J_{q+q'}$ over $\mathcal{C}_{q+q'}$ satisfying $J_{q+q'}\big(\tilde{\mK}^\star\big)
=J_{q}(\tilde{\mK})$.
\end{theorem}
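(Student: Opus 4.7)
The plan is to verify closed-loop stability of $\tilde{\mK}^\star$, solve the Lyapunov equations~\eqref{eq:LyapunovX} and~\eqref{eq:LyapunovY} at $\tilde{\mK}^\star$ by a block ansatz exploiting the decoupling of the $\Lambda$-state, and then apply~\cref{lemma:LQG_cost_formulation1} and~\cref{lemma:gradient_LQG_Jn} to obtain both the cost identity and $\nabla J_{q+q'}(\tilde{\mK}^\star)=0$.

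First I would check that $\tilde{\mK}^\star\in\mathcal{C}_{q+q'}$. The associated closed-loop matrix is
$$
\begin{bmatrix}
A & BC_{\mK}^\star & 0 \\
B_{\mK}^\star C & A_{\mK}^\star & 0 \\
0 & 0 & \Lambda
\end{bmatrix},
$$
which is block lower-triangular, so its spectrum is the union of the spectrum of the closed loop induced by $\mK^\star$ (stable because $\mK^\star\in\mathcal{C}_q$) and the spectrum of $\Lambda$ (stable by hypothesis). Hence $\tilde{\mK}^\star\in\mathcal{C}_{q+q'}$.

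Next, I would introduce the ansatz
$$
X_{\tilde{\mK}^\star}=\begin{bmatrix} X_{\mK^\star} & 0 \\ 0 & 0 \end{bmatrix},\qquad
Y_{\tilde{\mK}^\star}=\begin{bmatrix} Y_{\mK^\star} & 0 \\ 0 & 0 \end{bmatrix},
$$
where the zero rows/columns correspond to the $\Lambda$-block of the controller state. Plugging into~\eqref{eq:LyapunovX} and~\eqref{eq:LyapunovY}, the upper-left blocks reproduce the Lyapunov equations defining $X_{\mK^\star}$ and $Y_{\mK^\star}$, while the off-diagonal blocks give Sylvester equations of the form $MU+U\Lambda^\tr=0$ (and its dual) with $M$ and $\Lambda$ both stable, forcing $U=0$, and the $(3,3)$-blocks are Lyapunov equations driven by zero. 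By uniqueness of positive-semidefinite Lyapunov solutions under closed-loop stability (\cref{lemma:LQG_cost_formulation1}), the ansatz is the solution. The cost identity $J_{q+q'}(\tilde{\mK}^\star)=J_q(\mK^\star)$ then follows at once by substituting $X_{\tilde{\mK}^\star}$ (or $Y_{\tilde{\mK}^\star}$) into the trace formula~\eqref{eq:LQG_cost_formulation1}, since the nonzero part of the cost weight matches the nonzero block of $X_{\tilde{\mK}^\star}$.

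For the gradient, I would partition $X_{\tilde{\mK}^\star}$ and $Y_{\tilde{\mK}^\star}$ into $n\times n$, $n\times(q+q')$ and $(q+q')\times(q+q')$ blocks as in~\eqref{eq:LyapunovXY_block}, so that $\tilde X_{11}=X_{11}^{(\mK^\star)}$, $\tilde X_{12}=[X_{12}^{(\mK^\star)}\ 0]$, $\tilde X_{22}=\operatorname{diag}(X_{22}^{(\mK^\star)},0)$, and similarly for $\tilde Y$. Substituting these into the formulas~\eqref{eq:partial_Ak}--\eqref{eq:partial_Ck}, every product decouples into a single nontrivial block that equals the corresponding partial derivative of $J_q$ at $\mK^\star$—and thus vanishes by hypothesis—while all other blocks are identically zero because they contain a zero factor. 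Therefore $\nabla J_{q+q'}(\tilde{\mK}^\star)=0$.

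The main obstacle is essentially bookkeeping: carefully tracking the three-way block partitions and invoking uniqueness for each intermediate Sylvester/Lyapunov equation. No new structural insight beyond the decoupling of the $\Lambda$-subsystem is needed; the cleanliness of the argument is exactly what makes such $\Lambda$-augmented stationary points so plentiful and motivates~\cref{theorem:zero_stationary_hessian}.
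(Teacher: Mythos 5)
Your proof is correct, but it takes a genuinely different route from the paper's. You verify stability, solve the Lyapunov equations \eqref{eq:LyapunovX}--\eqref{eq:LyapunovY} at $\tilde{\mK}^\star$ with the block-diagonal ansatz $X_{\tilde{\mK}^\star}=\operatorname{diag}(X_{\mK^\star},0)$, $Y_{\tilde{\mK}^\star}=\operatorname{diag}(Y_{\mK^\star},0)$ (justified either by direct substitution plus uniqueness, or by noting the off-diagonal Sylvester equations $MU+U\Lambda^\tr=0$ have only the trivial solution since the spectra of $M$ and $-\Lambda^\tr$ are disjoint), and then read off both the cost identity and the vanishing of all three partials from \eqref{eq:partial_Ak}--\eqref{eq:partial_Ck}; I checked the block bookkeeping and each partial indeed reduces to the corresponding partial of $J_q$ at $\mK^\star$ padded with zeros. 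The paper instead avoids the explicit gradient formulas entirely: it observes that $\tilde{\mK}^\star$ is fixed by the similarity transformation $\mathscr{T}_{q+q'}(T,\cdot)$ with $T=\operatorname{diag}(I_q,-I_{q'})$, applies \cref{lemma:gradient_simi_tran_linear} to force the off-diagonal rows and columns of $\nabla J_{q+q'}|_{\tilde{\mK}^\star}$ to vanish, and then kills the remaining two diagonal blocks using the facts that $J_{q+q'}(\operatorname{diag}(\mK,\Lambda))=J_q(\mK)$ and that the cost is independent of $\Lambda$. Your computation is more elementary and self-contained (it needs only \cref{lemma:LQG_cost_formulation1} and \cref{lemma:gradient_LQG_Jn}), and it yields the explicit Lyapunov solutions, which is exactly the data needed in the proof of \cref{theorem:zero_stationary_hessian}; the paper's symmetry argument is shorter and generalizes more readily to situations where one does not want to compute the gradient blockwise.
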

\begin{proof}
Since $\mK^\star \in \mathcal{C}_{q}$, we have  $\tilde{\mK}^\star\in\mathcal{C}_{q+q'}$ by construction. It is straightforward to verify that
$$
\mathscr{T}_{q+q'}\big(T,\tilde{\mK}^\star\big)=\tilde{\mK}^\star \quad \text{with} \quad
T=\begin{bmatrix}
I_{q} & 0 \\ 0 & -I_{q'}
\end{bmatrix}.
$$
Therefore, by \cref{lemma:gradient_simi_tran_linear}, we have
$$
\left.\nabla J_{q+q'}\right|_{\tilde{\mK}^\star}
=
\left.\nabla J_{q+q'}\right|_{\mathscr{T}_{q+q'}\big(T,\tilde{\mK}^\star\big)}
=\begin{bmatrix}
I_{m+q} & 0 \\ 0 & -I_{q'}
\end{bmatrix}
\cdot \left.\nabla J_{q+q'}\right|_{\tilde{\mK}^\star}
\cdot
\begin{bmatrix}
I_{p+q} & 0 \\ 0 & -I_{q'}
\end{bmatrix},
$$
which implies that, excluding the the bottom right $q'\times q'$ block, the last $q'$ rows and the last $q'$ columns of $\left.\nabla J_{q+q'}\right|_{\tilde{\mK}^\star}$ are zero.
On the other hand, it can be checked that
$$
J_{q+q'}\!\left(\begin{bmatrix}
\mK & 0 \\ 0 & \Lambda
\end{bmatrix}\right) = J_{q}(\mK),
\qquad\forall\,\mK\in \mathcal{C}_{q},
$$
and since $\nabla J_{q}(\mK^\star)=0$,
we can see that the upper left $(m+q)\times (p+q)$ block of $\left.\nabla J_{q+q'}\right|_{\tilde{K}^\star}$ is equal to zero.
Then, from \cref{lemma:LQG_cost_formulation1}, it is not difficult to verify that the value $J_q(\tilde{\mK}^\ast)$ is independent of the $q'\times q'$ stable matrix $\Lambda$, and thus the bottom right $q'\times q'$ block of $\left.\nabla J_{q+q'}\right|_{\tilde{\mK}^\star}$ is zero.

We can now see that $\left.\nabla J_{q+q'}\right|_{\tilde{\mK}^\star}=0$. This completes the proof.
\end{proof}

\cref{theorem:non_globally_optimal_stationary_point} indicates that from any stationary point of $J_{q}$ over lower-order stabilizing controllers in $\mathcal{C}_{q}$, we can construct a family of stationary points of $J_{q+q'}$ over higher-order stabilizing controllers in $\mathcal{C}_{q+q'}$. Moreover, the stationary points constructed by \eqref{eq:gradient_nonglobally_K} are neither controllable nor observable. This indicates that, if the globally optimal controller of $J_n$ is controllable and observable, and if the problem 
$$
\min_{\mK\in\mathcal{C}_q} J_q(\mK)
$$
has a solution for some $q<n$, then there will exist many \emph{strictly suboptimal stationary points} of $J_n$ over $\mathcal{C}_n$.

The following theorem explicitly constructs a family of stationary points for $J_n$ with an open-loop stable plant, and also provides a criterion for checking whether the corresponding Hessian is indefinite or vanishing.
\begin{theorem}\label{theorem:zero_stationary_hessian}
Suppose the plant \eqref{eq:Dynamic} is open-loop stable. Let $\Lambda\in\mathbb{R}^{n\times n}$ be stable, and let
$$
\mK^\star = \begin{bmatrix}
0 & 0 \\
0 & \Lambda
\end{bmatrix}.
$$
Then $\mK^\star$ is a stationary point of $J_n(\mK)$ over $\mK\in\mathcal{C}_n$, and the corresponding Hessian $\operatorname{Hess}_{\,\mK^\star}$ is either indefinite or zero.

Furthermore, suppose $\Lambda$ is diagonalizable, and let $\operatorname{eig}(-\Lambda)$ denote the set of (distinct) eigenvalues of $-\Lambda$. Let $X_{\mathrm{op}}$ and $Y_{\mathrm{op}}$ be the solutions to the following Lyapunov equations
\begin{equation} \label{eq:Lyapunov_hessian_vanishing}
A X_{\mathrm{op}} + X_{\mathrm{op}} A^\tr + W=0,
\quad
A^\tr Y_{\mathrm{op}} + Y_{\mathrm{op}} A + Q=0,
\end{equation}
and let
\begin{equation} \label{eq:hessian_vanishing_condition}
\mathcal{Z}
=\left\{
s\in\mathbb{C}\mid
CX_{\mathrm{op}}\big(sI-A^\tr\big)^{-1}Y_{\mathrm{op}} B
=0
\right\}.
\end{equation}
Then, the Hessian of $J_n$ at $\mK^\star$ is indefinite if and only if $\operatorname{eig}(-\Lambda) \nsubseteq \mathcal{Z}$; 
the Hessian of $J_n$ at $\mK^\star$ is zero if and only if 
$\operatorname{eig}(-\Lambda) \subseteq \mathcal{Z}$.
\end{theorem}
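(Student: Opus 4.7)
The plan is to verify stationarity directly via Lemma \ref{lemma:gradient_LQG_Jn}, reduce the Hessian via Lemma \ref{lemma:Jn_Hessian} to a bilinear form in $(\Delta_{B_\mK},\Delta_{C_\mK})$ that immediately yields the zero-or-indefinite dichotomy, and finally convert the vanishing condition into the Laplace-domain criterion involving $\mathcal{Z}$ via the integral representation of a Sylvester equation together with a diagonalization of $\Lambda$.

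\textbf{Stationarity.} The closed-loop matrix at $\mK^\star$ is the block diagonal $\operatorname{diag}(A,\Lambda)$, which is stable, so $\mK^\star\in\mathcal{C}_n$. The source terms in \eqref{eq:LyapunovX} and \eqref{eq:LyapunovY} reduce to $\operatorname{diag}(W,0)$ and $\operatorname{diag}(Q,0)$; combined with the stability of $\Lambda$, the unique positive-semidefinite solutions become $X_{\mK^\star}=\operatorname{diag}(X_{\mathrm{op}},0)$ and $Y_{\mK^\star}=\operatorname{diag}(Y_{\mathrm{op}},0)$. Since the blocks $X_{12}, X_{22}, Y_{12}, Y_{22}$ all vanish, inspection of \eqref{eq:gradient_Jn} gives $\nabla J_n(\mK^\star)=0$.

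\textbf{Hessian reduction.} Plugging $\mK^\star$ into Lemma \ref{lemma:Jn_Hessian}, the term involving ${C_{\mK}^\ast}^\tr$ vanishes since $C_{\mK}^\ast=0$, and the two pure quadratic source terms vanish because the bottom-right blocks of $X_{\mK^\star}$ and $Y_{\mK^\star}$ are zero. In \eqref{eq:Lyapunov_hessian} the source reduces to the block anti-diagonal matrix
\begin{equation*}
M_1(X_{\mK^*},\Delta)=\begin{bmatrix}0 & X_{\mathrm{op}}C^\tr\Delta_{B_\mK}^\tr \\ \Delta_{B_\mK}CX_{\mathrm{op}} & 0\end{bmatrix},
\end{equation*}
so its solution $X'_{\mK^\star,\Delta}$ is also block anti-diagonal with off-diagonal block $X'_{12}$ solving the Sylvester equation $A X'_{12}+X'_{12}\Lambda^\tr = -X_{\mathrm{op}}C^\tr\Delta_{B_\mK}^\tr$. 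Multiplying out and applying cyclic invariance collapses the Hessian to
\begin{equation*}
\operatorname{Hess}_{\,\mK^\star}(\Delta,\Delta) = 4\operatorname{tr}\big(\Delta_{C_\mK}{X'_{12}}^\tr Y_{\mathrm{op}}B\big),
\end{equation*}
which is bilinear in $(\Delta_{B_\mK},\Delta_{C_\mK})$ and independent of $\Delta_{A_\mK}$. Negating $\Delta_{C_\mK}$ negates this value, so the Hessian is either identically zero or indefinite, establishing the first claim.

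\textbf{The $\mathcal{Z}$-criterion.} Since $A$ and $\Lambda$ are both stable, the Sylvester solution admits the integral representation $X'_{12} = \int_0^\infty e^{At}X_{\mathrm{op}}C^\tr\Delta_{B_\mK}^\tr e^{\Lambda^\tr t}\,dt$. Substituting and permuting cyclically gives
\begin{equation*}
\operatorname{Hess}_{\,\mK^\star}(\Delta,\Delta) = 4\int_0^\infty \operatorname{tr}\!\left(\Delta_{C_\mK}\,e^{\Lambda t}\,\Delta_{B_\mK}\,F(t)\right)dt,\quad F(t) := CX_{\mathrm{op}}e^{A^\tr t}Y_{\mathrm{op}}B.
\end{equation*}
Diagonalizing $\Lambda = P\Sigma P^{-1}$ with $\Sigma=\operatorname{diag}(\lambda_1,\ldots,\lambda_n)$ over $\mathbb{C}$ and changing variables $G:=\Delta_{C_\mK}P$, $H:=P^{-1}\Delta_{B_\mK}$, the rank-one expansion $e^{\Sigma t}=\sum_i e^{\lambda_i t}e_ie_i^\tr$ then yields
\begin{equation*}
\operatorname{Hess}_{\,\mK^\star}(\Delta,\Delta) = 4\sum_{i=1}^n H_{i,:}\,\hat{F}(-\lambda_i)\,G_{:,i},\qquad \hat{F}(s) = CX_{\mathrm{op}}(sI-A^\tr)^{-1}Y_{\mathrm{op}}B.
\end{equation*}
Because the Hessian is a real-coefficient polynomial in the entries of $\Delta$, its vanishing on all real $\Delta$ is equivalent to its vanishing as a polynomial, equivalently on all complex $(G,H)$; the resulting independent rank-one contributions then force $\hat{F}(-\lambda_i) = 0$ for every $i$, i.e., $\operatorname{eig}(-\Lambda)\subseteq\mathcal{Z}$. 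Conversely, if some $-\lambda_i\notin\mathcal{Z}$, choosing $G$ and $H$ supported only on the $i$-th column/row produces a nonzero Hessian value, which by the earlier dichotomy must be indefinite. The main obstacle is this last step: the diagonalization forces an excursion into $\mathbb{C}$ when $\Lambda$ has complex eigenvalues, and the polynomial-identity argument is what reconciles this with the reality constraint on $\Delta$.
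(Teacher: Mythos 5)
Your proof is correct, and in two places it takes a genuinely different route from the paper's. For stationarity, you compute $X_{\mK^\star}=\operatorname{diag}(X_{\mathrm{op}},0)$ and $Y_{\mK^\star}=\operatorname{diag}(Y_{\mathrm{op}},0)$ (valid because $A$ and $-\Lambda^\tr$ have disjoint spectra, so the off-diagonal Sylvester blocks vanish) and read off $\nabla J_n(\mK^\star)=0$ from \cref{lemma:gradient_LQG_Jn}; the paper instead invokes the symmetry $\mathscr{T}_n(-I_n,\mK^\star)=\mK^\star$ via \cref{lemma:gradient_simi_tran_linear} together with the independence of $J_n(\mK^\star)$ from $\Lambda$. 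For the dichotomy, your explicit reduction $\operatorname{Hess}_{\,\mK^\star}(\Delta,\Delta)=4\operatorname{tr}\big(\Delta_{C_\mK}{X'_{12}}^\tr Y_{\mathrm{op}}B\big)$, which is linear separately in $\Delta_{B_\mK}$ and $\Delta_{C_\mK}$ with no pure quadratic terms, yields the sign-flip argument directly; the paper reaches the same cross-term structure less computationally, by using transfer-function invariance to kill $\operatorname{Hess}(\Delta^{(i)},\Delta^{(i)})$ and the terms involving $\Delta_{A_\mK}$, then applying the zero-diagonal $2\times 2$ principal submatrix criterion. The real divergence is in the $\mathcal{Z}$-criterion: the paper diagonalizes $\Lambda$ over $\mathbb{R}$ (real Jordan-type blocks) and splits into three cases — real eigenvalue, complex eigenvalue with $G(\lambda)$ not purely imaginary, and complex eigenvalue with $G(\lambda)$ purely imaginary — constructing explicit real perturbations in each case; you diagonalize over $\mathbb{C}$, obtain the uniform formula $4\sum_i H_{i,:}\hat{F}(-\lambda_i)G_{:,i}$, and reconcile the complex coordinates with the reality of $\Delta$ by observing that a real-coefficient polynomial vanishing on $\mathbb{R}^N$ vanishes on $\mathbb{C}^N$ (and conversely a nonzero polynomial is nonzero at a real point, after which the dichotomy upgrades ``nonzero'' to ``indefinite''). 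Your complexification argument buys uniformity and eliminates the case analysis entirely, and it handles repeated eigenvalues without extra care; the paper's construction buys explicitness, exhibiting concrete real directions along which the Hessian is sign-indefinite, which is what its subsequent examples (e.g., \cref{example:strict_saddle}) actually use.
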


The fact that $\mK^\star=\begin{bmatrix} 0 & 0 \\ 0 & \Lambda\end{bmatrix}$ is a stationary point can be proved similarly as in \cref{theorem:non_globally_optimal_stationary_point}. Regarding the properties of the Hessian, we exploit its bilinear property and use \cref{lemma:Jn_Hessian} for direct calculation. In particular, the Lyapunov equations~\eqref{eq:LyapunovX} and~\eqref{eq:LyapunovY} are reduced to~\eqref{eq:Lyapunov_hessian_vanishing}, and the transfer function in~\eqref{eq:hessian_vanishing_condition} is obtained when we solve the third Lyapunov equation~\eqref{eq:Lyapunov_hessian}.  The detailed proof is provided in \cref{app:proof_zero_stationary_hessian}.

\cref{theorem:zero_stationary_hessian} constructs a family of non-minimal strict saddle points or stationary points with vanishing Hessian for LQG with open-loop stable systems. We now present two explicit examples illustrating the Hessian of $J_q(\mK)$ at non-minimal stationary points.
\begin{example}[Strict saddle point] \label{example:strict_saddle}
    Consider the open-loop stable SISO system in \cref{example:SISO_connected}. We choose $Q = R = 1, W = V = 1$ for the LQG formulation. By  \cref{theorem:zero_stationary_hessian}, given any negative $a < 0$, the following controller
    $$
        \mK^\star = \begin{bmatrix}
        0 & 0 \\
        0 & a
        \end{bmatrix} \in \mathbb{R}^{2 \times 2}
    $$
     is a stationary point of $J_1(\mK)$ over the set of full-order stabilizing controller $\mathcal{C}_1$. Furthermore, it can be checked that
     $$
     CX_{\mathrm{op}}\big(sI-A^\tr\big)^{-1}Y_{\mathrm{op}}B
     =\frac{1}{4(s+1)}.
     $$
     Therefore the Hessian of $J_1$ at $\mK^\star$ is indefinite by \cref{theorem:zero_stationary_hessian}, indicating that $\mK^\ast$ is a strict saddle point \cite{lee2019first}. Indeed, by using \eqref{eq:LQG_cost_formulation1}, we can directly compute the LQG cost and obtain
     $$
     J_1\!\left(
     \begin{bmatrix}
     0 & C_{\mK} \\
     B_{\mK} & A_{\mK}
     \end{bmatrix}
     \right)
     =
     \frac{A_{\mK}^2 - A_{\mK} (1 + B_{\mK}^2 C_{\mK}^2) -
 B_{\mK} C_{\mK} (1 - 3 B_{\mK} C_{\mK} + B_{\mK}^2 C_{\mK}^2)}{2 (-1 + A_{\mK}) (A_{\mK} + B_{\mK} C_{\mK})}.
     $$
     The Hessian at $\mK^\star$ can then be represented as
     $$
        \left. \begin{bmatrix}
        \frac{\partial J^2(\mK)}{\partial A^2_{\mK}} & \frac{\partial J^2(\mK)}{\partial A_{\mK} \partial B_{\mK}}  & \frac{\partial J^2(\mK)}{\partial A_{\mK} \partial C_{\mK}} \\
                \frac{\partial J^2(\mK)}{\partial B_{\mK}A_{\mK}} & \frac{\partial J^2(\mK)}{\partial B^2_{\mK}}  & \frac{\partial J^2(\mK)}{\partial B_{\mK} \partial C_{\mK}}  \\
                \frac{\partial J^2(\mK)}{\partial C_{\mK}A_{\mK}} & \frac{\partial J^2(\mK)}{\partial C_{\mK}B_{\mK}}  & \frac{\partial J^2(\mK)}{\partial  \partial C^2_{\mK}}
        \end{bmatrix}\right|_{\mK^\star = \begin{bmatrix}
        0 & 0 \\
        0 & a
        \end{bmatrix}} = \frac{1}{2(1 - a)} \begin{bmatrix}
         0 & 0 & 0\\
         0 & 0 & 1\\
         0 & 1 & 0
        \end{bmatrix},
     $$
     which has eigenvalues $0$ and $\pm\frac{1}{2(1-a)}$.
     \hfill\qed
\end{example}

\begin{example}[Stationary point with vanishing Hessian]
\label{example:saddle_point_vanishing_Hessian}
Consider the following SISO system:
$$
A=\begin{bmatrix}
-1 & 0 \\ 1 & -2
\end{bmatrix},
\quad
B=\begin{bmatrix}
-1 \\ 1
\end{bmatrix},
\quad
C=\begin{bmatrix}
-2 & 11
\end{bmatrix},
\quad
W=\begin{bmatrix}
1 & 0 \\ 0 & 1
\end{bmatrix},
\quad
V = 1,
$$
and let
$$
Q=\begin{bmatrix}
1 & 0 \\ 0 & 1
\end{bmatrix},
\quad
R = 1.
$$
It can be checked that
$$
CX_{\mathrm{op}}\big(sI-A^\tr\big)^{-1}Y_{\mathrm{op}} B
=\frac{5(s-1)}{36(s+1)(s+2)}.
$$
By \cref{theorem:zero_stationary_hessian}, the point
$$
\mK^\star=\begin{bmatrix}
0 & 0 & 0 \\
0 & -1 & 0 \\
0 & 0 & -1
\end{bmatrix}
$$
is a stationary point of $J_n$ with a vanishing Hessian. %
In \cref{fig:saddle_point_vanishing_Hessian}, we plot the graph of the function $t\mapsto J_n(\mK^\star + t\Delta)$ for
$$
\Delta = \begin{bmatrix}
0 & 2 & 1/2 \\
-1 & 1 & 3 \\
3 & 0 & 0
\end{bmatrix}.
$$
\cref{fig:saddle_point_vanishing_Hessian} suggests that $\mK^\star$ is a saddle point of $J_n$ with a vanishing Hessian but non-vanishing third-order partial derivatives. \hfill\qed
\begin{figure}[t]
    \centering
    \includegraphics[width=.5\textwidth]{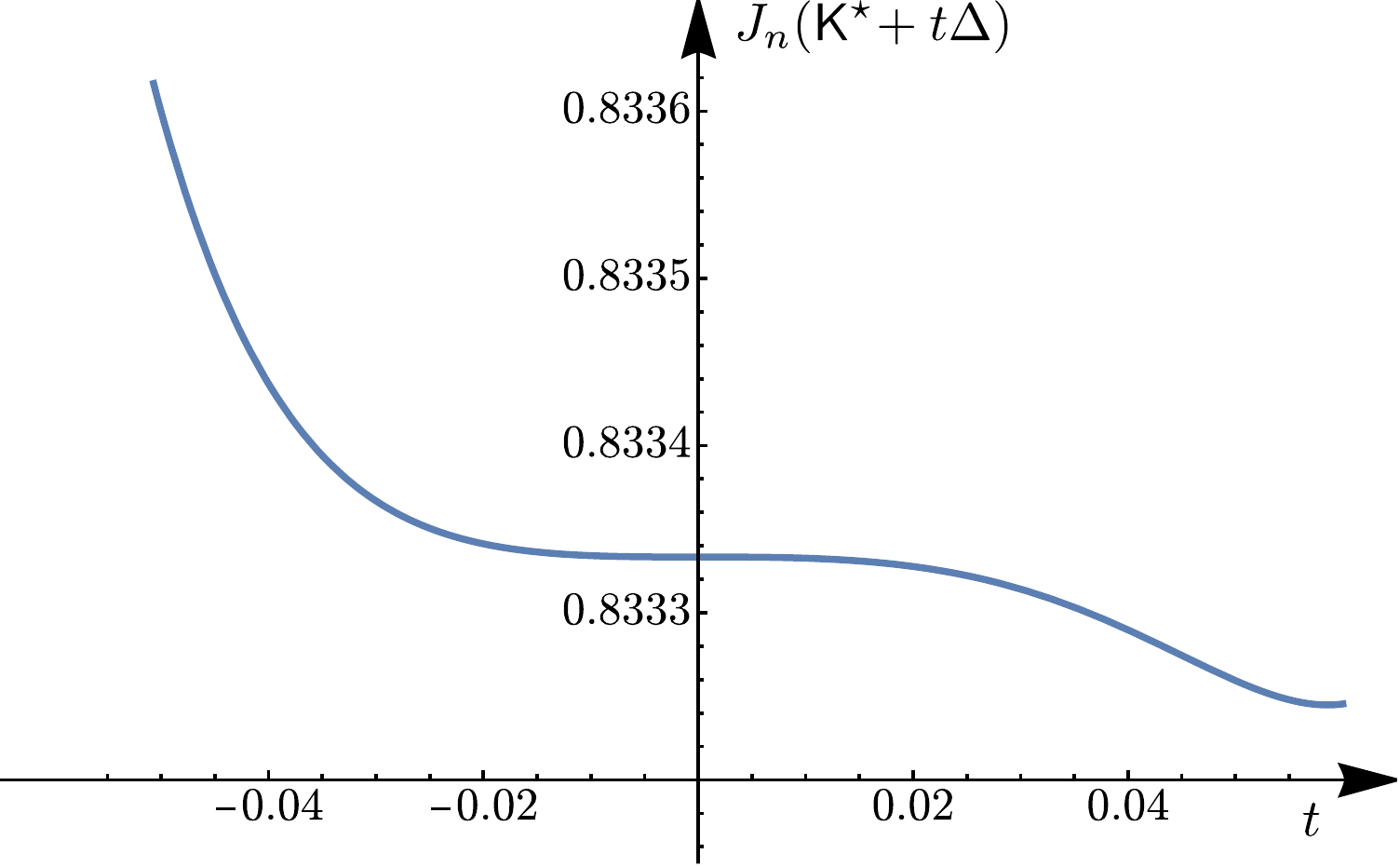}
    \caption{The function $t\mapsto J_n(\mK^\star + t\Delta)$ for \cref{example:saddle_point_vanishing_Hessian}.}
    \label{fig:saddle_point_vanishing_Hessian}
\end{figure}
\end{example}

\begin{remark}
Some recent studies have shown that many gradient-based algorithms can automatically escape strict saddle points under mild conditions~\cite{lee2019first, jin2017escape}. However,~\Cref{example:saddle_point_vanishing_Hessian} shows that the LQG cost function $J(\mK)$ may have non-strict saddle points, and further analysis is required to examine whether gradient-based methods can also escape such stationary points.
\end{remark}

\subsection{Minimal Stationary Points Are Globally Optimal} \label{subsection:gradient_LQG}

As discussed in Theorems~\ref{theorem:non_globally_optimal_stationary_point} and ~\ref{theorem:zero_stationary_hessian}, there may exist many \emph{non-minimal} stationary points for $J_n$ that are not globally optimal. In this section, we aim to show that all \emph{minimal} stationary points are globally optimal to the LQG problem~\eqref{eq:LQG}.

%

Recall that $\mK = \begin{bmatrix}
0 & C_{\mK} \\
B_{\mK} & A_{\mK}
\end{bmatrix} \in \mathcal{C}_q$ is minimal if it represents a controllable and observable controller. The gradient computation in \cref{lemma:gradient_LQG_Jn} works for both minimal and non-minimal stabilizing controllers in $\mathcal{C}_q$.
For a minimal stabilizing controller $\mK$, we further have the following result (see \cref{app:LyapunovXY_pd} for a proof).
\begin{lemma} \label{lemma:LyapunovXY_pd}
   Fix $q \in \mathbb{N}$ such that $\mathcal{C}_q \neq \varnothing$, and let $\mK \in \mathcal{C}_q$ be minimal. Under \cref{assumption:stabilizability}, the solutions $X_{\mK}$ and $Y_{\mK}$ to~\eqref{eq:LyapunovX} and~\eqref{eq:LyapunovY} are positive definite.
\end{lemma}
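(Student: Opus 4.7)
The plan is to represent $X_\mK$ and $Y_\mK$ as closed-loop Gramians and then invoke the Popov--Belevitch--Hautus (PBH) rank test. Since $\mK \in \mathcal{C}_q$, the closed-loop matrix $A_{\mathrm{cl}} := \begin{bmatrix} A & BC_\mK \\ B_\mK C & A_\mK \end{bmatrix}$ is Hurwitz, so each Lyapunov equation has the unique positive semidefinite solution
$$
X_\mK = \int_0^\infty e^{A_{\mathrm{cl}} t} B_{\mathrm{in}}B_{\mathrm{in}}^\tr e^{A_{\mathrm{cl}}^\tr t}\,dt,
\qquad
Y_\mK = \int_0^\infty e^{A_{\mathrm{cl}}^\tr t} C_{\mathrm{out}}^\tr C_{\mathrm{out}} e^{A_{\mathrm{cl}} t}\,dt,
$$
where $B_{\mathrm{in}} = \begin{bmatrix} W^{1/2} & 0 \\ 0 & B_\mK V^{1/2} \end{bmatrix}$ and $C_{\mathrm{out}} = \begin{bmatrix} Q^{1/2} & 0 \\ 0 & R^{1/2} C_\mK \end{bmatrix}$. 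By standard Gramian theory, $X_\mK \succ 0$ is equivalent to controllability of $(A_{\mathrm{cl}}, B_{\mathrm{in}})$ and $Y_\mK \succ 0$ to observability of $(C_{\mathrm{out}}, A_{\mathrm{cl}})$, so it suffices to verify these two structural properties.

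For $X_\mK$, I would argue by contradiction via the left-null-vector form of PBH: assume there exist $s \in \mathbb{C}$ and a nonzero $z = [z_1^\tr,\, z_2^\tr]^\tr$ satisfying $z^\tr(sI - A_{\mathrm{cl}}) = 0$ and $z^\tr B_{\mathrm{in}} = 0$. Since $V \succ 0$, the block $z_2^\tr B_\mK V^{1/2} = 0$ collapses to $z_2^\tr B_\mK = 0$; plugging this into the top block of $z^\tr(sI - A_{\mathrm{cl}}) = 0$ yields $z_1^\tr(sI - A) = 0$, alongside $z_1^\tr W^{1/2} = 0$. Controllability of $(A, W^{1/2})$ from \cref{assumption:stabilizability} then forces $z_1 = 0$. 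The remaining conditions reduce to $z_2^\tr(sI - A_\mK) = 0$ and $z_2^\tr B_\mK = 0$, and controllability of $(A_\mK, B_\mK)$ (the minimality hypothesis on $\mK$) gives $z_2 = 0$, contradicting $z \neq 0$.

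The argument for $Y_\mK$ is the exact dual. Starting from a hypothetical $s \in \mathbb{C}$ and nonzero $z$ with $(sI - A_{\mathrm{cl}})z = 0$ and $C_{\mathrm{out}} z = 0$, positivity of $R$ gives $C_\mK z_2 = 0$, which kills the coupling term in the top block and reduces it to $(sI - A)z_1 = 0$ together with $Q^{1/2} z_1 = 0$; observability of $(Q^{1/2}, A)$ forces $z_1 = 0$, and the remaining pair of conditions combined with observability of $(C_\mK, A_\mK)$ forces $z_2 = 0$.

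I do not expect a substantive obstacle in this proof; once the Gramian viewpoint is fixed, the two PBH arguments are essentially routine. The only minor technical point is the standard equivalence $z^\tr M^{1/2} = 0 \iff z^\tr M = 0$ for $M \succeq 0$, which lets the PBH conditions on the square-root pairs $(A, W^{1/2})$ and $(Q^{1/2}, A)$ of \cref{assumption:stabilizability} be applied in their usual form.
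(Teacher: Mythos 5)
Your proof is correct. Both you and the paper reduce the problem to the same structural claims via the Gramian characterization (the paper cites \cref{lemma:Lyapunov}, i.e.\ $Y_{\mK}\succ 0$ iff $(C_{\mathrm{out}},A_{\mathrm{cl}})$ is observable, and dually for $X_{\mK}$), but you verify those claims by a different mechanism. The paper establishes observability of $\left(\begin{bmatrix} Q^{1/2} & 0\\ 0 & R^{1/2}C_{\mK}\end{bmatrix}, A_{\mathrm{cl}}\right)$ through an output-injection/pole-placement argument: it picks the off-diagonal injection block $L_{12}=-BR^{-1/2}$ to cancel the coupling term $BC_{\mK}$, making the injected matrix block triangular, and then invokes arbitrary eigenvalue assignment for the diagonal blocks from observability of $(Q^{1/2},A)$ and $(C_{\mK},A_{\mK})$. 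You instead run the PBH test directly: the coupling term is killed not by a clever choice of injection gain but by the null conditions $z_2^\tr B_{\mK}=0$ (resp.\ $C_{\mK}z_2=0$) that come for free from $V\succ 0$ (resp.\ $R\succ 0$), after which the two PBH conditions decouple into the plant-level and controller-level conditions guaranteed by \cref{assumption:stabilizability} and minimality. Your route is arguably more elementary and self-contained, since it avoids appealing to the equivalence between observability and arbitrary eigenvalue assignability; the paper's route is shorter on the page because it delegates that equivalence to classical pole-placement theory. One cosmetic point: since $s$ may be a complex eigenvalue, the PBH left-null vector $z$ should be taken in $\mathbb{C}^{n+q}$ with $z^*$ in place of $z^\tr$ (or the test stated as a rank condition over $\mathbb{C}$); this changes nothing in the argument because all the matrices involved are real.
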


By letting the gradient~\eqref{eq:gradient_Jn} equal to zero, \emph{i.e.},
\begin{equation} \label{eq:gradient_zero}
    \frac{\partial J_n(\mK)}{\partial A_{\mK}}=0, \quad \frac{\partial J_n(\mK)}{\partial B_{\mK}} = 0, \quad \frac{\partial J_n(\mK)}{\partial C_{\mK}} = 0,
\end{equation}
we can characterize the stationary points of the LQG problem~\eqref{eq:LQG_reformulation_KX}. In particular, we have closed-loop form expressions for full-order minimal stationary points $\mK \in \mathcal{C}_n$, which turn out to be globally optimal. This result is formally summarized below.

\begin{theorem} \label{theo:stationary_points_globally_optimal}
 Under \cref{assumption:stabilizability}, all minimal stationary points $\mK\in\mathcal{C}_n$ to the LQG problem~\eqref{eq:LQG_reformulation_KX} are globally optimal, and they are in the form of
 \begin{equation} \label{eq:stationary_points}
        A_{\mK} = T(A - BK - LC)T^{-1}, \qquad    B_{\mK} = - TL, \qquad   C_{\mK} =  KT^{-1},
 \end{equation}
where $T \in \mathbb{R}^{n \times n}$ is an invertible matrix, and
\begin{equation} \label{eq:LQG_KL}
    K = R^{-1}B^\tr S, \qquad L = PC^\tr V^{-1},
\end{equation}
with $P$ and $S$ being the unique positive definite solutions to the Riccati equations~\eqref{eq:Riccati_P} and~\eqref{eq:Riccati_S}.

\end{theorem}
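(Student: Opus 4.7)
The plan is to use the similarity-transformation symmetry from \cref{lemma:Jn_invariance,lemma:gradient_simi_tran_linear} to normalize any minimal stationary point into a canonical form, and then combine the Lyapunov equations with the first-order conditions of \cref{lemma:gradient_LQG_Jn} to extract the two algebraic Riccati equations~\eqref{eq:Riccati_P}--\eqref{eq:Riccati_S}.

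By \cref{lemma:LyapunovXY_pd}, $X_\mK$ and $Y_\mK$ are positive definite at a minimal stationary point, so the diagonal blocks $X_{22},Y_{22}\in\mathbb{S}^n$ are invertible. The first-order condition~\eqref{eq:partial_Ak} reads $Y_{12}^\tr X_{12} = -Y_{22}X_{22}$, which forces $X_{12}$ and $Y_{12}$ to be invertible as well. Setting $\Pi := X_{12}X_{22}^{-1}$ and applying $\mathscr{T}_n(\Pi,\cdot)$, the induced congruences $X_\mK\mapsto\operatorname{diag}(I,\Pi)\,X_\mK\operatorname{diag}(I,\Pi)^\tr$ and $Y_\mK\mapsto\operatorname{diag}(I,\Pi)^{-\tr}\,Y_\mK\operatorname{diag}(I,\Pi)^{-1}$ yield, in the new coordinates, the canonical relations $X_{12}=X_{22}$ and $Y_{12}=-Y_{22}$ (with both $X_{22}$ and $Y_{22}$ still positive definite). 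By \cref{lemma:Jn_invariance,lemma:gradient_simi_tran_linear}, this similarity preserves being a minimal stationary point of $J_n$ and the LQG cost, so it suffices to prove the theorem in the canonical form.

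Substituting $X_{12}=X_{22}$ and $Y_{12}=-Y_{22}$ into the remaining conditions~\eqref{eq:partial_Bk}--\eqref{eq:partial_Ck} and cancelling the invertible factors $Y_{22}$ and $X_{22}$ yields the closed-form expressions $B_\mK = \tilde P\,C^\tr V^{-1}$ and $C_\mK = -R^{-1}B^\tr\tilde S$, where $\tilde P := X_{11}-X_{22}$ and $\tilde S := Y_{11}-Y_{22}$ are symmetric $n\times n$ matrices. To identify $\tilde P$ and $\tilde S$ with the Riccati solutions, apply the Kalman-filter change of basis $(x,\xi)\mapsto(\xi,\,x-\xi)$ on the $2n$-dimensional closed-loop system; a direct calculation shows that in this basis the covariance block-diagonalises to $\operatorname{diag}(X_{22},\tilde P)$, the $(2,1)$ block of the transformed version of~\eqref{eq:LyapunovX} (together with the expression for $B_\mK$ above) forces $A_\mK = A + BC_\mK - B_\mK C$, and the $(2,2)$ block collapses exactly to the Kalman Riccati equation~\eqref{eq:Riccati_P} for $\tilde P$. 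A dual calculation on~\eqref{eq:LyapunovY}, combining its $(1,1)$ and $(2,2)$ blocks, yields the LQR Riccati equation~\eqref{eq:Riccati_S} for $\tilde S$.

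Finally, under \cref{assumption:stabilizability} the two Riccati equations admit unique positive-definite solutions, so $\tilde P$ and $\tilde S$ must coincide with the Kalman-filter covariance and LQR cost-to-go matrices from~\eqref{eq:LQG_KL}. Hence $B_\mK = L$, $C_\mK = -K$, and $A_\mK = A - BK - LC$ in the canonical coordinates; undoing the similarity from the second paragraph recovers the parametrization~\eqref{eq:stationary_points}, and \cref{lemma:Jn_invariance} guarantees the original stationary point attains the classical LQG optimum. The main technical obstacle is the algebraic reduction identifying the Riccati equations: one must carefully combine several block components of~\eqref{eq:LyapunovX} and~\eqref{eq:LyapunovY} with the closed-form expressions for $B_\mK$ and $C_\mK$ so as to eliminate $A_\mK$, $X_{11}$, and $Y_{11}$ and isolate exactly the two Riccati equations, without picking up any extraneous constraint.
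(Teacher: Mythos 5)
Your proposal is correct, and it reorganizes the paper's computation in a genuinely cleaner way. The paper works with the general matrix $T=Y_{22}^{-1}Y_{12}^\tr$ throughout and extracts \eqref{eq:Riccati_P}--\eqref{eq:Riccati_S} by ad hoc combinations of the blocks of \eqref{eq:LyapunovX} (e.g.\ adding $T$ times one block equation to another and multiplying by $T^{-1}$, $T^{-\tr}$), which is exactly the ``tedious algebraic reduction'' you flag as the main obstacle. You instead spend the symmetry first: since \eqref{eq:partial_Ak} gives $Y_{12}^\tr X_{12}=-Y_{22}X_{22}$, the single choice $\Pi=X_{12}X_{22}^{-1}$ simultaneously normalizes $X_{12}=X_{22}$ and $Y_{12}=-Y_{22}$ (this consistency is precisely the stationarity condition, and is worth stating explicitly), after which \eqref{eq:partial_Bk}--\eqref{eq:partial_Ck} collapse to $B_{\mK}=\tilde P C^\tr V^{-1}$ and $C_{\mK}=-R^{-1}B^\tr\tilde S$ by cancelling $Y_{22}$ and $X_{22}$; I verified both. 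The observer-error change of basis $(x,\xi)\mapsto(\xi,x-\xi)$ then really does block-diagonalize the covariance to $\operatorname{diag}(X_{22},\tilde P)$, its $(2,1)$ block forces $A_{\mK}=A+BC_{\mK}-B_{\mK}C$, its $(2,2)$ block is \eqref{eq:Riccati_P}, and subtracting the $(2,2)$ block of \eqref{eq:LyapunovY} from its $(1,1)$ block yields \eqref{eq:Riccati_S} — all checks out. What your route buys is that the Riccati equations emerge from a control-theoretically meaningful coordinate change rather than from unmotivated block manipulations; what it costs is the extra (easy but necessary) verification that $\mathscr{T}_n(\Pi,\cdot)$ preserves minimality and stationarity, via \cref{lemma:gradient_simi_tran_linear}. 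The only point you should make explicit is that $\tilde P=X_{11}-X_{22}$ and $\tilde S=Y_{11}-Y_{22}$ are positive definite — in canonical coordinates they are exactly the Schur complements $X_{11}-X_{12}X_{22}^{-1}X_{12}^\tr$ and $Y_{11}-Y_{12}Y_{22}^{-1}Y_{12}^\tr$ of the positive definite matrices from \cref{lemma:LyapunovXY_pd} — since this is what lets you invoke uniqueness of the positive semidefinite Riccati solutions to identify them with $P$ and $S$.
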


\cref{theo:stationary_points_globally_optimal} can be viewed as a special case in~\cite[Theorem 20.6]{zhou1996robust},~\cite[Section II]{hyland1984optimal} that presents first-order necessary conditions for optimal reduced-order controllers $\mK \in \mathcal{C}_q$. Following the analysis in~\cite[Chapter 20]{zhou1996robust}, we present an adapted proof for~\Cref{theo:stationary_points_globally_optimal} here.

\begin{proof}
Consider a stationary point $\mK = \begin{bmatrix}
0 & C_{\mK} \\
B_{\mK} & A_{\mK}
\end{bmatrix} \in \mathcal{C}_n$ such that the gradient~\eqref{eq:gradient_Jn} vanishes. If the controller $\mK$ is minimal, we know by \cref{lemma:LyapunovXY_pd} that the solutions $X_{\mK}$ and $Y_{\mK}$ to~\eqref{eq:LyapunovX} and~\eqref{eq:LyapunovY} are unique and positive definite.

Upon partitioning $X_{\mK}$ and $Y_\mK$ in~\eqref{eq:LyapunovXY_block}, by the Schur complement, the following matrices are well-defined and positive definite
\begin{equation} \label{eq:riccatiPS}
    \begin{aligned}
        P : = X_{11} - X_{12}X_{22}^{-1}X_{12}^\tr \succ 0, \qquad
        S : = Y_{11} - Y_{12}Y_{22}^{-1} Y_{12}^\tr \succ 0.
    \end{aligned}
\end{equation}
We further define
$
    T := Y_{22}^{-1} Y_{12}^\tr.
$
 By~\eqref{eq:partial_Ak}, we know that matrix $T$ is invertible, and
$$
T^{-1} = - X_{12}X_{22}^{-1}.
$$
Now, letting $ \frac{\partial J_n(\mK)}{\partial B_{\mK}} = 0$, from~\eqref{eq:partial_Bk}, we have \begin{equation} \label{eq:stationary_Bk}
    \begin{aligned}
         B_{\mK} &= -(X_{12}^\tr + Y_{22}^{-1} Y_{12}^\tr X_{11})C^\tr V^{-1}, \\
         &= -(X_{12}^\tr + T X_{11})C^\tr V^{-1} \\
         &= - T(X_{11} - X_{12}X_{22}^{-1}X_{12}^\tr)C^\tr V^{-1}, \\
         &= - TPC^\tr V^{-1}.
    \end{aligned}
\end{equation}
Similarly, from~\eqref{eq:partial_Ck}, we have
\begin{equation} \label{eq:stationary_Ck}
    \begin{aligned}
        C_{\mK} = -R^{-1}B^\tr (Y_{11}X_{12}X_{22}^{-1} + Y_{12}) = R^{-1}B^\tr S T^{-1}.
    \end{aligned}
\end{equation}

Furthermore, since $X_\mK$ is the solution to the Lyapunov equation~\eqref{eq:LyapunovX}, by plugging in the blocks of $X_{\mK}$ we get
\begin{subequations}
\begin{align}
0=\ & AX_{11}+X_{11}A+BC_{\mK}X_{12}^\tr + X_{12}C_{\mK}^\tr B_{\mK}^\tr + W,
\label{eq:minimal_stationary_Lyapunov_1}
\\
0=\ & AX_{12}+BC_{\mK}X_{22}+X_{11}C^\tr B_{\mK}^\tr + X_{12}A_{\mK}^\tr,
\label{eq:minimal_stationary_Lyapunov_2}
\\
0=\ &
A_{\mK}X_{22}+X_{22}A_{\mK}^\tr+B_{\mK}CX_{12}+X_{12}^\tr C^\tr B_{\mK}^\tr  + B_{\mK}VB_{\mK}^\tr.
\label{eq:minimal_stationary_Lyapunov_3}
\end{align}
\end{subequations}
%
%
Now, we have~\eqref{eq:minimal_stationary_Lyapunov_3} + $T\times$\eqref{eq:minimal_stationary_Lyapunov_2} leads to
$$
\begin{aligned}
    A_{\mK}X_{22}+X_{22}A_{\mK}^\tr+B_{\mK}CX_{12}+X_{12}^\tr C^\tr B_{\mK}^\tr  &+ B_{\mK}VB_{\mK}^\tr + \\
    & T(AX_{12} +  BC_{\mK} X_{22} + X_{11}C^\tr B_{\mK}^\tr +X_{12}A_{\mK}^\tr) =0,
\end{aligned}
$$
which is the same as
$$
\begin{aligned}
    A_{\mK}X_{22} + X_{22}A_{\mK}^\tr- &TPC^\tr V^{-1}CX_{12}  - X_{12}^\tr C^\tr V^{-1} CPT   + TPC^\tr V^{-1}CPT+ \\
    &T(AX_{12} + BR^{-1}B^\tr ST^{-1} X_{22} - X_{11}C^\tr V^{-1}CPT  +X_{12}A_{\mK}^\tr) =0.
\end{aligned}
$$
By the definition of $T$, we have $TX_{12} = - X_{22}$. Then, the equation above becomes
$$
\begin{aligned}
    A_{\mK}X_{22} - TPC^\tr V^{-1}CX_{12}- &X_{12}^\tr C^\tr V^{-1} CPT  + TPC^\tr V^{-1}CPT+ \\
    &T(AX_{12} + BR^{-1}B^\tr ST^{-1} X_{22} - X_{11}C^\tr V^{-1}CPT) =0,
\end{aligned}
$$
leading to
\begin{equation}\label{eq:stationary_Ak}
    \begin{aligned}
    A_{\mK} &=  TPC^\tr V^{-1}CX_{12}X_{22}^{-1} + X_{12}^\tr C^\tr V^{-1} CPTX_{22}^{-1}   - TPC^\tr V^{-1}CPTX_{22}^{-1} \\
    &\qquad \qquad \qquad \qquad \qquad \qquad - T(AX_{12} + BR^{-1}B^\tr ST^{-1} X_{22} - X_{11}C^\tr V^{-1}CPT)X_{22}^{-1} \\
    &= T(A - PC^\tr V^{-1}C - BR^{-1}B^\tr S)T^{-1}.
    \end{aligned}
\end{equation}

From~\eqref{eq:stationary_Bk},~\eqref{eq:stationary_Ck} and~\eqref{eq:stationary_Ak}, upon defining $K$ and $L$ in~\eqref{eq:LQG_KL}, it is easy to see that
the stationary points
are in the form of~\eqref{eq:stationary_points}. It remains to prove that $P$ and $S$ defined in~\eqref{eq:riccatiPS} are the unique positive definite solutions to
the Riccati equations~\eqref{eq:Riccati_P} and~\eqref{eq:Riccati_S}.

We multiply \eqref{eq:minimal_stationary_Lyapunov_3} by $T^{-1}$ on the left and by $T^{-\tr}$ on the right, and by noting that $B_{\mK}=-TPC^\tr V^{-1}$ and $T^{-1}=-X_{12}X_{22}^{-1}$, we get
$$
\begin{aligned}
0 =\ & X_{12}X_{22}^{-1}A_{\mK}X_{12}^\tr + X_{12} A_{\mK}^\tr X_{22}^{-1}X_{12}^\tr\\
& + PC^\tr V^{-1} CX_{12}X_{22}^{-1}X_{12}^\tr +
X_{12}X_{22}^{-1}X_{12}^\tr C^\tr V^{-1} C P  + PC^\tr V^{-1}CP.
\end{aligned}
$$
Since $P=X_{11}-X_{12}X_{22}^{-1}X_{12}^\tr$, we further get
\begin{equation}\label{eq:minimal_stationary_Lyapunov_2_conseq1}
0 = X_{12}X_{22}^{-1}A_{\mK}X_{12}^\tr+X_{12} A_{\mK}^\tr X_{22}^{-1}X_{12}^\tr
+ PC^\tr V^{-1} CX_{11} +
X_{11} C^\tr V^{-1} C P  - PC^\tr V^{-1}CP.
\end{equation}
Next, we multiply \eqref{eq:minimal_stationary_Lyapunov_2} by $-T^{-\tr}=X_{22}^{-1}X_{12}^\tr$ on the right and get
$$
0 =
AX_{12}X_{22}^{-1}X_{12}^\tr
+BC_{\mK}X_{12}^\tr
+X_{11}C^\tr V^{-1}C^\tr P
+X_{12}A_{\mK}^\tr X_{22}^{-1}X_{12}^\tr.
$$
By plugging this equality into \eqref{eq:minimal_stationary_Lyapunov_2_conseq1}, we get
$$
0 =
-AX_{12}X_{22}^{-1}X_{12}^\tr
-BC_{\mK}X_{12}^\tr
-X_{12}X_{22}^{-1}X_{12}^\tr A
-X_{12}C_{\mK}^\tr B^\tr
-PC^\tr V^{-1}CP.
$$
Then, we plug the above equality into \eqref{eq:minimal_stationary_Lyapunov_1} and get
$$
0 = A(X_{11}-X_{12}X_{22}^{-1}X_{12}^\tr)+(X_{11}-X_{12}X_{22}^{-1}X_{12}^\tr)A-PC^\tr V^{-1}CP+W,
$$
and since $P=X_{11}-X_{12}X_{22}^{-1}X_{12}^\tr$, we can see that $P$ satisfies the Riccati equation~\eqref{eq:Riccati_P}. Through similar steps, we can derive from \eqref{eq:LyapunovY} that $S$ satisfies the Riccati equation~\eqref{eq:Riccati_S}.

Finally, from classical control theory~\cite[Theorem 14.7]{zhou1996robust}, a globally optimal controller to the LQG problem~\eqref{eq:LQG_reformulation_KX} is given by~\eqref{eq:LQGstatespace}, and any similarity transformation leads to another equivalent controller with the same LQG cost. Therefore, any minimal stationary point, given by~\eqref{eq:stationary_points}, is globally optimal.
%
%
%
%
\end{proof}

The results in \cref{theo:stationary_points_globally_optimal} indicate that if the LQG problem~\eqref{eq:LQG_reformulation_KX} has a globally optimal solution in $\mathcal{C}_n$ that is also minimal, then the globally optimal controller is unique in $\mathcal{C}_n$ after taking a quotient with respect to similarity transformation. This is expected from the classical result that the globally optimal LQG controller is unique in the frequency domain~\cite[Theorem 14.7]{zhou1996robust}.

We note that \emph{minimal} stationary points are required in the proof of \cref{theo:stationary_points_globally_optimal}, as it guarantees that matrices~\eqref{eq:riccatiPS} are well-defined and the solutions~\eqref{eq:stationary_Bk} and~\eqref{eq:stationary_Ck} are unique. \cref{theo:stationary_points_globally_optimal} allows us to establish the following corollaries.
\begin{corollary}\label{corollary:non-minimal-globally-optimal}
The following statements are true:
\begin{enumerate}
\item     If $J_n(\mK)$ has a minimal stationary point in $\mathcal{C}_n$, then all its non-minimal stationary points $\mK \in \mathcal{C}_n$ are strictly suboptimal.
\item  If $J_n(\mK)$ has a non-minimal stationary point in $\mathcal{C}_n$ that is globally optimal, then all stationary points $\mK \in \mathcal{C}_n$ of $J_n(\mK)$ are non-minimal.
\end{enumerate}
\end{corollary}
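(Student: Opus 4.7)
The plan is to prove part~1) directly and then obtain part~2) by contraposition.

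For part~1), let $\mK^\diamond \in \mathcal{C}_n$ be any minimal stationary point. By \cref{theo:stationary_points_globally_optimal}, $\mK^\diamond$ is globally optimal and, up to a similarity transformation by some $T \in \mathrm{GL}_n$, coincides with the canonical LQG realization \eqref{eq:LQGstatespace}. Since $\mK^\diamond$ is controllable and observable of order $n$, its realization is minimal in the classical Kalman sense, so the associated transfer function $\mathbf{K}^\diamond(s) = C_{\mK^\diamond}(sI - A_{\mK^\diamond})^{-1}B_{\mK^\diamond}$ has McMillan degree exactly $n$. Now let $\hat{\mK} \in \mathcal{C}_n$ be any non-minimal stationary point. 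By the Kalman decomposition (see \cref{App:control_basics}), $\hat{\mK}$ admits a realization of dimension strictly less than $n$ with the same transfer function, so $\hat{\mathbf{K}}$ has McMillan degree at most $n-1$; in particular $\hat{\mathbf{K}} \neq \mathbf{K}^\diamond$. Because the LQG cost depends only on the closed-loop transfer function of the controller (the fact underlying \cref{lemma:Jn_invariance}) and the optimal LQG controller is unique in the frequency domain \cite[Theorem 14.7]{zhou1996robust}, only the optimal transfer function attains the LQG optimum; hence $J_n(\hat{\mK}) > J_n(\mK^\diamond)$ and $\hat{\mK}$ is strictly suboptimal.

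Part~2) is then immediate. Suppose some non-minimal $\tilde{\mK} \in \mathcal{C}_n$ is a globally optimal stationary point. If a minimal stationary point also existed in $\mathcal{C}_n$, part~1) would force $\tilde{\mK}$ to be strictly suboptimal, contradicting the assumed optimality of $\tilde{\mK}$. Hence every stationary point of $J_n$ in $\mathcal{C}_n$ must be non-minimal.

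The only delicate step is the strict inequality $J_n(\hat{\mK}) > J_n(\mK^\diamond)$. It hinges on two classical ingredients: the LQG cost depends purely on the closed-loop transfer function, and the LQG optimizer is unique as a transfer function. Given these, the McMillan-degree mismatch between $\mathbf{K}^\diamond$ (degree $n$) and $\hat{\mathbf{K}}$ (degree $<n$) rules out equality of costs, and the remainder of the argument is pure bookkeeping.
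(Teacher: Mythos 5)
Your proof is correct and follows the route the paper intends: \cref{theo:stationary_points_globally_optimal} makes every minimal stationary point a similarity transform of the (hence minimal, McMillan degree $n$) Riccati controller, a non-minimal controller's transfer function has McMillan degree below $n$, and uniqueness of the frequency-domain optimizer \cite[Theorem 14.7]{zhou1996robust} together with the transfer-function invariance of the cost forces strict suboptimality, with part 2) following by contraposition. The paper leaves this argument implicit after \cref{theo:stationary_points_globally_optimal}; your write-up correctly supplies the missing details.
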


We have already seen LQG cases with non-minimal stationary points that are strictly suboptimal in \cref{example:strict_saddle} and \cref{example:saddle_point_vanishing_Hessian}. It should be noted that, even with \cref{assumption:stabilizability}, the LQG problem~\eqref{eq:LQG_reformulation_KX} might have no minimal stationary points, i.e., all the solutions $\mK$ for~\eqref{eq:gradient_zero} may be non-minimal; this happens if the controller from the Ricatti equations~\eqref{eq:Riccati} is not minimal.

\begin{example}[Non-minimal globally optimal controllers]
\label{example:non-minimal-LQG}
Here we give an example from \cite{yousuff1984note}, whose optimal LQG controller does not have a minimal realization in $\mathcal{C}_n$.
Consider the linear system \eqref{eq:Dynamic} with
$$
A=\begin{bmatrix}
0 & -1 \\ 1 & 0
\end{bmatrix},
\qquad
B = \begin{bmatrix}
1 \\ 0
\end{bmatrix},
\qquad
C = \begin{bmatrix}
1 & -1
\end{bmatrix},
\qquad
W = \begin{bmatrix}
1 & -1 \\ -1 & 16
\end{bmatrix},
\qquad
V = 1,
$$
and let the LQG cost be defined by
$$
Q=\begin{bmatrix}
4 & 0 \\ 0 & 0
\end{bmatrix},
\qquad
R = 1.
$$
This LQG problem satisfies \cref{assumption:stabilizability}. The positive definite solutions to the Riccati equations \eqref{eq:Riccati} are given by
$$
P=\begin{bmatrix}
1 & 0 \\ 0 & 4
\end{bmatrix},
\quad
S
=\begin{bmatrix}
2 & 0 \\ 0 & 2
\end{bmatrix},
$$
and the globally optimal controller is given by
\begin{equation} \label{eq:LQGcontroller_Nonminimal_case}
A_{\mK} = \begin{bmatrix}
-3 & 0 \\
5 & -4
\end{bmatrix},
\quad
B_{\mK}=L
=
\begin{bmatrix}
1 \\ -4
\end{bmatrix},
\quad
C_{\mK}=-K =
\begin{bmatrix}
-2 & 0
\end{bmatrix}.
\end{equation}
It is not hard to see that $(C_{\mK},A_{\mK})$ is not observable. Therefore, the controller obtained from the Riccati equations is not minimal in this example. Consequently, by \cref{corollary:non-minimal-globally-optimal}, all stationary points of $J_n$ are not minimal for this example.

In this case, the globally optimal controllers in $\mathcal{C}_n$ are not all connected by similarity transformations. For example, it can be verified that the following two non-minimal controllers are both globally optimal:
     \begin{equation*}
         \mK_1 = \begin{bmatrix}
            0 & -2 & 0\\
            1 & -3& 0\\
            -4 & 5 & -4
         \end{bmatrix}, \qquad \mK_2 = \begin{bmatrix}
                 0 & -2 & 0\\
            1 & -3& 0\\
            0 & 0 & -1
         \end{bmatrix},
     \end{equation*}
     but there exists no similarity transformation between $\mK_1$ and $\mK_2$ since $\begin{bmatrix}
     -3 & 0 \\ 5 & -4
     \end{bmatrix}$ and $\begin{bmatrix}-3 & 0 \\ 0 & -1\end{bmatrix}$ have different sets of eigenvalues (recall that similarity transformation does not change eigenvalues).
     \hfill\qed
\end{example}

If a sequence of gradient iterates converges to a point, \cref{theo:stationary_points_globally_optimal} also allow us to check whether the limit point is a globally optimal solution to the LQG problem.

\begin{corollary} \label{corollary:Gradient_Descent_Convergence}
    Consider a gradient descent algorithm $\mK_{t+1} = \mK_{t} - \alpha_t \nabla J_n(\mK_t)$ for the LQG problem~\eqref{eq:LQG_reformulation_KX}, where $\alpha_t$ is a step size. Suppose the iterates $\mK_{t}$ converge to a point $\mK^*$, \emph{i.e.}, $\lim_{t\rightarrow \infty} \mK_t = \mK^*$. If $\mK^*$ is a controllable and observable controller, then it is globally optimal.
\end{corollary}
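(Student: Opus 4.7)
The plan is to reduce \cref{corollary:Gradient_Descent_Convergence} to \cref{theo:stationary_points_globally_optimal}. Once it is shown that $\mK^\ast$ is a stationary point of $J_n$ lying in $\mathcal{C}_n$, the hypothesis that $\mK^\ast$ is controllable and observable makes it a \emph{minimal} stationary point, and \cref{theo:stationary_points_globally_optimal} then forces it to be globally optimal, with the closed-form structure given in \eqref{eq:stationary_points}. All the work therefore reduces to establishing $\nabla J_n(\mK^\ast)=0$.

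First, I would argue that $\mK^\ast$ must lie in the open set $\mathcal{C}_n$: the iteration $\mK_{t+1} = \mK_t - \alpha_t \nabla J_n(\mK_t)$ only makes sense when every iterate is stabilizing, and if $\mK_t$ approached $\partial \mathcal{C}_n$ then $\nabla J_n(\mK_t)$ would typically blow up and the sequence could not converge to a controller with a finite LQG cost. This would be stated as a standing assumption on the trajectory. Then \cref{lemma:LQG_cost_analytical} yields that $J_n$ is real analytic on $\mathcal{C}_n$, so $\nabla J_n$ is continuous at $\mK^\ast$ and $\nabla J_n(\mK_t) \to \nabla J_n(\mK^\ast)$.

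Next, from the update rule, $\mK_{t+1}-\mK_t = -\alpha_t \nabla J_n(\mK_t) \to 0$ because a convergent sequence is Cauchy. Under a mild regularity condition on the step size (for instance, $\alpha_t$ bounded below by some $\alpha_{\min}>0$, which is the standard regime in which gradient descent is analyzed), this forces $\nabla J_n(\mK_t) \to 0$, and continuity of $\nabla J_n$ gives $\nabla J_n(\mK^\ast)=0$. The minimality of $\mK^\ast$ then lets us invoke \cref{theo:stationary_points_globally_optimal} directly, and we conclude that $\mK^\ast$ is globally optimal to the LQG problem.

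The main obstacle is the implication $\alpha_t \nabla J_n(\mK_t)\to 0 \Rightarrow \nabla J_n(\mK_t)\to 0$: the corollary as stated does not quantitatively control the step sizes, and in principle $\alpha_t$ could shrink so fast that the iterates converge while the gradient stays bounded away from zero. A fully rigorous version would therefore need to impose a step-size assumption (constant $\alpha_t$, Armijo-type backtracking, diminishing but non-summable $\alpha_t$, etc.), or else weaken the conclusion to hold along a subsequence on which the gradients do vanish. Outside of that subtlety, the corollary is a direct consequence of \cref{theo:stationary_points_globally_optimal} and does not require any new landscape analysis beyond what has already been developed.
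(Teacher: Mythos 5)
Your proposal takes essentially the same route as the paper: the corollary is stated there without a separate proof, as an immediate consequence of \cref{theo:stationary_points_globally_optimal} once one accepts that the limit of the convergent gradient-descent iterates is a (minimal) stationary point in $\mathcal{C}_n$. The step-size caveat you raise --- that $\mK_{t+1}-\mK_t\to 0$ does not force $\nabla J_n(\mK_t)\to 0$ without some lower bound or line-search condition on $\alpha_t$ --- is a genuine and worthwhile observation, and the paper itself concedes in the remark following the corollary that the conditions under which the iterates converge to a stationary point (given that $J_n$ is only analytic on $\mathcal{C}_n$ and is non-coercive) are left open; so your flagged assumption should indeed be read as implicit in the statement rather than as something your argument is missing.
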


\begin{remark}
\Cref{corollary:Gradient_Descent_Convergence} proposes checking the controllability and observability of $\mK^\ast$ for verifying global optimality when the gradient descent iterates converge to $\mK^\ast$. In practice, the limit $\mK^\ast$ cannot be directly computed, and one tentative approach to check its controllability (observability) is to check whether the smallest singular value of the controllability (observability) matrix of the last iterate $\mK_T$ is sufficiently bounded away from zero. A rigorous justification of this approach will be of interest for future work.
\end{remark}

\begin{remark}
Note that~\Cref{corollary:Gradient_Descent_Convergence} does not discuss under what conditions will the gradient descent iterates converge. The results in~\cite{absil2005convergence} guarantee that if the cost function is analytic over the whole Euclidean space, then the gradient descent with step sizes satisfying the Wolfe conditions will either converge to a stationary point or diverge to infinity. In our case, however, the cost function $J_n(\mK)$ is only analytic over a subset $\mathcal{C}_n \subset \mathcal{V}_n$. Furthermore, $J_n(\mK)$ is not coercive as shown in Example~\ref{example:non-coercivity}. Whether the gradient descent with properly chosen step sizes can converge to a stationary point of $J_n(\mK)$ requires further investigation. 

\end{remark}





\subsection{Hessian of $J_n(\mK)$ at Minimal Stationary Points} \label{subsection:hessaion_LQG_minimal}

Finally, we turn to characterizing the second-order behavior of $J_n$ around a globally optimal controller $\mK^\ast$. Throughout this subsection, we will assume that $\mK^\ast$ is controllable and observable. We focus on the eigenvalues and eigenspaces of the Hessian $\operatorname{Hess}_{\,\mK^\ast}$. The null space of $\operatorname{Hess}_{\,\mK^\ast}$ is
$$
\operatorname{null} \operatorname{Hess}_{\,\mK^\ast}
=\{x\in\mathcal{V}_n \mid \operatorname{Hess}_{\,\mK^\ast}(x,y)=0,\ \forall y\in\mathcal{V}_n\}.
$$
The following lemma shows that the tangent space $\mathcal{TO}_{\mK^\ast}$ is a subspace of the null space of $\operatorname{Hess}_{\,\mK^\ast}$, which is a direct corollary of \cite[Theorem 2]{li2019symmetry}.
\begin{lemma}\label{lemma:null_hessian_tangent}
Suppose $\mK^\ast$ is controllable and observable. Then
$$
\mathcal{TO}_{\mK^\ast}\subseteq\operatorname{null}\operatorname{Hess}_{\,\mK^\ast}.
$$
\end{lemma}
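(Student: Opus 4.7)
The plan is to exploit the similarity-transformation invariance of $J_n$ (\cref{lemma:Jn_invariance}) together with the first-order optimality of $\mK^\ast$. Since $\mK^\ast$ is a globally optimal (and in this subsection, minimal) controller, \cref{theo:stationary_points_globally_optimal} (or simply global optimality over the open set $\mathcal{C}_n$) gives $\nabla J_n(\mK^\ast)=0$, so $\mK^\ast$ is a stationary point. The argument I would use is the standard one that says: if a smooth function is invariant under a smooth Lie group action and vanishes to first order at a point, then its Hessian vanishes identically along the tangent space of the orbit through that point.

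Concretely, I would first invoke \cref{proposition:tangent_orbit} to represent an arbitrary $v\in\mathcal{TO}_{\mK^\ast}$ as $v=L_H(\mK^\ast)$, where
$$
L_H(\mK)\coloneqq \begin{bmatrix} 0 & 0\\ 0 & H\end{bmatrix}\mK - \mK\begin{bmatrix} 0 & 0\\ 0 & H\end{bmatrix},\qquad H\in\mathbb{R}^{n\times n},
$$
so that $v$ is the velocity at $t=0$ of the curve $t\mapsto \mathscr{T}_n(I+tH,\mK^\ast)$. To establish $v\in\operatorname{null}\operatorname{Hess}_{\mK^\ast}$, I would fix an arbitrary $y\in\mathcal{V}_n$ and introduce the two-parameter perturbation
$$
F(t,s) \;=\; J_n\!\left(\mathscr{T}_n(I+tH,\,\mK^\ast+sy)\right),
$$
which is well defined and $C^\infty$ on a neighborhood of $(0,0)$ because $\mathcal{C}_n$ is open and $\mathscr{T}_n$ is a diffeomorphism (\cref{lemma:Cq_invariant}). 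By \cref{lemma:Jn_invariance}, $F(t,s)=J_n(\mK^\ast+sy)$ for all $(t,s)$ in this neighborhood, so $F$ is independent of $t$, and in particular $\partial_s\partial_t F(0,0)=0$.

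The key calculation is to compute this mixed partial directly. Expanding to first order in $t$, one has $\mathscr{T}_n(I+tH,\mK^\ast+sy)=\mK^\ast+sy+t\,L_H(\mK^\ast+sy)+O(t^2)$, hence
$$
\partial_t F(t,s)\big|_{t=0}
\;=\; \bigl\langle \nabla J_n(\mK^\ast+sy),\; L_H(\mK^\ast+sy)\bigr\rangle,
$$
where $\langle\cdot,\cdot\rangle$ is the Frobenius inner product on $\mathcal{V}_n$. Differentiating in $s$ at $s=0$ and using linearity of $L_H$ yields
$$
0 \;=\; \partial_s\partial_t F(0,0) \;=\; \operatorname{Hess}_{\mK^\ast}\!\bigl(y,\;L_H(\mK^\ast)\bigr) \;+\; \bigl\langle \nabla J_n(\mK^\ast),\; L_H(y)\bigr\rangle.
$$
The stationarity $\nabla J_n(\mK^\ast)=0$ kills the second term, leaving $\operatorname{Hess}_{\mK^\ast}(y,v)=0$. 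Since $y\in\mathcal{V}_n$ was arbitrary and $\operatorname{Hess}_{\mK^\ast}$ is symmetric, $v\in\operatorname{null}\operatorname{Hess}_{\mK^\ast}$, as required.

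There is no substantive obstacle; the only care needed is bookkeeping in the mixed-partial expansion to separate the Hessian term from the gradient term, and this is exactly where global optimality of $\mK^\ast$ is used. Minimality of $\mK^\ast$ enters only through \cref{proposition:sim_trans_submanifold} and \cref{proposition:tangent_orbit}, which guarantee that $\mathcal{TO}_{\mK^\ast}$ is a genuine $n^2$-dimensional subspace parameterized as above; the Hessian identity itself would hold for any $\mK^\ast$ that is both stationary and in the domain of the similarity action.
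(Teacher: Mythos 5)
Your proof is correct and is essentially the argument the paper relies on: the paper establishes this lemma by citing \cite[Theorem 2]{li2019symmetry}, and your mixed-partials computation of $\partial_s\partial_t\, J_n\!\left(\mathscr{T}_n(I+tH,\,\mK^\ast+sy)\right)$ is precisely the standard symmetry-plus-stationarity argument that cited result encapsulates, made self-contained. You also correctly isolate the two hypotheses actually in play: stationarity of $\mK^\ast$ (which follows from its global optimality over the open set $\mathcal{C}_n$) is what kills the $\bigl\langle \nabla J_n(\mK^\ast),\, L_H(y)\bigr\rangle$ term, while minimality enters only through \cref{proposition:sim_trans_submanifold} and \cref{proposition:tangent_orbit} to guarantee that $\mathcal{TO}_{\mK^\ast}$ is well defined and parameterized by $H\mapsto L_H(\mK^\ast)$.
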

This lemma can be viewed as a local version of \cref{lemma:Jn_invariance} indicating the invariance of $J_n$ along the orbit $\mathcal{O}_{\mK}$. Consequently, the dimension of the null space of $\operatorname{Hess}_{\mK^\ast}$ is at least $q^2$.
On the other hand, we also have the following result.

\begin{lemma}\label{lemma:positive_orthogonal_tangent}
Suppose $\mK^\ast$ is controllable and observable, and let $\Delta\in\mathcal{TO}_{\mK^\ast}^\perp$. Then for all sufficiently small $t>0$,
$$
J_n(\mK^\ast+t\Delta)-J_n(\mK^\ast)>0.
$$
\end{lemma}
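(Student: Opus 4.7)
The plan is to reduce the claim to a one-variable problem along the ray $\mK^\ast + t\Delta$ and then exclude the only possible failure mode by invoking the classification of minimal stationary points from \cref{theo:stationary_points_globally_optimal}. Set $f(t) \coloneqq J_n(\mK^\ast + t\Delta) - J_n(\mK^\ast)$, which by \cref{lemma:LQG_cost_analytical} and the openness of $\mathcal{C}_n$ is a real-analytic function on some open interval $(-\varepsilon,\varepsilon)$. Since $\mK^\ast$ is globally optimal, we have $f \geq 0$ on this interval and $f(0) = 0$; the target is to show $f(t) > 0$ for all sufficiently small $t>0$, which requires the standing assumption $\Delta \neq 0$ for the strict inequality to be meaningful.

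Suppose instead that $f(t_k) = 0$ for some sequence $t_k \to 0^+$. Since $f$ is real analytic on the connected interval $(-\varepsilon,\varepsilon)$ and its zero set accumulates at $0$, the identity theorem for real-analytic functions forces $f \equiv 0$, so every $\mK^\ast + t\Delta$ with $|t|<\varepsilon$ is globally optimal. Because controllability and observability are open conditions on $\mathcal{V}_n$ and $\mK^\ast$ is minimal, we may shrink $\varepsilon$ so that minimality is preserved along the entire segment. Applying \cref{theo:stationary_points_globally_optimal}, every minimal globally optimal controller has the form $\mathscr{T}_n(T,\bar{\mK})$ for some $T \in \mathrm{GL}_n$, where $\bar{\mK}$ denotes the canonical LQG controller from~\eqref{eq:LQGstatespace}. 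In particular $\mK^\ast$ itself is a similarity transform of $\bar{\mK}$, hence $\mathcal{O}_{\bar{\mK}} = \mathcal{O}_{\mK^\ast}$, and consequently the smooth curve $\gamma(t) \coloneqq \mK^\ast + t\Delta$ lies entirely in the submanifold $\mathcal{O}_{\mK^\ast}$ (\cref{proposition:sim_trans_submanifold}) for $|t|<\varepsilon$. Its velocity $\gamma'(0) = \Delta$ must therefore lie in $\mathcal{TO}_{\mK^\ast}$, and combined with $\Delta \in \mathcal{TO}_{\mK^\ast}^\perp$ this forces $\Delta = 0$, contradicting $\Delta \neq 0$.

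I expect the main obstacle to be the third step, where the \emph{full} characterization of minimal globally optimal controllers in \cref{theo:stationary_points_globally_optimal} is needed (not just the existence of one), together with the observation that minimality is stable under small perturbations, in order to trap the hypothetical analytic ray of global minima inside the single orbit $\mathcal{O}_{\mK^\ast}$. Once the curve is confined to this submanifold, the tangent-space contradiction against $\mathcal{TO}_{\mK^\ast}^\perp$ --- whose explicit description appears in \cref{proposition:tangent_orbit} --- is immediate. The analytic-continuation step is standard, but one should remember that $f$ is only defined while $\mK^\ast + t\Delta$ remains in the open set $\mathcal{C}_n$; this is not an issue for small $|t|$, but is worth stating explicitly.
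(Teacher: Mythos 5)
Your proof is correct, but it takes a noticeably different route from the paper's. The paper argues at a single point: from a sequence $t_j\to 0^+$ with $J_n(\mK^\ast+t_j\Delta)=J_n(\mK^\ast)$ it asserts (invoking $\Delta\perp\mathcal{TO}_{\mK^\ast}$) that some $\mK^\ast+t_j\Delta$ must lie off the orbit $\mathcal{O}_{\mK^\ast}$, and then derives a contradiction from the uniqueness of minimal realizations up to similarity transformation together with the uniqueness of the optimal LQG transfer function. You instead upgrade the accumulating zeros to $f\equiv 0$ on a whole interval via real-analyticity (\cref{lemma:LQG_cost_analytical}) and the identity theorem, trap the resulting segment of minimal global optima inside the single orbit using the classification in \cref{theo:stationary_points_globally_optimal} (correctly noting that an interior global minimizer is a stationary point and that minimality is an open condition), and only then deploy the orthogonality hypothesis to force $\Delta\in\mathcal{TO}_{\mK^\ast}\cap\mathcal{TO}_{\mK^\ast}^\perp=\{0\}$. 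Your version leans on heavier internal machinery (\cref{theo:stationary_points_globally_optimal} rather than the frequency-domain uniqueness facts), but it is arguably tighter: the paper's claim that some $\mK^\ast+t_j\Delta$ leaves the orbit is stated tersely and really requires the secant-limit/tangent-vector reasoning that your confinement-to-the-orbit step makes explicit and immediate. Both arguments, as you rightly flag, need $\Delta\neq 0$ and the standing assumption of this subsection that $\mK^\ast$ is globally optimal; without the latter the statement fails for a minimal non-stationary $\mK^\ast$.
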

\begin{proof}
We prove by contradiction. Suppose for any sufficiently small $\delta>0$, there always exists $t\in(0,\delta)$ such that $J_n(\mK^\ast+t\Delta)=J_n(\mK^\ast)$. Then we can find a positive sequence $(t_j)_{j\geq 1}$ such that $t_j\rightarrow 0$ and $J_n(\mK^\ast+t_j\Delta)=J_n(\mK^\ast)$. Denote $\mK_j=\mK^\ast+t_j\Delta$. Since $\Delta$ is orthogonal to $\mathcal{TO}_{\mK^\ast}$, there must exists some $j\geq 1$ such that $\mK_j\notin \mathcal{TO}_{\mK^\ast}$. By \cite[Theorem 3.17]{zhou1996robust}, we can see that the transfer function of $\mK_j$ will be different from the transfer function of $\mK_j$. Then by the uniqueness of the transfer function solution to the LQG problem, $\mK_j$ cannot be a global minimum of $J_n$, contradicting $J_n(\mK_j)=J_n(\mK^\ast)$.
\end{proof}

Combining the observations from~\Cref{lemma:null_hessian_tangent} and~\ref{lemma:positive_orthogonal_tangent}, we can see that, while the Hessian $\operatorname{Hess}_{\,\mK^\ast}$ is degenerate and its null space has a nontrivial subspace $\mathcal{TO}_{\mK^\ast}$, the degeneracy associated with $\mathcal{TO}_{\mK^\ast}$ does not cause much trouble for optimizing $J_n$, as the directions in $\mathcal{TO}_{\mK^\ast}$ correspond to similarity transformations that lead to other globally optimal controllers, while along the directions orthogonal to $\mathcal{TO}_{\mK^\ast}$, the optimal controller of $J_n$ is locally unique.

We are therefore interested in the behavior of $\operatorname{Hess}_{\,\mK^\ast}$ restricted to the subspace $\mathcal{TO}_{\mK^\ast}^\perp$. Specifically, we let $\operatorname{rcond}_{\,\mK^\ast}$ denote the reciprocal condition number of $\operatorname{Hess}_{\,\mK^\ast}$ restricted to the subspace $\mathcal{TO}_{\mK^\ast}^\perp$, i.e.,
\begin{equation} \label{eq:reciprocal_condition_number}
\operatorname{rcond}_{\mK^\ast}
\coloneqq
\frac{\min_{\Delta\perp \mathcal{TO}_{\mK^\ast}} \operatorname{Hess}_{\,\mK^\ast}(\Delta,\Delta)/\|\Delta\|_F^2}{\max_{\Delta\perp \mathcal{TO}_{\mK^\ast}} \operatorname{Hess}_{\,\mK^\ast}(\Delta,\Delta)/\|\Delta\|_F^2}.
\end{equation}
Intuitively, if $\operatorname{rcond}_{\mK^\ast}$ is bounded away from zero, then we can expect gradient-based methods to achieve good local convergence behavior for optimizing $J_n$. However, we give an explicit example below showing that $\operatorname{rcond}_{\mK^\ast}$ can be arbitrarily bad even if the original plant seems entirely normal.

\begin{example}
\label{example:bad_Hessian}
Let $\epsilon>0$ be arbitrary, and let
$$
A=\frac{3}{2}\begin{bmatrix}
-1 & 0 \\ 0 & -1-\epsilon
\end{bmatrix},
\qquad
B = \begin{bmatrix}
1 \\ 1+\epsilon
\end{bmatrix},
\qquad
C = \begin{bmatrix}
1 & 1
\end{bmatrix},
$$
and
$$
Q=\begin{bmatrix}
4 & 1 \\ 1 & 4
\end{bmatrix},
\qquad
W=\begin{bmatrix}
4 & 1+\epsilon \\ 1+\epsilon & 4(1+\epsilon)^2
\end{bmatrix},
\qquad
V=R = 1.
$$
For this plant, the positive definite solutions to the Riccati equations \eqref{eq:Riccati} are given by
$$
P = \begin{bmatrix}
1 & 0 \\ 0 & 1+\epsilon
\end{bmatrix},
\qquad
S = \begin{bmatrix}
1 & 0 \\ 0 & \frac{1}{1+\epsilon}
\end{bmatrix},
$$
and we have
$$
K = R^{-1}B^\tr S
=\begin{bmatrix}
1 & 1
\end{bmatrix},
\qquad
L = PC^\tr V^{-1}
=\begin{bmatrix}
1 \\ 1+\epsilon
\end{bmatrix}.
$$
The optimal controller $\mK^\ast$ is then given by
$$
\mK^\ast=
\begin{bmatrix}
0 & -K \\ L & A-BK-LC
\end{bmatrix}
=\begin{bmatrix}
0 & -1 & -1 \\
1 & -\frac{7}{2} & -2 \\
1+\epsilon & -2(1+\epsilon) & -\frac{7}{2}(1+\epsilon)
\end{bmatrix}.
$$
It can be checked that the optimal controller provided by the Riccati equations is controllable and observable when $\epsilon\neq 0$. In \cref{proposition:Hessian_example}, we provide an asymptotic upper bound on the reciprocal condition number $\operatorname{rcond}_{\mK^\ast}$. We also provide numerical results on $\operatorname{Hess}_{\mK^\ast}$ for $\epsilon\in[0.002,0.5]$ in \cref{fig:Hessian_example}.
It can be seen that the upper bound~\eqref{eq:example_rcond_upperbound} on $\operatorname{rcond}_{\mK^\ast}$ is on the order of $O(\epsilon^4)$, indicating that $\operatorname{rcond}_{\mK^\ast}$ degrades rapidly as $\epsilon$ approaches zero. Moreover, it can be numerically checked via \cref{lemma:gradient_LQG_Jn} that, even if we set $\epsilon=0.5$, the reciprocal condition number $\operatorname{rcond}_{\mK^\ast}$ is still below $1.7\times 10^{-6}$. On the other hand, if we plug in $\epsilon=0.5$, the resulting plant's parameters as well as the controllability and observability matrices
$$
\begin{bmatrix}
B & AB
\end{bmatrix}
=\begin{bmatrix}
1 & -1.5 \\
1.5 & -3.375
\end{bmatrix},
\qquad
\begin{bmatrix}
C \\ CA^\tr
\end{bmatrix}
=\begin{bmatrix}
1 & 1 \\
-1.5 & -2.25
\end{bmatrix}
$$
seem entirely normal.\hfill\qed
\end{example}

\begin{figure}[t]
\centering
\begin{subfigure}{.32\linewidth}
\centering
\includegraphics[width=\linewidth]{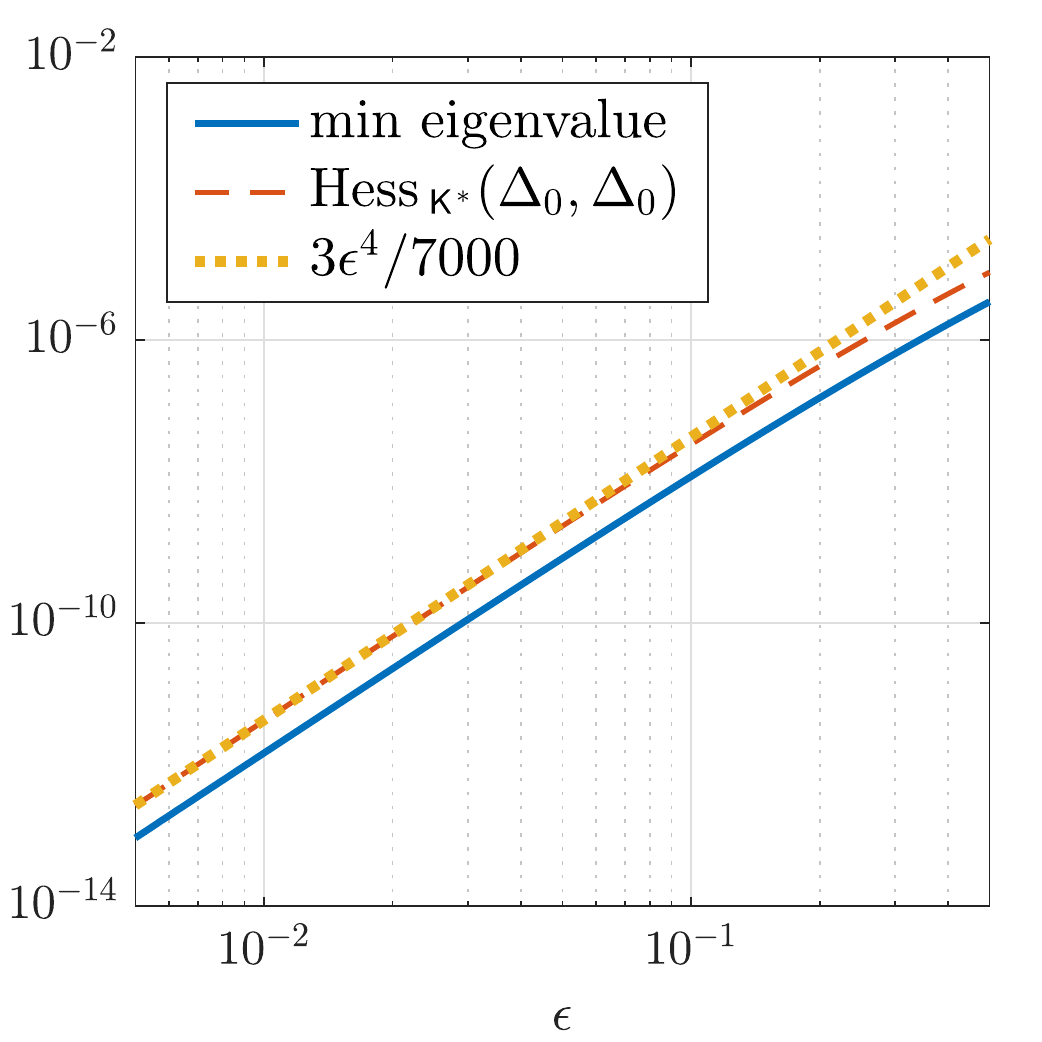}
\caption{}
\end{subfigure}
\begin{subfigure}{.32\linewidth}
\centering
\includegraphics[width=\linewidth]{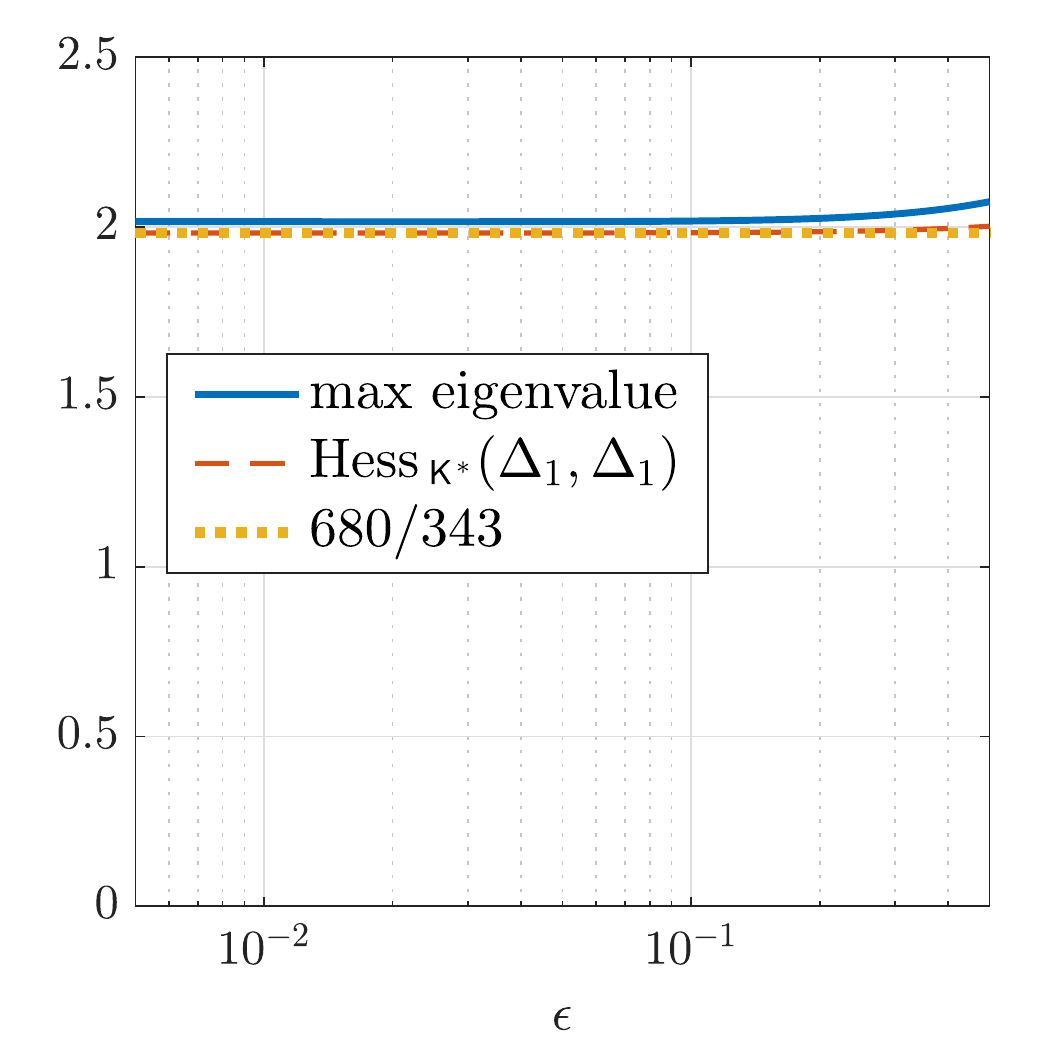}
\caption{}
\end{subfigure}
\begin{subfigure}{.32\linewidth}
\centering
\includegraphics[width=\linewidth]{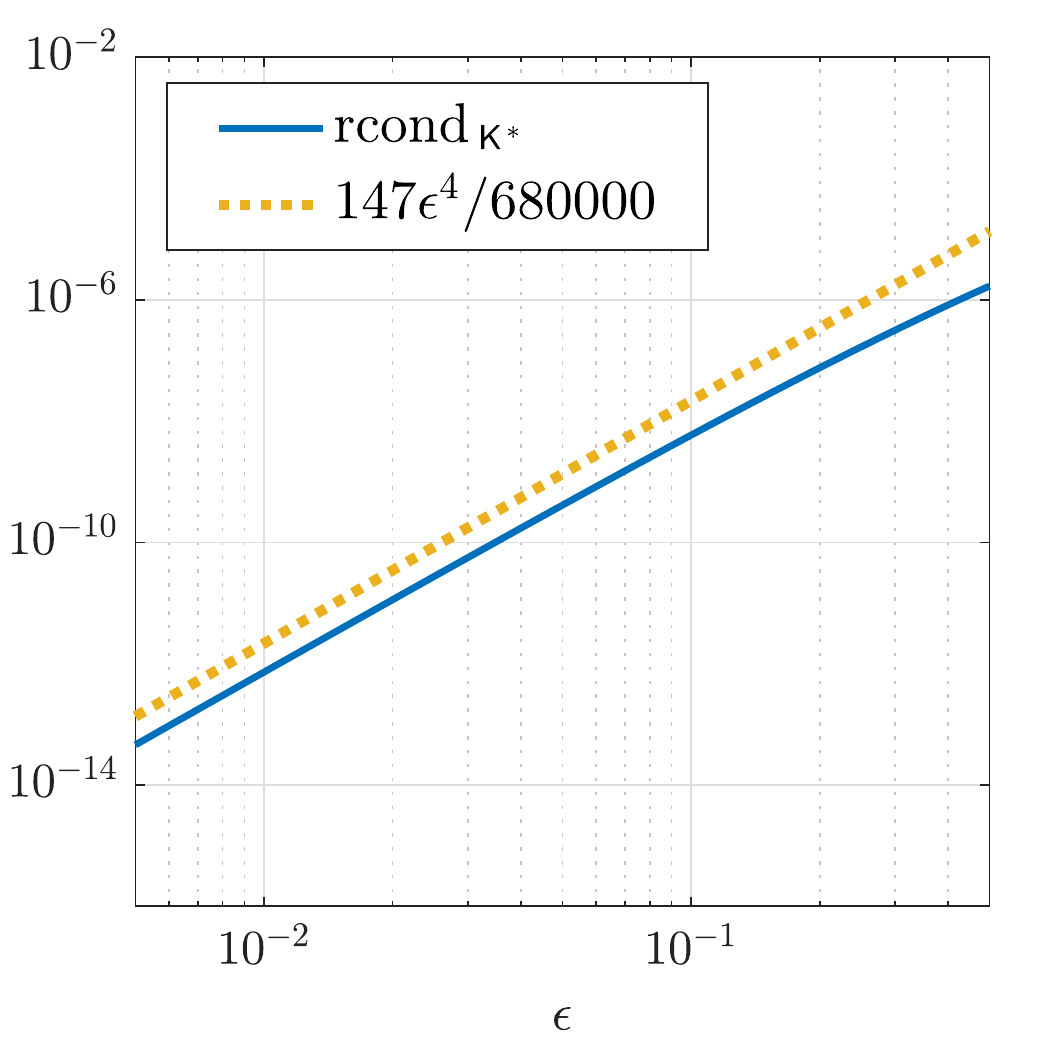}
\caption{}
\end{subfigure}
\caption{Numerical results on the behavior of $\operatorname{Hess}_{\,\mK^\ast}$ in~\cref{example:bad_Hessian}: (a) The minimum eigenvalue of $\operatorname{Hess}_{\mK^\ast}$ restricted on $\mathcal{TO}_{\mK^\ast}^\perp$, the value of $\operatorname{Hess}_{\mK^\ast}(\Delta_0,\Delta_0)$, and the asymptotic upper bound given by \eqref{eq:example_mineig_upperbound}. (b) The maximum eigenvalue of $\operatorname{Hess}_{\mK^\ast}$ restricted on $\mathcal{TO}_{\mK^\ast}^\perp$, the value of $\operatorname{Hess}_{\mK^\ast}(\Delta_1,\Delta_1)$, and the asymptotic lower bound given by \eqref{eq:example_maxeig_upperbound}. (c) The reciprocal condition number $\operatorname{rcond}_{\mK^\ast}$ and its asymptotic upper bound given by \eqref{eq:example_rcond_upperbound}. }
\label{fig:Hessian_example}
\end{figure}

\begin{theorem}\label{proposition:Hessian_example}
Consider the LQG problem in \cref{example:bad_Hessian}. Let $\epsilon>0$ be arbitrary. Let
$$
\Delta_0
=\begin{bmatrix}
0 & 0 & 0 \\
0 & -1/2 & 1/2 \\
0 & 1/2 & -1/2
\end{bmatrix},
\qquad
\Delta_1
=\begin{bmatrix}
0 & -1/2 & -1/2 \\
1/2 & 0 & 0 \\
1/2 & 0 & 0
\end{bmatrix}.
$$
Then, as $\epsilon\rightarrow 0$, we have
\begin{align*}
\operatorname{Hess}_{\,\mK^\ast}(\Delta_0,\Delta_0)
=\ &
\left.\frac{d^2 J(\mK^\ast+t\Delta_0)}{dt^2}\right|_{t=0}
=\frac{3}{7000}\epsilon^4+o(\epsilon^4), \\
\operatorname{Hess}_{\,\mK^\ast}(\Delta_1,\Delta_1)
=\ &
\left.\frac{d^2 J(\mK^\ast+t\Delta_1)}{dt^2}\right|_{t=0}
=
\frac{680}{343}
+o(1),
\end{align*}
and
$$
\left\|\operatorname{Proj}_{\mathcal{TO}_{\mK^\ast}}[\Delta_0]\right\|_F
=O(\epsilon).
$$
Consequently, as $\epsilon\rightarrow 0$,
\begin{subequations}
\begin{align}
\min_{\Delta\perp \mathcal{TO}_{\mK^\ast}}
\frac{\operatorname{Hess}_{\mK^\ast}(\Delta,\Delta)}{\|\Delta\|_F^2}
\leq\ &
\frac{\operatorname{Hess}_{\mK^\ast}(\Delta_0,\Delta_0)}{\|\Delta_0\|_F^2-\left\|\mathcal{P}_{\mathcal{TO}_{\mK^\ast}}[\Delta_0]\right\|_F^2}
=
\frac{3}{7000}\epsilon^4+o(\epsilon^4),
\label{eq:example_mineig_upperbound}\\
\max_{\Delta\perp \mathcal{TO}_{\mK^\ast}}
\frac{\operatorname{Hess}_{\mK^\ast}(\Delta,\Delta)}{\|\Delta\|_F^2}
\geq\ &
\frac{\operatorname{Hess}_{\mK^\ast}(\Delta_1,\Delta_1)}{\|\Delta_1\|_F^2}
=
\frac{680}{343}+o(1),
\label{eq:example_maxeig_upperbound}
\end{align}
and the reciprocal condition number of $\operatorname{Hess}_{\,\mK^\ast}$ restricted on $\mathcal{TO}_{\mK^\ast}^\perp$ can be upper bounded by
\begin{equation}\label{eq:example_rcond_upperbound}
\operatorname{rcond}_{\mK^\ast}
\leq \frac{147}{680000}\epsilon^4+o(\epsilon^4)
\approx 2.16\times 10^{-4}\cdot\epsilon^4+o(\epsilon^4).
\end{equation}
\end{subequations}
\end{theorem}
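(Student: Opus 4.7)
The plan is to compute everything needed from \cref{lemma:Jn_Hessian} and \cref{proposition:tangent_orbit} by direct substitution, expanded as Laurent/Taylor series in $\epsilon$. First, I would solve the Lyapunov equations \eqref{eq:LyapunovX} and \eqref{eq:LyapunovY} at $\mK^\ast$. Since $\mK^\ast$ is the globally optimal LQG controller, a classical block factorization shows that
$$
X_{\mK^\ast}=\begin{bmatrix} P+\Sigma & -\Sigma \\ -\Sigma & \Sigma\end{bmatrix},\qquad Y_{\mK^\ast}=\begin{bmatrix} S & S \\ S & S+\Pi\end{bmatrix},
$$
for some positive definite $\Sigma,\Pi$ that are determined by $A,B,C,K,L$ and are rational in $\epsilon$; with $P$ and $S$ given explicitly in \cref{example:bad_Hessian}, all six $2\times 2$ blocks of $X_{\mK^\ast}$ and $Y_{\mK^\ast}$ become elementary functions of $\epsilon$. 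Next, I would plug $\Delta=\Delta_0$ (resp.\ $\Delta=\Delta_1$) into the Lyapunov equation \eqref{eq:Lyapunov_hessian} and solve for $X'_{\mK^\ast,\Delta}$; the closed-loop matrix is stable and its eigenvalues are bounded away from zero, so the Lyapunov operator is invertible and the solution is again rational in $\epsilon$.

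Second, I would substitute these three matrices into the trace formula of \cref{lemma:Jn_Hessian}. This gives $\operatorname{Hess}_{\mK^\ast}(\Delta_i,\Delta_i)$ as an explicit rational function of $\epsilon$. For $\Delta_1$, evaluation at $\epsilon=0$ already yields the stated leading coefficient $680/343$. For $\Delta_0$, the key phenomenon to verify is cancellation: the $O(1)$, $O(\epsilon)$, $O(\epsilon^2)$, and $O(\epsilon^3)$ coefficients must vanish, so that the leading term is $\tfrac{3}{7000}\epsilon^4$. Conceptually, this cancellation is expected because at $\epsilon=0$ the plant has a repeated mode at $-3/2$, the optimal $\mK^\ast$ degenerates to a non-minimal controller, and $\Delta_0$ points in a direction that (at $\epsilon=0$) lies in the null space of the Hessian guaranteed by \cref{lemma:null_hessian_tangent} applied in that degenerate limit. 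Making this rigorous is the same as carrying out the Taylor expansion and confirming the four cancellations; I would do this symbolically and then read off the leading coefficient.

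Third, for the projection norm $\|\operatorname{Proj}_{\mathcal{TO}_{\mK^\ast}}[\Delta_0]\|_F$, I would use the parametrization from \cref{proposition:tangent_orbit}: the tangent space is the image of the linear map
$$
H\mapsto L_{\mK^\ast}(H)\coloneqq \begin{bmatrix} 0 & -C_{\mK^\ast}H \\ HB_{\mK^\ast} & HA_{\mK^\ast}-A_{\mK^\ast}H\end{bmatrix},
$$
so the squared projection equals the minimum of $\|\Delta_0-L_{\mK^\ast}(H)\|_F^2$ over $H\in\mathbb{R}^{n\times n}$. The normal equations are a linear system in the entries of $H$ whose coefficient matrix is nondegenerate for $\epsilon>0$ (by controllability and observability of $\mK^\ast$), and whose right-hand side vanishes at $\epsilon=0$ because $\Delta_0$ lies in $\mathcal{TO}_{\mK^\ast}$ in the degenerate limit. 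Solving the normal equations perturbatively shows $H=O(\epsilon)$ and hence $\|\operatorname{Proj}_{\mathcal{TO}_{\mK^\ast}}[\Delta_0]\|_F=O(\epsilon)$. Combining this with the Hessian computations: since $\|\Delta_0\|_F^2=1$ and the squared projection is $O(\epsilon^2)$, the denominator in \eqref{eq:example_mineig_upperbound} is $1-O(\epsilon^2)=1+o(1)$, so the Rayleigh quotient bound on the minimum eigenvalue is $\tfrac{3}{7000}\epsilon^4+o(\epsilon^4)$; the lower bound on the maximum eigenvalue is immediate from $\|\Delta_1\|_F^2=1$ once one checks $\Delta_1\perp \mathcal{TO}_{\mK^\ast}$ (again by inspection of the normal equations); and dividing yields \eqref{eq:example_rcond_upperbound}.

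The main obstacle is purely the symbolic bookkeeping for the $\Delta_0$ direction: the solution of the $6\times 6$ Lyapunov system \eqref{eq:Lyapunov_hessian} has entries that are messy rational functions of $\epsilon$, and obtaining the precise coefficient $3/7000$ requires tracking at least five orders in $\epsilon$ through the trace formula without arithmetic slips. In practice I would carry out the expansion with a computer algebra system, but structurally the four-fold cancellation is \emph{forced} by the fact that at $\epsilon=0$ the controller $\mK^\ast$ loses observability and $\Delta_0$ becomes a tangent direction to the orbit, so that the invariance of $J$ along orbits (\cref{lemma:Jn_invariance}) makes the entire Taylor expansion in $t$ along $\Delta_0$ vanish identically at $\epsilon=0$. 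This reduces the problem to justifying that the first nonvanishing contribution is of the predicted order $\epsilon^4$, after which the explicit coefficient is a routine symbolic evaluation.
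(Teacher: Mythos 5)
Your overall strategy---solve the Lyapunov equations at $\mK^\ast$, expand everything in powers of $\epsilon$, substitute into the trace formula of \cref{lemma:Jn_Hessian}, and bound the projection via the tangent-space characterization of \cref{proposition:tangent_orbit}---is exactly the paper's proof (the paper organizes the expansions order-by-order via \cref{lemma:Taylor_Lyapunov_eq} rather than by first forming rational functions of $\epsilon$, which is cosmetic). Two of your supporting claims need repair, however. Your heuristics about $\Delta_0$ at $\epsilon=0$ point the wrong way: at $\epsilon=0$ the map $H\mapsto L_{\mK^\ast}(H)$ acquires a nontrivial kernel (the controller loses minimality), its image is a degenerate tangent space, and a short computation shows $\Delta_0$ is \emph{orthogonal} to that image, not contained in it. The reason $\operatorname{Hess}_{\,\mK^\ast}(\Delta_0,\Delta_0)\to 0$ is not \cref{lemma:null_hessian_tangent}; it is that $\Delta_0$ perturbs only the unreachable/unobservable part of the degenerate controller and therefore leaves the transfer function, hence $J$, unchanged at $\epsilon=0$ (the mechanism behind \cref{theorem:zero_stationary_hessian}). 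This only explains the vanishing of the $O(1)$ term; the cancellations through $O(\epsilon^3)$ are, as you say, pure computation, and also incidentally your claimed block factorizations of $X_{\mK^\ast}$ and $Y_{\mK^\ast}$ have the wrong signs and structure (compare the paper's explicit $\epsilon=0$ values), though this would be caught once you actually solve the equations.

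The more substantive gap is the projection estimate. The normal-equation matrix $L_{\mK^\ast}^{*}L_{\mK^\ast}$ (equivalently $\mathsf{M}\mathsf{M}^\tr$ in the paper's dual formulation) becomes \emph{singular} at $\epsilon=0$, precisely because $\mK^\ast$ loses minimality there. Hence an $O(\epsilon)$ right-hand side does not imply an $O(\epsilon)$ solution $H$: the component along the near-kernel direction is amplified by the vanishing smallest eigenvalue. The paper's computation confirms that this amplification actually occurs---the coefficient vector $(\mathsf{M}\mathsf{M}^\tr)^{-1}\mathsf{M}v_0$ converges to a \emph{nonzero} limit as $\epsilon\to 0$---and the $O(\epsilon)$ bound on $\left\|\operatorname{Proj}_{\mathcal{TO}_{\mK^\ast}}[\Delta_0]\right\|_F$ follows only from a second, separate cancellation: $\mathsf{M}^\tr$ applied to that limit vector is itself $O(\epsilon)$. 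Your perturbative argument, which concludes $H=O(\epsilon)$ from an $O(\epsilon)$ right-hand side, would not detect the need for this second cancellation and, as written, does not establish the bound. (A smaller slip: $\min_H\|\Delta_0-L_{\mK^\ast}(H)\|_F^2$ is the squared distance to the tangent space, i.e.\ $\|\Delta_0\|_F^2$ minus the squared projection, not the squared projection itself.) None of this affects the truth of the statement, but the projection estimate must be obtained by the explicit two-stage expansion rather than the generic perturbation argument you describe.
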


The proof of~\cref{proposition:Hessian_example} is based on a direct but tedious calculation of Hessian via \cref{lemma:Jn_Hessian}. The details are provided in~\cref{appendix:hessian_example}. The observations in \cref{example:bad_Hessian} suggest that, if we apply the vanilla gradient descent algorithm to the optimization problem~\eqref{eq:LQG_reformulation_KX}, it may take a large number of iterations for the iterate to converge to a globally optimal controller for certain LQG problems that appear entirely normal.

\begin{remark}[Symmetry structures in LQG control]
Due to the symmetry induced by similarity transformations, the landscape of LQG shares some similarities with the landscapes of non-convex machine learning problems with rotational symmetries such as phase retrieval, matrix factorization~\cite{sun2018geometric,li2019symmetry,zhang2020symmetry}. For example, the stationary points of these non-convex problems are non-isolated, and the tangent space of the orbit associated with the symmetry group is a subspace of the null space of the Hessian (see~\cref{lemma:null_hessian_tangent}). On the other hand, for phase retrieval~\cite{sun2018geometric} and matrix factorization~\cite{li2019symmetry}, the 
classification of all stationary points as well as their local curvatures (Hessian) seem to be relatively well understood, while there remain many open questions regarding
the stationary points of LQG:  
such as the existence of local optimizers that are not globally optimal, whether all non-globally-optimal stationary points have the form of~\eqref{eq:gradient_nonglobally_K} up to similarity transformations. Finally, in addition to the apparent algebraic complication of LQG and control-theoretic notions such as minimal controllers, the non-compactness of the group of similarity transformations may also render the landscape of LQG distinct from the non-convex machine learning problems with rotational symmetries.
\end{remark}



\section{Numerical experiments} \label{section:numerical_results}

We have illustrated our main technical results on the connectivity of stabilizing controllers and stationary points through Examples~\ref{example:disconnectivity}-\ref{example:bad_Hessian}. Here, we present some numerical experiments to demonstrate empirical performance of gradient descent algorithms for solving the LQG problem~\eqref{eq:LQG_reformulation_KX}. The scripts for all experiments can be downloaded from {\small \url{https://github.com/zhengy09/LQG_gradient}}.


\subsection{Gradient Descent Algorithms}
A vanilla gradient descent algorithm for solving~\eqref{eq:LQG_reformulation_KX} is as follows. Upon giving an initial stabilizing controller $\mK \in \mathcal{C}_n$, we update the controller: $t = 0, 1, 2, \ldots$
\begin{equation}\label{eq:gradient_descent}
    A_{\mK,t+1} = A_{\mK,t} - s_t \left.\frac{\partial J(\mK)}{\partial A_{\mK}}\right|_{\mK_t}, \, B_{\mK,t+1} = B_{\mK,t} - s_t \left.\frac{\partial J(\mK)}{\partial B_{\mK}}\right|_{\mK_t}, \, C_{\mK,t+1} = C_{\mK,t} - s_t \left.\frac{\partial J(\mK)}{\partial C_{\mK}}\right|_{\mK_t},
\end{equation}
where the gradient is obtained using~\eqref{eq:gradient_Jn}, until the gradient satisfies $\|\nabla J(\mK_t)\|_F \leq \epsilon$ or the iteration reaches the maximum number $t_{\max}$. In our simulation, the step size $s_t$ in~\eqref{eq:gradient_descent} is determined by  the \emph{Armijo rule}~\cite[Chapter 1.3]{bertsekas1997nonlinear}: Set $s_t = 1$, repeat $s_t = \beta s_t$ until
 $$
    J(\mK_{t}) - J(\mK_{t+1})    \geq \alpha s_t  \|\nabla J(\mK_t)\|_F^2,
 $$
where $\alpha \in (0,1), \beta \in (0,1)$, e.g., $\alpha = 0.01$ and $\beta = 0.5$.

For numerical comparison, we can also reduce the number of controller parameters by considering a controller canonical form. In particular, for any SISO controller, the controllable canonical form of $\mK$ is
\begin{equation} \label{eq:canonical_form}
    A_{\mK} = \begin{bmatrix}
    0 & 1 & 0 & \ldots & 0\\
    0 & 0 & 1 & \ldots & 0\\
    \vdots & \vdots & \vdots & \ddots & \vdots\\
     0 & 0 & 0 & \ldots & 1\\
    -b_0 & -b_1 & -b_2 & \ldots & -b_{n-1}\\
    \end{bmatrix}, \; B_{\mK} = \begin{bmatrix}
    0 \\ 0 \\0 \\ \vdots \\1
    \end{bmatrix}, \, C_{\mK} = \begin{bmatrix}
    a_0 & a_1 & a_2 & \ldots & a_{n-1}
    \end{bmatrix}.
\end{equation}
We now only update the controller parameters $a_i, b_i, i = 0, \ldots, n-1$  by using a partial gradient in~\eqref{eq:gradient_descent}. It is clear that the set of stabilizing controllable controllers is a subset of $\mathcal{C}_n$, but we note that the connectivity of stabilizing controllable controllers is unclear and cannot be deduced from the results in \cref{sec:connectivity}. Here, we further remark a few facts~\cite[Chapter 3]{zhou1996robust}
\begin{itemize}
    \item The controller $\mK$ in~\eqref{eq:canonical_form} is not necessarily minimal, and it may be unobservable. Thus, this parameterization~\eqref{eq:canonical_form} is able to capture some non-minimal globally optimal controllers, e.g., the LQG problem in \cref{example:non-minimal-LQG}.
    \item For any controllable SISO $\mK$, there is a unique similarity transformation such that $\mathscr{T}_T(\mK)$ is in the form of \eqref{eq:canonical_form}. Conversely, given $\mK$ in the form of \eqref{eq:canonical_form}, all the controllers in the orbit $\mathcal{O}_{\mK}$ are controllable.
    \item By \cref{theo:stationary_points_globally_optimal}, if the LQG problem~\eqref{eq:LQG_reformulation_KX}  for SISO systems has a minimal stationary point, then it admits a unique globally optimal controller in the form of~\eqref{eq:canonical_form}.
\end{itemize}

In our experiments, we set the maximum iteration number $t_{\max} = 10^4$ and the stopping criterion $\epsilon = 10^{-6}$. 
To investigate the influence of initial stabilizing controllers on the convergence performance of gradient descent algorithms, we used two different initialization strategies:
\begin{enumerate}
    \item \textit{Random initialization:} We used a pole placement method to get an initial stabilizing controller $\mK$, and the closed-loop poles were chosen randomly from $(-2,-1)$.
\item \textit{Initialization around a globally optimal point:} We also considered initialization around the globally optimal controller from Riccati equations, i.e.,
$$
    A_{\mK,0} \sim \mathcal{N}(A_{\mK}^{\star},\delta I),  \qquad B_{\mK,0} \sim \mathcal{N}(B_{\mK}^{\star},\delta I), \qquad C_{\mK,0} \sim \mathcal{N}(C_{\mK}^{\star},\delta I),
$$
where $(A_{\mK}^{\star}, B_{\mK}^{\star}, C_{\mK}^{\star})$ is the optimal LQG controller~\eqref{eq:LQGstatespace} from solving Riccati equations, and we chose $\delta = 10^{-2}$ in the simulations.
\end{enumerate}

Throughout this section, we denote the vanilla gradient descent algorithm~\eqref{eq:gradient_descent} as \texttt{Vanilla GD$_A$} and call the gradient descent over the controllable canonical form~\eqref{eq:canonical_form} as \texttt{Vanilla GD$_B$}.

\subsection{Numerical Results I: Performance with random initialization}

We first consider two examples for which \texttt{Vanilla GD$_B$} has good empirical convergence performance. The first one is the famous Doyle's LQG example from~\cite{doyle1978guaranteed}
\begin{subequations}\label{eq:doyle_example}
\begin{equation}
    A = \begin{bmatrix}
        1 & 1\\
        0 & 1
    \end{bmatrix},\; B = \begin{bmatrix}
        0\\
        1
    \end{bmatrix}, \; C = \begin{bmatrix} 1 & 0 \end{bmatrix},\; W = 5\begin{bmatrix}
    1 & 1 \\
    1 & 1
    \end{bmatrix}, \; V = 1
\end{equation}
with performance weights
\begin{equation}
 Q = 5\begin{bmatrix}
    1 & 1 \\
    1 & 1
    \end{bmatrix}, \; R = 1.
\end{equation}
\end{subequations}
The globally optimal LQG controller from Riccati equations is
\begin{equation} \label{eq:DoyleExample_OptimalController}
    A_{\mK} = \begin{bmatrix}
       -4 &    1 \\
  -10 &-4
    \end{bmatrix}, \; B_{\mK} = \begin{bmatrix}
    5\\5
    \end{bmatrix},  C_{\mK} = \begin{bmatrix}
    -5 & -5
    \end{bmatrix},
\end{equation}
and its corresponding LQG cost is $J^\star = 750$. The system~\eqref{eq:doyle_example} is open-loop unstable, so we chose an initial stabilizing controller using pole placement where the poles were randomly selected from $(-2,-1)$ in our simulations. The results are shown in \cref{fig:Doyle_example}. For this LQG case, \texttt{Vanilla GD$_B$} over the controllable canonical form has better convergence performance compared to \texttt{Vanilla GD$_A$}. In particular, \texttt{Vanilla GD$_A$} did not converge within $10^4$ iterations, and  the final iterate in \texttt{Vanilla GD$_A$} has nonzero gradient. Instead, for different initial points, \texttt{Vanilla GD$_B$} converged to the following solution (up to two decimal places)
\begin{equation} \label{eq:Doyle_GD_solution}
    A_{\mK} = \begin{bmatrix} 0 & 1 \\
                               -26.00 & -8.00 \end{bmatrix}, B_{\mK} = \begin{bmatrix} 0 \\ 1 \end{bmatrix}, C_{\mK} = \begin{bmatrix}25.00 &  -50.00 \end{bmatrix}.
\end{equation}
The controller~\eqref{eq:Doyle_GD_solution} from \texttt{Vanilla GD$_B$} is minimal, and the gradient is close to zero (stationary point). By \cref{corollary:Gradient_Descent_Convergence}, it is reasonable to conclude that this controller is globally optimal. Indeed,~\eqref{eq:Doyle_GD_solution} is identical to~\eqref{eq:DoyleExample_OptimalController} via a similarity transformation defined by $T = \begin{bmatrix}   25   &  5 \\
  -30  &  5\end{bmatrix}$. By \cref{lemma:Jn_Hessian}, we can also compute the hessian of $J_2(\mK)$ at~\eqref{eq:Doyle_GD_solution}, for which the minimum eigenvalue is $12.15$ when restricting to the subspace $\mathcal{TO}_{\mK^\ast}^\perp$.

\begin{figure}[t]
    \centering
        \centering
\begin{subfigure}{.4\textwidth}
    \hspace{10mm}
    \includegraphics[scale = 0.6]{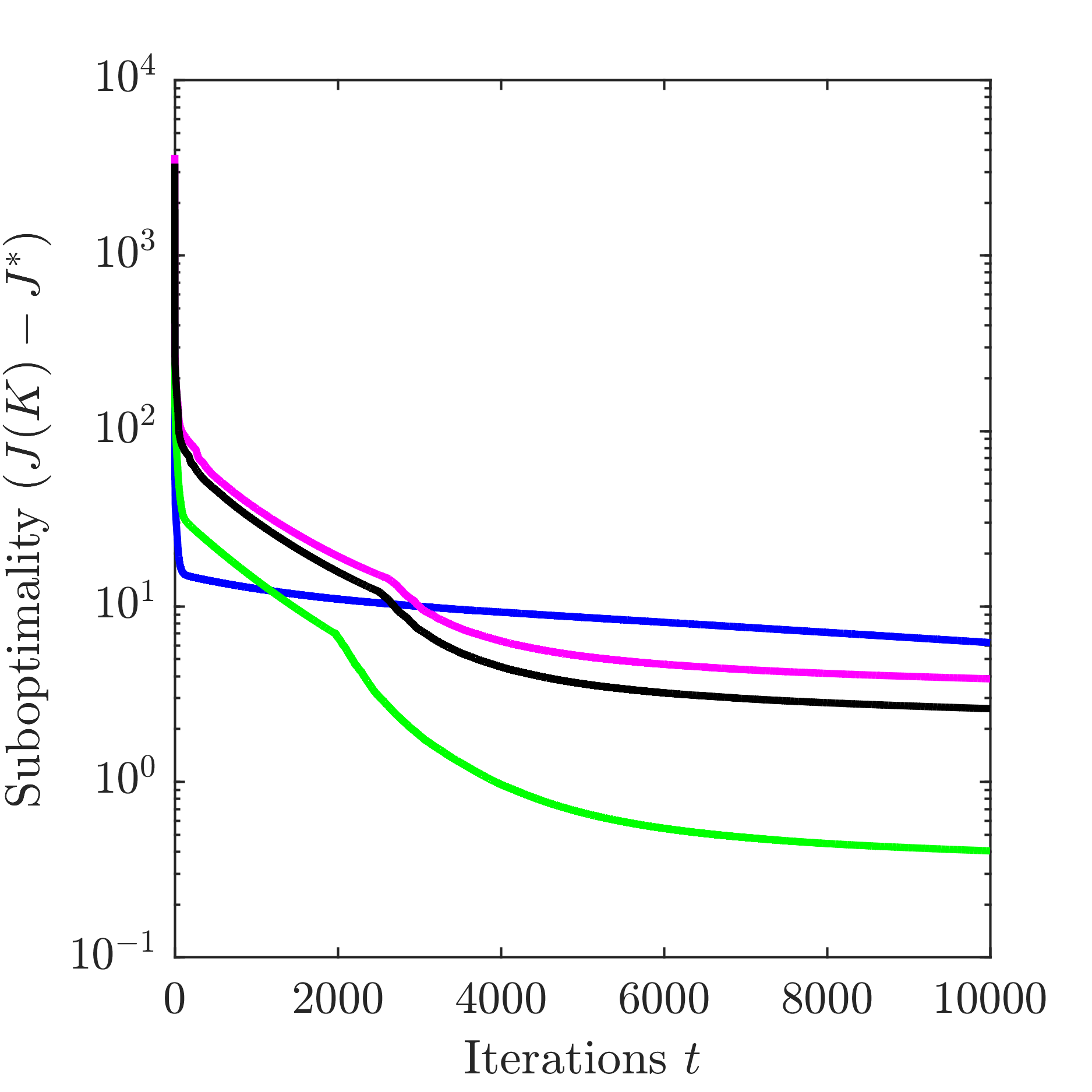}
     \caption{\texttt{Vanilla GD$_A$}}
    \end{subfigure}
    \hspace{20mm}
\begin{subfigure}{.4\textwidth}
    \hspace{10mm}
    \includegraphics[scale = 0.6]{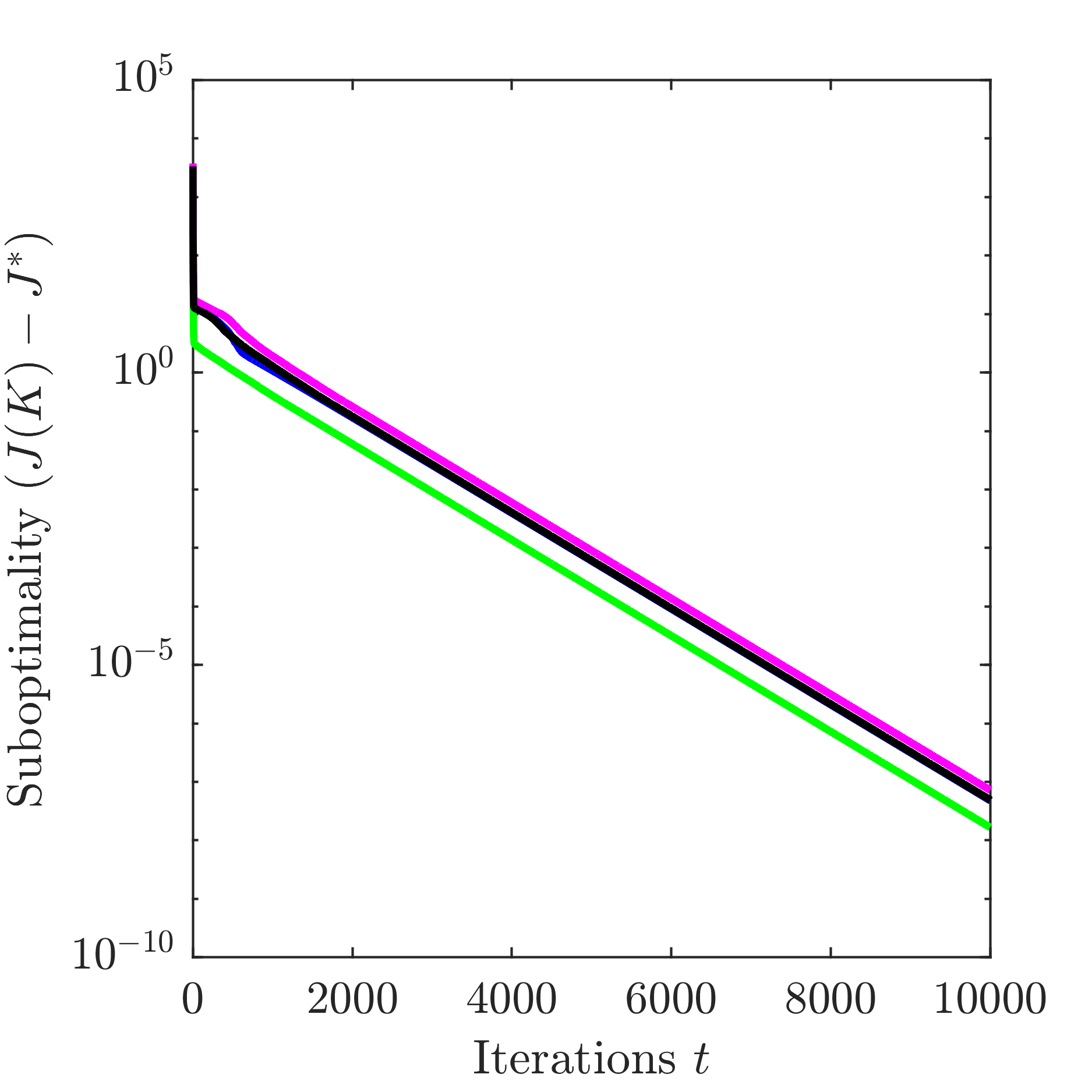}
         \caption{\texttt{Vanilla GD$_B$}}
\end{subfigure}
    \caption{Convergence performance of gradient descent algorithms for Doyle's example in~\eqref{eq:doyle_example} with four different random initialization $\mK_0$.} 
    \label{fig:Doyle_example}
\end{figure}

\begin{figure}[t]
    \centering
        \centering
\begin{subfigure}{.4\textwidth}
    \hspace{10mm}
    \includegraphics[scale = 0.6]{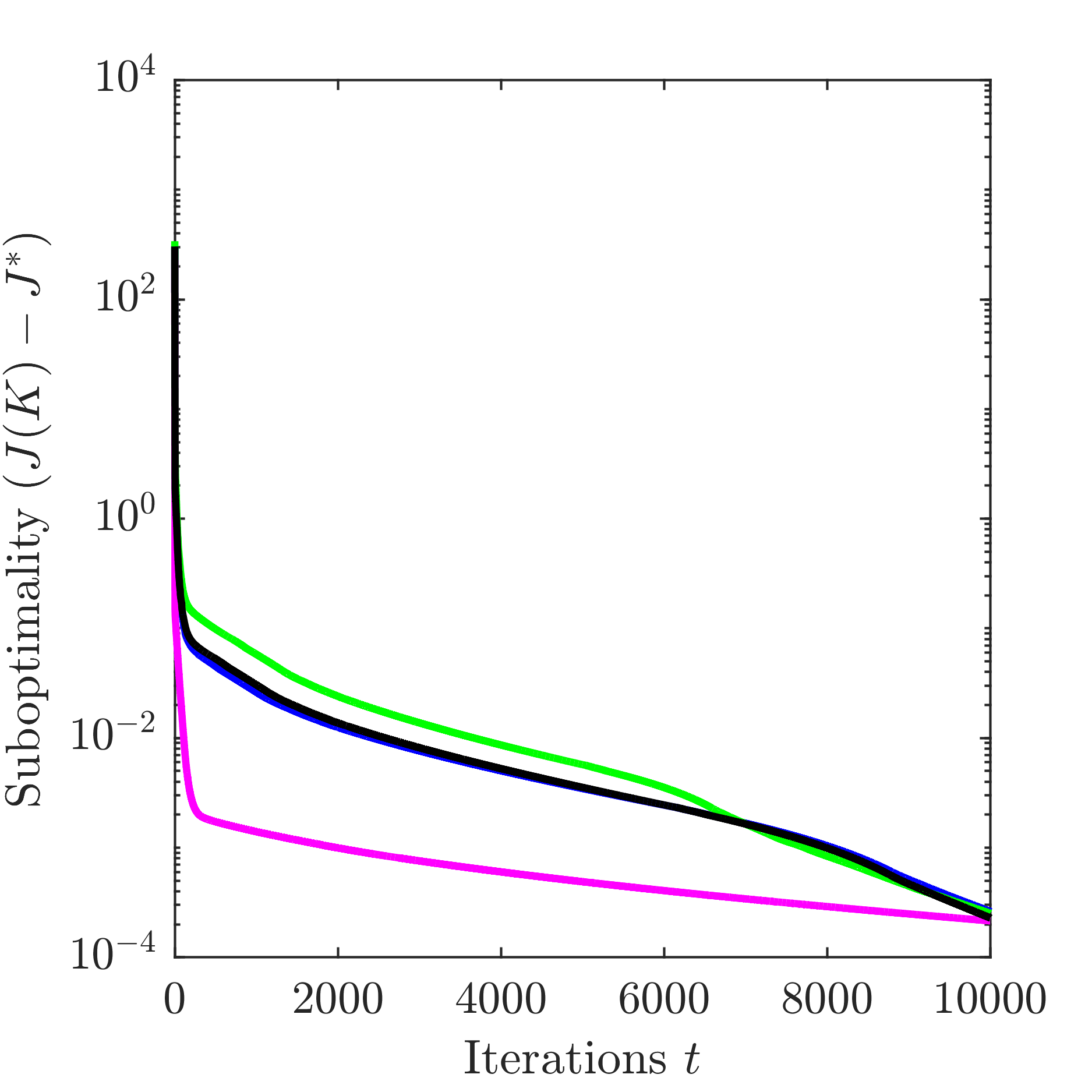}
     \caption{\texttt{Vanilla GD$_A$}}
    \end{subfigure}
    \hspace{20mm}
\begin{subfigure}{.4\textwidth}
    \hspace{10mm}
    \includegraphics[scale = 0.6]{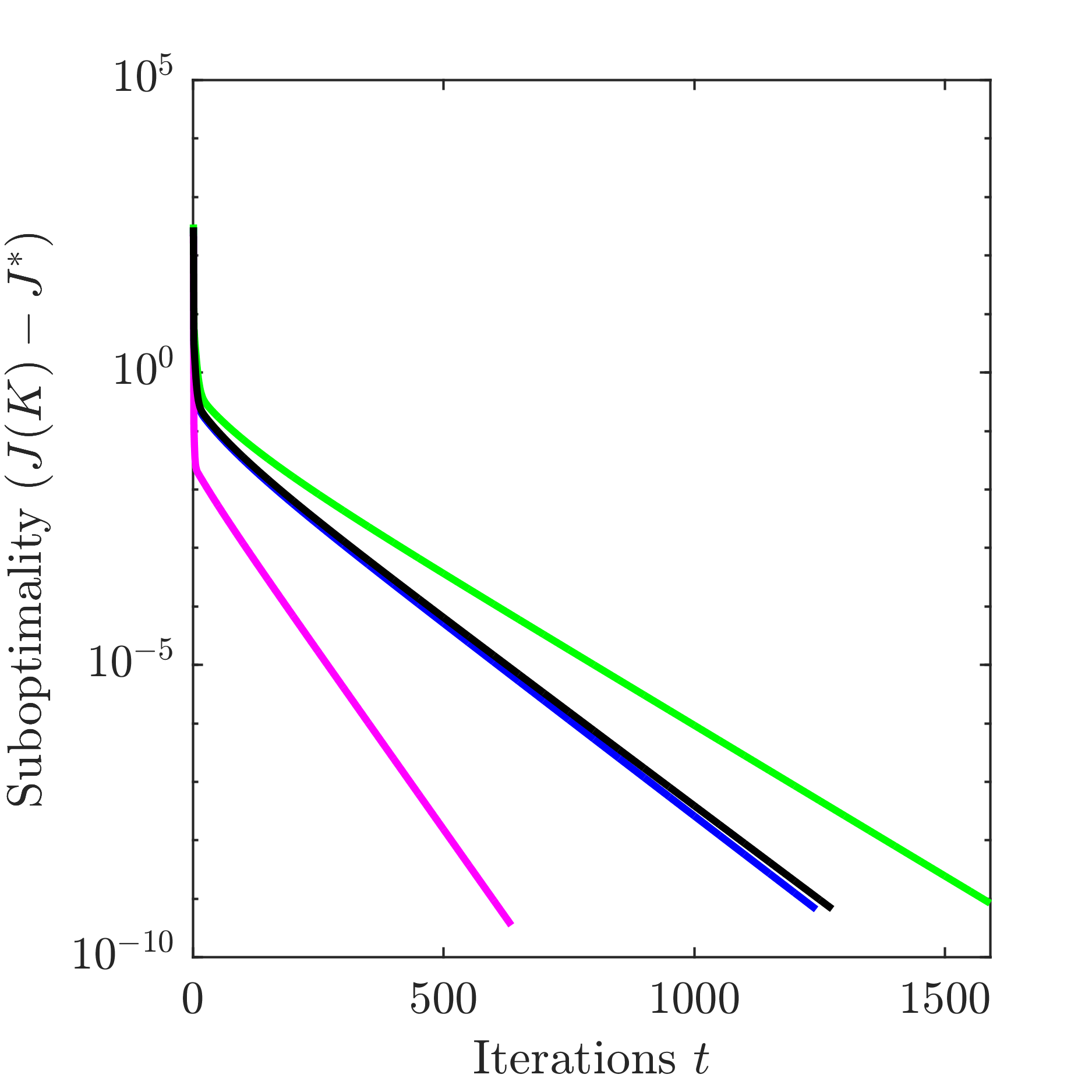}
         \caption{\texttt{Vanilla GD$_B$}}
\end{subfigure}
    \caption{Convergence performance of gradient descent algorithms for \cref{example:non-minimal-LQG} with four different random initialization $\mK_0$.} 
    \label{fig:Example7}
\end{figure}

Our second numerical experiment is carried out on  the LQG case in \cref{example:non-minimal-LQG}, for which a globally optimal controller from Riccati equations is non-minimal, shown in~\eqref{eq:LQGcontroller_Nonminimal_case}. The initial controllers were randomly chosen by pole placement from $(-2,-1)$. Similar to the first numerical experiment, \texttt{Vanilla GD$_A$} did not converge within $10^4$ iterations, while \texttt{Vanilla GD$_B$} converged to stationary points (the gradient reached the stopping criterion); see  \cref{fig:Example7}. In this case, the controllers from \texttt{Vanilla GD$_B$} are not minimal, and they have different state-space representations, two of which are
\begin{subequations}
\label{eq:GD_solution_non_minimal}
\begin{align}
    A_{\mK,1} &= \begin{bmatrix} 0 & 1 \\
                               -14.0912 &  -7.6970 \end{bmatrix}, \;\; B_{\mK,1} = \begin{bmatrix} 0 \\ 1 \end{bmatrix}, \;\; C_{\mK,1} = \begin{bmatrix} -9.3941 &  -1.9999 \end{bmatrix}, \\
                                A_{\mK,2} &= \begin{bmatrix} 0 & 1 \\
                               -17.2130 &  -8.7375 \end{bmatrix}, \;\; B_{\mK,2} = \begin{bmatrix} 0 \\ 1 \end{bmatrix}, \;\; C_{\mK,2} = \begin{bmatrix} -11.4753  & -1.9999 \end{bmatrix}.
\end{align}
\end{subequations}
Our theoretical results (\cref{theorem:non_globally_optimal_stationary_point} and \cref{corollary:Gradient_Descent_Convergence}) failed to check whether the controllers~\eqref{eq:GD_solution_non_minimal} from \texttt{Vanilla GD$_B$} are globally optimal. However, after pole-zero cancellation, we can check that the controllers~\eqref{eq:GD_solution_non_minimal} correspond to the same transfer function with~\eqref{eq:LQGcontroller_Nonminimal_case}, which is
$$
    \mathbf{K}_\star = \frac{-2}{s+3}.
$$
Also, we numerically check that the the Hessian of $J_2(\mK)$ at the controllers~\eqref{eq:GD_solution_non_minimal} and~\eqref{eq:LQGcontroller_Nonminimal_case} has a minimum eigenvalue as zero over the subspace $\mathcal{TO}_{\mK^\ast}^\perp$.

\subsection{Numerical Results II: initialization matters}

Here, we present two LQG examples for which \texttt{Vanilla GD$_B$} over the controllable canonical form seems to get stuck around some points when using random initialization. We first consider the LQG in \cref{example:saddle_point_vanishing_Hessian}, for which we have shown there exist stationary points with vanishing Hessian (see \cref{fig:saddle_point_vanishing_Hessian}). Note that this LQG problem has a minimal globally optimal controller, so it admits a unique globally optimal controller in the form of~\eqref{eq:canonical_form}. However, as shown in \cref{fig:Example6}, with random initialization, \texttt{Vanilla GD$_B$} over the controllable canonical form seems to get stuck around different points; \texttt{Vanilla GD$_A$} does make steady improvement over the LQG cost function, but it still failed to converge within $10^4$ iterations.  When using the initialization around a globally optimal point, the convergence performance of both \texttt{Vanilla GD$_A$} and \texttt{Vanilla GD$_B$} has been significantly improved, and both of them reached the stopping criterion within one hundred iterations. We note that the random initialization actually started from a point with a smaller LQG cost compared to the other initialization.

\begin{figure}[t]
    \centering
        \centering
\begin{subfigure}{.49\textwidth}
    \includegraphics[scale = 0.48]{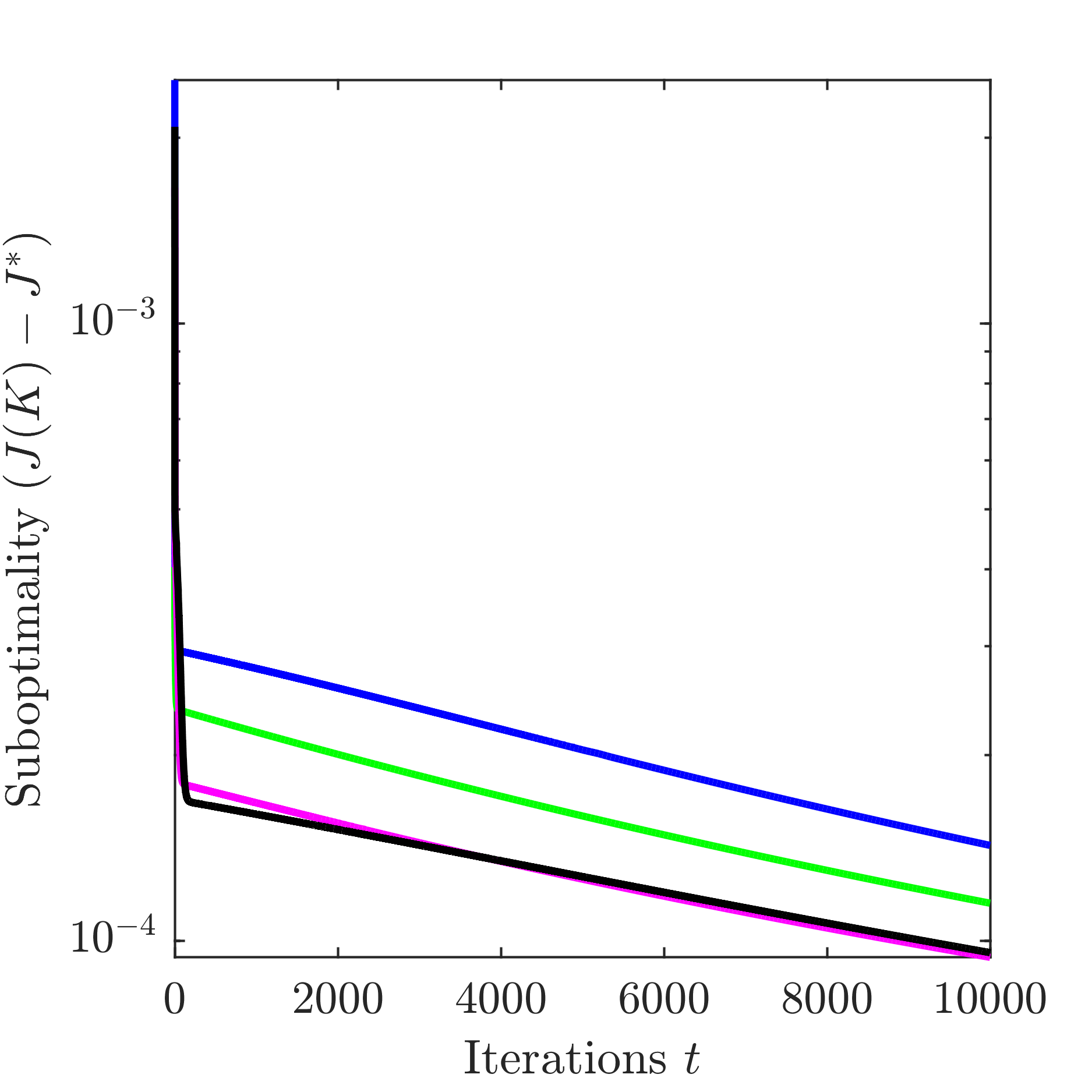}
    \hspace{0mm}
     \includegraphics[scale = 0.48]{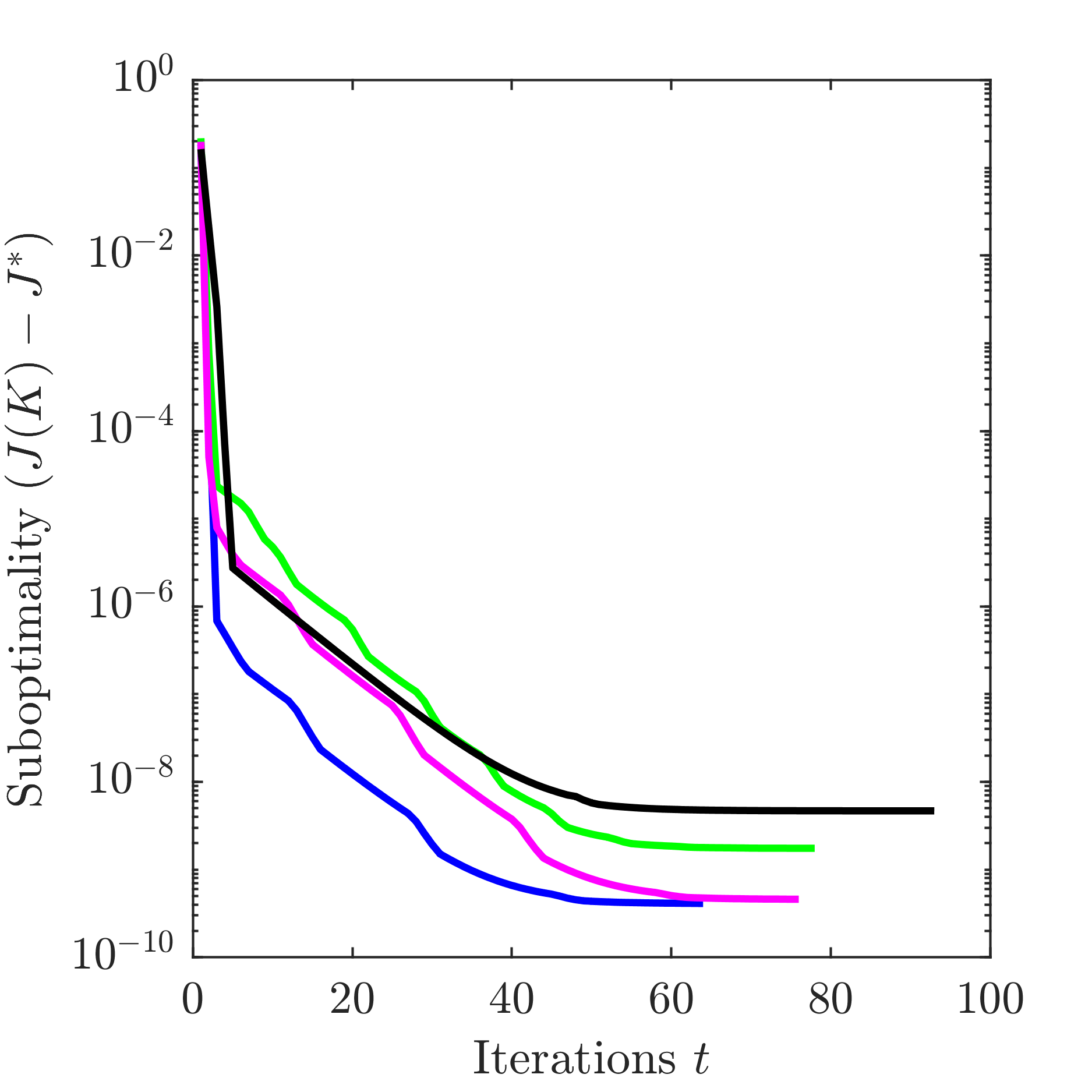}
     \caption{\texttt{Vanilla GD$_A$}}
    \end{subfigure}
    \hspace{1mm}
\begin{subfigure}{.49\textwidth}
    \includegraphics[scale = 0.48]{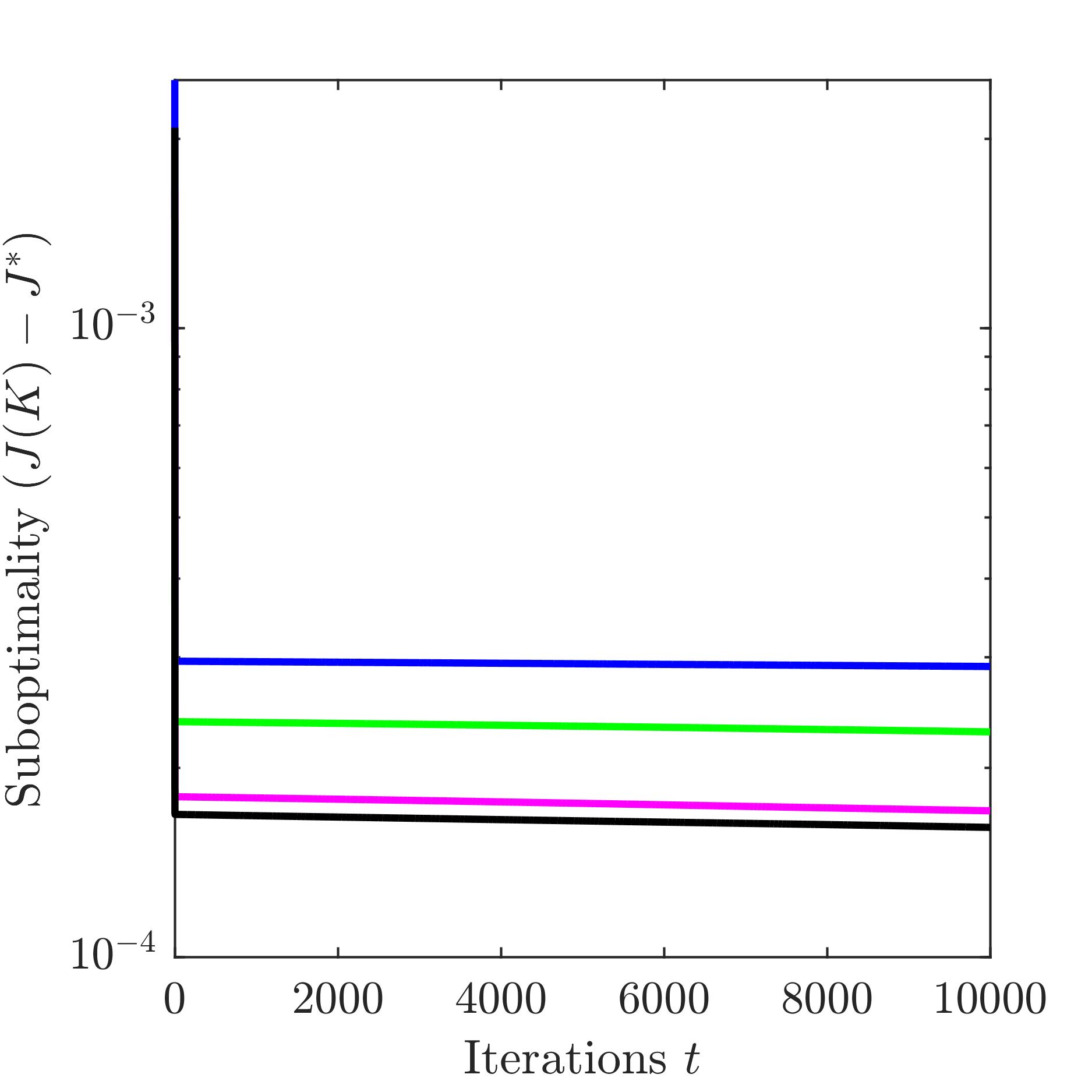}
    \hspace{0mm}
    \includegraphics[scale = 0.48]{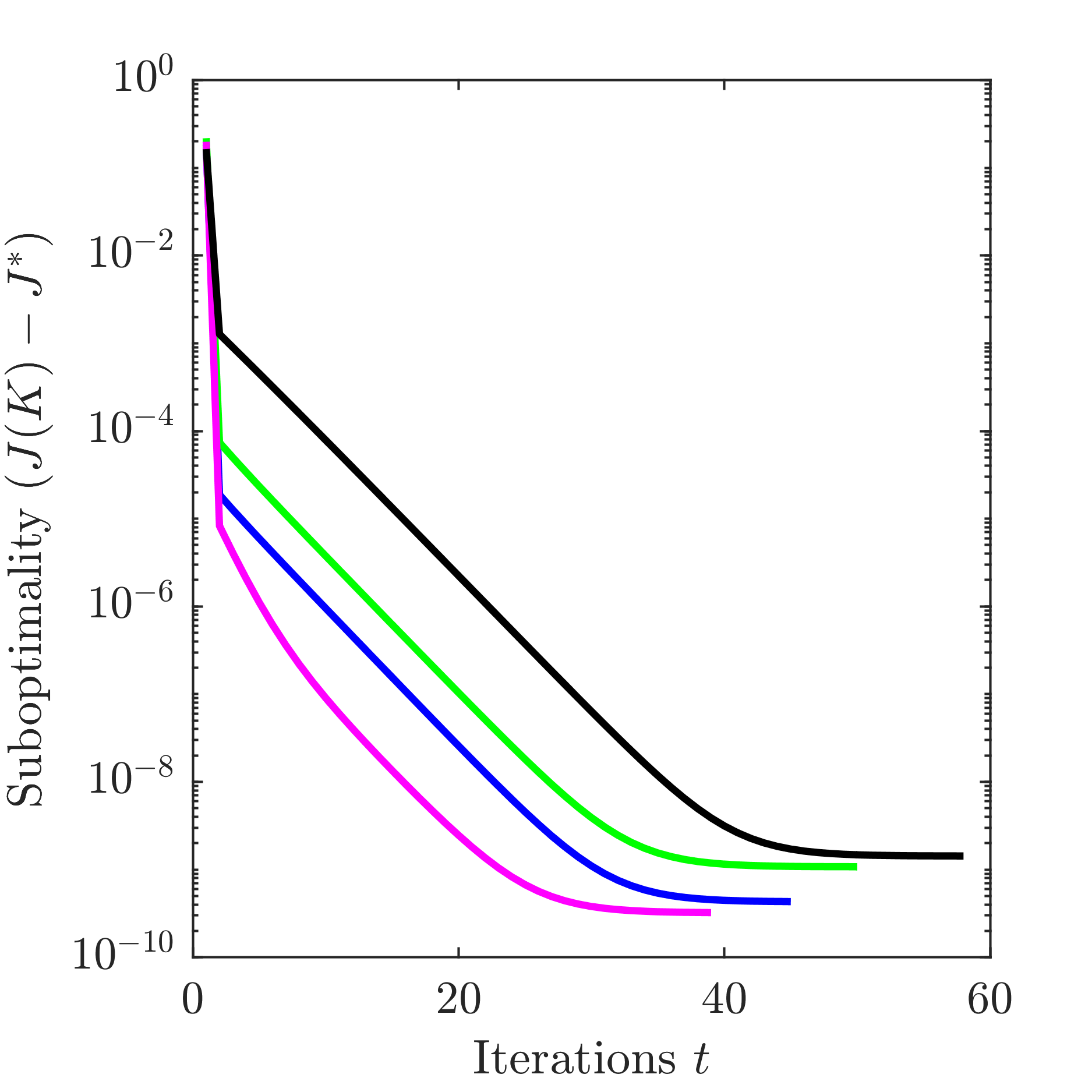}
         \caption{\texttt{Vanilla GD$_B$}}
\end{subfigure}
    \caption{Convergence performance of gradient descent algorithms for \cref{example:saddle_point_vanishing_Hessian} with different initialization strategies. In each subfigure, the left one shows results using random initialization, and the right one show results using initialization around a globally optimal point.} 
    \label{fig:Example6}
\end{figure}


\begin{figure}[t]
    \centering
        \centering
\begin{subfigure}{.49\textwidth}
    \includegraphics[scale = 0.47]{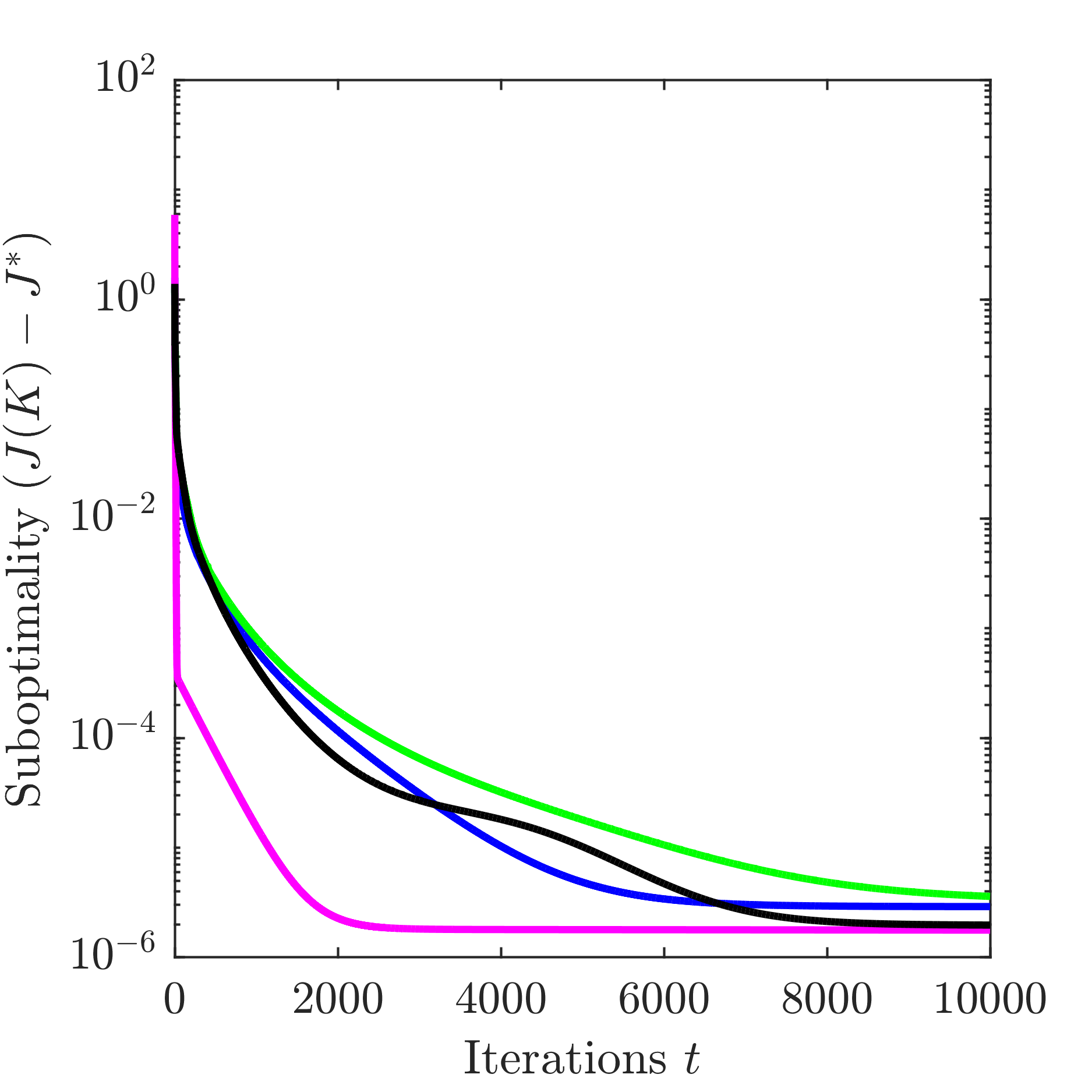}
    \includegraphics[scale = 0.47]{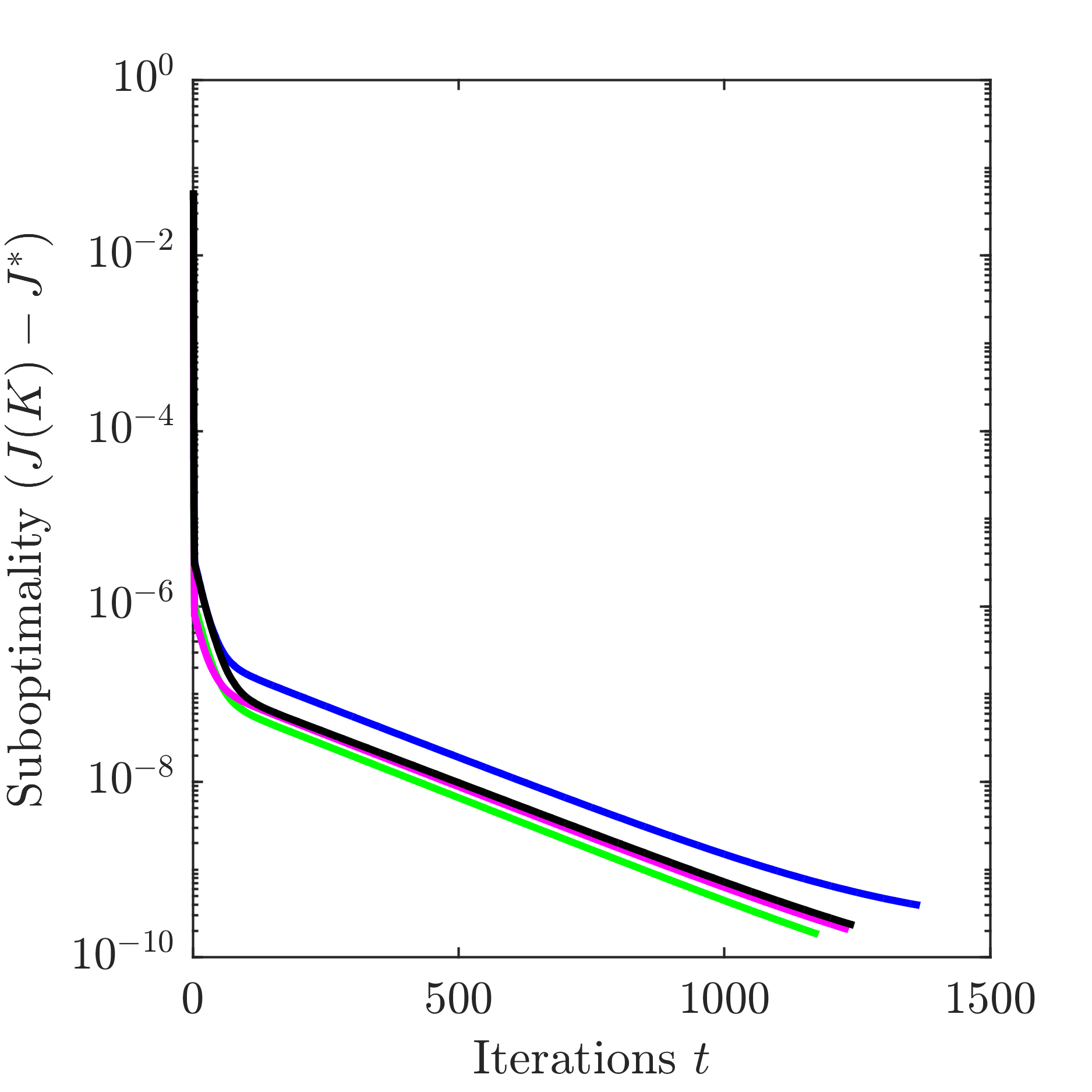}
     \caption{\texttt{Vanilla GD$_A$}}
    \end{subfigure}
    \hspace{1mm}
\begin{subfigure}{.49\textwidth}
    \includegraphics[scale = 0.47]{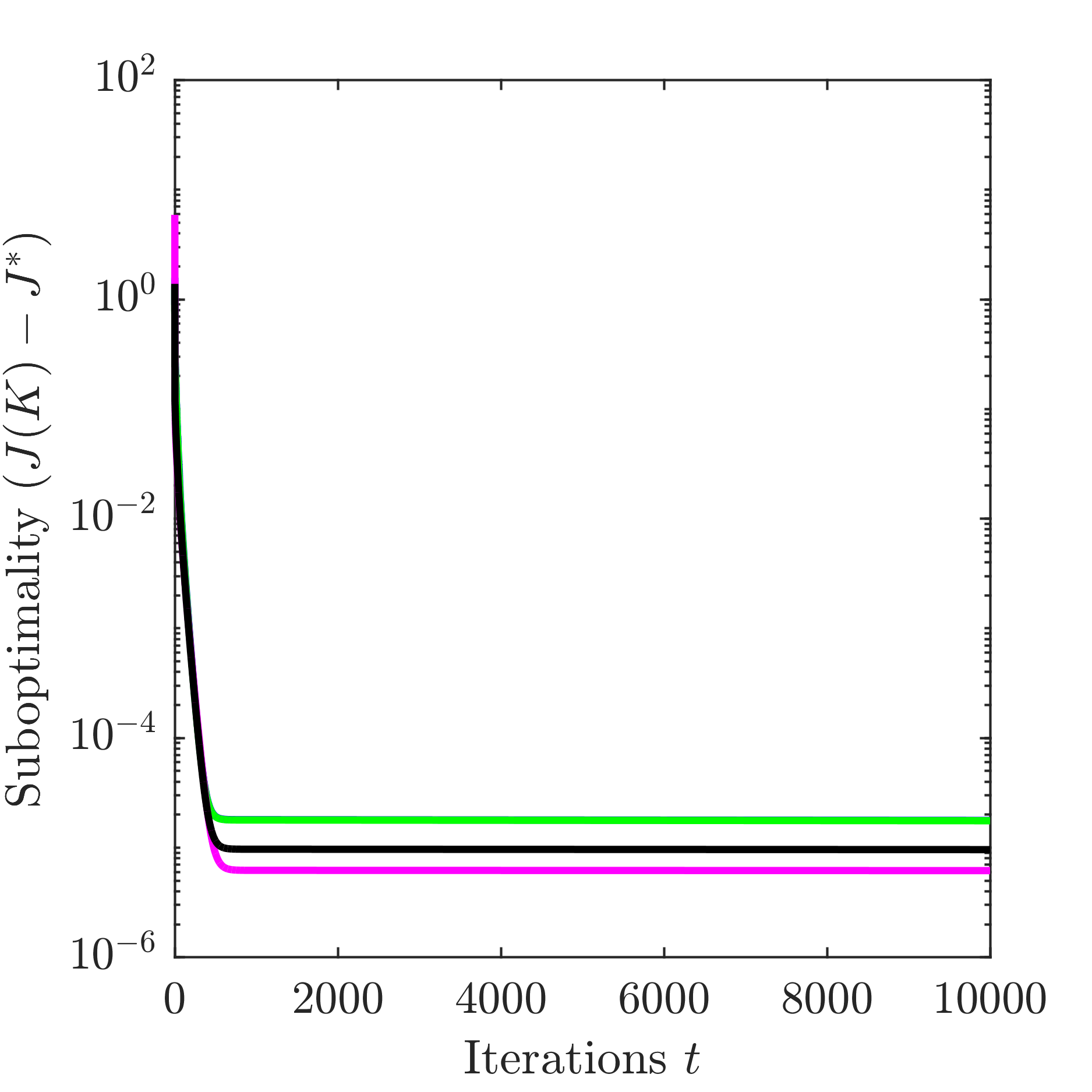}
     \includegraphics[scale = 0.47]{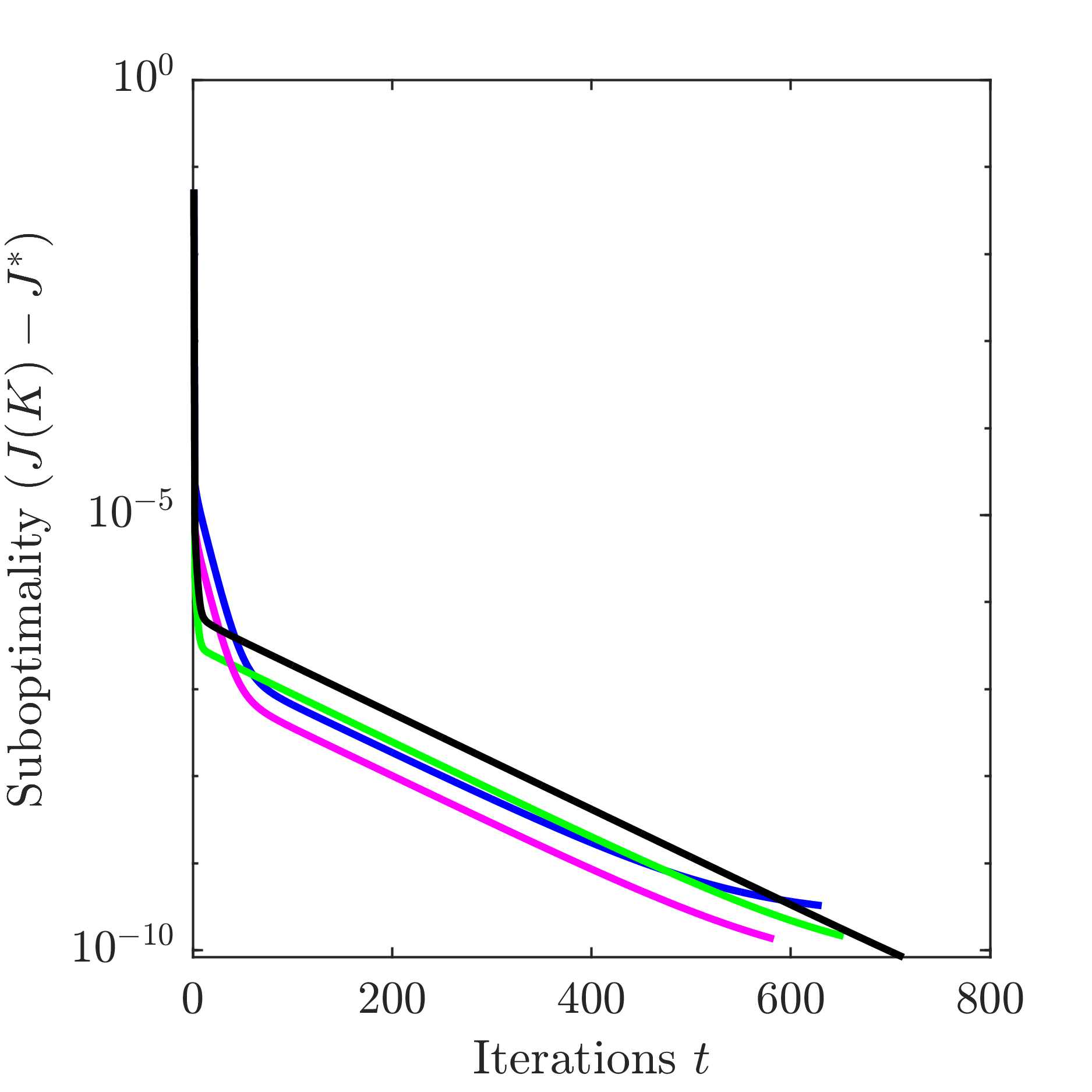}
         \caption{\texttt{Vanilla GD$_B$}}
\end{subfigure}
    \caption{Convergence performance of gradient descent algorithms for \cref{example:bad_Hessian} ($\epsilon = 0.5$) with four different initialization $\mK_0$. In each subfigure, the left one shows results using random initialization, and the right one show results using initialization around a globally optimal point.} 
    \label{fig:Example8}
\end{figure}

Our final numerical experiment is carried out for the LQG in \cref{example:bad_Hessian}, where we chose $\epsilon = 0.5$. The results are shown in \cref{fig:Example8}. Both \texttt{Vanilla GD$_A$} and \texttt{Vanilla GD$_B$} failed to converge with $10^4$ iterations, and they seems to get stuck around different points for very many iterations that are not globally optimal. Similar to the previous case,  using the initialization around a globally optimal point greatly improved the convergence performance of  \texttt{Vanilla GD$_A$} and \texttt{Vanilla GD$_B$}, and both of them reached the stopping criterion within a few hundred iterations.

These two LQG cases show that initialization has a great impact on the performance of gradient algorithms for solving general LQG problems. We also note that for the LQG cases we tested, gradient descent algorithms can reduce the LQG cost quickly in the beginning period of iterations, but might get struck in some region for many iterations. 

\section{Conclusion} \label{section:conclusion}

In this paper, we have characterized the connectivity of the set of stabilizing controllers $\mathcal{C}_n$ and provided some structural properties of the LQG cost function. These results reveal rich yet complicated optimization landscape properties of the LQG problem. Ongoing work includes establishing convergence conditions for gradient descent algorithms and investigating whether local search algorithms can escape saddle points of the LQG problem.
We note that the optimization landscape of LQG also depends on the parameterization of dynamical controllers. It will be interesting to look into the LQG problem when parameterizing controllers in a canonical form. 
Finally, our analysis reveals that \emph{minimal} stationary points in $\mathcal{C}_n$ are always globally optimal, and it would also be interesting to investigate the existence of minimal stationary points for the LQG problem.









\vspace{8mm}

\bibliographystyle{unsrt}
\bibliography{biblio.bib}

\newpage
\appendix


\numberwithin{equation}{section}

\vspace{10mm}
\noindent\textbf{\Large Appendix}

\vspace{5mm}
This appendix is divided into four parts:
\begin{itemize}
    \item \cref{appendix:preliminaries} presents some preliminaries in control theory and differential geometry;
    \item \cref{appendix:auxiliary_results} presents auxiliary proofs/results for continuous-time systems;
    \item \cref{appendix:proper_controller} presents the connectivity results for proper stabilizing controllers;
    \item \cref{appendix:discrete_time} presents analogous landscape results for the LQG problem in discrete-time.
\end{itemize}


\section{Fundamentals of Control Theory and Differential Geometry} \label{appendix:preliminaries}

For self-completeness, this section reviews some fundamental notions in control theory (see~\cite[Chapter 3]{zhou1996robust} for more details), as well as some basic notions from differential geometry \cite{lee2013introduction,milnor1997topology}.

\subsection{Controllability, Observability, and Minimal Systems} \label{App:control_basics}

Consider a dynamical system, parameterized by $(A, B, C, D) \in \mathbb{R}^{n \times n} \times \mathbb{R}^{n \times m} \times \mathbb{R}^{p \times n} \times \mathbb{R}^{p \times m}$,  as follows
\begin{equation} \label{eq:App_dynamics}
    \begin{aligned}
        \dot{x} &= A x + Bu,\\
        y &= Cx + Du.
    \end{aligned}
\end{equation}
The system~\eqref{eq:App_dynamics} is called \emph{controllable} if the following controllability matrix is of full row rank
$$
    \text{rank}\left(\begin{bmatrix} B & AB & \ldots & A^{n-1}B \end{bmatrix}\right) = n,
$$
and \emph{observable} if the following observability matrix is of full column rank
$$
    \text{rank}\left(\begin{bmatrix} C \\ CA \\ \vdots \\ CA^{n-1}\end{bmatrix}\right) = n.
$$

The input-output behavior of~\eqref{eq:App_dynamics} can also be equivalently described in the frequency domain
\begin{equation} \label{eq:app_transfer_function}
    \mathbf{G}(s) = C(sI - A)^{-1}B + D.
\end{equation}
It is easy to verify that the transfer function $\mathbf{G}(s)$ is invariant under any similarity transformation on the state-space model $(TAT^{-1}, TB, CT^{-1}, D)$.

    System~\eqref{eq:App_dynamics} is called \emph{minimal} if and only if it is controllable and observable.
%
This \emph{``minimal''} notion is justified by the following interpretation: if system~\eqref{eq:App_dynamics} is not minimal, then there exists another state-space model with a smaller state dimension $\hat{n} < n$
$$
    \begin{aligned}
        \dot{\hat{x}} &= \hat{A} \hat{x} + \hat{B} u \\
        y &= \hat{C} \hat{x} + D u,
    \end{aligned}
$$
such that the input-output behavior is the same as~\eqref{eq:App_dynamics}, \emph{i.e.},
$
    \mathbf{G}(s) = \hat{C} (sI - \hat{A})^{-1}\hat{B} + D.
$
In this paper, we have used the notions of \emph{``minimal controller''} and \emph{``controllable and observable controller''} in an interchangeabe way.
The following theorem shows that minimal realizations of a transfer matrix are identical up to a similarity transformation.
\begin{theorem}[{\cite[Theorem 3.17]{zhou1996robust}}] \label{theo:minimal_state_space_similarity_transformation}
    Given a real rational transfer matrix $\mathbf{G}(s)$, suppose that $(A_1, B_1, C_1, D_1)$ and $(A_2, B_2, C_2, D_2)$ are two minimal state-space realizations of $\mathbf{G}(s)$. Then, there exists a unique invertible matrix $T$, such that
    $$
        A_2 = TA_1T^{-1}, \qquad B_2 = TB_1, \qquad C_2 = C_1T^{-1}, \qquad D_2=D_1.
    $$
\end{theorem}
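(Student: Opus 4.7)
The plan is to recover $T$ explicitly from the observability and controllability matrices of the two realizations, by exploiting the fact that both realizations share the same Markov parameters.

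First, I would compare Laurent expansions of $\mathbf{G}(s)$ at infinity. Expanding $C_i(sI - A_i)^{-1}B_i + D_i = D_i + \sum_{k\ge 0} C_i A_i^k B_i \, s^{-(k+1)}$ and using uniqueness of the expansion yields $D_1 = D_2$ and the Markov-parameter identity $M_k := C_1 A_1^k B_1 = C_2 A_2^k B_2$ for all $k \ge 0$. Stacking these Markov parameters into a block Hankel matrix $H$ and a shifted Hankel matrix $H_s$ gives the factorizations
\begin{equation*}
\mathcal{O}_1 \mathcal{C}_1 \;=\; H \;=\; \mathcal{O}_2 \mathcal{C}_2,
\qquad
\mathcal{O}_1 A_1 \mathcal{C}_1 \;=\; H_s \;=\; \mathcal{O}_2 A_2 \mathcal{C}_2,
\end{equation*}
where $\mathcal{C}_i$ and $\mathcal{O}_i$ are the controllability and observability matrices of $(A_i, B_i, C_i)$. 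Minimality forces each $\mathcal{C}_i$ to have full row rank and each $\mathcal{O}_i$ to have full column rank equal to the McMillan degree, and the rank equality $\operatorname{rank}(H)$ computed from both sides forces $n_1 = n_2 =: n$.

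Next, I would define $T := \mathcal{O}_2^{\dagger}\, \mathcal{O}_1$, where $\mathcal{O}_2^{\dagger} = (\mathcal{O}_2^{\tr}\mathcal{O}_2)^{-1}\mathcal{O}_2^{\tr}$ is a left inverse. The first Hankel factorization gives $T\mathcal{C}_1 = \mathcal{O}_2^{\dagger}\mathcal{O}_2\mathcal{C}_2 = \mathcal{C}_2$, and since $\mathcal{C}_1$ and $\mathcal{C}_2$ both have rank $n$, $T\in \mathbb{R}^{n\times n}$ is invertible. The same factorization now reads $\mathcal{O}_1\mathcal{C}_1 = \mathcal{O}_2 T \mathcal{C}_1$, and cancelling $\mathcal{C}_1$ on the right by its right inverse yields $\mathcal{O}_1 = \mathcal{O}_2 T$. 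The three desired identities then fall out by reading off blocks: the first block column of $T\mathcal{C}_1 = \mathcal{C}_2$ is $TB_1 = B_2$; the first block row of $\mathcal{O}_1 = \mathcal{O}_2 T$ is $C_1 = C_2 T$, i.e., $C_2 = C_1 T^{-1}$. For the state matrix, the shifted-Hankel identity together with what we already know gives
\begin{equation*}
\mathcal{O}_2\, T A_1 \mathcal{C}_1 \;=\; \mathcal{O}_1 A_1 \mathcal{C}_1 \;=\; \mathcal{O}_2 A_2 \mathcal{C}_2 \;=\; \mathcal{O}_2 A_2 T \mathcal{C}_1,
\end{equation*}
and cancelling $\mathcal{O}_2$ on the left and $\mathcal{C}_1$ on the right (by the appropriate one-sided inverses) yields $TA_1 = A_2 T$, i.e., $A_2 = TA_1 T^{-1}$.

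For uniqueness, suppose $T'$ also satisfies the three identities. Setting $S := T^{-1}T'$, one checks that $SA_1 = A_1 S$, $SB_1 = B_1$, and $C_1 S^{-1} = C_1$. Iterating the first two relations gives $S A_1^k B_1 = A_1^k SB_1 = A_1^k B_1$ for all $k\ge 0$, so $S\mathcal{C}_1 = \mathcal{C}_1$; the full row rank of $\mathcal{C}_1$ then forces $S = I$, hence $T' = T$. The main obstacle is not any single calculation but the careful bookkeeping with one-sided inverses of non-square matrices and the justification that $n_1 = n_2$ from the common Hankel rank; once these are handled, all four identities and uniqueness drop out of the two Hankel factorizations.
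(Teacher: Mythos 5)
Your proof is correct and follows essentially the same route as the classical argument in the cited reference \cite[Theorem 3.17]{zhou1996robust} (the paper itself states this result without proof): equality of Markov parameters, the Hankel factorizations $\mathcal{O}_1\mathcal{C}_1=\mathcal{O}_2\mathcal{C}_2$ and $\mathcal{O}_1A_1\mathcal{C}_1=\mathcal{O}_2A_2\mathcal{C}_2$, construction of $T$ via a one-sided inverse of the observability matrix, and uniqueness from the full row rank of $\mathcal{C}_1$. No gaps.
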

Finally, the system~\eqref{eq:App_dynamics} is \emph{proper} in the sense that the degree of the numerator in~\eqref{eq:app_transfer_function} does not exceed the degree of its denominator. The system~\eqref{eq:App_dynamics} becomes \emph{strictly proper} if $D = 0$.

\subsection{Lyapunov Equations}

Given a real matrix $A \in \mathbb{R}^{n\times n}$ and a symmetric matrix $Q \in \mathbb{S}^n$, we consider the following Lyapunov
equation
\begin{equation} \label{eq:Lyapunov_equation}
    A^\tr X + X A + Q = 0.
\end{equation}
Its vectorized version is
\begin{equation} \label{eq:Lyapunov_equation_vec}
    ( I_n \otimes A^\tr  + A^\tr \otimes I_n)\operatorname{vec}(X) = -\operatorname{vec}(Q),
\end{equation}
where we use $\otimes$ to denote Kronecker product. It can be shown that if $A$ is stable, then $( I_n \otimes A^\tr  + A^\tr \otimes I_n)$ is invertible, and thus from~\eqref{eq:Lyapunov_equation_vec}, the Lyapunov equation~\eqref{eq:Lyapunov_equation} admits a unique solution for any matrix $Q$. Furthermore, we have the following results on the positive semidefiniteness of the solution $X$.
\begin{lemma}[{\cite[Lemma 3.18]{zhou1996robust}}] \label{lemma:Lyapunov}
   Consider the Lyapunov equation~\eqref{eq:Lyapunov_equation}. Assuming that $A$ is stable, the following statements hold.
   \begin{itemize}
       \item The unique solution is
       $$X = \int_0^\infty e^{A^\tr t}Q e^{At} dt.$$
       \item $X \succ 0$ if $Q \succ 0$, and $X \succeq 0$ if $Q \succeq 0$.
       \item If $Q \succeq 0$, then $X \succ 0$ if and only if $(Q^{1/2},A)$ is observable.
   \end{itemize}
\end{lemma}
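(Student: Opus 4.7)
The plan is to establish the three bullets in turn, using the integral formula as the common tool. First I would show absolute convergence of $X=\int_0^\infty e^{A^\tr t}Qe^{At}\,dt$: stability of $A$ gives an estimate $\|e^{At}\|\le ce^{-\alpha t}$ for some $c,\alpha>0$, so the integrand decays exponentially. A direct differentiation,
$$
\frac{d}{dt}\bigl(e^{A^\tr t}Qe^{At}\bigr)
= A^\tr e^{A^\tr t}Qe^{At} + e^{A^\tr t}Qe^{At}A,
$$
combined with the fundamental theorem of calculus and $e^{At}\to 0$ as $t\to\infty$, yields $A^\tr X+XA=-Q$. Uniqueness then follows immediately from the vectorized form~\eqref{eq:Lyapunov_equation_vec} and the fact, noted just before the lemma, that $I_n\otimes A^\tr+A^\tr\otimes I_n$ is invertible when $A$ is stable (its eigenvalues are the pairwise sums $\lambda_i(A)+\lambda_j(A)$, all with negative real part).

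For the second bullet, I would rewrite the quadratic form as
$$
v^\tr X v = \int_0^\infty \bigl\|Q^{1/2}e^{At}v\bigr\|^2\,dt \ge 0 \qquad \text{for all } v\in\mathbb{R}^n,
$$
which gives $X\succeq 0$ whenever $Q\succeq 0$. If in addition $Q\succ 0$, then $Q^{1/2}$ is invertible; evaluating the integrand at $t=0$ and using invertibility of $e^{A\cdot 0}=I$, the integrand is strictly positive at $t=0$ for any $v\neq 0$, and continuity then forces the integral to be positive, so $X\succ 0$.

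The third bullet is the main substantive step. Using the same identity, $v^\tr X v=0$ if and only if the continuous nonnegative function $t\mapsto\|Q^{1/2}e^{At}v\|^2$ is identically zero, i.e., $Q^{1/2}e^{At}v\equiv 0$ for all $t\ge 0$. Expanding $e^{At}$ as a power series and matching coefficients gives $Q^{1/2}A^k v=0$ for every $k\ge 0$, and in particular for $k=0,\dots,n-1$, which is exactly $\mathcal{O}v=0$ where $\mathcal{O}$ is the observability matrix of $(Q^{1/2},A)$. Conversely, if $\mathcal{O}v=0$, Cayley--Hamilton reduces all higher powers $A^k$ to linear combinations of $I,A,\dots,A^{n-1}$, so $Q^{1/2}A^kv=0$ for every $k\ge 0$ and hence $Q^{1/2}e^{At}v\equiv 0$. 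Therefore $X\succ 0$ if and only if $\ker\mathcal{O}=\{0\}$, i.e., $(Q^{1/2},A)$ is observable.

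The only real subtlety is the ``only if'' direction of the third bullet: one must justify that vanishing of $v^\tr Xv$ forces the integrand to vanish pointwise (using continuity and nonnegativity), and then cleanly pass from the analytic condition $Q^{1/2}e^{At}v\equiv 0$ to the finite algebraic condition $\mathcal{O}v=0$ via Taylor expansion at $t=0$ together with Cayley--Hamilton. Everything else is direct computation.
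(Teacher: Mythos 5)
Your proof is correct. The paper does not supply its own proof of this lemma---it is quoted directly from \cite[Lemma 3.18]{zhou1996robust}---and your argument (exponential decay of $e^{At}$ for convergence, differentiation under the integral to verify $A^\tr X+XA=-Q$, uniqueness via invertibility of $I_n\otimes A^\tr+A^\tr\otimes I_n$, and the passage from $Q^{1/2}e^{At}v\equiv 0$ to the observability matrix condition via Taylor coefficients and Cayley--Hamilton) is precisely the standard textbook proof of that cited result.
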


Given the solution to the Lyapunov equation~\eqref{eq:Lyapunov_equation}, there also exist converse results that establish the stability property of the matrix $A$; see~\cite[Lemma 3.19]{zhou1996robust}.

\subsection{Manifolds and Lie Groups}
\label{app:manifold}

We adopt the following definitions for manifolds in Euclidean spaces. 
We refer to \cite{lee2013introduction,milnor1997topology} for more details of these definitions and related results.
\begin{definition}[$C^\infty$ maps and diffeomorphism]
Let $\mathcal{E}$ and $\mathcal{F}$ be two real Euclidean spaces, and let $\mathcal{X}\subset\mathcal{E}$ and $\mathcal{Y}\subseteq\mathcal{F}$ be subsets of $\mathcal{E}$ and $\mathcal{F}$ respectively. We say that a map $\phi:\mathcal{X}\rightarrow\mathcal{Y}$ is $C^\infty$, if for any $p\in\mathcal{X}$, there exists an open neighborhood $\mathcal{U}$ of $p$ in $\mathcal{E}$ and an indefinitely differentiable function $\tilde{\phi}:\mathcal{U}\rightarrow\mathcal{F}$ that coincides with $\phi$ on $\mathcal{U}\cap\mathcal{X}$. We say that a $C^\infty$ map $\phi:\mathcal{X}\rightarrow\mathcal{Y}$ is a \emph{diffeomorphism} from $\mathcal{X}$ to $\mathcal{Y}$, if $\phi$ has an inverse map $\phi^{-1}:\mathcal{Y}\rightarrow\mathcal{X}$ that is $C^\infty$. We say that $\mathcal{X}$ and $\mathcal{Y}$ are \emph{diffeomorphic} if there exists a diffeomorphism from $\mathcal{X}$ to $\mathcal{Y}$.
\end{definition}
\begin{definition}[Manifold and submanifold]
Let $\mathcal{E}$ be a real Euclidean space. A subset $\mathcal{M}\subset \mathcal{E}$ is said to be a $C^\infty$ \emph{manifold} of dimension $k$ in $\mathcal{E}$ , if for any $p\in\mathcal{M}$, there exists an open neighborhood $\mathcal{U}$ of $p$ in $\mathcal{E}$, such that $\mathcal{U}\cap\mathcal{M}$ is diffeomorphic to some open subset of $\mathbb{R}^k$.

Let $\mathcal{M}\subseteq\mathcal{E}$ be a $C^\infty$ manifold in the real Euclidean space $\mathcal{E}$. A subset $\mathcal{N}\subseteq\mathcal{M}$ is said to be a $C^\infty$ \emph{(embedded) submanifold} of $\mathcal{M}$ if it is a manifold in the real Euclidean space $\mathcal{E}$.
\end{definition}

\begin{definition}[Tangent space]
    Let $\mathcal{M} \subseteq \mathcal{E}$ be a $C^\infty$ manifold in a real Euclidean space $\mathcal{E}$. Given $x \in \mathcal{M}$, we say that $v \in \mathcal{E}$ is a \emph{tangent vector} of $\mathcal{M}$ at $x$, if there exists a $C^\infty$ curve $\gamma: (-1,1) \rightarrow \mathcal{M}$ with $\gamma(0) = x$ and $v = \gamma'(0)$. The set of tangent vectors of $\mathcal{M}$ at $x$ is called the \emph{tangent space} of $\mathcal{M}$ at $x$, which we denoted by $\mathcal{T}_x\mathcal{M}$.
\end{definition}

It is a known fact in differential geometry that the dimension of the tangent space is equal to the dimension of the manifold.

\begin{definition}[Tangent map]
Let $\mathcal{M}\subseteq\mathcal{E}$ and $\mathcal{N}\subseteq\mathcal{F}$ be two $C^\infty$ manifolds in real Euclidean spaces $\mathcal{E}$ and $\mathcal{F}$ respectively. let $\phi:\mathcal{M}\rightarrow\mathcal{N}$ be a $C^\infty$ map. For any $x\in\mathcal{M}$, the \emph{tangent map} of $\phi$ at $x$ is the linear map $d\phi_x:\mathcal{T}_x\mathcal{M}\rightarrow \mathcal{T}_{\phi(x)}\mathcal{N}$ defined by
$$
d\phi_x (\gamma'(0))
=\left.\frac{d(\phi\circ\gamma(t))}{dt}\right|_{t=0}
$$
for any $C^\infty$ curve $\gamma:(-1,1)\rightarrow\mathcal{M}$ with $\gamma(0)=x$.
\end{definition}

It is known in differential geometry that, if $\phi:\mathcal{M}\rightarrow\mathcal{N}$ is a diffeomorphism, then $d\phi_x$ is an isomorphism (a bijective linear map) from $\mathcal{T}_x\mathcal{M}$ to $\mathcal{T}_{\phi(x)}\mathcal{N}$.

\begin{definition}[Lie group]
A $C^\infty$ manifold $\mathcal{G}$ is said to be a Lie group, if there exists a $C^\infty$ binary operation $\cdot:\mathcal{G}\times\mathcal{G}\rightarrow\mathcal{G}$, such that the following group axioms are satisfied:
\begin{enumerate}
\item associativity: $(x\cdot y)\cdot z=x\cdot(y\cdot z)$ for all $x,y,z\in\mathcal{G}$;
\item identity: there exists $e\in\mathcal{G}$ such that $e\cdot x=x\cdot e=x$ for all $x\in\mathcal{G}$;
\item inverse: for all $x\in\mathcal{G}$ there exists a unique $x^{-1}\in\mathcal{G}$ such that $x\cdot x^{-1}=x^{-1}\cdot x=e$;
\end{enumerate}
and moreover, the inversion $x\mapsto x^{-1}$ is a $C^\infty$ map from $\mathcal{G}$ to $\mathcal{G}$.
\end{definition}

In this paper, we extensively use the Lie group
$\mathrm{GL}_q$ which is the set of $q \times q$ (real) invertible matrices together with the ordinary matrix multiplication. $\mathrm{GL}_q$ is a Lie group whose elements are organized continuously and smoothly. In addition, $\mathrm{GL}_q$ is also a $q^2$-dimensional manifold, where the group operations of multiplication and inversion are smooth maps.

\begin{definition}[Lie group action]
Let $\mathcal{M}$ be a $C^\infty$ manifold, and let $\mathcal{G}$ be a Lie group with identity $e\in\mathcal{G}$. We say that a $C^\infty$ map $\mathscr{T}:\mathcal{G}\times\mathcal{M}\rightarrow\mathcal{M}$ gives a \emph{(left) Lie group action}, if
$
\mathscr{T}(e,x)=x
$
and
$
\mathscr{T}(u\cdot v,x)=\mathscr{T}(u,\mathscr{T}(v,x))
$
for all $x\in\mathcal{M}$ and $u,v\in\mathcal{G}$.
\end{definition}

As an example, the similarity transformation $\mathscr{T}_q(T,K)$ defined in~\eqref{eq:def_sim_transform} gives a Lie group action of $\mathrm{GL}_q$ on $\mathcal{C}_q$.

\section{Auxiliary Results for Continuous-time Systems} \label{appendix:auxiliary_results}

This section presents some auxiliary proofs/results for continuous-time systems.

\subsection{Proofs of Lemmas~\ref{lemma:LQG_cost_formulation1} and~\ref{lemma:LQG_cost_analytical}}\label{appendix:real_analytic}

We first prove the LQG cost formulation in \cref{lemma:LQG_cost_formulation1}. Given a stabilizing controller $\mK \in \mathcal{C}_q$, the closed-loop system is shown in~\eqref{eq:closed-loop_LQG}. Since the controller $\mK$ internally stabilizes the plant, the closed-loop matrix
$$
    A_{\mathrm{cl},\mK} := \begin{bmatrix} A &  BC_{\mK} \\ B_{\mK} C & A_{\mK} \end{bmatrix}
$$
is stable and the state variable $(x(t), \xi(t))$ is a Gaussian process with mean satisfying
$$
    \lim_{t \rightarrow \infty} \mathbb{E}\left(\begin{bmatrix} x(t) \\ \xi(t) \end{bmatrix} \right) = 0,
$$
and covariance satisfying
\begin{equation} \label{eq:co-variance}
\begin{aligned}
    \lim_{t \rightarrow \infty} \mathbb{E}\left(\begin{bmatrix} x(t) \\ \xi(t) \end{bmatrix}\begin{bmatrix} x(t) \\ \xi(t) \end{bmatrix}^\tr \right) &= \lim_{t \rightarrow \infty} \int_{0}^t e^{A_{\mathrm{cl},\mK}(t - \tau)}\begin{bmatrix}W & \\ & B_{\mK} V B_{\mK}^\tr  \end{bmatrix}e^{A_{\mathrm{cl},\mK}^\tr (t - \tau)} d\tau\\
    &= \int_{0}^\infty e^{A_{\mathrm{cl},\mK}t}\begin{bmatrix}W & \\ & B_{\mK} V B_{\mK}^\tr  \end{bmatrix}e^{A_{\mathrm{cl},\mK}^\tr t} dt.
\end{aligned}
\end{equation}
By \cref{lemma:Lyapunov}, the last expression in~\eqref{eq:co-variance} is the same as the unique solution $X_{\mK}$ to the Lyapunov equation~\eqref{eq:LyapunovX}.

Therefore, the corresponding LQG cost is given by
$$
\begin{aligned}
    J_q := \lim_{T \rightarrow \infty }\frac{1}{T}\,\mathbb{E} \!\left[\int_{t=0}^T \left(x^\tr Q x + u^\tr R u\right)dt\right] 
    &=  \lim_{t\rightarrow \infty} \mathbb{E}\!\left(\begin{bmatrix}x \\ \xi \end{bmatrix}^\tr  \begin{bmatrix}Q & \\ & C_{\mK}^\tr RC_{\mK} \end{bmatrix}\begin{bmatrix}x \\ \xi \end{bmatrix}\right) \\
    &= \lim_{t\rightarrow \infty} \mathbb{E} \operatorname{tr}\left( \begin{bmatrix}Q & \\ & C_{\mK}^\tr RC_{\mK} \end{bmatrix}\begin{bmatrix}x \\ \xi \end{bmatrix}\begin{bmatrix}x \\ \xi \end{bmatrix}^\tr \right) \\
    &= \operatorname{tr} \left( \begin{bmatrix}Q & \\ & C_{\mK}^\tr RC_{\mK} \end{bmatrix} \lim_{t\rightarrow \infty} \mathbb{E} \left(\begin{bmatrix}x \\ \xi \end{bmatrix}\begin{bmatrix}x \\ \xi \end{bmatrix}^\tr \right) \right)\\
    &= \operatorname{tr} \left( \begin{bmatrix}Q & \\ & C_{\mK}^\tr RC_{\mK} \end{bmatrix} X_{\mK}\right).\\
\end{aligned}
$$
The other expression of the LQG cost in \cref{lemma:LQG_cost_formulation1} follows from the Lyapunov function~\eqref{eq:LyapunovY} by duality between controllability Gramian and observability Gramian.

We now proceed to prove \cref{lemma:LQG_cost_analytical}. First, upon vectorizing the Lyapunov equation~\eqref{eq:LyapunovX}, we have
    $$
       \left(I_{n+q} \otimes A_{\mathrm{cl},\mK} + A_{\mathrm{cl},\mK} \otimes I_{n+q}\right) \operatorname{vec}(X_{\mK}) = -\text{vec}\left( \begin{bmatrix} W & 0 \\ 0 & B_{\mK}VB_{\mK}^\tr  \end{bmatrix}\right).
    $$
Since $A_{\mathrm{cl},\mK}$ is stable, we know that $I_{n+q} \otimes A_{\mathrm{cl},\mK} + A_{\mathrm{cl},\mK} \otimes I_{n+q}$ is invertible, and thus we have
$$
    \operatorname{vec}(X_{\mK}) = - \left(I_{n+q} \otimes A_{\mathrm{cl},\mK} + A_{\mathrm{cl},\mK} \otimes I_{n+q}\right)^{-1}\text{vec}\left( \begin{bmatrix} W & 0 \\ 0 & B_{\mK}VB_{\mK}^\tr  \end{bmatrix}\right).
$$
It is not difficult to see that each element of $\left(I_{n+q} \otimes A_{\mathrm{cl},\mK} + A_{\mathrm{cl},\mK} \otimes I_{n+q}\right)^{-1}$ is a rational function of the elements of $\mK$. Therefore, the LQG cost function
$$
J_q(\mK)
=
\operatorname{tr}
\left(
\begin{bmatrix}
Q & 0 \\ 0 & C_{\mK}^\tr R C_{\mK}
\end{bmatrix} X_\mK\right)
$$
is a rational function of the elements of $\mK$, which is real analytical.


\subsection{Proof of \cref{proposition:Phi_surjective}} \label{app:surjective_proof}
It is straightforward to see that $\Phi(\cdot)$ is continuous since each element of $\Phi(\mZ)$ is a rational function in terms of the elements of $\mZ$ (a ratio of two polynomials).
To show that $\Phi$ is a mapping onto $\mathcal{C}_n$, we need to prove the following statements:
\begin{enumerate}
    \item For all $ \mK \in \mathcal{C}_n$, there exists $\mZ=(X,Y,M,G,H,F,\Pi,\Xi)\in\mathcal{G}_n$ such that
    $
        \Phi(\mZ) = \mK
    $.
    \item For all $ \mZ=(X,Y,M,G,H,F,\Pi,\Xi)\in\mathcal{G}_n$, we have $\Phi(\mZ)\in \mathcal{C}_n$.
\end{enumerate}

To show the first statement, let $\mK= \begin{bmatrix}
D_{\mK} & C{_\mK} \\ B{_\mK} & A{_\mK}
\end{bmatrix}\in\mathcal{C}_n$ be arbitrary. By definition we have $D_{\mK}=0$, and the stability of the matrix $\begin{bmatrix}
A &  BC{_\mK} \\
B{_\mK} C & A{_\mK}
\end{bmatrix}$ implies that the Lyapunov inequality
\begin{equation} \label{eq:LMIstability}
\begin{bmatrix}
A + BD{_\mK}C &  BC{_\mK} \\
B{_\mK} C & A{_\mK}
\end{bmatrix}
\begin{bmatrix}
X & \Pi^\tr \\ \Pi & \hat{X}
\end{bmatrix}
+
\begin{bmatrix}
X & \Pi^\tr \\ \Pi & \hat{X}
\end{bmatrix}
\begin{bmatrix}
A + BD{_\mK}C &  BC{_\mK} \\
B{_\mK} C & A{_\mK}
\end{bmatrix}^\tr \prec 0
\end{equation}
has a solution $\begin{bmatrix} X & \Pi^\tr \\ \Pi & \hat{X} \end{bmatrix} \succ 0$. Without loss of generality we may assume that $\det \Pi\neq 0$ (otherwise we can add a small perturbation on $\Pi$ to make it invertible while still preserving the inequality \eqref{eq:LMIstability}). Upon defining
$$
\begin{bmatrix} Y & \Xi \\ \Xi^\tr & \hat{Y} \end{bmatrix}
:=\begin{bmatrix} X & \Pi^\tr \\ \Pi & \hat{X} \end{bmatrix}^{-1},
\qquad
T :=
\begin{bmatrix} X & \Pi^\tr \\ \Pi & \hat{X} \end{bmatrix}^{-1}\begin{bmatrix} X & I \\ \Pi & 0 \end{bmatrix}
=\begin{bmatrix} I & Y \\ 0 & \Xi^\tr \end{bmatrix},
$$
we can verify that
\begin{equation}\label{eq:Gset_cond1}
YX+\Xi\Pi=I,
\qquad
T^\tr  \begin{bmatrix} X & \Pi^\tr \\ \Pi & \hat{X} \end{bmatrix} T = \begin{bmatrix} X & I \\ I & Y \end{bmatrix}
\succ 0.
\end{equation}
Upon letting
\begin{equation}\label{eq:change_of_var}
\begin{aligned}
M &= Y(A + BD{_\mK}C)X + \Xi B{_\mK} CX + YBC{_\mK}\Pi+ \Xi A{_\mK}\Pi, \\
G &= D{_\mK}, \\
H &= YBD{_\mK} + \Xi B{_\mK}, \\
F & = D{_\mK}CX + C{_\mK}\Pi,\\
\end{aligned}
\end{equation}
we can also verify that
\begin{equation}\label{eq:proof_Phi_map_temp}
T^\tr  \begin{bmatrix} A + BD{_\mK}C &  BC{_\mK} \\ B{_\mK} C & A{_\mK} \end{bmatrix} \begin{bmatrix} X & \Pi^\tr \\ \Pi & \hat{X} \end{bmatrix} T  =  \begin{bmatrix} AX + BF  & A + BGC \\ M & Y A + HC\end{bmatrix}.
\end{equation}
Combining \eqref{eq:proof_Phi_map_temp} with \eqref{eq:LMIstability} and \eqref{eq:Gset_cond1}, we see that $\mZ=(X,Y,M,G,H,F,\Pi,\Xi)\in\mathcal{G}_n$ by the definition of $\mathcal{G}_n$.
Note that the change of variables \eqref{eq:change_of_var} can be compactly represented as
$$
\begin{bmatrix} G & F\\ H & M \end{bmatrix}
=
\begin{bmatrix} I & 0 \\ YB & \Xi \end{bmatrix}
\begin{bmatrix} D{_\mK} & C{_\mK} \\ B{_\mK} & A{_\mK}\end{bmatrix}
\begin{bmatrix} I & CX \\ 0 & \Pi \end{bmatrix}
+
\begin{bmatrix} 0 & 0\\0 & YAX \end{bmatrix},
$$
and with the guarantee in \cref{lemma:connectivity_preliminary}, we see that
$$
\begin{bmatrix}
D{_\mK} & C{_\mK} \\
B{_\mK} & A{_\mK}
\end{bmatrix}
=
\begin{bmatrix} I & 0 \\
YB & \Xi \end{bmatrix}^{-1} \begin{bmatrix}
G & F \\
H & M-YAX \end{bmatrix}\begin{bmatrix} I & CX\\ 0 & \Pi \end{bmatrix}^{-1}
=
\begin{bmatrix}
\Phi_D(\mZ) & \Phi_C(\mZ) \\
\Phi_B(\mZ) & \Phi_A(\mZ)
\end{bmatrix}=\Phi(\mZ).
$$

We then prove the second statement. Let $\mZ=(X,Y,M,G,H,F,\Pi,\Xi)\in\mathcal{G}_n$ be arbitrary. Let $\hat{X}=\Pi(X-Y^{-1})^{-1}\Pi^\tr$, and it's straightforward to see that $\hat{X}\succ 0$ and
$$
\begin{bmatrix}
X & \Pi^\tr \\
\Pi & \hat{X}
\end{bmatrix}
\begin{bmatrix}
I & Y \\
0 & \Xi^\tr
\end{bmatrix}
=
\begin{bmatrix}
X & XY+\Pi^\tr \Xi^\tr \\
\Pi & \Pi Y+\hat{X}\Xi^\tr
\end{bmatrix}
=\begin{bmatrix}
X & I \\
\Pi & 0
\end{bmatrix},
$$
where we used the fact that
$$
\begin{aligned}
\Pi Y+\hat{X}\Xi^\tr =&\Pi Y+\Pi (X-Y^{-1})^{-1}\Pi^\tr \Xi^\tr \\
=&\Pi Y - \Pi(X-Y^{-1})^{-1}(XY-I) \\
=&\Pi Y - \Pi (X-Y^{-1})^{-1}(X-Y^{-1})Y \\
=&0.
\end{aligned}
$$ We also have
$$
\begin{bmatrix} G & F\\ H & M \end{bmatrix}
=
\begin{bmatrix} I & 0\\ YB & \Xi \end{bmatrix}
\begin{bmatrix}
\Phi_D(\mZ) & \Phi_C(\mZ) \\
\Phi_B(\mZ) & \Phi_A(\mZ)
\end{bmatrix}
\begin{bmatrix} I & CX\\ 0 & \Pi \end{bmatrix}
+
\begin{bmatrix} 0 & 0\\0 & YAX \end{bmatrix}
$$
from the definition of $\Phi$. Similarly as showing the equality \eqref{eq:proof_Phi_map_temp}, we can derive that
$$
\begin{aligned}
& \begin{bmatrix}
AX \!+\! BF & A \!+\! BGC \\ M & YA \!+\! HC
\end{bmatrix}
=
\begin{bmatrix} I & Y \\ 0 & \Xi^\tr \end{bmatrix}^\tr
\begin{bmatrix} A + BGC &  B\Phi_C(\mZ) \\ \Phi_B(\mZ) C & \Phi_A(\mZ) \end{bmatrix} \begin{bmatrix} X & \Pi^\tr \\ \Pi & \hat{X} \end{bmatrix}
\begin{bmatrix} I & Y \\ 0 & \Xi^\tr \end{bmatrix}.
\end{aligned}
$$
Then from the definition of $\mathcal{G}_n$, we can further get
$$
\begin{bmatrix} A + BGC &  B\Phi_C(\mZ) \\ \Phi_B(\mZ) C & \Phi_A(\mZ) \end{bmatrix} \begin{bmatrix} X & \Pi^\tr \\ \Pi & \hat{X} \end{bmatrix}
+ \begin{bmatrix} X & \Pi^\tr \\ \Pi & \hat{X} \end{bmatrix}
\begin{bmatrix} A + BGC &  B\Phi_C(\mZ) \\ \Phi_B(\mZ) C & \Phi_A(\mZ) \end{bmatrix}^\tr
\prec 0,
$$
and since $X-\Pi^\tr \hat{X}^{-1} \Pi
=Y^{-1}\succ 0$, the matrix $\begin{bmatrix} X & \Pi^\tr \\ \Pi & \hat{X} \end{bmatrix}$ is positive definite. We can now see that $\begin{bmatrix} A &  B\Phi_C(\mZ) \\ \Phi_B(\mZ) C & \Phi_A(\mZ) \end{bmatrix}$ satisfies the Lyapunov inequality and thus is stable, meaning that $\Phi(\mZ)\in \mathcal{C}_n$.

\subsection{A Second-Order SISO System for Which $\mathcal{C}_n$ Is Not Path-Connected}
\label{appendix:eg_disconnectivity}

Consider a second-order SISO plant with
\begin{equation} \label{eq:ExampleSISO2}
A=\begin{bmatrix}
0 & 1 \\ 1 & 0
\end{bmatrix},
\qquad
B=\begin{bmatrix}
0 \\ 1
\end{bmatrix},
\qquad
C=\begin{bmatrix}
0 & 1
\end{bmatrix}.
\end{equation}
For this case, any reduced-order controller in $\mathcal{C}_{1}$ and can be parameterized by
$
\mathsf{K} = \begin{bmatrix}
0 & C_{\mK} \\ B_{\mK} & A_{\mK}
\end{bmatrix}
$
for some $A_{\mK},B_{\mK},C_{\mK}\in\mathbb{R}$. We now show that the matrix \eqref{eq:closedloopmatrix}, given by
$$
\begin{bmatrix}
0 & 1 & 0 \\ 1 & 0 & C_{\mK} \\ 0 & B_{\mK} & A_{\mK}
\end{bmatrix},
$$
is not stable for any $A_{\mK},B_{\mK},C_{\mK}\in\mathbb{R}$, implying that $\mathcal{C}_{1}=\varnothing$. Indeed,
by the Routh–Hurwitz criterion, the characteristic polynomial
$$
\det\left(\lambda I_3 - \begin{bmatrix}
        0 & 1 & 0 \\
        1 & 0 & C_{\mK} \\
        0 & B_{\mK} & A_{\mK}
        \end{bmatrix}\right) = \lambda^3 - A_{\mK}\lambda^2 - (B_{\mK}C_{\mK}+1)\lambda  + A_{\mK}.
$$
has all roots in the open left half plane if and only if
$$
        -A_{\mK} > 0, \qquad A_{\mK} > 0,  \qquad A_{\mK}(B_{\mK}C_{\mK}+1) > A_{\mK},
$$
which are obviously infeasible. We can now conclude that $\mathcal{C}_n$ is not path-connected by \cref{Theo:connectivity_conditions} since the plant is SISO.

We can also directly prove the disconnectivity of $\mathcal{C}_n$ in this example. The set $\mathcal{C}_n=\mathcal{C}_2$ for~\eqref{eq:ExampleSISO2} can be written as
$$
    \mathcal{C}_{n} = \left\{
    \begin{bmatrix}
    0 & C_{\mK,1} & C_{\mK,2} \\
    B_{\mK,1} & A_{\mK,11} & A_{\mK,12} \\
    B_{\mK,2} & A_{\mK,21} & A_{\mK,22}
    \end{bmatrix}
    \in \mathbb{R}^{3\times 3}
    \left|
\begin{bmatrix}
0 & 1 & 0 & 0 \\
1 & 0 & C_{\mK,1} & C_{\mK,2} \\
0 & B_{\mK,1} & A_{\mK,11} & A_{\mK,12} \\
0 & B_{\mK,2} & A_{\mK,21} & A_{\mK,22}
\end{bmatrix}
\textrm{ is stable} \right.
\right\}.
$$
Obviously $B_{\mK}$ cannot be zero for any stabilizing controller in $\mathcal{C}_2$. Since for any $B_{\mK}\in\mathbb{R}^{2}\backslash\{0\}$, there exists $T\in\mathbb{R}^{2\times 2}$ with $\det T>0$ such that $TB_{\mK}=\begin{bmatrix}
0 \\ 1
\end{bmatrix}$, by the path-connectivity of the set $\{T\in\mathbb{R}^{2\times 2}:\det T>0\}$ \cite{lee2013introduction}, we can see that $\mathcal{C}_{n}$ is path-connected if and only if the set
$$
\mathcal{S}
=
\left\{
\hat{\mK}=\begin{bmatrix}
C_{\mK,1} & C_{\mK,2} \\
A_{\mK,11} & A_{\mK,12} \\
A_{\mK,21} & A_{\mK,22}
\end{bmatrix}
\in \mathbb{R}^{3\times 2}
\left|
\begin{bmatrix}
0 & 1 & 0 & 0 \\
1 & 0 & C_{\mK,1} & C_{\mK,2} \\
0 & 0 & A_{\mK,11} & A_{\mK,12} \\
0 & 1 & A_{\mK,21} & A_{\mK,22}
\end{bmatrix}\right.
\textrm{ is stable}
\right\}
$$
is path-connected. The Routh--Hurwitz stability criterion allows establishing an equivalent condition for the set $\mathcal{S}$ as
$$
\mathcal{S}
=\left\{\left.\hat{\mK}=\begin{bmatrix}
C_{\mK,1} & C_{\mK,2} \\
A_{\mK,11} & A_{\mK,12} \\
A_{\mK,21} & A_{\mK,22}
\end{bmatrix} \right|\;
p_1(\hat{\mK})>0, \;
p_2(\hat{\mK})>0, \;
p_3(\hat{\mK})>0, \;
p_4(\hat{\mK})>0 \right\},
$$
where
$$
\begin{aligned}
p_1(\hat{\mK})
=\ &
-A_{\mK,11}-A_{\mK,22}, \\
p_{2} (\hat{\mK})
=\ &
A_{\mK,11}+A_{\mK,22}
+ A_{\mK,11} C_{\mK,2} - A_{\mK,12} C_{\mK,1}, \\
p_{3} (\hat{\mK})
=\ &
(A_{\mK,11}+A_{\mK,22})^2
(A_{\mK,11}A_{\mK,22}-A_{\mK,12}A_{\mK,21}) \\
& - (A_{\mK,11}+A_{\mK,22} + A_{\mK,11} C_{\mK,2} - A_{\mK,12} C_{\mK,1}) \\
&
\ \ \ \ \times
[(A_{\mK,11}+A_{\mK,22})
(A_{\mK,11}A_{\mK,22}-A_{\mK,12}A_{\mK,21}-C_{\mK,2})+A_{\mK,11} C_{\mK,2} - A_{\mK,12}C_{\mK,2}], \\
p_{4}(\hat{\mK})
=\ &
-A_{\mK,11}A_{\mK,22}+A_{\mK,12}A_{\mK,21}.
\end{aligned}
$$
We first show that $A_{\mK,12}\neq 0$ for any $\hat{\mK}\in\mathcal{S}$. Indeed, if $A_{\mK,12}=0$, we then have
$$
p_{2}(\hat{\mK})
=
A_{\mK,11}+A_{\mK,22}
+A_{\mK,11} C_{\mK,2},
\qquad
p_4(\hat{\mK})
=-A_{\mK,11}A_{\mK,22},
$$
and
$$
\begin{aligned}
p_3(\hat{\mK})
=\ &
(A_{\mK,11}+A_{\mK,22})^2
A_{\mK,11}A_{\mK,22} \\
&
-
(A_{\mK,11}+A_{\mK,22}
+A_{\mK,11} C_{\mK,2})
[
(A_{\mK,11}+A_{\mK,22})
(A_{\mK,11}A_{\mK,22}-C_{\mK,1})
+A_{\mK,11}C_{\mK,2}
] \\
=\ &
A_{\mK,22}C_{\mK,2}
(A_{\mK,11} + A_{\mK,22} + A_{\mK,11}C_{\mK,2} - A_{\mK,11}^2(A_{\mK,11} + A_{\mK,22})).
\end{aligned}
$$
From $p_1(\hat{\mK})>0$ and $p_2(\hat{\mK})>0$, we get $A_{\mK,11}C_{\mK,2}>0$, and together with $p_4(\hat{\mK})>0$ and $p_3(\hat{\mK})>0$, we see that $A_{\mK,22}C_{\mK,2}<0$ and
$$
A_{\mK,11}+A_{\mK,22}+A_{\mK,11}C_{\mK,2}
<A_{\mK,11}^2(A_{\mK,11}+A_{\mK,22})<0,
$$
which contradicts $p_{2}(\hat{\mK})>0$. Thus $A_{\mK,12}\neq 0$ for any $\hat{\mK}\in\mathcal{S}$

On the other hand, let
$$
\hat{\mK}^{(1)}
=\begin{bmatrix}
-3/2 & -2 \\
0 & 1 \\ 1/8 & -1
\end{bmatrix},
\qquad
\hat{\mK}^{(2)}
=\begin{bmatrix}
3/2 & -2 \\
0 & -1 \\ -1/8 & -1
\end{bmatrix}.
$$
It can be checked that $\hat{\mK}^{(1)}$ and $\hat{\mK}^{(2)}$ are both in  $\mathcal{S}$. Now we see that $\mathcal{S}$ is not path-connected, since any continuous path connecting $\hat{\mK}^{(1)}$ and $\hat{\mK}^{(2)}$ must pass a point with $A_{\mK,12}=0$. Consequently, the set $\mathcal{C}_{2}$ is not path-connected for this example.

\subsection{The Gradient and the Hessian of $J_q(\mK)$}
\label{app:gradien_Jq}

We first introduce the following lemma.
\begin{lemma}\label{lemma:Taylor_Lyapunov_eq}
Suppose $M:(-\delta,\delta)\rightarrow\mathbb{R}^{k\times k}$ and $G:(-\delta,\delta)\rightarrow\mathbb{S}^k$ are two indefinitely differentiable matrix-valued functions for some $\delta>0$ and $k\in\mathbb{N}\backslash\{0\}$, and suppose $M(t)$ is stable for all $t\in(-\delta,\delta)$. Let $X(t)$ denote the solution to the following Lyapunov equation
$$
M(t)X(t)+X(t)M(t)^\tr+G(t)=0.
$$
Then $X(t)$ is indefinitely differentiable over $t\in(-\delta,\delta)$, and its $j$'th order derivative at $t=0$, denoted by $X^{(j)}(0)$, is the solution to the following Lyapunov equation
\begin{equation}\label{eq:Taylor_Lyapunov_eq}
\begin{aligned}
& M(0)X^{(j)}(0)
+X^{(j)}(0)M(0)^\tr \\
& \qquad+
\left(\sum_{i=1}^j \frac{j!}{i!(j-i)!}
\left(M^{(i)}(0)X^{(j-i)}(0)
+X^{(j-i)}(0)M^{(i)}(0)^\tr
\right)
+G^{(j)}(0)\right)=0.
\end{aligned}
\end{equation}
\end{lemma}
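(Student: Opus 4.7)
The plan is to establish smoothness first and then derive the recursive Lyapunov equation by direct differentiation. For smoothness, I would vectorize the Lyapunov equation as
\[
\bigl(I_k\otimes M(t) + M(t)\otimes I_k\bigr)\operatorname{vec}(X(t)) = -\operatorname{vec}(G(t)).
\]
Since $M(t)$ is stable for every $t\in(-\delta,\delta)$, the Kronecker sum $I_k\otimes M(t)+M(t)\otimes I_k$ has only eigenvalues of the form $\lambda_i(M(t))+\lambda_j(M(t))$, all of which have strictly negative real part, hence it is invertible. Therefore
\[
\operatorname{vec}(X(t)) = -\bigl(I_k\otimes M(t)+M(t)\otimes I_k\bigr)^{-1}\operatorname{vec}(G(t)),
\]
and because matrix inversion is $C^\infty$ on the open set of invertible matrices, and $t\mapsto M(t),G(t)$ are $C^\infty$ by assumption, $X(t)$ is $C^\infty$ on $(-\delta,\delta)$.

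Next, I would differentiate the identity $M(t)X(t)+X(t)M(t)^\tr+G(t)=0$ exactly $j$ times in $t$ and apply the Leibniz rule to each of the two products. This gives
\[
\sum_{i=0}^{j}\binom{j}{i}\Bigl(M^{(i)}(t)X^{(j-i)}(t)+X^{(j-i)}(t)\bigl(M^{(i)}(t)\bigr)^{\!\tr}\Bigr)+G^{(j)}(t)=0.
\]
Isolating the $i=0$ term, which contains the highest-order derivative $X^{(j)}(t)$, yields
\[
M(t)X^{(j)}(t)+X^{(j)}(t)M(t)^\tr + \sum_{i=1}^{j}\binom{j}{i}\Bigl(M^{(i)}(t)X^{(j-i)}(t)+X^{(j-i)}(t)\bigl(M^{(i)}(t)\bigr)^{\!\tr}\Bigr)+G^{(j)}(t)=0.
\]
Evaluating at $t=0$ produces exactly the Lyapunov equation \eqref{eq:Taylor_Lyapunov_eq}, with the forcing term composed of $G^{(j)}(0)$ and the lower-order derivatives $X^{(0)}(0),\ldots,X^{(j-1)}(0)$ weighted against $M^{(1)}(0),\ldots,M^{(j)}(0)$. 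Since $M(0)$ is stable, this Lyapunov equation has a unique solution, which is precisely $X^{(j)}(0)$.

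Neither step presents a serious obstacle; the main thing to be careful about is correctly bookkeeping the binomial coefficients and the transpose on the second factor, and justifying smoothness of the inverse rather than just pointwise existence. A potential subtlety to mention, if desired, is that smoothness of $X(t)$ could alternatively be argued via the implicit function theorem applied to the map $(t,X)\mapsto M(t)X+XM(t)^\tr+G(t)$, whose $X$-derivative at a stabilizing $M(t)$ is the same Kronecker-sum operator and is thus a linear isomorphism; either route works and is purely routine.
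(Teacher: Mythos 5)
Your proposal is correct and follows essentially the same route as the paper: smoothness via vectorization and invertibility of the Kronecker-sum operator, followed by extraction of the order-$j$ identity (the paper matches Taylor coefficients where you apply the Leibniz rule directly, but these are the same computation). No gaps.
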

\begin{proof}[Proof of \cref{lemma:Taylor_Lyapunov_eq}]
The differentiability of $X(t)$ follows from the observation that the unique solution to the Lyapunov equation can be written as
$$
\operatorname{vec}(X(t))
=-(I_k\otimes M(t) + M(t) \otimes I_k)^{-1}
\operatorname{vec}(G(t)).
$$
Since $M(t)$, $G(t)$ and $X(t)$ are indefinitely differentiable, they admit Taylor expansions around $t=0$ given by
\begin{align*}
M(t) =\ &
\sum_{j=0}^a\frac{t^j}{j!}
M^{(j)}(0)
+o(t^a), \\
G(t) =\ &
\sum_{j=0}^a\frac{t^j}{j!}
G^{(j)}(0)
+o(t^a), \\
X(t) =\ &
\sum_{j=0}^a\frac{t^j}{j!}
X^{(j)}(0)
+o(t^a)
\end{align*}
for any $a\in\mathbb{N}$. By plugging these Taylor expansions into the original Lyapunov equation, after some algebraic manipulations, we can show that
$$
\sum_{j=0}^a t^j
\left[\sum_{i=0}^j \frac{1}{i!(j-i)!}
\left(M^{(i)}(0)X^{(j-i)}(0)
+X^{(j-i)}(0)M^{(i)}(0)^\tr
\right)
+\frac{1}{j!}G^{(j)}(0)
\right]
+o(t^a)
=0.
$$
Since the above equality holds for all sufficiently small $t$, we get
$$
\sum_{i=0}^j \frac{1}{i!(j-i)!}
\left(M^{(i)}(0)X^{(j-i)}(0)
+X^{(j-i)}(0)M^{(i)}(0)^\tr
\right)
+\frac{1}{j!}G^{(j)}(0)=0,
$$
which is the same as~\eqref{eq:Taylor_Lyapunov_eq}. Thus, $X^{(j)}(0)$ is a solution to the Lyapunov equation~\eqref{eq:Taylor_Lyapunov_eq}.
\end{proof}


Given any stabilizing controller $\mK \in\mathcal{C}_q$, we denote the closed-loop matrix as
$$
{A}_{\text{cl},\mK} = \begin{bmatrix}
A & BC_{\mK} \\ B_{\mK}C & A_{\mK}
\end{bmatrix} = \begin{bmatrix}
A & 0  \\ 0& 0
\end{bmatrix} + \begin{bmatrix}
B & 0 \\ 0 & I
\end{bmatrix}\mK
\begin{bmatrix}
C & 0 \\ 0 & I
\end{bmatrix}
$$
and recall that the LQG cost is given by
$$
J_q(\mK)
=\operatorname{tr}
\left(
\begin{bmatrix}
Q & 0 \\ 0 & C_{\mK}^\tr R C_{\mK}
\end{bmatrix}X_\mK
\right),
$$
where $X_\mK$ is the unique positive semidefinite solution to the Lyapunov equation~\eqref{eq:LyapunovX}.

Consider an arbitrary direction $\Delta
=\begin{bmatrix}
0 & \Delta_{C_{\mK}} \\
\Delta_{B_{\mK}} & \Delta_{A_{\mK}}
\end{bmatrix}
\in\mathcal{V}_q$. For sufficiently small $t>0$ such that $\mK + t \Delta \in \mathcal{C}_q$, the corresponding closed-loop matrix is
$$
A_{\text{cl},\mK+t\Delta} = {A}_{\text{cl},\mK}
+t
\begin{bmatrix}
B & 0 \\ 0 & I
\end{bmatrix}\Delta
\begin{bmatrix}
C & 0 \\ 0 & I
\end{bmatrix},
$$
and we let $X_{\mK,\Delta}(t)$ denote the solution to the Lyapunov equation~\eqref{eq:LyapunovX} with closed-loop matrix $A_{\mathrm{cl},\mK+t\Delta}$, i.e.,
\begin{equation} \label{eq:Lyapunov_X_next_step}
    \begin{aligned}
\left({A}_{\text{cl},\mK} + t \begin{bmatrix}
B & 0 \\ 0 & I
\end{bmatrix}\Delta \begin{bmatrix}
C & 0 \\ 0 & I
\end{bmatrix} \right)X_{\mK,\Delta}(t)
&+
X_{\mK,\Delta}(t)
\left(A_{\text{cl},\mK}
+t \begin{bmatrix}
B & 0 \\ 0 & I
\end{bmatrix}\Delta
\begin{bmatrix}
C & 0 \\ 0 & I
\end{bmatrix}
\right)^\tr \\
&+ \begin{bmatrix}
W & 0 \\
0 & (B_\mK + t\Delta_{B_\mK})V
(B_\mK + t\Delta_{B_\mK})^\tr
\end{bmatrix} = 0.
\end{aligned}
\end{equation}
By \cref{lemma:Taylor_Lyapunov_eq}, we see that $X_{\mK,\Delta}(t)$ admits a Taylor expansion of the form
\begin{equation} \label{eq:Lyapunov_X_Taylor}
X_{\mK,\Delta}(t)
= X_{\mK}
+
t\cdot X_{\mK,\Delta}'(0)
+\frac{t^2}{2}
\cdot X_{\mK,\Delta}''(0)
+o(t^2),
\cdot
\end{equation}
and the derivatives $X_{\mK,\Delta}'(0)$ and $X_{\mK,\Delta}''(0)$ are the solutions to the following Lyapunov equations
\begin{align}
A_{\mathrm{cl},\mK} X'_{\mK,\Delta}(0)
+X'_{\mK,\Delta}(0) A_{\mathrm{cl},\mK}^\tr
+M_1(X_{\mK},\Delta) =\ & 0,
\\
A_{\mathrm{cl},\mK} X''_{\mK,\Delta}(0)
+X''_{\mK,\Delta}(0) A_{\mathrm{cl},\mK}^\tr
+2M_2\big(X'_{\mK,\Delta}(0),\Delta\big)
=\ & 0,
\end{align}
where
\begin{align*}
M_1(X_{\mK},\Delta)
\coloneqq &
\begin{bmatrix}
B & 0 \\ 0 & I
\end{bmatrix}\Delta
\begin{bmatrix}
C & 0 \\ 0 & I
\end{bmatrix} X_{\mK}
+ X_{\mK}\begin{bmatrix}
C & 0 \\ 0 & I
\end{bmatrix}^\tr
\!\Delta^\tr\!
\begin{bmatrix}
B & 0 \\ 0 & I
\end{bmatrix}^\tr
\!+
\begin{bmatrix}
0 & 0 \\
0 & B_{\mK}V\Delta_{B_\mK}^\tr
\!+\!
\Delta_{B_\mK} V B_{\mK}^\tr
\end{bmatrix},
\\
M_2\big(X_{\mK,\Delta}'(0),\Delta\big)
\coloneqq &
\begin{bmatrix}
B & 0 \\ 0 & I
\end{bmatrix}\Delta
\begin{bmatrix}
C & 0 \\ 0 & I
\end{bmatrix} X'_{\mK,\Delta}(0)
+ X'_{\mK,\Delta}(0)\begin{bmatrix}
C & 0 \\ 0 & I
\end{bmatrix}^\tr
\!\Delta^\tr\!
\begin{bmatrix}
B & 0 \\ 0 & I
\end{bmatrix}^\tr
\!
+\begin{bmatrix}
0 & 0 \\
0 & \Delta_{B_\mK}V\Delta_{B_\mK}^\tr
\end{bmatrix}.
\end{align*}

Now, by plugging the Taylor expansion~\eqref{eq:Lyapunov_X_Taylor} into the expression~\eqref{eq:LQG_cost_formulation1} for $J_q(\mK)$, we get
$$
\begin{aligned}
J_q(\mK+t\Delta)
=\ &
\operatorname{tr}
\left(
\begin{bmatrix}
Q & 0 \\ 0 & (C_{\mK}+t\Delta_{C_\mK})^\tr R (C_{\mK}+t\Delta_{C_\mK})
\end{bmatrix} X_{\mK,\Delta}(t)\right)
\\
=\ &
J_q(\mK)
+
t
\cdot\operatorname{tr}
\left(
\begin{bmatrix}
Q & 0 \\ 0 & C_{\mK}^\tr R C_{\mK}
\end{bmatrix} X_{\mK,\Delta}'(0)
+
\begin{bmatrix}
0 & 0 \\ 0 & C_{\mK}^\tr R \Delta_{C_\mK} + \Delta_{C_\mK}^\tr RC_{\mK}
\end{bmatrix}X_{\mK}
\right) \\
& +
\frac{t^2}{2}
\cdot \operatorname{tr}\Bigg(
\begin{bmatrix}
Q & 0 \\ 0 & C_{\mK}^\tr R C_{\mK}
\end{bmatrix} X_{\mK,\Delta}''(0)
+2
\begin{bmatrix}
0 & 0 \\ 0 & C_{\mK}^\tr R \Delta_{C_\mK} + \Delta_{C_\mK}^\tr RC_{\mK}
\end{bmatrix}X'_{\mK,\Delta}(0) \\
&\qquad\qquad\qquad\qquad
+
2\begin{bmatrix}
0 & 0 \\ 0 & \Delta_{C_\mK}^\tr R \Delta_{C_\mK}
\end{bmatrix}X_{\mK}
\Bigg)
+o(t^2),
\end{aligned}
$$
from which we can directly recognize $\left.\mfrac{dJ_q(\mK+t\Delta)}{dt}\right|_{t=0}$ and $\left.\mfrac{d^2J_q(\mK+t\Delta)}{dt^2}\right|_{t=0}$.

Now suppose $X$ is the solution to the following Lyapunov equation
$$
A_{\mathrm{cl},\mK} X
+X A_{\mathrm{cl},\mK}^\tr
+M=0
$$
for some $M\in\mathbb{S}^{n+q}$. Then, by \cref{lemma:Lyapunov}, we have
$$
X = \int_0^{+\infty}
e^{A_{\mathrm{cl},\mK} s}
M e^{A_{\mathrm{cl},\mK}^\tr s}\,ds,
$$
and consequently
$$
\begin{aligned}
\operatorname{tr}\left(\begin{bmatrix}
Q & 0 \\ 0 & C_{\mK}^\tr R C_{\mK}
\end{bmatrix}
X\right)
=\ &
\int_0^{+\infty}\operatorname{tr}\left(\begin{bmatrix}
Q & 0 \\ 0 & C_{\mK}^\tr R C_{\mK}
\end{bmatrix}
e^{A_{\mathrm{cl},\mK} s}
M e^{A_{\mathrm{cl},\mK}^\tr s}\right)\,ds \\
=\ &
\int_0^{+\infty}\operatorname{tr}\left(
e^{A_{\mathrm{cl},\mK}^\tr s}\begin{bmatrix}
Q & 0 \\ 0 & C_{\mK}^\tr R C_{\mK}
\end{bmatrix}
e^{A_{\mathrm{cl},\mK} s}
M \right)\,ds
=\operatorname{tr}(Y_{\mK}M),
\end{aligned}
$$
in which we recall that $Y_{\mK}$ is the unique positive semidefinite solution to Lyapunov equation~\eqref{eq:LyapunovY}. Therefore the first-order derivative $\left.\mfrac{dJ_q(\mK+t\Delta)}{dt}\right|_{t=0}$ can be alternatively given by
$$
\begin{aligned}
& \left.\frac{d J_q(\mK+t\Delta)}{dt}\right|_{t=0} \\
=\ &
\operatorname{tr}
\left(Y_{\mK} M_1(X_{\mK},\Delta)
+\begin{bmatrix}
0 & 0 \\ 0 & C_{\mK}^\tr R \Delta_{C_\mK} + \Delta_{C_\mK}^\tr RC_{\mK}
\end{bmatrix}X_{\mK}\right) \\
=\ &
2\operatorname{tr}\left[
\left(
\begin{bmatrix}
0 & RC_{\mK} \\ 0 & 0
\end{bmatrix}X_{\mK} \begin{bmatrix}
0 & 0 \\ 0 & I
\end{bmatrix}
+
\begin{bmatrix}
B & 0 \\ 0 & I
\end{bmatrix}^\tr
Y_{\mK} X_{\mK}
\begin{bmatrix}
C & 0 \\ 0 & I
\end{bmatrix}^\tr
+\begin{bmatrix}
0 & 0 \\ 0 & I
\end{bmatrix}
Y_{\mK}
\begin{bmatrix}
0 & 0 \\ B_{\mK}V & 0
\end{bmatrix}
\right)^\tr\Delta\right].
\end{aligned}
$$
One can readily recognize the gradient $\nabla J_q(\mK)$ by noticing that
$$
\left.\frac{dJ_q(\mK+t\Delta)}{dt}\right|_{t=0}
=\operatorname{tr}\left(
\nabla J_q(\mK)^\tr\Delta\right).
$$
Upon partitioning $X_{\mK}$ and $Y_{\mK}$ as~\eqref{eq:LyapunovXY_block}, a few simple calculations lead to the gradient formula of $J_q(\mK)$ in~\eqref{eq:gradient_Jn}.

Similarly, we can show that the second-order derivative $\left.\mfrac{d^2J_q(\mK+t\Delta)}{dt^2}\right|_{t=0}$ can be alternatively given by
\begin{equation*}
\begin{aligned}
& \left.\frac{d^2J_q(\mK+t\Delta)}{dt^2}\right|_{t=0} \\
=\ &
2\operatorname{tr}
\left(Y_{\mK} M_2\big(X_{\mK,\Delta}'(0),\Delta\big)
+
\begin{bmatrix}
0 & 0 \\ 0 & C_{\mK}^\tr R \Delta_{C_\mK} + \Delta_{C_\mK}^\tr RC_{\mK}
\end{bmatrix}X'_{\mK,\Delta}(0)
+\begin{bmatrix}
0 & 0 \\ 0 & \Delta_{C_\mK}^\tr R \Delta_{C_\mK}
\end{bmatrix}X_{\mK}\right) \\
=\ &
2\operatorname{tr}
\Bigg(
2\begin{bmatrix}
B & 0 \\ 0 & I
\end{bmatrix}
\Delta\begin{bmatrix}
C & 0 \\ 0 & I
\end{bmatrix}
X'_{\mK,\Delta}(0)Y_{\mK}
+2\begin{bmatrix}
0 & 0 \\ 0 & C_{\mK}^\tr R \Delta_{C_\mK}
\end{bmatrix}X'_{\mK,\Delta}(0)
\\
& \qquad\qquad\qquad\qquad\qquad\qquad
+\begin{bmatrix}
0 & 0 \\
0 & \Delta_{B_\mK}V\Delta_{B_\mK}^\tr
\end{bmatrix} Y_{\mK}
+
\begin{bmatrix}
0 & 0 \\ 0 & \Delta_{C_\mK}^\tr R \Delta_{C_\mK}
\end{bmatrix}X_{\mK}
\Bigg).
\end{aligned}
\end{equation*}
\begin{remark}
If we let $\operatorname{Hess}_{\,\mK}:\mathcal{V}_q\times\mathcal{V}_q\rightarrow\mathbb{R}$ denote the bilinear form of the Hessian of $J_q$ at $\mK\in\mathcal{C}_q$. Then one can compute $\operatorname{Hess}_{\,\mK}(\Delta_1,\Delta_2)$ for any $\Delta_1,\Delta_2\in\mathcal{V}_q$ by noting that
$$
\operatorname{Hess}_{\,\mK}(\Delta_1,\Delta_2)
=
\frac{1}{4}\left(\operatorname{Hess}_{\,\mK}(\Delta_1+\Delta_2,\Delta_1+\Delta_2)
-\operatorname{Hess}_{\,\mK}(\Delta_1-\Delta_2,\Delta_1-\Delta_2)\right),
$$
and that
$$
\operatorname{Hess}_{\,\mK}(\Delta,\Delta)
=\left.\frac{d^2J_q(\mK+t\Delta)}{dt^2}\right|_{t=0}
$$
for any $\Delta\in\mathcal{V}_q$.
\end{remark}

\subsection{Proof of \cref{lemma:LyapunovXY_pd}} \label{app:LyapunovXY_pd}
    By \cref{lemma:Lyapunov}, given a stable matrix $A$, if $(C,A)$ is observable, then the solution $L$ to the Lyapunov equation is positive definite
    $$
        A^\tr L + L A + C^\tr C = 0.
    $$
    Therefore, we only need to prove
    $$
       \left(\begin{bmatrix} Q^{\frac{1}{2}} & 0 \\ 0 & R^{\frac{1}{2}}C_{\mK}\end{bmatrix},\begin{bmatrix} A & BC_{\mK} \\
        B_{\mK} C & A_{\mK}\end{bmatrix} \right)
    $$
    is observable, and this is equivalent to show that the eigenvalues of the following matrix
    $$
         \begin{bmatrix} A & BC_{\mK} \\
        B_{\mK} C & A_{\mK}\end{bmatrix} + \begin{bmatrix} L_{11} & L_{12} \\ L_{21} & L_{22}\end{bmatrix}\begin{bmatrix} Q^{\frac{1}{2}} & 0 \\ 0 & R^{\frac{1}{2}}C_{\mK}\end{bmatrix}  = \begin{bmatrix} A+ L_{11}Q^{\frac{1}{2}} & BC_{\mK} + L_{12}R^{\frac{1}{2}}C_{\mK} \\ B_{\mK}C+ L_{21}Q^{\frac{1}{2}} & A_{\mK} + L_{22}R^{\frac{1}{2}}C_{\mK}\end{bmatrix}
    $$
    can be arbitrarily assigned by choosing $L_{11},L_{12},L_{21},L_{22}$. {This is true by choosing
    $$
        L_{12} = -BR^{-\frac{1}{2}},
    $$
    and observing that $A + L_{11}Q^{\frac{1}{2}} $ and $A_{\mK} + L_{22}R^{\frac{1}{2}}C_{\mK}$ can be arbitrarily assigned since $(Q^{\frac{1}{2}},A), (C_{\mK}, A_{\mK})$ are both observable. }

    Thus, by \cref{lemma:Lyapunov}, the solution $Y_{\mK}$ to~\eqref{eq:LyapunovY} is positive definite.
    Similarly, we can prove $X_{\mK}$ is positive definite.

\subsection{Proof of \cref{proposition:sim_trans_submanifold}}
\label{app:proof_sim_trans_submanifold}

We have already seen that $\mathscr{T}_q$ gives a smooth Lie group action of $\mathrm{GL}_q$ on $\mathcal{C}_q$.
We first show that the isotropy group of $\mK$ under the group actions in $\mathrm{GL}_q$, defined by
$$
\{T\in\mathrm{GL}_q\mid \mathscr{T}_q(T,\mK)=\mK\},
$$
is a trivial group containing only the identity matrix. Let $T\in\mathrm{GL}_q$ satisfy $\mathscr{T}_q(T,\mK)=\mK$, or
$$
\begin{bmatrix}
0 & C_{\mK}T^{-1} \\
TB_{\mK} & TA_{\mK}T^{-1}
\end{bmatrix}
=
\begin{bmatrix}
0 & C_{\mK} \\
B_{\mK} & A_{\mK}
\end{bmatrix}.
$$
Then we have $TA_{\mK}=A_{\mK}T$, and consequently
$$
TA_{\mK}^{j+1}B_{\mK}
=A_{\mK}TA_{\mK}^jB_{\mK}.
$$
By mathematical induction, we can see that $TA_{\mK}^{j}B_{\mK}=A_{\mK}^{j}B_{\mK}$ for all $j=0,\ldots,q-1$, indicating that any column vector of $A_{\mK}^{j}B_{\mK}$ is an eigenvector of $T$ with eigenvalue $1$. On the other hand, the controllability of $\mK$ implies the column vectors of the matrix
$$
\begin{bmatrix}
B_{\mK} & A_{\mK}B_{\mK} & \cdots & A_{\mK}^{q-1}B_{\mK}
\end{bmatrix}
$$
span the whole space $\mathbb{R}^q$. Therefore $\mathbb{R}^q$ is a subspace of the eigenspace of $T$ with eigenvalue $1$, meaning that $T$ is just the identity matrix.

Since the isotropy group $\{T\in\mathrm{GL}_q\mid \mathscr{T}_q(T,\mK)=\mK\}$ only contains the identity, by \cite[Proposition 7.26]{lee2013introduction}, the mapping $T\mapsto \mathscr{T}_q(T,\mK)$ is an immersion and the orbit $\mathcal{O}_{\mK}$ is an immersed submanifold.

We then prove that $\mathcal{O}_{\mK}$ is closed under the original topology of $\mathcal{C}_q$. Suppose $(T_j)_{j=1}^\infty$ is a sequence in $\mathrm{GL}_q$ such that
$$
\mathscr{T}_q(T_j,\mK)
=\begin{bmatrix}
0 & C_{\mK} T_j^{-1} \\
T_jB_{\mK} & T_jA_{\mK} T_j^{-1}
\end{bmatrix}\rightarrow
\begin{bmatrix}
0 & \tilde{C}_{\mK} \\
\tilde{B}_{\mK} & \tilde{A}_{\mK}
\end{bmatrix}
=\tilde{\mK},
\qquad j\rightarrow\infty.
$$
Let $\mathbf{G}(s)$ be the transfer function of $\mK$, i.e.,
$$
\mathbf{G}(s)
=C_{\mK}(sI-A_{\mK})^{-1}B_{\mK}.
$$
We notice that for any $j\geq 1$, the matrix $sI-T_jA_{\mK}T_j^{-1}$ is invertible if and only if $sI-A_{\mK}$ is invertible. Thus for any fixed $s\in\mathbb{C}$ such that $sI-A_{\mK}$ is invertible, we have
$$
\lim_{j\rightarrow\infty}
C_{\mK}T_j^{-1}
(sI-T_jA_{\mK}T_j^{-1})^{-1}
T_jB_{\mK}
=
\tilde{C}_{\mK}(sI-\tilde{A}_{\mK})^{-1}\tilde{B}_{\mK}.
$$
On the other hand, we simply have
$$
C_{\mK}T_j^{-1}
(sI-T_jA_{\mK}T_j^{-1})^{-1}
T_jB_{\mK}
=
C_{\mK}
(sI-A_{\mK})^{-1}B_{\mK}
=\mathbf{G}(s).
$$
This shows that the transfer function of $\tilde{\mK}$ agrees with $\mathbf{G}(s)$ for any $s\in\mathbb{C}$ such that $sI-A_{\mK}$ is invertible, and thus is just equal to $\mathbf{G}(s)$. On the other hand, the controllability and observability of $\mK\in\mathcal{C}_q$ indicates that the transfer function $\mathbf{G}(s)$ has order $q$, and so any two state-space representations of $\mathbf{G}(s)$ with order $q$ will always be similarity transformations of each other (see \cref{theo:minimal_state_space_similarity_transformation}). 
In other words, there exists $\tilde{T}\in\mathrm{GL}_q$ such that
$$
\tilde{\mK}
=\begin{bmatrix}
0 & \tilde{C}_{\mK} \\
\tilde{B}_{\mK} & \tilde{A}_{\mK}
\end{bmatrix}
=
\begin{bmatrix}
0 & C_{\mK}\tilde{T}^{-1} \\
\tilde{T}{B}_{\mK} & \tilde{T}{A}_{\mK}\tilde{T}^{-1}
\end{bmatrix}
=\mathscr{T}_q(\tilde{T},\mK),
$$
which implies that $\tilde{\mK}\in\mathcal{O}_\mK$. We can now conclude that $\mathcal{O}_\mK$ is a closed subset of $\mathcal{C}_q$. As a consequence of the closedness of $\mathcal{O}_\mK$, the set $\mathcal{O}_\mK$ equipped with the subspace topology induced from $\mathcal{C}_q$ is a locally compact Hausdorff space.

Now, by combining the above results and applying \cite[Theorem 2.13]{montgomery2018topological}, we can conclude that the mapping $T\mapsto \mathscr{T}_q(T,\mK)$ is a homeomorphism from $\mathrm{GL}_q$ to $\mathcal{O}_\mK$. Therefore, the mapping $T\mapsto \mathscr{T}_q(T,\mK)$ is a diffeomorphism from $\mathrm{GL}_q$ to $\mathcal{O}_{\mK}$, and $\mathcal{O}_\mK$ is an embedded submanifold of $\mathcal{C}_q$ with dimension given by
$$
\dim\mathcal{O}_{\mK}
= \dim\mathrm{GL}_q = q^2.
$$
Finally, the two path-connected components of $\mathcal{O}_{\mK}$ are immediate.

\subsection{Proof of \cref{theorem:zero_stationary_hessian}}
\label{app:proof_zero_stationary_hessian}

We first show that $\mK^\star$ is a stationary point of $J_n(\mK)$ over $\mK\in\mathcal{C}_n$. Since
$$
\mathscr{T}_n(-I_n,\mK^\star)
=\mK^\star,
$$
by \cref{lemma:gradient_simi_tran_linear}, we have
$$
\left.\nabla J_n\right|_{\mK^\star}
=\left.\nabla J_n\right|_{\mathscr{T}_n(-I_n,\mK^\star)}
=\begin{bmatrix}
I_m & 0\\
0 & -I_n
\end{bmatrix}
\cdot
\left.\nabla J_n\right|_{\mK^\star}
\cdot
\begin{bmatrix}
I_p & 0\\
0 & -I_n
\end{bmatrix}.
$$
This equality implies that, excluding the bottom right $n\times n$ block, the last $n$ rows and the last $n$ columns of $\left.\nabla J_n\right|_{\mK^\star}$ are zero. On the other hand, it is not hard to see that
$
J_n(\mK^\star)
$
does not depend on the choice of $\Lambda$ as long as $\Lambda$ is stable. Therefore the bottom right $n\times n$ block of $\left.\nabla J_n\right|_{\mK^\star}$ is zero. We can now see that $\left.\nabla J_n\right|_{\mK^\star}=0$, showing that $\mK^\star$ is a stationary point of $J_n$.

Let $\Delta=\begin{bmatrix}
0 & \Delta_{C_\mK} \\
\Delta_{B_\mK} & \Delta_{A_\mK}
\end{bmatrix}\in\mathcal{V}_n$ be arbitrary, and let
$$
\Delta^{(1)} =
\begin{bmatrix}
0 & \Delta_{C_\mK} \\
0 & 0
\end{bmatrix},
\quad
\Delta^{(2)} =
\begin{bmatrix}
0 & 0 \\
\Delta_{B_\mK} & 0
\end{bmatrix},
\quad
\Delta^{(3)} =
\begin{bmatrix}
0 & 0 \\
0 & \Delta_{A_\mK}
\end{bmatrix}.
$$
By the bilinearity of the Hessian, we have
$$
\begin{aligned}
\operatorname{Hess}_{\,\mK^\star}(\Delta,\Delta)
=\ &
\sum_{1\leq i<j\leq 3}
\operatorname{Hess}_{\,\mK^\star}(\Delta^{(i)}+\Delta^{(j)},\Delta^{(i)}+\Delta^{(j)})
-\sum_{i=1}^3\operatorname{Hess}_{\,\mK^\star}(\Delta^{(i)},\Delta^{(i)}).
\end{aligned}
$$
Since the controllers $\mK^\star
+t\Delta^{(i)}$ for $i=1,2,3$ and $\mK^\star
+t(\Delta^{(i)}+\Delta^{(3)})$ for $i=1,2$ have the same transfer function representation as $\mK^*$, we can see that for all sufficiently small $t$,
$$
\begin{aligned}
J_n(\mK^\star)
=\ &
J_n(\mK^\star
+t\Delta^{(1)})
=J_n(\mK^\star
+t\Delta^{(2)})
=J_n(\mK^\star
+t\Delta^{(3)}) \\
=\ &
J_n(\mK^\star
+t(\Delta^{(1)}+\Delta^{(3)}))
=J_n(\mK^\star
+t(\Delta^{(2)}+\Delta^{(3)})),
\end{aligned}
$$
which implies that
$$
\operatorname{Hess}_{\,\mK^\star}
(\Delta^{(i)},\Delta^{(i)})=0,\qquad\forall i=1,2,3,
$$
and
$$
\operatorname{Hess}_{\,\mK^\star}
(\Delta^{(1)}+\Delta^{(3)},\Delta^{(1)}+\Delta^{(3)})
=\operatorname{Hess}_{\,\mK^\star}
(\Delta^{(2)}+\Delta^{(3)},\Delta^{(2)}+\Delta^{(3)})=0.
$$
Therefore
$$
\operatorname{Hess}_{\,\mK^\star}(\Delta,\Delta)
=\operatorname{Hess}_{\,\mK^\star}
(\Delta^{(1)}+\Delta^{(2)},\Delta^{(1)}+\Delta^{(2)}).
$$

Now, if $\operatorname{Hess}_{\,\mK^\star}(\Delta,\Delta)= 0$ for all $\Delta\in\mathcal{V}_n$, then the Hessian $\operatorname{Hess}_{\,\mK^\star}$ is obviously zero. Otherwise, $\operatorname{Hess}_{\,\mK^\star}(\Delta,\Delta)\neq 0$ for some $\Delta\in\mathcal{V}_n$, which implies that
$$
\begin{aligned}
& \operatorname{Hess}_{\,\mK^\star}
(\Delta^{(1)},\Delta^{(2)}) \\
=\ &
\frac{1}{2}
\!\left(\operatorname{Hess}_{\,\mK^\star}
(\Delta^{(1)}+\Delta^{(2)},\Delta^{(1)}+\Delta^{(2)})
- \operatorname{Hess}_{\,\mK^\star}
(\Delta^{(1)},\Delta^{(1)})
-\operatorname{Hess}_{\,\mK^\star}
(\Delta^{(2)},\Delta^{(2)})
\right) \\
=\ &
\frac{1}{2}\operatorname{Hess}_{\,\mK^\star}
(\Delta,\Delta)\neq 0.
\end{aligned}
$$
Note that $\Delta^{(1)}$ and $\Delta^{(2)}$ are linearly independent (otherwise $\operatorname{Hess}_{\,\mK^\star}
(\Delta^{(1)},\Delta^{(2)})$ will be zero). Together with $\operatorname{Hess}_{\,\mK^\star}
(\Delta^{(i)},\Delta^{(i)})=0$ for $i=1,2$, we see that $\operatorname{Hess}_{\,\mK^\star}$ must be indefinite (a symmetric matrix having a $2 \times 2$ principal submatrix with zero diagonal entries and non-zero off-diagonal entries must be indefinite).

Now we proceed to the situation where $\Lambda$ is diagonalizable. We will use $e_i^{(k)}$ to denote the $k$-dimensional vector where only the $i$th entry is $1$ and other entries are zero. 
\vspace{5pt}

\noindent\textbf{Part I: $\operatorname{eig}(-\Lambda)\nsubseteq\mathcal{Z}$ $\Longrightarrow$ the Hessian is indefinite. }
Let $\lambda\in \operatorname{eig}(-\Lambda)\backslash\mathcal{Z}$. Since $\lambda\notin \mathcal{Z}$, there exists some $i,j$ such that
$$
G(\lambda)
\coloneqq
{e_i^{(p)}}^\tr CX_{\mathrm{op}}\big(\lambda I-A^\tr\big)^{-1}Y_{\mathrm{op}} B e_j^{(m)}
\neq 0.
$$
We consider three situations:
\begin{enumerate}
\item $\lambda$ is real. In this case, let $T$ be a real invertible matrix such that
$$
T\Lambda T^{-1}
=\begin{bmatrix}
-\lambda & 0 \\ 0 & \ast
\end{bmatrix}.
$$
Let $\Delta^{(1)},\Delta^{(2)}\in\mathcal{V}_n$ be given by
$$
\Delta^{(1)}
=\begin{bmatrix}
0 & \Delta_{C_{\mK}}^{(1)} \\
0 & 0
\end{bmatrix},
\qquad
\Delta^{(2)}
=\begin{bmatrix}
0 & 0 \\
\Delta_{B_{\mK}}^{(2)} & 0
\end{bmatrix},
$$
where
$$
\Delta_{C_{\mK}}^{(1)}
=e_j^{(m)}{e_1^{(n)}}^\tr T^{-1},
\qquad
\Delta_{B_{\mK}}^{(2)}
=Te_1^{(n)} {e_i^{(p)}}^\tr.
$$
Then it's not hard to see that
$$
J_n(\mK^\star+t\Delta^{(1)})
= J_n(\mK^\star+t\Delta^{(2)})
= J_n(\mK^\star)
$$
for any sufficiently small $t$, indicating that
$$
\operatorname{Hess}_{\,\mK^\star}(\Delta^{(1)},\Delta^{(1)})
=\operatorname{Hess}_{\,\mK^\star}(\Delta^{(2)},\Delta^{(2)})=0.
$$
On the other hand, we have that the unique solutions to Lyapunov equations~\eqref{eq:LyapunovX} and~\eqref{eq:LyapunovY} are
$$
X_{\mK^\star} = \begin{bmatrix}
X_{\mathrm{op}} & 0 \\
0 & 0
\end{bmatrix},
\quad
Y_{\mK^\star} = \begin{bmatrix}
Y_{\mathrm{op}} & 0 \\
0 & 0
\end{bmatrix}.
$$
By \cref{lemma:Jn_Hessian}, we can see that
$$
\operatorname{Hess}_{\,\mK^\star}(\Delta^{(1)}+\Delta^{(2)},\Delta^{(1)}+\Delta^{(2)})
=
4\operatorname{tr}
\!\left(
\begin{bmatrix}
0 & B\Delta_{C_\mK}^{(1)} \\
\Delta_{B_\mK}^{(2)}C & 0
\end{bmatrix}
X'_{\mK^\star,\Delta^{(1)}\!+\!\Delta^{(2)}}
\begin{bmatrix}
Y_{\mathrm{op}} & 0 \\ 0 & 0
\end{bmatrix}
\right),
$$
where $X'_{\mK^\star,\Delta^{(1)}\!+\!\Delta^{(2)}}$ is the solution to the following Lyapunov equation
$$
\begin{aligned}
& \begin{bmatrix}
A & 0 \\
0 & \Lambda
\end{bmatrix}
X'_{\mK^\star,\Delta^{(1)}\!+\!\Delta^{(2)}}
+
X'_{\mK^\star,\Delta^{(1)}\!+\!\Delta^{(2)}}
\begin{bmatrix}
A & 0 \\
0 & \Lambda
\end{bmatrix}^\tr \\
&
\qquad\qquad
+
\begin{bmatrix}
0 & B\Delta_{C_\mK}^{(1)} \\
\Delta_{B_\mK}^{(2)}C & 0
\end{bmatrix}
\begin{bmatrix}
X_{\mathrm{op}} & 0 \\
0 & 0
\end{bmatrix}
+
\begin{bmatrix}
X_{\mathrm{op}} & 0 \\
0 & 0
\end{bmatrix}
\begin{bmatrix}
0 & B\Delta_{C_\mK}^{(1)} \\
\Delta_{B_\mK}^{(2)}C & 0
\end{bmatrix}^\tr
=0.
\end{aligned}
$$
Since
$$
\begin{bmatrix}
0 & B\Delta_{C_\mK}^{(1)} \\
\Delta_{B_\mK}^{(2)}C & 0
\end{bmatrix}
\begin{bmatrix}
X_{\mathrm{op}} & 0 \\
0 & 0
\end{bmatrix}
+
\begin{bmatrix}
X_{\mathrm{op}} & 0 \\
0 & 0
\end{bmatrix}
\begin{bmatrix}
0 & B\Delta_{C_\mK}^{(1)} \\
\Delta_{B_\mK}^{(2)}C & 0
\end{bmatrix}^\tr
=\begin{bmatrix}
0 & X_{\mathrm{op}}C^\tr {\Delta_{B_{\mK}}^{(2)}}^\tr \\
\Delta_{B_{\mK}}^{(2)}CX_{\mathrm{op}} & 0
\end{bmatrix},
$$
the matrix $X'_{\mK^\star,\Delta^{(1)}+\Delta^{(2)}}$ can be represented by
$$
\begin{aligned}
X'_{\mK^\star,\Delta^{(1)}+\Delta^{(2)}}
=\ &
\int_0^{+\infty} \exp
\!\left(
\begin{bmatrix}
A & 0 \\
0 & \Lambda
\end{bmatrix}s\right)
\begin{bmatrix}
0 & X_{\mathrm{op}}C^\tr {\Delta_{B_{\mK}}^{(2)}}^\tr \\
\Delta_{B_{\mK}}^{(2)}CX_{\mathrm{op}} & 0
\end{bmatrix}
\exp
\!\left(
\begin{bmatrix}
A & 0 \\
0 & \Lambda
\end{bmatrix}^\tr s\right)\,ds \\
=\ &
\int_0^{+\infty}
\begin{bmatrix}
e^{As} & 0 \\ 0 & e^{\Lambda s}
\end{bmatrix}
\begin{bmatrix}
0 & X_{\mathrm{op}}C^\tr {\Delta_{B_{\mK}}^{(2)}}^\tr \\
\Delta_{B_{\mK}}^{(2)}CX_{\mathrm{op}} & 0
\end{bmatrix}
\begin{bmatrix}
e^{A^\tr s} & 0 \\ 0 & e^{\Lambda^\tr s}
\end{bmatrix}\,ds \\
=\ &
\int_0^{+\infty}
\begin{bmatrix}
0 & e^{As} X_{\mathrm{op}}C^\tr {\Delta_{B_\mK}^{(2)}}^\tr
e^{\Lambda^\tr s} \\
e^{\Lambda s}\Delta_{B_\mK}^{(2)} C X_{\mathrm{op}} e^{A^\tr s} & 0
\end{bmatrix} \,ds.
\end{aligned}
$$
Therefore
$$
\begin{aligned}
& \operatorname{Hess}_{\,\mK^\star}(\Delta^{(1)}+\Delta^{(2)},\Delta^{(1)}+\Delta^{(2)}) \\
=\ &
\int_0^\infty 4\operatorname{tr}
\!\left(
\begin{bmatrix}
0 & B\Delta_{C_\mK}^{(1)} \\
\Delta_{B_\mK}^{(2)}C & 0
\end{bmatrix}
\begin{bmatrix}
0 & e^{As} X_{\mathrm{op}}C^\tr {\Delta_{B_\mK}^{(2)}}^\tr
e^{\Lambda^\tr s} \\
e^{\Lambda s}\Delta_{B_\mK}^{(2)} C X_{\mathrm{op}} e^{A^\tr s} & 0
\end{bmatrix}
\begin{bmatrix}
Y_{\mathrm{op}} & 0 \\ 0 & 0
\end{bmatrix}
\right)\,ds \\
=\ &
\int_0^{+\infty}
4
\operatorname{tr}
\left(
B\Delta_{C_\mK}^{(1)}
e^{\Lambda s} \Delta_{B_\mK}^{(2)}
C X_{\mathrm{op}} e^{A^\tr s}
Y_{\mathrm{op}}
\right)ds.
\end{aligned}
$$
By the construction of $\Delta_{C_\mK}^{(1)}$ and $\Delta_{B_\mK}^{(2)}$, we can see that
$$
\Delta_{C_\mK}^{(1)}
e^{\Lambda s} \Delta_{B_\mK}^{(2)}
= e^{-\lambda s}
e_j^{(m)} {e_i^{(p)}}^\tr.
$$
Thus
$$
\begin{aligned}
\operatorname{Hess}_{\,\mK^\star}(\Delta^{(1)}+\Delta^{(2)},\Delta^{(1)}+\Delta^{(2)})
=\ &
\int_0^{+\infty} 4
{e_i^{(p)}}^\tr C
X_{\mathrm{op}} e^{(A^\tr-\lambda I) s} Y_{\mathrm{op}}Be_j^{(m)}
\,ds \\
=\ &
4
{e_i^{(p)}}^\tr C
X_{\mathrm{op}} \big(\lambda I-A^\tr\big)^{-1} Y_{\mathrm{op}}Be_j^{(m)} \\
=\ &
4 G(\lambda),
\end{aligned}
$$
which is nonzero by assumption. Consequently,
$$
\begin{aligned}
& \operatorname{Hess}_{\,\mK^\star}(\Delta^{(1)},\Delta^{(2)}) \\
=\ &
\frac{1}{2}\!\left(\operatorname{Hess}_{\,\mK^\star}(\Delta^{(1)}\!+\!\Delta^{(2)},\Delta^{(1)}\!+\!\Delta^{(2)})
-\operatorname{Hess}_{\,\mK^\star}(\Delta^{(1)},\Delta^{(1)})
-\operatorname{Hess}_{\,\mK^\star}(\Delta^{(2)},\Delta^{(2)})
\right) \\
=\ &
2G(\lambda)\neq 0.
\end{aligned}
$$
Together with the fact that $\operatorname{Hess}_{\,\mK^\star}(\Delta^{(1)},\Delta^{(1)})=\operatorname{Hess}_{\,\mK^\star}(\Delta^{(2)},\Delta^{(2)})=0$, we can see that neither $\operatorname{Hess}_{\,\mK^\star}$ nor $-\operatorname{Hess}_{\,\mK^\star}$ can be positive semidefinite. Thus $\operatorname{Hess}_{\,\mK^\star}$ has at least one positive eigenvalue and one negative eigenvalue.

\item $\lambda=\lambda_{\mathrm{re}}+\mathrm{i}\lambda_{\mathrm{im}}$ is not real, and $G(\lambda)$ is not purely imaginary. In this case, since $\Lambda$ is real, the complex conjugate of $\lambda$, which we denote by $\overline{\lambda}$, is also an eigenvalue of $\Lambda$. We can find a real invertible matrix $T$ such that
$$
T\Lambda T^{-1}
=\begin{bmatrix}
\begin{bmatrix}
-\lambda_{\mathrm{re}}
&
-\lambda_{\mathrm{im}} \\
\lambda_{\mathrm{im}} &
-\lambda_{\mathrm{re}}
\end{bmatrix} & 0 \\
0 & \ast
\end{bmatrix}.
$$
We still let $\Delta^{(1)},\Delta^{(2)}\in\mathcal{V}_n$ be given by
$$
\Delta^{(1)}
=\begin{bmatrix}
0 & \Delta_{C_{\mK}}^{(1)} \\
0 & 0
\end{bmatrix},
\quad
\Delta^{(2)}
=\begin{bmatrix}
0 & 0 \\
\Delta_{B_{\mK}}^{(2)} & 0
\end{bmatrix},
\quad
\Delta_{C_{\mK}}^{(1)}
=e_j^{(m)}{e_1^{(n)}}^\tr T^{-1},
\quad
\Delta_{B_{\mK}}^{(2)}
=Te_1^{(n)} {e_i^{(p)}}^\tr.
$$
Then similarly as in the previous situation, we have
$$
\operatorname{Hess}_{\,\mK^\star}(\Delta^{(1)},\Delta^{(1)})
=\operatorname{Hess}_{\,\mK^\star}(\Delta^{(2)},\Delta^{(2)})=0,
$$
and
$$
\operatorname{Hess}_{\,\mK^\star}(\Delta^{(1)}+\Delta^{(2)},\Delta^{(1)}+\Delta^{(2)})
=
\int_0^{+\infty}
4
\operatorname{tr}
\left(
B\Delta_{C_\mK}^{(1)}
e^{\Lambda s} \Delta_{B_\mK}^{(2)}
C X_{\mathrm{op}} e^{A^\tr s}
Y_{\mathrm{op}}
\right)ds.
$$
By the construction of $\Delta_{C_\mK}^{(1)}$ and $\Delta_{B_\mK}^{(2)}$, we have
$$
\begin{aligned}
\Delta_{C_\mK}^{(1)}
e^{\Lambda s} \Delta_{B_\mK}^{(2)}
=\ &
e^{-\lambda_{\mathrm{re}} s}
\cos(-\lambda_{\mathrm{im}}s)
e_j^{(m)} {e_i^{(p)}}^\tr \\
=\ &
\frac{e^{-\lambda s}
+e^{-\overline{\lambda}s}}{2\mathrm{i}}
e_j^{(m)} {e_i^{(p)}}^\tr,
\end{aligned}
$$
and therefore
$$
\begin{aligned}
& \operatorname{Hess}_{\,\mK^\star}(\Delta^{(1)}+\Delta^{(2)},\Delta^{(1)}+\Delta^{(2)}) \\
=\ &
\frac{1}{2\mathrm{i}}\left(\int_0^{+\infty}
4{e_i^{(p)}}^\tr C
X_{\mathrm{op}} e^{(A^\tr-\lambda I) s} Y_{\mathrm{op}}Be_j^{(m)}
\,ds
+
\int_0^{+\infty} 4
{e_i^{(p)}}^\tr C
X_{\mathrm{op}} e^{(A^\tr-\overline{\lambda} I) s} Y_{\mathrm{op}}Be_j^{(m)}
\,ds\right) \\
=\ &
-2\mathrm{i}(G(\lambda)+\overline{G(\lambda)}),
\end{aligned}
$$
and since $G(\lambda)$ is not purely imaginary, we have $\operatorname{Hess}_{\,\mK^\star}(\Delta^{(1)}+\Delta^{(2)},\Delta^{(1)}+\Delta^{(2)})\neq 0$. Consequently, $\operatorname{Hess}_{\,\mK^\star}(\Delta^{(1)},\Delta^{(2)})\neq 0$, and together with the fact that $\operatorname{Hess}_{\,\mK^\star}(\Delta^{(1)},\Delta^{(1)})=\operatorname{Hess}_{\,\mK^\star}(\Delta^{(2)},\Delta^{(2)})=0$, we can conclude that $\operatorname{Hess}_{\,\mK^\star}$ has at least one positive eigenvalue and one negative eigenvalue.

\item $\lambda=\lambda_{\mathrm{re}}+\mathrm{i}\lambda_{\mathrm{im}}$ is not real, and $G(\lambda)$ is purely imaginary. In this case, we can still find a real invertible matrix $T$ such that
$$
T\Lambda T^{-1}
=\begin{bmatrix}
\begin{bmatrix}
-\lambda_{\mathrm{re}}
&
-\lambda_{\mathrm{im}} \\
\lambda_{\mathrm{im}} &
-\lambda_{\mathrm{re}}
\end{bmatrix} & 0 \\
0 & \ast
\end{bmatrix}.
$$
We let $\Delta^{(1)},\Delta^{(2)}\in\mathcal{V}_n$ be given by
$$
\Delta^{(1)}
=\begin{bmatrix}
0 & \Delta_{C_{\mK}}^{(1)} \\
0 & 0
\end{bmatrix},
\quad
\Delta^{(2)}
=\begin{bmatrix}
0 & 0 \\
\Delta_{B_{\mK}}^{(2)} & 0
\end{bmatrix},
\quad
\Delta_{C_{\mK}}^{(1)}
=e_j^{(m)}{e_1^{(n)}}^\tr T^{-1},
\quad
\Delta_{B_{\mK}}^{(2)}
=Te_2^{(n)} {e_i^{(p)}}^\tr.
$$
Then we have
$$
\operatorname{Hess}_{\,\mK^\star}(\Delta^{(1)},\Delta^{(1)})
=\operatorname{Hess}_{\,\mK^\star}(\Delta^{(2)},\Delta^{(2)})=0,
$$
and
$$
\operatorname{Hess}_{\,\mK^\star}(\Delta^{(1)}+\Delta^{(2)},\Delta^{(1)}+\Delta^{(2)})
=
\int_0^{+\infty}
4
\operatorname{tr}
\left(
B\Delta_{C_\mK}^{(1)}
e^{\Lambda s} \Delta_{B_\mK}^{(2)}
C X_{\mathrm{op}} e^{A^\tr s}
Y_{\mathrm{op}}
\right)ds.
$$
By the construction of $\Delta_{C_\mK}^{(1)}$ and $\Delta_{B_\mK}^{(2)}$, we have
$$
\begin{aligned}
\Delta_{C_\mK}^{(1)}
e^{\Lambda s} \Delta_{B_\mK}^{(2)}
=\ &
e^{-\lambda_{\mathrm{re}} s}
\sin(-\lambda_{\mathrm{im}}s)
e_j^{(m)} {e_i^{(p)}}^\tr \\
=\ &
\frac{e^{-\lambda s}
-e^{-\overline{\lambda}s}}{2}
e_j^{(m)} {e_i^{(p)}}^\tr,
\end{aligned}
$$
and therefore
$$
\begin{aligned}
& \operatorname{Hess}_{\,\mK^\star}(\Delta^{(1)}+\Delta^{(2)},\Delta^{(1)}+\Delta^{(2)}) \\
=\ &
\frac{1}{2}\left(\int_0^{+\infty}
4{e_i^{(p)}}^\tr C
X_{\mathrm{op}} e^{(A^\tr-\lambda I) s} Y_{\mathrm{op}}Be_j^{(m)}
\,ds
-
\int_0^{+\infty} 4
{e_i^{(p)}}^\tr C
X_{\mathrm{op}} e^{(A^\tr-\overline{\lambda} I) s} Y_{\mathrm{op}}Be_j^{(m)}
\,ds\right) \\
=\ &
2(G(\lambda)-\overline{G(\lambda)}),
\end{aligned}
$$
and since $G(\lambda)$ has a nonzero imaginary part, we have $\operatorname{Hess}_{\,\mK^\star}(\Delta^{(1)}+\Delta^{(2)},\Delta^{(1)}+\Delta^{(2)})\neq 0$. Consequently, $\operatorname{Hess}_{\,\mK^\star}(\Delta^{(1)},\Delta^{(2)})\neq 0$, and together with the fact that $\operatorname{Hess}_{\,\mK^\star}(\Delta^{(1)},\Delta^{(1)})=\operatorname{Hess}_{\,\mK^\star}(\Delta^{(2)},\Delta^{(2)})=0$, we can conclude that $\operatorname{Hess}_{\,\mK^\star}$ has at least one positive eigenvalue and one negative eigenvalue.
\end{enumerate}
\vspace{5pt}

\noindent\textbf{Part II: $\operatorname{eig}(-\Lambda)\subseteq\mathcal{Z}$ $\Longrightarrow$ the Hessian is zero. } In this part, we will show that
$$
\operatorname{Hess}_{\,\mK^\star}(\Delta,\Delta)=0
$$
for any $\Delta\in\mathcal{V}_n$.

Let $\Delta=\begin{bmatrix}
0 & \Delta_{C_\mK} \\
\Delta_{B_\mK} & \Delta_{A_\mK}
\end{bmatrix}\in\mathcal{V}_n$ be arbitrary. Let
$$
\Delta^{(1)} =
\begin{bmatrix}
0 & \Delta_{C_\mK} \\
0 & 0
\end{bmatrix},
\quad
\Delta^{(2)} =
\begin{bmatrix}
0 & 0 \\
\Delta_{B_\mK} & 0
\end{bmatrix},
\quad
\Delta^{(3)} =
\begin{bmatrix}
0 & 0 \\
0 & \Delta_{A_\mK}
\end{bmatrix}.
$$
We have already shown that
$$
\operatorname{Hess}_{\,\mK^\star}(\Delta,\Delta)
=\operatorname{Hess}_{\,\mK^\star}
(\Delta^{(1)}+\Delta^{(2)},\Delta^{(1)}+\Delta^{(2)}).
$$
Let $T$ be an invertible $n\times n$ (complex) matrix that diagonalizes $\Lambda$ as
$$
T\Lambda T^{-1}
=\begin{bmatrix}
-\lambda_1 \\
& \ddots \\
& & -\lambda_n
\end{bmatrix}.
$$
Define
$$
U_{ik}
=e_i^{(m)}{e_k^{(n)}}^\tr T^{-1},
\quad
V_{jk}
=Te_k^{(n)}{e_j^{(p)}}^\tr
$$
for each $1\leq i\leq m$, $1\leq j\leq p$ and $1\leq k\leq n$.
It's not hard to see that $\{U_{ik}\mid 1\leq i\leq m,1\leq k\leq n\}$ forms a basis of $\mathbb{C}^{m\times n}$, and $\{V_{jk}\mid 1\leq j\leq n,1\leq k\leq n\}$ forms a basis of $\mathbb{C}^{n\times q}$. Therefore $\Delta_{C_\mK}$ and $\Delta_{B_\mK}$ can be expanded as
$$
\Delta_{C_\mK}
=\sum_{1\leq i\leq m}\sum_{1\leq k\leq n} \alpha_{ik}U_{ik},
\qquad
\Delta_{B_\mK}
=\sum_{1\leq j\leq q}\sum_{1\leq k\leq n} \beta_{jk}V_{jk}.
$$
By similar derivations as in Case 1, we can get
$$
\begin{aligned}
& \operatorname{Hess}_{\,\mK^\star}(\Delta^{(1)}+\Delta^{(2)},\Delta^{(1)}+\Delta^{(2)}) \\
=\ &
\int_0^{+\infty}
4
\operatorname{tr}
\left(
B\Delta_{C_\mK}
e^{\Lambda s} \Delta_{B_\mK}
C X_{\mathrm{op}} e^{A^\tr s}
Y_{\mathrm{op}}
\right)ds.
\end{aligned}
$$
Then, since
$$
\begin{aligned}
\Delta_{C_\mK}
e^{\Lambda s} \Delta_{B_\mK}
=\ &
\sum_{1\leq i\leq m}
\sum_{1\leq j\leq q}
\sum_{1\leq k\leq n}
\sum_{1\leq k'\leq n}
\alpha_{ik}\beta_{jk'}U_{ik}e^{\Lambda s}V_{jk'} \\
=\ &
\sum_{1\leq i\leq m}
\sum_{1\leq j\leq q}
\sum_{1\leq k\leq n}
\sum_{1\leq k'\leq n}
\alpha_{ik}\beta_{jk'}
e_i^{(m)}{e_k^{(n)}}^\tr
\begin{bmatrix}
e^{-\lambda_1 s} \\
& \ddots \\
& & e^{-\lambda_n s}
\end{bmatrix}
e_{k'}^{(n)}{e_j^{(p)}}^\tr \\
=\ &
\sum_{1\leq i\leq m}
\sum_{1\leq j\leq q}
\sum_{1\leq k\leq n}
\alpha_{ik}\beta_{jk'} e^{-\lambda_k s}
e_i^{(m)}
{e_j^{(p)}}^\tr,
\end{aligned}
$$
we have
$$
\begin{aligned}
& \operatorname{Hess}_{\,\mK^\star}(\Delta^{(1)}+\Delta^{(2)},\Delta^{(1)}+\Delta^{(2)}) \\
=\ &
\sum_{1\leq i\leq m}
\sum_{1\leq j\leq q}
\sum_{1\leq k\leq n}
\int_0^{+\infty}
4\alpha_{ik}\beta_{jk'}
\cdot
{e_j^{(p)}}^\tr
C X_{\mathrm{op}} e^{(A-\lambda_k I)^\tr s}
Y_{\mathrm{op}}B
e_i^{(m)}
\,ds \\
=\ &
\sum_{1\leq i\leq m}
\sum_{1\leq j\leq q}
\sum_{1\leq k\leq n}
4\alpha_{ik}\beta_{jk'}
\cdot
{e_j^{(p)}}^\tr
C X_{\mathrm{op}} \big(\lambda_k I-A^\tr\big)^{-1}
Y_{\mathrm{op}}B
e_i^{(m)}.
\end{aligned}
$$
Since $\operatorname{eig}(-\Lambda)\backslash\mathcal{Z}=\varnothing$, we can see that $C X_{\mathrm{op}} \big(\lambda_k I-A^\tr\big)^{-1}
Y_{\mathrm{op}}B=0$ for any $1\leq k\leq n$. Therefore
$$
\operatorname{Hess}_{\,\mK^\star}(\Delta,\Delta)=
\operatorname{Hess}_{\,\mK^\star}(\Delta^{(1)}+\Delta^{(2)},\Delta^{(1)}+\Delta^{(2)})=0,
$$
which completes the proof.

\subsection{Proof of \cref{proposition:Hessian_example}} \label{appendix:hessian_example}

For each $\epsilon>0$, let the closed-loop system matrix be denoted by
$$
A_{\mathrm{cl}}(\epsilon)
=\begin{bmatrix}
-\frac{3}{2} & 0 & -1 & -1 \\
0 & -\frac{3}{2}(1+\epsilon) & -1-\epsilon & -1-\epsilon \\
1 & 1 & -\frac{7}{2} & -2 \\
1+\epsilon & 1+\epsilon & -2(1+\epsilon) & -\frac{7}{2}(1+\epsilon),
\end{bmatrix}
$$
and let
\begin{align*}
M_{W,V}(\epsilon)
=\ &
\begin{bmatrix}
W & 0 \\
0 &  B_{\mK}^\ast V{B_{\mK}^\ast}^\tr
\end{bmatrix}
=\begin{bmatrix}
4 & 1+\epsilon & 0 & 0 \\
1+\epsilon & 4(1+\epsilon)^2 & 0 & 0 \\
0 & 0 & 1 & 1+\epsilon \\
0 & 0 & 1+\epsilon & (1+\epsilon)^2
\end{bmatrix},
\\
M_{Q,R}(\epsilon)
=\ &
\begin{bmatrix}
Q & 0 \\
0 & {C_{\mK}^\ast}^\tr R C_{\mK}^\ast
\end{bmatrix}
=\begin{bmatrix}
4 & 1 & 0 & 0 \\
1 & 4 & 0 & 0 \\
0 & 0 & 1 & 1 \\
0 & 0 & 1 & 1
\end{bmatrix}
\end{align*}
Let $X_{\mK^\ast}(\epsilon)$ and $Y_{\mK^\ast}(\epsilon)$ denote the solutions to the Lyapunov equations
$$
\begin{aligned}
A_{\mathrm{cl}}(\epsilon)
X_{\mK^\ast}(\epsilon)
+X_{\mK^\ast}(\epsilon)
A_{\mathrm{cl}}(\epsilon)^\tr
+M_{W,V}(\epsilon) & =0, \\
A_{\mathrm{cl}}(\epsilon)^\tr
Y_{\mK^\ast}(\epsilon)
+Y_{\mK^\ast}(\epsilon)
A_{\mathrm{cl}}(\epsilon)
+M_{Q,R}(\epsilon) & =0.
\end{aligned}
$$
By \cref{lemma:Taylor_Lyapunov_eq}, we can compute the Taylor expansions of $X_{\mK^\ast}(\epsilon)$ and $Y_{\mK^\ast}(\epsilon)$, which turn out to be
$$
\begin{aligned}
X_{\mK^\ast}(\epsilon)
=\ &
\frac{1}{7}\begin{bmatrix}
8 & 1 & 1 & 1 \\
1 & 8 & 1 & 1 \\
1 & 1 & 1 & 1 \\
1 & 1 & 1 & 1
\end{bmatrix}
+
\begin{bmatrix}
-1/5 & 1/2 & -1/5 & 1/2 \\
1/2 & 41/5 & 1/2 & 6/5 \\
-1/5 & 1/2 & -1/5 & 1/2 \\
1/2 & 6/5 & 1/2 & 6/5
\end{bmatrix}\frac{\epsilon}{7} \\
& +
\begin{bmatrix}
1/5 & -1/2 & 1/5 & -1/2 \\
-1/2 & 1/5 & -1/2 & 1/5 \\
1/5 & -1/2 & 1/5 & -1/2 \\
-1/2 & 1/5 & -1/2 & 1/5
\end{bmatrix}
\left(\frac{\epsilon^2}{14}
-\frac{\epsilon^3}{28}
+\frac{\epsilon^4}{56}\right)
+o(\epsilon^4),
\end{aligned}
$$
and
$$
\begin{aligned}
Y_{\mK^\ast}(\epsilon)
=\ &
\frac{1}{7}\begin{bmatrix}
8 & 1 & -1 & -1 \\
1 & 8 & -1 & -1 \\
-1 & -1 & 1 & 1 \\
-1 & -1 & 1 & 1
\end{bmatrix}
+
\begin{bmatrix}
-1/5 & -1/2 & 1/5 & 1/2 \\
-1/2 & -39/5 & 1/2 & 4/5 \\
1/5 & 1/2 & -1/5 & -1/2 \\
1/2 & 4/5 & -1/2 & 4/5
\end{bmatrix}\frac{\epsilon}{7} \\
& +
\begin{bmatrix}
1/5 & 1/2 & -1/5 & -1/2 \\
1/2 & 77/5 & -1/2 & -7/5 \\
-1/5 & -1/2 & 1/5 & 1/2 \\
-1/2 & -7/5 & 1/2 & 7/5
\end{bmatrix}
\frac{\epsilon^2}{14}
+
\begin{bmatrix}
-1/5 & -1/2 & 1/5 & 1/2 \\
-1/2 & -153/5 & 1/2 & 13/5 \\
1/5 & 1/2 & -1/5 & -1/2 \\
1/2 & 13/5 & -1/2 & -13/5
\end{bmatrix}\frac{\epsilon^3}{28} \\
& +
\begin{bmatrix}
1/5 & 1/2 & -1/5 & -1/2 \\
1/2 & 61 & -1/2 & -5 \\
-1/5 & -1/2 & 1/5 & 1/2 \\
-1/2 & -5 & 1/2 & 5
\end{bmatrix}
+\frac{\epsilon^4}{56}
+o(\epsilon^4).
\end{aligned}
$$
Next, we let
$$
\begin{aligned}
M_1^{(0)}(\epsilon)
=\ &
\begin{bmatrix}
0 & 0 & 0 & 0 \\
0 & 0 & 0 & 0 \\
0 & 0 & -1/2 & 1/2 \\
0 & 0 & 1/2 & -1/2
\end{bmatrix} X_{\mK^\ast}(\epsilon)
+ X_{\mK^\ast}(\epsilon)\begin{bmatrix}
0 & 0 & 0 & 0 \\
0 & 0 & 0 & 0 \\
0 & 0 & -1/2 & 1/2 \\
0 & 0 & 1/2 & -1/2
\end{bmatrix},
\end{aligned}
$$
which corresponds to the matrix $M_1(X_{\mK^\ast},\Delta_0)$ in \cref{lemma:Jn_Hessian}. Let $X_{\mK^\ast}^{\prime(0)}(\epsilon)$ denote the solution to the Lyapunov equation
$$
A_{\mathrm{cl}}(\epsilon)
X_{\mK^\ast}^{\prime(0)}(\epsilon)
+X_{\mK^\ast}^{\prime(0)}(\epsilon)A_{\mathrm{cl}}(\epsilon)^\tr
+M_1^{(0)}(\epsilon)=0.
$$
Then similarly by \cref{lemma:Taylor_Lyapunov_eq}, we can compute the Taylor expansion of $X_{\mK^\ast}^{\prime(0)}(\epsilon)$, which is given by
$$
\begin{aligned}
X_{\mK^\ast}^{\prime(0)}(\epsilon)
=\ &
\begin{bmatrix}
0 & 0 & 1 & -1 \\
0 & 0 & 1 & -1 \\
1 & 1 & 2 & 0 \\
-1 & -1 & 0 & -2
\end{bmatrix}\frac{\epsilon}{100}
+
\begin{bmatrix}
-6 & -6 & -69 & 78 \\
-6 & -6 & -20 & 29 \\
-69 & -20 & -132 & 64 \\
78 & 29 & 64 & 64
\end{bmatrix}\frac{\epsilon^2}{9800} \\
& +
\begin{bmatrix}
12 & 12 & 89 & -107 \\
12 & 12 & -9 & -9 \\
89 & -9 & 166 & -128 \\
-107 & -9 & -128 & -30
\end{bmatrix}
\frac{\epsilon^3}{19600}
+
\begin{bmatrix}
-18 & -18 & -109 & 136 \\
-18 & -18 & 38 & -11 \\
-109 & 38 & -200 & 192 \\
136 & -11 & 192 & -4
\end{bmatrix}
\frac{\epsilon^4}{39200}
+o(\epsilon^4).
\end{aligned}
$$
By \cref{lemma:Jn_Hessian}, we then have
$$
\mathrm{Hess}_{\,\mK^\ast}(\Delta_0,\Delta_0)
=4\operatorname{tr}
\left(
\begin{bmatrix}
0 & 0 & 0 & 0 \\
0 & 0 & 0 & 0 \\
0 & 0 & -1/2 & 1/2 \\
0 & 0 & 1/2 & -1/2
\end{bmatrix}X_{\mK^\ast}^{\prime(0)}(\epsilon) Y_{\mK^\ast}(\epsilon)
\right)
=\frac{3}{7000}\epsilon^4+o(\epsilon^4).
$$

Similarly, to compute the leading term of the Taylor expansion of $\mathrm{Hess}_{\,\mK^\ast}(\Delta_1,\Delta_1)$, we let
$$
\begin{aligned}
M_1^{(1)}(\epsilon)
=\ &
\begin{bmatrix}
0 & 0 & -1/2 & -1/2 \\
0 & 0 & -(1+\epsilon)/2 & -(1+\epsilon)/2 \\
1/2 & 1/2 & 0 & 0 \\
1/2 & 1/2 & 0 & 0
\end{bmatrix}
X_{\mK^\ast}(\epsilon) \\
&
+
X_{\mK^\ast}(\epsilon)
\begin{bmatrix}
0 & 0 & -1/2 & -1/2 \\
0 & 0 & -(1+\epsilon)/2 & -(1+\epsilon)/2 \\
1/2 & 1/2 & 0 & 0 \\
1/2 & 1/2 & 0 & 0
\end{bmatrix}
+
\begin{bmatrix}
0 & 0 & 0 & 0 \\
0 & 0 & 0 & 0 \\
0 & 0 & 1 & (2+\epsilon)/2 \\
0 & 0 & (2+\epsilon)/2 & 1+\epsilon
\end{bmatrix},
\end{aligned}
$$
which corresponds to the matrix $M_1(X_{\mK^\ast},\Delta_1)$ in \cref{lemma:Jn_Hessian}. Let $X_{\mK^\ast}^{\prime(1)}(\epsilon)$ denote the solution to the Lyapunov equation
$$
A_{\mathrm{cl}}(\epsilon)
X_{\mK^\ast}^{\prime(1)}(\epsilon)
+X_{\mK^\ast}^{\prime(1)}(\epsilon)A_{\mathrm{cl}}(\epsilon)^\tr
+M_1^{(1)}(\epsilon)=0.
$$
Then by \cref{lemma:Taylor_Lyapunov_eq}, we have
$$
X_{\mK^\ast}^{\prime(1)}(\epsilon)
=\frac{1}{686}
\begin{bmatrix}
-72 & -72 & 5 & 5 \\
-72 & -72 & 5 & 5 \\
5 & 5 & 82 & 82 \\
5 & 5 & 82 & 82
\end{bmatrix}
+o(1),
$$
and then by \cref{lemma:Jn_Hessian}, we can show that
$$
\mathrm{Hess}_{\,\mK^\ast}(\Delta_1,\Delta_1)
= \frac{680}{343}+o(1).
$$

Finally, we show that $\left\|\operatorname{Proj}_{\mathcal{TO}_{\mK^\ast}}[\Delta_0]\right\|_F=O(\epsilon)$. It can be shown that for any $\Delta=\begin{bmatrix}
0 & \Delta_{C_{\mK}} \\
\Delta_{B_{\mK}} & \Delta_{A_{\mK}}
\end{bmatrix}$, we have
$$
\begin{aligned}
& \Delta_{A_\mK}{A_{\mK}^\ast}^\tr
-{A_{\mK}^\ast}^\tr\Delta_{A_\mK}
+ \Delta_{B_\mK}{B_{\mK}^\ast}^\tr-{C_{\mK}^\ast}^\tr \Delta_{C_\mK}=0 \\
\Longleftrightarrow\quad &
\begin{bmatrix}
A_{\mK}^\ast\otimes I_n-I_n\otimes {A_{\mK}^\ast}^\tr &
B_{\mK}^\ast\otimes I_n &
-I_n\otimes {C_{\mK}^\ast}^\tr
\end{bmatrix}
\begin{bmatrix}
\operatorname{vec}(\Delta_{A_\mK}) \\
\operatorname{vec}(\Delta_{B_\mK}) \\
\operatorname{vec}(\Delta_{C_\mK})
\end{bmatrix}
=0.
\end{aligned}
$$
Denoting
$$
\begin{aligned}
\mathsf{M}
=\ &
\begin{bmatrix}
A_{\mK}^\ast\otimes I_n-I_n\otimes {A_{\mK}^\ast}^\tr &
B_{\mK}^\ast\otimes I_n &
-I_n\otimes {C_{\mK}^\ast}^\tr
\end{bmatrix} \\
=\ &
\begin{bmatrix}
0 & 2(1+\epsilon) & -2 & 0 & 1 & 0 & 1 & 0 \\
2 &	7\epsilon/2 & 0 & -2 & 0 & 1 & 1 & 0 \\
-2(1+\epsilon) & 0 & -7\epsilon/2 & 2(1+\epsilon) & 1+\epsilon & 0 & 0 & 1 \\
0 & -2(1+\epsilon) & 2 & 0 & 0 & 1+\epsilon & 0 & 1
\end{bmatrix}.
\end{aligned}
$$
Since for $\epsilon>0$, $\dim\mathcal{TO}_{\mK^\ast}=n^2=4$, we can see that
$$
\operatorname{rank}\mathsf{M}=n^2+nm+np-\dim\operatorname{ker}\mathsf{M}
=8-\dim\mathcal{TO}_{\mK^\ast}
=4.
$$
By \cref{proposition:tangent_orbit}, we can obtain $\|\operatorname{Proj}_{\mathcal{TO}_{\mK^\ast}}[\Delta_0]\|_F$ by computing
$$
\left\|\mathsf{M}^\tr(\mathsf{M}\mathsf{M}^\tr)^{-1}\mathsf{M}v_0\right\|
$$
where
$v_0=
\begin{bmatrix}
-1/2 & 1/2 & 1/2 & -1/2 & 0 & 0 & 0 & 0
\end{bmatrix}^\tr
$. We note that
$$
\mathsf{M}\mathsf{M}^\tr
=\begin{bmatrix}
10+8\epsilon+4\epsilon^2 &
1+7\epsilon+7\epsilon^2 &
1+8\epsilon &
-8-8\epsilon-4\epsilon^2 \\
1+7\epsilon+7\epsilon^2 &
10+49\epsilon^2/4 &
-8-8\epsilon &
1-6\epsilon-7\epsilon^2 \\
1+8\epsilon &
-8-8\epsilon &
10 + 18\epsilon + 85 \epsilon^2/4 &
1-7\epsilon \\
-8-8\epsilon-4\epsilon^2 &
1-6\epsilon-7\epsilon^2 &
1-7\epsilon &
10+10\epsilon+5\epsilon^2
\end{bmatrix}.
$$
It can be checked that
$$
\begin{aligned}
\mathsf{M}\mathsf{M}^\tr
\begin{bmatrix}
-72 - 84 \epsilon + 3 \epsilon^2 + 29 \epsilon^3 + 49 \epsilon^4/2 \\
72 + 126 \epsilon + 60 \epsilon^2 + 13 \epsilon^3 - 14 \epsilon^4 \\
72 - 30 \epsilon - 36 \epsilon - 35 \epsilon^3 \\
-72 - 12 \epsilon- 27 \epsilon^2 + 14 \epsilon^3
\end{bmatrix}
=\ &
(432 + 840\epsilon + 1122 \epsilon^2 + 702 \epsilon^3 + 249 \epsilon^4)\begin{bmatrix}
\epsilon \\
7\epsilon/4 \\
-7\epsilon/4 \\
-\epsilon
\end{bmatrix} \\
=\ &
(432 + 840\epsilon + 1122 \epsilon^2 + 702 \epsilon^3 + 249 \epsilon^4)\mathsf{M}v_0,
\end{aligned}
$$
implying that
$$
\begin{aligned}
(\mathsf{M}\mathsf{M}^\tr)^{-1}
\mathsf{M}v_0
=\ &
\frac{1}{432 + 840\epsilon + 1122 \epsilon^2 + 702 \epsilon^3 + 249 \epsilon^4}
\begin{bmatrix}
-72 - 84 \epsilon + 3 \epsilon^2 + 29 \epsilon^3 + 49 \epsilon^4/2 \\
72 + 126 \epsilon + 60 \epsilon^2 + 13 \epsilon^3 - 14 \epsilon^4 \\
72 - 30 \epsilon - 36 \epsilon - 35 \epsilon^3 \\
-72 - 12 \epsilon- 27 \epsilon^2 + 14 \epsilon^3
\end{bmatrix} \\
=\ &
\begin{bmatrix}
-1/6 \\ 1/6 \\ 1/6 \\ -1/6
\end{bmatrix}
+\begin{bmatrix}
28 \\ -7 \\ -85 \\ 64
\end{bmatrix}
\frac{\epsilon}{216}
+o(\epsilon).
\end{aligned}
$$
Since
$$
\mathsf{M}^\tr
\begin{bmatrix}
-1/6 \\ 1/6 \\ 1/6 \\ -1/6
\end{bmatrix}=O(\epsilon),
$$
we can see that
$$
\mathsf{M}^\tr(\mathsf{M}\mathsf{M}^\tr)^{-1}
\mathsf{M}v_0
=O(\epsilon),
$$
which completes the proof.





\section{Connectivity of the Set of Proper Stabilizing Controllers} \label{appendix:proper_controller}

We present connectivity results for the set of proper stabilizing controllers. The dynamical controller in~\eqref{eq:Dynamic_Controller} is \emph{strictly proper} as it does not contain a direct feedback term from the output measurement. We note that the optimal solution for the LQG problem~\eqref{eq:LQG} is always strictly proper.

For closed-loop stability, we can also consider a \emph{proper} dynamical controller as follows
\begin{equation}
    \begin{aligned}
        \dot \xi(t) &= A_{\mK}\xi(t) + B_{\mK}y(t), \\
        u(t) &= C_{\mK}\xi(t) + D_{\mK}y(t),
    \end{aligned}
\end{equation}
parameterized by four matrices $A_{\mK}, B_{\mK}, C_{\mK}, D_{\mK}$ with compatible dimensions. Similarly, we define the set of \emph{proper stabilizing controllers} as
\begin{equation}\label{eq:sys_closed_loop}
\hat{\mathcal{C}}_{q}:= \left\{\mK = \begin{bmatrix} D_{\mK} & C_{\mK} \\  B_{\mK} & A_{\mK} \end{bmatrix} \in \mathbb{R}^{(p+q) \times (m+q)}   \left|
\begin{bmatrix}
A+BD_{\mK}C & BC_{\mK} \\ B_{\mK}C & A_{\mK}
\end{bmatrix}\; \text{is stable }\right. \right\}.
\end{equation}
By this definition, we always have $\mathcal{C}_q \subseteq \hat{\mathcal{C}}_q$, which is consistent with the fact that the set of strictly proper stabilizing controllers is a subset of the set of proper stabilizing controllers. But we note that $\forall \mK \in \hat{\mathcal{C}}_{q}$ with $D_{\mK} \neq 0$, the resulting LQG cost $J(\mK)$ in~\eqref{eq:LQG} is infinite, despite that $\mK$ internally stabilizes the plant.

Similar to \cref{lemma:unbounded_proper}, the observation in \cref{lemma:unbounded_proper2} is obvious. Unlike $\mathcal{C}_n$ that might have two path-connected components, $\hat{\mathcal{C}}_n$ is always path-connected, as stated in \cref{prop:connectivity_Cn}.
\begin{lemma} \label{lemma:unbounded_proper2}
   Under \cref{assumption:stabilizability}, the set $\hat{\mathcal{C}}_{n}$ is non-empty, open, unbounded and non-convex.
\end{lemma}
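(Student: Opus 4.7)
The plan is to reduce each of the four claims about $\hat{\mathcal{C}}_n$ to an already-established fact about $\mathcal{C}_n$, exploiting the trivial inclusion $\mathcal{C}_n\subseteq\hat{\mathcal{C}}_n$ that is immediate from the definitions: setting $D_{\mK}=0$ in \eqref{eq:sys_closed_loop} recovers exactly \eqref{eq:internallystabilizing}.

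First, non-emptiness and unboundedness would be inherited directly from \cref{lemma:unbounded_proper}, which states that, under \cref{assumption:stabilizability}, $\mathcal{C}_n$ is non-empty and unbounded; these properties are preserved under taking supersets, so $\hat{\mathcal{C}}_n\supseteq\mathcal{C}_n$ inherits both. Second, openness would follow by the same argument already used in the paper to show that $\mathcal{C}_q$ is open: the stability of the $(2n)\times(2n)$ closed-loop matrix in \eqref{eq:sys_closed_loop} is equivalent, via the Routh--Hurwitz criterion, to a finite collection of strict polynomial inequalities in the entries of $(A_{\mK},B_{\mK},C_{\mK},D_{\mK})$. Since strict polynomial inequalities cut out open subsets of a Euclidean space and $\hat{\mathcal{C}}_n$ is their (finite) intersection in $\mathbb{R}^{(m+n)\times (p+n)}$, openness is immediate.

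Third, for non-convexity, the plan is simply to invoke the counterexample already constructed in \cref{example:disconnectivity}: the two controllers
\[
\mK^{(1)} = \begin{bmatrix} 0 & 2 \\ -2 & -2\end{bmatrix},
\qquad
\mK^{(2)} = \begin{bmatrix} 0 & -2 \\ 2 & -2\end{bmatrix}
\]
already have $D_{\mK}=0$ and therefore lie in $\hat{\mathcal{C}}_1$ as well; their midpoint $\hat{\mK}$ also has $D_{\mK}=0$ and was shown to fail to stabilize the plant, so $\hat{\mK}\notin\hat{\mathcal{C}}_1$. For plants of higher state dimension $n\geq 2$, one can lift this example by embedding it block-diagonally with a stable sub-block (e.g.\ $-I_{n-1}$), which preserves membership in $\hat{\mathcal{C}}_n$ for the two witness controllers while still leaving their midpoint unstable in the relevant block.

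There is no real obstacle here; this lemma is essentially bookkeeping, collecting the elementary topological and geometric properties of $\hat{\mathcal{C}}_n$ before turning to the substantive connectivity statement in \cref{prop:connectivity_Cn}. The only mild care needed is to state the Routh--Hurwitz openness argument in the enlarged parameter space that now includes $D_{\mK}$, but this is a literal extension of the reasoning already carried out for $\mathcal{C}_q$ in the body of the paper.
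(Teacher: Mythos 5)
Your proposal is correct and matches the paper's treatment: the paper dismisses \cref{lemma:unbounded_proper2} as ``obvious'' by direct analogy with \cref{lemma:unbounded_proper}, and the details you supply---non-emptiness and unboundedness inherited from the inclusion $\mathcal{C}_n\subseteq\hat{\mathcal{C}}_n$, openness from the strict Routh--Hurwitz inequalities (now in the full space $\mathbb{R}^{(m+n)\times(p+n)}$ since the constraint $D_{\mK}=0$ is dropped), and non-convexity from the witnesses of \cref{example:disconnectivity}---are exactly what the authors intend. The only point of care is the non-convexity claim for general $n$: as with \cref{lemma:unbounded_proper}, the example-based argument really establishes that $\hat{\mathcal{C}}_n$ \emph{can be} non-convex, and your block-diagonal lifting would additionally need $B$, $C$, $W$, $Q$ chosen so that the lifted plant still satisfies \cref{assumption:stabilizability}; but this is no weaker than what the paper itself provides.
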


\begin{theorem} \label{prop:connectivity_Cn}
    Under \cref{assumption:stabilizability}, $\hat{\mathcal{C}}_{n}$ is always path-connected.
\end{theorem}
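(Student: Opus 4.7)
My plan is to mirror the change-of-variables argument from Section 3.2 (proof of \cref{Theo:disconnectivity}) and then leverage the extra degree of freedom provided by the feedthrough $D_\mK$ to merge the two potential path-connected components into one.

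First, I would define the enlarged convex set $\hat{\mathcal{F}}_n$ by the same defining LMI as $\mathcal{F}_n$ in \eqref{eq:SetF}, but with the constraint $G = 0_{m\times p}$ replaced by $G \in \mathbb{R}^{m \times p}$, and define the companion set $\hat{\mathcal{G}}_n$ and the change-of-variables map $\hat{\Phi}$ analogously to \eqref{eq:SetG}--\eqref{eq:change_of_variables_output}. Convexity of $\hat{\mathcal{F}}_n$, nonemptiness (inherited from $\mathcal{F}_n\subseteq\hat{\mathcal{F}}_n$), and continuous surjectivity $\hat{\Phi}:\hat{\mathcal{G}}_n\twoheadrightarrow\hat{\mathcal{C}}_n$ all follow from the same arguments as in \cref{proposition:Phi_surjective}, the only modification being that $D_\mK$ is allowed to be nonzero throughout. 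The identical reasoning as in \cref{lemma:Gn_connected_components} then gives that $\hat{\mathcal{G}}_n$ has at most two path-connected components $\hat{\mathcal{G}}_n^{\pm}$ distinguished by the sign of $\det\Pi$, and hence $\hat{\mathcal{C}}_n$ has at most two path-connected components $\hat{\mathcal{C}}_n^{\pm}=\hat{\Phi}(\hat{\mathcal{G}}_n^{\pm})$.

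The key remaining step is to show $\hat{\mathcal{C}}_n^+\cap\hat{\mathcal{C}}_n^-\neq\varnothing$, which, together with the path-connectedness of each piece (as the continuous image of a path-connected set), forces $\hat{\mathcal{C}}_n^+=\hat{\mathcal{C}}_n^-=\hat{\mathcal{C}}_n$. To this end, I would exhibit a single $\mK^\ast\in\hat{\mathcal{C}}_n$ fixed by some orientation-reversing similarity transformation $\mathscr{T}_n(T,\cdot)$ with $\det T<0$: by the argument of \cref{Theo:connectivity_conditions} (any preimage $\mZ$ of $\mK^\ast$ with $\det\Pi>0$ yields a companion preimage $\hat{\mZ}$ with $\det(T\Pi)<0$ still mapping to $\mK^\ast$ under $\hat{\Phi}$), such a fixed point automatically lies in $\hat{\mathcal{C}}_n^+\cap\hat{\mathcal{C}}_n^-$. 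A natural candidate is the block-diagonal controller
$$
\mK^\ast=\begin{bmatrix}\tilde D & \tilde C & 0 \\ \tilde B & \tilde A & 0 \\ 0 & 0 & \alpha\end{bmatrix},
\quad \begin{bmatrix}\tilde D & \tilde C \\ \tilde B & \tilde A\end{bmatrix}\in\hat{\mathcal{C}}_{n-1},\quad \alpha<0,
$$
which is fixed by $T=\operatorname{diag}(I_{n-1},-1)$.

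The main obstacle will be establishing nonemptiness of the reduced-order proper set $\hat{\mathcal{C}}_{n-1}$ under \cref{assumption:stabilizability}; this is the crucial place where the proper case genuinely departs from the strictly proper setting, because there exist plants (cf.\ \cref{appendix:eg_disconnectivity}) for which $\mathcal{C}_{n-1}=\varnothing$ yet we still expect $\hat{\mathcal{C}}_{n-1}\neq\varnothing$ thanks to the extra freedom in $D_\mK$. I would try to construct such a reduced-order stabilizer by starting from a full-order stabilizer in $\hat{\mathcal{C}}_n$ and carrying out a pole-truncation limit that sends one controller eigenvalue to $-\infty$ while absorbing its static contribution into the feedthrough matrix $D_\mK$, then verifying closed-loop stability of the limiting order-$(n-1)$ proper controller via the Lyapunov criterion. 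This step is where the bulk of the technical work is expected to lie.
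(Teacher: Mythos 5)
Your overall architecture coincides with the paper's: extend the change of variables of \cref{Theo:disconnectivity} by freeing $G$ (so that $D_\mK$ may be nonzero), conclude via the analogues of \cref{proposition:Phi_surjective} and \cref{lemma:Gn_connected_components} that $\hat{\mathcal{C}}_n$ has at most two path-connected components, and then merge them by exhibiting a controller fixed by an orientation-reversing similarity transformation, obtained by padding an element of $\hat{\mathcal{C}}_{n-1}$ with a stable scalar block. Those steps are exactly what the paper does, and your fixed-point argument correctly adapts the proof of \cref{Theo:connectivity_conditions}.

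The gap is in the one step you yourself flag as carrying the technical weight: proving $\hat{\mathcal{C}}_{n-1}\neq\varnothing$ under \cref{assumption:stabilizability}. Your proposed pole-truncation limit is not a proof and, as described, runs into a circularity: a singular-perturbation/residualization argument requires a full-order stabilizing controller that already possesses a fast, well-separated stable mode whose truncation remains stabilizing, and the standard way to manufacture such a controller is to start from a reduced-order stabilizer and append a fast mode --- i.e., to already have the object you are trying to construct. Sending an eigenvalue of an arbitrary full-order stabilizing controller to $-\infty$ need not preserve closed-loop stability along the way, nor need the residualized limit stabilize. The paper closes this step by citation: under controllability of $(A,B)$ and observability of $(C,A)$, the classical result of Brasch and Pearson \cite{brasch1970pole} guarantees a proper stabilizing (indeed pole-assigning) compensator of order at most $n-1$, so $\hat{\mathcal{C}}_{n-1}\neq\varnothing$ always. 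An elementary substitute is the reduced-order Luenberger observer: since $(C,A)$ is observable we have $C\neq 0$, and an observer of order $n-\operatorname{rank}(C)\leq n-1$ combined with a stabilizing state-feedback gain yields a proper controller (with nonzero $D_\mK$) that internally stabilizes the plant. Either of these replaces your limiting argument; without one of them your proof is incomplete at precisely the point where the proper case genuinely differs from the strictly proper one.
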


The proof of \cref{prop:connectivity_Cn} is almost identical to \cref{Theo:disconnectivity}. By replacing the constraint $R=0_{m\times p}$ with $R \in \mathbb{R}^{m\times p}$ in the definitions of $\mathcal{F}_n$, $\mathcal{G}_n$ and $\Phi(\cdot)$, it is not difficult to verify that the results in \cref{proposition:Phi_surjective} and \cref{lemma:Gn_connected_components} still hold for  $\hat{\mathcal{C}}_{n}$.
%
Unlike ${\mathcal{C}}_{n-1}$ might be empty, we always have $\hat{\mathcal{C}}_{n-1} \neq \varnothing$ under \cref{assumption:stabilizability}~\cite{brasch1970pole}. By adapting the proof in \cref{Theo:connectivity_conditions}, \cref{prop:connectivity_Cn} is now obvious.

\begin{example}[Connectivity of proper stabilizing controllers]
    Consider the linear system~\eqref{eq:ExampleSISO2} in \cref{appendix:eg_disconnectivity}. We have shown that $\mathcal{C}_{n-1} = \varnothing$ for \emph{strictly proper} reduced-order dynamical controllers. Here, it is easy to verify that the following \emph{proper} reduced-order  dynamical controller
    $$
    A_{\mK} = 1, \quad B_{\mK} = -3, \quad C_{\mK} = 2, \quad D_{\mK} = -2,
    $$
    internally stabilizes the system, \emph{i.e.}, the eigenvalues of
     $$
            \begin{bmatrix}
            0 & 1 & 0 \\
            1 & D_{\mK} & C_{\mK} \\
            0 & B_{\mK} & A_{\mK}
            \end{bmatrix} 
    $$
    have all negative real parts, indicating that $\hat{\mathcal{C}}_{n-1} \neq \emptyset$. Thus, $\hat{\mathcal{C}}_n$ is path-connected.

    Similarly, one can verify that the set of proper stabilizing controllers for the system in \cref{example:SISO1} is path-connected. Indeed, using the Routh--Hurwitz stability criterion, we derive that
\begin{equation*}
    \begin{aligned}
    \hat{\mathcal{C}}_1 &= \left\{\mK = \begin{bmatrix} D_{\mK} & C_{\mK} \\
                          B_{\mK} & A_{\mK}\end{bmatrix} \in \mathbb{R}^{2 \times 2} \left| \begin{bmatrix}
    A+BD_{\mK}C & BC_{\mK} \\
    B_{\mK}C & A_{\mK}
\end{bmatrix}\; \text{is stable}\right. \right\} \\
&= \left\{ \left. \mK = \begin{bmatrix} D_{\mK} & C_{\mK} \\
                          B_{\mK} & A_{\mK}\end{bmatrix} \in \mathbb{R}^{2 \times 2} \right| A_{\mK} + D_{\mK}< -1, \;\; B_{\mK}C_{\mK} < A_{\mK} + A_{\mK}D_{\mK} \right\}. \\
\end{aligned}
\end{equation*}
This set is path-connected.
\end{example}

 \section{Results for Discrete-Time Systems} \label{appendix:discrete_time}

In this section, we discuss some landscape properties for the discrete-time LQG problem. As we will see, most results are analogous to the continuous-time case. We will slightly abuse the use of notation, and adopt the same notation for both continuous-time and discrete-time cases.

Consider a discrete-time partially observed LTI system
\begin{equation} \label{eq:dynamics_discrete}
    \begin{aligned}
        x_{t+1} &= A x_t + B u_t + w_t, \\
        y_t &= Cx_t + v_t,
    \end{aligned}
\end{equation}
where $x_t \in \mathbb{R}^n, u_t \in \mathbb{R}^m, y_t \in \mathbb{R}^p$ are the system state, input, and output measurement at time $t$, and $w_t \sim \mathcal{N}(0, W), v_t \sim \mathcal{N}(0, V)$ are Gaussian process and measurement noises, respectively.  It is assumed that the covariance matrices satisfy $W \succeq 0, V \succ 0$.
Given performance weight matrices $Q \succeq 0, R \succ 0$, the discrete-time LQG problem is defined as
\begin{equation}~\label{eq:LQG_discrete}
    \begin{aligned}
        \min_{u_0,u_1, \ldots} &\quad \lim_{T \rightarrow \infty} \mathbb{E} \left[ \frac{1}{T} \sum_{t=1}^T\!\left(x_t^\tr Q x_t + u_t^\tr R u_t\right) \right] \\
        \text{subject to} &\quad~\eqref{eq:dynamics_discrete}.
    \end{aligned}
\end{equation}
The control input $u_t$ at time $t$ is allowed to depend on the history $\mathcal{H}_t:=(u_0, \ldots, u_{t-1}, y_0, \ldots, y_{t-1})$. \footnote{The one step delay in $y_t$ is a standard assumption that simplifies the Kalman filtering expressions. We leave the case where the history contains the current measurement $y_t$ for future discussions.} We make the following standard assumption.

\begin{assumption} \label{assumption:stabilizability_discrete}
$(A,B)$ and $(A,W^{1/2})$ are controllable, and $(C,A)$ and $(Q^{1/2},A)$ are observable.
\end{assumption}

Under Assumption~\ref{assumption:stabilizability_discrete}, the optimal solution to~\eqref{eq:LQG_discrete} is a dynamical controller, given by  
\begin{equation} \label{eq:LQGcontroller_discrete-time}
    \begin{aligned}
        \xi_{t+1} &= A\xi_{t} + Bu_{t} + L(y_t - C \xi_t), \\
        u_t &= -K \xi_t.
    \end{aligned}
\end{equation}
Here the matrix $L$ is called the \emph{Kalman gain}, computed as $L = APC^\tr (CPC^\tr + V)^{-1}$ where $P$ is the unique positive semidefinite solution to
\begin{equation}\label{eq:Riccati_P_discrete}
    P = APA^\tr - APC^\tr(CPC^\tr+V)^{-1}CPA^\tr +W,
\end{equation}
and  the matrix $K$ is called the LQR \emph{feedback gain}, computed as $  K = (B^\tr SB + R)^{-1}B^\tr SA
$ where $S$ is the unique positive semidefinite solution to
\begin{equation} \label{eq:Riccati_S_discrete}
    S = A^\tr SA - A^\tr SB(B^\tr SB+R)^{-1}B^\tr SA +Q.
\end{equation}

\subsection{Controller Parameterization and the LQG Cost Function}
Similar to the continuous-time case, we consider the following parameterization of dynamical controllers
\begin{equation} \label{eq:controller_discrete}
    \begin{aligned}
        \xi_{t+1} &= A_{\mK} \xi_t + B_{\mK} y_t, \\
        u_t &= C_{\mK} \xi_{t},
    \end{aligned}
\end{equation}
where $\xi_t \in \mathbb{R}^q$ is the controller state at time $t$, and $(A_{\mK},B_{\mK},C_{\mK}) \in \mathbb{R}^{q \times q} \times \mathbb{R}^{q \times p} \times \mathbb{R}^{m \times q}$ specify the controller dynamics. The optimal LQG controller~\eqref{eq:LQGcontroller_discrete-time} can be written in the form of~\eqref{eq:controller_discrete}, where the controller state has dimension $q = n$, and
$$
 \quad A_{\mK} = A - BK - LC, \quad B_{\mK} = L, \quad C_{\mK} = - \mK.
$$

Combining~\eqref{eq:controller_discrete} with~\eqref{eq:dynamics_discrete} leads to the closed-loop system as
\begin{equation*}
\begin{aligned}
   \begin{bmatrix} x_{t+1} \\ \xi_{t+1} \end{bmatrix} &= \begin{bmatrix}
    A & BC_{\mK} \\
    B_{\mK}C & A_{\mK}
    \end{bmatrix} \begin{bmatrix} x_t \\ \xi_t \end{bmatrix}  +  \begin{bmatrix} I & 0 \\ 0 & B_{\mK}  \end{bmatrix}\begin{bmatrix} w_t \\ v_t \end{bmatrix} \\
    \begin{bmatrix} y_t \\ u_t \end{bmatrix}& = \begin{bmatrix} C & 0 \\ 0& C_{\mK} \end{bmatrix} \begin{bmatrix} x_t \\ \xi_t \end{bmatrix} + \begin{bmatrix} v_t \\0 \end{bmatrix}.
\end{aligned}
\end{equation*}
The set of stabilizing controllers with order $q \in \mathbb{N}$ is defined as
\begin{equation} \label{eq:internallystabilizing_discrete}
    \mathcal{C}_{q} := \left\{
    \left.\mK=\begin{bmatrix}
    0_{m\times p} & C_{\mK} \\
    B_{\mK} & A_{\mK}
    \end{bmatrix}
    \in \mathbb{R}^{(m+q) \times (p+q)} \right|\;  \rho\left(\begin{bmatrix}
    A & BC_{\mK} \\
    B_{\mK}C & A_{\mK}
    \end{bmatrix}\right) < 1\right\},
\end{equation}
where $\rho(\cdot)$ denotes the spectral radius of a square matrix.
Let $J_q(\mK):\mathcal{C}_q\rightarrow\mathbb{R}$ denote the function that maps a parameterized dynamical controller in $\mathcal{C}_q$ to its corresponding LQG cost for each $q\in\mathbb{N}$.
Analogous to the continuous time case, we have the following two lemmas characterizing the LQG cost function $J_q$.
\begin{lemma}\label{lemma:LQG_cost_formulation_discrete}
Fix $q\in\mathbb{N}$ such that $\mathcal{C}_q\neq\varnothing$. Given $\mK\in\mathcal{C}_q$, we have
\begin{equation}\label{eq:LQG_cost_formulation_discrete}
J_q(\mK)
=
\operatorname{tr}
\left(
\begin{bmatrix}
Q & 0 \\ 0 & C_{\mK}^\tr R C_{\mK}
\end{bmatrix} X_\mK\right)
=
\operatorname{tr}
\left(
\begin{bmatrix}
W & 0 \\ 0 & B_{\mK} V B_{\mK}^\tr
\end{bmatrix} Y_\mK\right),
\end{equation}
where $X_{\mK}$ and $Y_{\mK}$ are the unique positive semidefinite
solutions to the following Lyapunov equations
\begin{subequations}
\begin{align}
X_{\mK} &= \begin{bmatrix} A &  BC_{\mK} \\ B_{\mK} C & A_{\mK} \end{bmatrix}X_{\mK}\begin{bmatrix} A &  BC_{\mK} \\ B_{\mK} C & A_{\mK} \end{bmatrix}^\tr +  \begin{bmatrix} W & 0 \\ 0 & B_{\mK}VB_{\mK}^\tr  \end{bmatrix}, \label{eq:LyapunovX_discrete}
\\
Y_{\mK} &= \begin{bmatrix} A &  BC_{\mK} \\ B_{\mK} C & A_{\mK} \end{bmatrix}^\tr Y_{\mK}\begin{bmatrix} A &  BC_{\mK} \\ B_{\mK} C & A_{\mK} \end{bmatrix} +   \begin{bmatrix} Q & 0 \\ 0 & C_{\mK}^\tr R C_{\mK} \end{bmatrix}. \label{eq:LyapunovY_discrete}
\end{align}
\end{subequations}
\end{lemma}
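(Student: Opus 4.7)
The plan is to mirror the continuous-time proof in Appendix B.1, substituting the discrete-time Lyapunov machinery for its continuous counterpart. First I would observe that because $\mK\in\mathcal{C}_q$, the closed-loop matrix $A_{\mathrm{cl},\mK} := \begin{bmatrix} A & BC_{\mK} \\ B_{\mK}C & A_{\mK}\end{bmatrix}$ is Schur stable, i.e.\ $\rho(A_{\mathrm{cl},\mK})<1$. Vectorizing \eqref{eq:LyapunovX_discrete} gives $(I - A_{\mathrm{cl},\mK}\otimes A_{\mathrm{cl},\mK})\operatorname{vec}(X_\mK) = \operatorname{vec}\bigl(\operatorname{diag}(W, B_\mK V B_\mK^\tr)\bigr)$; Schur stability makes the Kronecker operator invertible, so $X_\mK$ exists and is unique, and the standard discrete Lyapunov theory gives the absolutely convergent series representation
\begin{equation*}
X_\mK = \sum_{t=0}^{\infty} A_{\mathrm{cl},\mK}^{t}\begin{bmatrix} W & 0 \\ 0 & B_\mK V B_\mK^\tr \end{bmatrix}(A_{\mathrm{cl},\mK}^\tr)^{t},
\end{equation*}
which is positive semidefinite. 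An analogous argument gives the uniqueness and series expression for $Y_\mK$.

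Next I would identify $X_\mK$ as the stationary covariance of the closed-loop state. Setting $z_t = [x_t^\tr,\xi_t^\tr]^\tr$, the closed-loop recursion $z_{t+1} = A_{\mathrm{cl},\mK} z_t + B_{\mathrm{cl}} n_t$ driven by white noise $n_t=[w_t^\tr,v_t^\tr]^\tr$ with Schur-stable $A_{\mathrm{cl},\mK}$ converges in distribution to a zero-mean Gaussian stationary law whose covariance $\bar{X}$ satisfies the same equation \eqref{eq:LyapunovX_discrete}, hence $\bar{X}=X_\mK$ by uniqueness. Ergodicity of this Gauss--Markov chain then gives
\begin{equation*}
\lim_{T\to\infty}\frac{1}{T}\sum_{t=1}^T \mathbb{E}\bigl[x_t^\tr Q x_t + u_t^\tr R u_t\bigr]
= \mathbb{E}\!\left[z_\infty^\tr \begin{bmatrix} Q & 0 \\ 0 & C_\mK^\tr R C_\mK \end{bmatrix} z_\infty\right],
\end{equation*}
and substituting $u_t = C_\mK \xi_t$ together with the trace-cyclic identity $\mathbb{E}[z^\tr M z] = \operatorname{tr}(M\,\mathbb{E}[zz^\tr])$ yields the first equality in \eqref{eq:LQG_cost_formulation_discrete}.

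For the second equality I would use the duality between the two Lyapunov equations. Plugging the series form of $X_\mK$ into $\operatorname{tr}\bigl(\operatorname{diag}(Q,C_\mK^\tr R C_\mK)X_\mK\bigr)$, exchanging trace with the absolutely convergent sum, and applying the cyclic property inside each summand produces
\begin{equation*}
\sum_{t=0}^\infty \operatorname{tr}\!\left((A_{\mathrm{cl},\mK}^\tr)^{t}\begin{bmatrix} Q & 0 \\ 0 & C_\mK^\tr R C_\mK \end{bmatrix}A_{\mathrm{cl},\mK}^{t}\cdot\begin{bmatrix} W & 0 \\ 0 & B_\mK V B_\mK^\tr \end{bmatrix}\right),
\end{equation*}
and recognizing the bracketed sum over $t$ as exactly $Y_\mK$ completes the proof.

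There is no genuine obstacle here: the only nontrivial step is the ergodic-convergence argument that identifies $X_\mK$ with the stationary covariance, and this is entirely standard for Schur-stable linear systems with Gaussian excitation. The remaining work is bookkeeping—invoking uniqueness of the discrete Lyapunov solution and carefully commuting trace with summation, which is justified by $\|A_{\mathrm{cl},\mK}^t\|\to 0$ geometrically. Since this is the only statement at the end of the excerpt with no subsequent proof supplied, my presentation would be kept short, mirroring the style used for \cref{lemma:LQG_cost_formulation1} in the main text.
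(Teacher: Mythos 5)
Your proposal is correct and follows essentially the same route as the paper's proof of the continuous-time analogue (\cref{lemma:LQG_cost_formulation1} in \cref{appendix:real_analytic}): identify $X_\mK$ with the limiting closed-loop state covariance via the series/Gramian representation of the unique Lyapunov solution, take the trace against the block-diagonal weight, and obtain the second expression by the controllability--observability Gramian duality (which you simply make explicit via the term-by-term trace-cyclic swap). No gaps.
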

\begin{lemma}\label{lemma:LQG_cost_analytical_discrete}
    Fix $q \in \mathbb{N}$ such that $\mathcal{C}_q\neq\varnothing$. Then, $J_q$ is a real analytic function on $\mathcal{C}_q$.
\end{lemma}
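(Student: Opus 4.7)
The proof will closely parallel the continuous-time argument for \cref{lemma:LQG_cost_analytical} in \cref{appendix:real_analytic}; the only differences are algebraic and come from replacing the continuous Lyapunov operator by its discrete counterpart. The plan is to show that each entry of $X_{\mK}$, viewed as a function of $\mK\in\mathcal{C}_q$, is a rational function of the entries of $\mK$, from which analyticity of $J_q(\mK) = \operatorname{tr}\!\bigl(\operatorname{diag}(Q, C_{\mK}^\tr R C_{\mK}) X_{\mK}\bigr)$ is immediate.

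First, I would vectorize the discrete Lyapunov equation~\eqref{eq:LyapunovX_discrete}. Writing
$$
A_{\mathrm{cl},\mK} \coloneqq \begin{bmatrix} A & BC_{\mK} \\ B_{\mK}C & A_{\mK} \end{bmatrix},
\qquad
M_{\mK} \coloneqq \begin{bmatrix} W & 0 \\ 0 & B_{\mK}VB_{\mK}^\tr \end{bmatrix},
$$
equation~\eqref{eq:LyapunovX_discrete} becomes
$$
\bigl( I_{(n+q)^2} - A_{\mathrm{cl},\mK}\otimes A_{\mathrm{cl},\mK} \bigr)\operatorname{vec}(X_{\mK}) = \operatorname{vec}(M_{\mK}).
$$
Next I would invoke the standard fact that the eigenvalues of $A_{\mathrm{cl},\mK}\otimes A_{\mathrm{cl},\mK}$ are all products $\lambda_i \lambda_j$ of eigenvalues of $A_{\mathrm{cl},\mK}$. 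Since $\mK\in\mathcal{C}_q$ means $\rho(A_{\mathrm{cl},\mK})<1$, every such product satisfies $|\lambda_i\lambda_j|<1$, so $I - A_{\mathrm{cl},\mK}\otimes A_{\mathrm{cl},\mK}$ is invertible, giving
$$
\operatorname{vec}(X_{\mK}) = \bigl(I - A_{\mathrm{cl},\mK}\otimes A_{\mathrm{cl},\mK}\bigr)^{-1}\operatorname{vec}(M_{\mK}).
$$

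Each entry of $I - A_{\mathrm{cl},\mK}\otimes A_{\mathrm{cl},\mK}$ is a polynomial in the entries of $\mK$; by Cramer's rule each entry of its inverse is a ratio of polynomials in the entries of $\mK$ with nonvanishing denominator on $\mathcal{C}_q$. Likewise every entry of $M_{\mK}$ is a polynomial in the entries of $\mK$. Hence every entry of $X_{\mK}$ is a rational function in the entries of $\mK$, well-defined on $\mathcal{C}_q$. The expression
$$
J_q(\mK) = \operatorname{tr}\!\left( \begin{bmatrix} Q & 0 \\ 0 & C_{\mK}^\tr R C_{\mK} \end{bmatrix} X_{\mK}\right)
$$
is then itself a rational function of the entries of $\mK$ with no pole on $\mathcal{C}_q$, and any such rational function is real analytic on its domain of definition.

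The argument is essentially routine, and I do not anticipate any serious obstacle. The one point that deserves a line of justification, rather than being asserted, is the invertibility of $I - A_{\mathrm{cl},\mK}\otimes A_{\mathrm{cl},\mK}$ from Schur stability of $A_{\mathrm{cl},\mK}$; this is where the discrete-time proof genuinely diverges from the continuous-time one, but it follows immediately from the eigenvalue structure of a Kronecker product and the definition of $\mathcal{C}_q$ in~\eqref{eq:internallystabilizing_discrete}.
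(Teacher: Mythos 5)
Your proof is correct and follows essentially the same route as the paper: the paper simply states that the argument is identical to the continuous-time case in \cref{appendix:real_analytic}, which is exactly the vectorization-of-the-Lyapunov-equation / rational-function argument you give, with the invertibility of $I - A_{\mathrm{cl},\mK}\otimes A_{\mathrm{cl},\mK}$ under $\rho(A_{\mathrm{cl},\mK})<1$ playing the role that the invertibility of $I\otimes A_{\mathrm{cl},\mK}+A_{\mathrm{cl},\mK}\otimes I$ plays there (and which the paper itself also uses in the proof of \cref{lemma:Taylor_Lyapunov_discrete}). Your extra line justifying that invertibility via the eigenvalue structure of the Kronecker product is the right detail to spell out.
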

The LQG cost function being real analytical is a direct consequence of~\cref{lemma:LQG_cost_formulation_discrete}, and the proof is identical to~\cref{lemma:LQG_cost_analytical}.
Given the dimension $n$ of the plant's state variable, the discrete-time LQG problem~\eqref{eq:LQG_discrete} can be reformulated into a constrained optimization problem:
\begin{equation} \label{eq:LQG_reformulation_KX_discrete}
    \begin{aligned}
        \min_{\mK} \quad &J_n(\mK) \\
        \text{subject to} \quad& \mK \in \mathcal{C}_n.
    \end{aligned}
\end{equation}

\subsection{Connectivity of the Feasible Region $\mathcal{C}_n$}
We now characterize the connectivity of the set of full-order stabilizing controllers $\mathcal{C}_n$.

\begin{lemma} \label{lemma:unbounded_proper_discrete}
    Under \cref{assumption:stabilizability_discrete}, the set ${\mathcal{C}}_{n}$ in~\eqref{eq:internallystabilizing_discrete} is non-empty, unbounded, and can be non-convex.
\end{lemma}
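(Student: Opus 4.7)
The plan is to follow the same three-step template as in the proof of~\cref{lemma:unbounded_proper} for continuous-time systems, replacing Hurwitz stability with Schur stability throughout.

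First, for non-emptiness, I will invoke~\cref{assumption:stabilizability_discrete}, which guarantees that both the discrete-time Riccati equations~\eqref{eq:Riccati_P_discrete} and~\eqref{eq:Riccati_S_discrete} admit unique positive semidefinite solutions $P$ and $S$. The resulting Kalman gain $L$ and feedback gain $K$ yield a dynamical controller of the form $A_{\mK} = A - BK - LC$, $B_{\mK} = L$, $C_{\mK} = -K$, which by the standard separation principle for discrete-time LQG (see \cite[Chapter 3.5]{zhou1996robust}, discrete counterpart) internally stabilizes the plant. Hence $\mathcal{C}_n \neq \varnothing$.

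Second, for unboundedness, I will define, in direct analogy with the continuous-time proof, the set
$$
\mathcal{S}_n = \left\{ \mK = \begin{bmatrix} 0 & C_{\mK} \\ B_{\mK} & A_{\mK} \end{bmatrix} \;\middle|\; \begin{aligned} A_{\mK} &= A - BK - LC, B_{\mK} = L, C_{\mK} = -K, \\ & A - BK \text{ and } A - LC \text{ are Schur stable} \end{aligned} \right\}.
$$
The separation principle implies $\mathcal{S}_n \subseteq \mathcal{C}_n$. The set $\{K \mid A - BK \text{ is Schur stable}\}$ is unbounded whenever $(A,B)$ is controllable—one can scale $K$ by adding terms in $\ker B^\tr$ or by perturbing along directions that do not affect the closed-loop spectrum—so $\mathcal{S}_n$, and hence $\mathcal{C}_n$, is unbounded.

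Third, for non-convexity, I will exhibit an explicit counterexample. I expect this to be the main obstacle, since one needs a scalar open-loop unstable plant together with two Schur-stabilizing controllers whose midpoint fails to be Schur-stabilizing. A natural candidate is the SISO system $A = 2,\ B = 1,\ C = 1$ (which is open-loop unstable since $|A| > 1$). I will search for two controllers of the form
$$
\mK^{(1)} = \begin{bmatrix} 0 & c_1 \\ b_1 & a_1 \end{bmatrix}, \qquad \mK^{(2)} = \begin{bmatrix} 0 & c_2 \\ b_2 & a_2 \end{bmatrix},
$$
such that both closed-loop matrices $\begin{bmatrix} A & BC_{\mK} \\ B_{\mK}C & A_{\mK} \end{bmatrix}$ have spectral radius less than $1$, while their midpoint $\hat{\mK} = \tfrac{1}{2}(\mK^{(1)} + \mK^{(2)})$ yields a closed-loop matrix with at least one eigenvalue of modulus $\geq 1$. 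The Jury stability criterion (the discrete-time analog of Routh--Hurwitz) reduces this to a system of polynomial inequalities in $a_i,b_i,c_i$, which can be checked by direct numerical verification; alternatively, the sign-flip trick $\mK^{(1)} = (0, c, b, a)$ and $\mK^{(2)} = (0, -c, -b, a)$ (so that $b_i c_i$ agrees but the midpoint has $b = c = 0$) often works, as the midpoint reduces the closed-loop to a block-diagonal matrix containing $A$ itself, which is Schur unstable. This will complete the argument.
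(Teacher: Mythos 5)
Your non-emptiness argument and your non-convexity plan are both fine: the first is the standard separation-principle argument, and the second is exactly the sign-flip construction the paper uses in \cref{example:nonconvexity_discrete} (there with $A=1.1$ and $\mK^{(1)}=\bigl[\begin{smallmatrix}0 & 0.5\\ -0.5 & 0\end{smallmatrix}\bigr]$, so that the midpoint is the zero controller and the closed loop degenerates to the unstable open loop). You would still need to supply concrete numbers for $A=2$ (with $a=0$ the Jury conditions $|2a-b c|<1$ and $|2+a|<1+2a-bc$ are infeasible, so you must take $a\neq 0$, e.g.\ $a=-0.5$, $bc=-1.75$), but the template is sound.

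The genuine gap is in your unboundedness step. The continuous-time proof of \cref{lemma:unbounded_proper} works because the Hurwitz stability region is the entire open left half-plane, so $\{K \mid A-BK \text{ is Hurwitz}\}$ is unbounded. In discrete time the stability region is the \emph{bounded} unit disk, and your claim that $\{K\mid \rho(A-BK)<1\}$ is unbounded whenever $(A,B)$ is controllable is false: for a scalar plant $A=a$, $B=1$ it is the interval $(a-1,a+1)$, and more generally for any single-input controllable pair the map from $K$ to the characteristic-polynomial coefficients of $A-BK$ is an affine bijection onto a bounded set (Schur polynomials have bounded coefficients), so the set of Schur-stabilizing static gains is bounded. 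Your proposed repair---perturbing $K$ along $\ker B$---also fails whenever $B$ has full column rank (e.g.\ any nonzero single-input $B$), since that kernel is trivial. Consequently the set $\mathcal{S}_n$ of observer-based controllers can be bounded in discrete time and cannot certify unboundedness of $\mathcal{C}_n$. The correct (and easy) route is via similarity transformations: take any $\mK\in\mathcal{C}_n$ from your step 1; if the plant is open-loop unstable then necessarily $B_{\mK}\neq 0$ (otherwise the closed-loop matrix is block upper-triangular and contains the unstable spectrum of $A$), and $\mathscr{T}_n(tI_n,\mK)$ defined in \eqref{eq:def_sim_transform} remains in $\mathcal{C}_n$ for all $t\neq 0$ while $\|tB_{\mK}\|_F\rightarrow\infty$; if instead $A$ is Schur stable, then $\bigl[\begin{smallmatrix}0 & 0\\ B_{\mK} & \Lambda\end{smallmatrix}\bigr]\in\mathcal{C}_n$ for every $B_{\mK}$ and every Schur-stable $\Lambda$, since the closed loop is block lower-triangular. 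This is also visible in the paper's own \cref{example:disconnectivity_discrete}: the derived set $\mathcal{C}_1$ constrains $A_{\mK}$ and the product $B_{\mK}C_{\mK}$ to bounded ranges, yet is unbounded along the hyperbolas $B_{\mK}C_{\mK}=\mathrm{const}$, which are precisely similarity-transformation orbits.
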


\begin{example}[Non-convexity of stabilizing controllers] \label{example:nonconvexity_discrete}
Consider an open-loop unstable dynamical system~\eqref{eq:dynamics_discrete} with
\begin{equation*} 
   A = 1.1, \;\; B = 1, \;\;  C = 1.
\end{equation*}
The set of stabilizing controllers $\mathcal{C}_n=\mathcal{C}_1$ is given by
$$
\mathcal{C}_n
=\left\{
\left.
\mK = \begin{bmatrix} 0 & C_{\mK} \\
                          B_{\mK} & A_{\mK}\end{bmatrix} \in \mathbb{R}^{2 \times 2}
\right|
\begin{bmatrix}
1.1 & C_{\mK} \\
B_{\mK} & A_{\mK}
\end{bmatrix}
\text{ is stable}
\right\}.
$$
It is easy to verify that the following dynamical controllers
$$
    \mK^{(1)} = \begin{bmatrix} 0 & 0.5 \\
                          -0.5 & 0\end{bmatrix}, \qquad     \mK^{(2)} = \begin{bmatrix} 0 & -0.5 \\
                          0.5 & 0\end{bmatrix}
$$
stabilize the plant and thus belong to $\mathcal{C}_1$. However,
$
   \hat{\mK} = \frac{1}{2}\left(\mK^{(1)} + \mK^{(2)}\right) = \begin{bmatrix} 0 & 0 \\
                          0 & 0\end{bmatrix}
$
fails to stabilize the plant. \hfill\qed
\end{example}

The notion of similarity transformation for the discrete-time case remains the same as that of the continuous-time case. We thus use the same mapping  $\mathscr{T}_q:\mathrm{GL}_q\times\mathcal{C}_q\rightarrow\mathcal{C}_q$, defined in~\eqref{eq:def_sim_transform}, to represent similarity transformations on $\mathcal{C}_q$. It is not difficult to see that~\cref{lemma:Cq_invariant} also holds in discrete-time. Indeed, All the connectivity results in~\cref{Theo:disconnectivity,Theo:Cn_homeomorphic,Theo:connectivity_conditions} have their counterparts in the discrete-time case.

\begin{theorem} \label{Theo:disconnectivity_discrete}
Under~\cref{assumption:stabilizability_discrete}, the set ${\mathcal{C}}_{n}$ in~\eqref{eq:internallystabilizing_discrete} has at most two path-connected components.
\end{theorem}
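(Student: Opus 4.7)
The plan is to mirror the continuous-time proof of \cref{Theo:disconnectivity} line by line, replacing the continuous Lyapunov inequality with its discrete-time analog. Recall that a matrix $M\in\mathbb{R}^{k\times k}$ is Schur stable (spectral radius strictly less than $1$) if and only if there exists $P\succ 0$ with $P - MPM^\tr \succ 0$, which can be equivalently written via Schur complement as the LMI
\begin{equation*}
\begin{bmatrix} P & MP \\ PM^\tr & P \end{bmatrix}\succ 0.
\end{equation*}
Applying this to the discrete-time closed-loop matrix in \eqref{eq:internallystabilizing_discrete} and using the same Scherer-type congruence transformation $T=\begin{bmatrix} I & Y \\ 0 & \Xi^\tr\end{bmatrix}$ as in the continuous-time proof will linearize the inequality in a new set of variables.

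Concretely, I would first define a discrete-time analog of $\mathcal{F}_n$ by replacing the affine Lyapunov block in~\eqref{eq:SetF} with the block-LMI obtained from applying Schur complement to the discrete Lyapunov inequality after the change of variables \eqref{eq:change_of_var}; call this set $\mathcal{F}_n^{\mathrm{d}}$. Since each constraint remains an LMI in the new variables $(X,Y,M,G,H,F)$, the set $\mathcal{F}_n^{\mathrm{d}}$ is convex and therefore path-connected. Then I would define $\mathcal{G}_n^{\mathrm{d}}$ exactly as in \eqref{eq:SetG}, namely by appending $(\Pi,\Xi)$ with $\Xi\Pi=I-YX$, and equip it with the same map $\Phi$ from \eqref{eq:change_of_variables_output}. \Cref{lemma:connectivity_preliminary} carries over verbatim because it only uses the block-diagonal positivity $\begin{bmatrix}X&I\\I&Y\end{bmatrix}\succ 0$, which is still present in $\mathcal{F}_n^{\mathrm{d}}$.

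Next, I would establish the discrete-time counterpart of \cref{proposition:Phi_surjective}, namely that $\Phi:\mathcal{G}_n^{\mathrm{d}}\to\mathcal{C}_n$ is continuous and surjective. Continuity is immediate from rationality of $\Phi$ in its entries. For surjectivity, given any $\mK\in\mathcal{C}_n$ I would solve the discrete Lyapunov inequality for some $\begin{bmatrix}X&\Pi^\tr\\\Pi&\hat X\end{bmatrix}\succ 0$, assume (by a small perturbation) that $\Pi$ is invertible, and reuse the congruence transform by $T$ together with the change of variables \eqref{eq:change_of_var}. The algebraic identity $T^\tr\begin{bmatrix}A&BC_\mK\\B_\mK C & A_\mK\end{bmatrix}\begin{bmatrix}X&\Pi^\tr\\\Pi&\hat X\end{bmatrix}T = \begin{bmatrix}AX+BF & A+BGC\\ M & YA+HC\end{bmatrix}$ from \eqref{eq:proof_Phi_map_temp} is purely algebraic and independent of the continuous- versus discrete-time setting, so the same computation verifies that the transformed closed-loop stability LMI is exactly what defines $\mathcal{F}_n^{\mathrm{d}}$. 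Conversely, given $(X,Y,M,G,H,F,\Pi,\Xi)\in\mathcal{G}_n^{\mathrm{d}}$, running the same congruence argument in reverse shows that $\Phi(\mZ)\in\mathcal{C}_n$.

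Once this machinery is in place, \cref{lemma:Gn_connected_components} transfers without change: $\mathcal{F}_n^{\mathrm{d}}\times \mathrm{GL}_n$ has exactly two path-connected components induced by $\mathrm{GL}_n^+\sqcup\mathrm{GL}_n^-$, and the continuous bijection $(X,Y,M,G,H,F,\Pi)\mapsto(X,Y,M,G,H,F,\Pi,(I-YX)\Pi^{-1})$ shows that $\mathcal{G}_n^{\mathrm{d}}$ does too. Continuity of $\Phi$ then forces $\mathcal{C}_n=\Phi(\mathcal{G}_n^{\mathrm{d}})$ to have at most two path-connected components, proving the theorem. The main technical obstacle is the first step: verifying that, after Schur-complementing out the bilinear discrete Lyapunov inequality and performing the Scherer change of variables, the resulting constraint on $(X,Y,M,G,H,F)$ is genuinely an LMI (hence convex). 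This requires careful bookkeeping of the cross-terms $MPM^\tr$ in the closed-loop Lyapunov inequality, but is a standard manipulation (see, \emph{e.g.}, the discrete-time LMI synthesis formulas in \cite{scherer1997multiobjective}).
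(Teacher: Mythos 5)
Your proposal is correct and follows essentially the same route as the paper: the appendix proof likewise invokes the discrete Lyapunov criterion in its Schur-complement LMI form $\bigl[\begin{smallmatrix} P & MP \\ PM^\tr & P\end{smallmatrix}\bigr]\succ 0$, replaces the affine block in $\mathcal{F}_n$ by the corresponding $2\times 2$ block LMI in $(X,Y,M,G,H,F)$ (which is exactly the constraint you describe obtaining after the Scherer congruence by $T=\bigl[\begin{smallmatrix} I & Y\\ 0 & \Xi^\tr\end{smallmatrix}\bigr]$), reuses $\mathcal{G}_n$ and the map $\Phi$ verbatim, and concludes via the two components of $\mathrm{GL}_n$. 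Your observation that the identity \eqref{eq:proof_Phi_map_temp} is purely algebraic and hence transfers to the discrete setting is precisely the reason the paper omits the details of the surjectivity proof.
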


\begin{theorem} \label{Theo:Cn_homeomorphic_discrete}
{If ${\mathcal{C}}_{n}$ in~\eqref{eq:internallystabilizing_discrete} has two path-connected components $\mathcal{C}_n^{(1)}$ and $\mathcal{C}_n^{(2)}$, then $\mathcal{C}_n^{(1)}$ and $\mathcal{C}_n^{(2)}$ are diffeomorphic under the mapping $\mathscr{T}_T$, for any invertible matrix $T\in\mathbb{R}^{n\times n}$ with $\det T<0$. 
}
\end{theorem}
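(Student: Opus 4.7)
The plan is to mirror the proof of \cref{Theo:Cn_homeomorphic} for the continuous-time case, adapting the change-of-variables construction to the discrete-time Lyapunov stability condition. Recall that the discrete-time closed-loop matrix is stable (spectral radius strictly less than one) if and only if there exists $P\succ 0$ such that $A_{\mathrm{cl}}PA_{\mathrm{cl}}^\tr - P \prec 0$, which via Schur complement can be written as a linear matrix inequality in $P$ and the closed-loop system data. Applying the change of variables of \cite{scherer1997multiobjective} in its discrete-time form (with $X,Y\in\mathbb{S}^n$, the auxiliary matrices $M, F, H, G$, and the factorization $\Xi\Pi = I - YX$) yields a convex set $\mathcal{F}_n^{\mathrm{d}}$ and its extended counterpart $\mathcal{G}_n^{\mathrm{d}} = \{(X,Y,M,G,H,F,\Pi,\Xi) : (X,Y,M,G,H,F)\in\mathcal{F}_n^{\mathrm{d}},\;\Xi\Pi = I-YX\}$ analogous to \eqref{eq:SetF}--\eqref{eq:SetG}, together with the same rational mapping $\Phi$ defined in \eqref{eq:change_of_variables_output}.

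First, I would verify that the discrete-time analogues of \cref{lemma:connectivity_preliminary} and \cref{proposition:Phi_surjective} hold: the block $\begin{bmatrix} X & I \\ I & Y\end{bmatrix}\succ 0$ guarantees invertibility of $\Pi$ and $\Xi$, and the same algebraic identity as in \eqref{eq:proof_Phi_map_temp}, together with a congruence transformation, shows that $\Phi$ is a continuous surjection from $\mathcal{G}_n^{\mathrm{d}}$ onto $\mathcal{C}_n$ (the discrete-time set). The key point is that the manipulations in \cref{app:surjective_proof} are purely algebraic and do not depend on whether the underlying Lyapunov inequality is continuous- or discrete-time; only the particular matrix inequality characterizing stability changes.

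Next, applying \cref{lemma:Gn_connected_components} verbatim (its proof uses only the convexity of $\mathcal{F}_n$ and the fact that $\mathrm{GL}_n$ splits into two components $\mathrm{GL}_n^\pm$ according to $\operatorname{sgn}\det\Pi$), I would conclude that $\mathcal{G}_n^{\mathrm{d}}$ has exactly two path-connected components $\mathcal{G}_n^{\mathrm{d},\pm}$, and hence $\mathcal{C}_n = \Phi(\mathcal{G}_n^{\mathrm{d},+})\cup\Phi(\mathcal{G}_n^{\mathrm{d},-})$ has at most two path-connected components, yielding~\cref{Theo:disconnectivity_discrete}. Denote $\mathcal{C}_n^\pm := \Phi(\mathcal{G}_n^{\mathrm{d},\pm})$. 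To obtain~\cref{Theo:Cn_homeomorphic_discrete}, it then suffices to show $\mathscr{T}_T(\mathcal{C}_n^+)\subseteq \mathcal{C}_n^-$ and $\mathscr{T}_{T^{-1}}(\mathcal{C}_n^-)\subseteq \mathcal{C}_n^+$ whenever $\det T<0$, since $\mathscr{T}_T$ is already a diffeomorphism of $\mathcal{C}_n$ onto itself by the discrete-time version of \cref{lemma:Cq_invariant}.

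This last inclusion is established by the exact same explicit construction used in the continuous-time proof: given $\mK\in\mathcal{C}_n^+$ with $\mK = \Phi(\mZ)$ for some $\mZ = (X,Y,M,G,H,F,\Pi,\Xi)\in\mathcal{G}_n^{\mathrm{d},+}$, setting $\hat\Pi = T\Pi$, $\hat\Xi = \Xi T^{-1}$, and $\hat\mZ = (X,Y,M,G,H,F,\hat\Pi,\hat\Xi)$, one checks that $\hat\mZ\in\mathcal{G}_n^{\mathrm{d},-}$ (since $\hat\Xi\hat\Pi = \Xi\Pi = I-YX$ and $\det\hat\Pi < 0$) and that $\Phi(\hat\mZ) = \mathscr{T}_T(\mK)$, by direct block matrix algebra identical to that used for the continuous-time case. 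The main obstacle, and essentially the only thing that requires care, is setting up the correct discrete-time form of the LMI defining $\mathcal{F}_n^{\mathrm{d}}$ and re-verifying the algebraic identity that makes $\Phi$ a valid change of variables in the discrete-time setting; the topological and sign-of-determinant arguments then carry over without modification.
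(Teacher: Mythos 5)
Your proposal is correct and follows essentially the same route as the paper: the paper's own proof of \cref{Theo:Cn_homeomorphic_discrete} simply replaces the continuous-time Lyapunov inequality with its discrete-time Schur-complement form in the definition of $\mathcal{F}_n$ and $\mathcal{G}_n$, notes that $\Phi$ remains a continuous surjection, and then reuses the $\hat\Pi = T\Pi$, $\hat\Xi = \Xi T^{-1}$ sign-of-determinant argument verbatim. Nothing further is needed.
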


\begin{theorem} \label{Theo:connectivity_conditions_discrete}
Under~\cref{assumption:stabilizability}, the following statements are true.
\begin{enumerate}
    \item
$\mathcal{C}_n$ is path-connected if there exists a reduced-order stabilizing controller, \emph{i.e.}, ${\mathcal{C}}_{n-1} \neq \varnothing$.

\item Suppose the plant \eqref{eq:Dynamic} is single-input or single-output, i.e., $m=1$ or $p=1$. Then the set ${\mathcal{C}}_{n}$ is path-connected if and only if ${\mathcal{C}}_{n-1} \neq \varnothing$.
\end{enumerate}
\end{theorem}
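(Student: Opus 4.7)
The strategy is to mirror the continuous-time argument in Section~\ref{subsection:proof_connectivity}--\ref{subsection:proof_connectivity_2}, replacing the continuous-time Lyapunov inequality $MP + PM^\tr \prec 0$ with its discrete-time counterpart: a real square matrix $M$ has spectral radius strictly less than one if and only if there exists $P\succ 0$ such that $MPM^\tr - P \prec 0$. Using this criterion, I would introduce a discrete-time analog of the change of variables of Scherer et al.\ \cite{scherer1997multiobjective}, redefining the sets $\mathcal{F}_n$ and $\mathcal{G}_n$ in~\eqref{eq:SetF}--\eqref{eq:SetG} by replacing the affine term
$$
\begin{bmatrix}
AX+BF & A+BGC \\ M & YA+HC
\end{bmatrix}
+
\begin{bmatrix}
AX+BF & A+BGC \\ M & YA+HC
\end{bmatrix}^\tr
$$
with the discrete-time linear matrix inequality
$$
\begin{bmatrix}
X & I & (AX+BF)^\tr & M^\tr \\
I & Y & (A+BGC)^\tr & (YA+HC)^\tr \\
AX+BF & A+BGC & X & I \\
M & YA+HC & I & Y
\end{bmatrix} \succ 0,
$$
which is the standard Schur-complement form of the discrete-time Lyapunov inequality after the same block-congruence as in the continuous-time derivation. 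The mapping $\Phi$ in~\eqref{eq:change_of_variables_output} would then be re-used verbatim.

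With these discrete-time versions in place, the proof of \cref{proposition:Phi_surjective} carries over almost word-for-word: one direction constructs a witness $(X,Y,M,G,H,F,\Pi,\Xi)$ from a stabilizing closed-loop Lyapunov certificate, and the reverse direction verifies closed-loop stability by undoing the congruence. The convexity of the discrete-time LMI set preserves \cref{lemma:Gn_connected_components}, so $\mathcal{G}_n$ still splits into two path-connected components indexed by the sign of $\det \Pi$, and $\mathrm{GL}_n=\mathrm{GL}_n^+\cup\mathrm{GL}_n^-$ remains the only source of disconnectivity, yielding Theorems~\ref{Theo:disconnectivity_discrete} and~\ref{Theo:Cn_homeomorphic_discrete} (which I assume have already been proved by the same template). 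For part~1 of \cref{Theo:connectivity_conditions_discrete}, given $\tilde{\mK}\in\mathcal{C}_{n-1}$ I would augment it to the full-order controller $\mK=\operatorname{diag}(\tilde{\mK},0)$, which lies in $\mathcal{C}_n$ because $0\in\mathbb{R}$ is discrete-time stable (spectral radius zero), and then exhibit the fixed-point relation $\mathscr{T}_T(\mK)=\mK$ for $T=\operatorname{diag}(I_{n-1},-1)$ to conclude $\mK\in\mathcal{C}_n^+\cap\mathcal{C}_n^-$, forcing path-connectivity.

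For part~2, the argument of Section~\ref{subsection:proof_connectivity_2} adapts directly. Given any $\mK^{(0)}\in\mathcal{C}_n$ and a real $T$ with $\det T<0$, set $\mK^{(1)}=\mathscr{T}_T(\mK^{(0)})$ and, assuming path-connectivity, take any continuous path $\mK(t)$, $t\in[0,1]$, in $\mathcal{C}_n$ from $\mK^{(0)}$ to $\mK^{(1)}$. In the single-output (equivalently single-input controller) case the controllability matrix $\mathsf{C}(t)$ of $(A_\mK(t),B_\mK(t))$ is $n\times n$, depends continuously on $t$, and satisfies $\mathsf{C}(1)=T\mathsf{C}(0)$; hence $\det\mathsf{C}(t)$ changes sign on $[0,1]$ and must vanish at some $\tau$, giving a non-controllable controller whose transfer function admits an $(n{-}1)$-dimensional realization and therefore produces an element of $\mathcal{C}_{n-1}$. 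The single-input plant case is handled dually via the observability matrix.

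The main obstacle will be the discrete-time LMI bookkeeping: the Schur-complement form introduces a $4\times 4$ block structure rather than the $2\times 2$ block used in continuous time, so the change-of-variables identity that was captured by~\eqref{eq:proof_Phi_map_temp} must be re-derived carefully so that $\Phi$ still produces a controller certifying the closed-loop inequality. Once this algebraic step is verified, everything else is a direct translation. A subtler point is the construction in part~1: one must check that augmenting by the scalar block $0$ (or more generally any Schur-stable matrix) preserves stability of the closed-loop in discrete-time, which it trivially does since the augmented closed-loop matrix is block-triangular, and that the similarity $T=\operatorname{diag}(I_{n-1},-1)$ indeed satisfies $\det T<0$ so that $\mK$ lies in both components.
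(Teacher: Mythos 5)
Your proposal is correct and follows essentially the same route as the paper, which proves the discrete-time theorem by transplanting the continuous-time argument: the Schur-complement form of the discrete Lyapunov inequality for $\mathcal{F}_n$, the surjection $\Phi$ onto $\mathcal{C}_n$, the augmentation-plus-fixed-point argument for part 1, and the sign change of $\det\mathsf{C}(t)$ for part 2. Your one substantive adaptation --- replacing the continuous-time augmentation block $-1$ by the Schur-stable block $0$ --- is exactly the adjustment the discrete-time setting requires.
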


\begin{example}[Disconectivity of stabilizing controllers in discrete-time]\label{example:disconnectivity_discrete}
 Consider the discrete-time dynamical system in~\cref{example:nonconvexity_discrete}:
$$
A=1.1,\quad B=1,\quad C=1.
$$
Since it is open-loop unstable and only has state of dimension $n = 1$, we know $\mathcal{C}_{n-1} = \varnothing$. Thus, \cref{Theo:connectivity_conditions_discrete} indicates that its associated set of stabilizing controllers $\mathcal{C}_n$ is not path-connected.

Indeed, using the Jury stability criterion~\cite{jury1964theory,keel1999new} (the discrete-time analogue of the Routh–Hurwitz stability criterion), we can derive that
\begin{equation*}
    \begin{aligned}
    \mathcal{C}_1 &= \left\{\mK = \begin{bmatrix} 0 & C_{\mK} \\
                          B_{\mK} & A_{\mK}\end{bmatrix} \in \mathbb{R}^{2 \times 2} \left| \rho\left(\begin{bmatrix}
    1.1 & C_{\mK} \\
    B_{\mK} & A_{\mK}
\end{bmatrix}\right) < 1 \right. \right\} \\
&= \left\{ \left. \mK = \begin{bmatrix} 0 & C_{\mK} \\
                          B_{\mK} & A_{\mK}\end{bmatrix} \in \mathbb{R}^{2 \times 2} \right|  \begin{aligned}B_{\mK}C_{\mK} &< 1 + 1.1 A_{\mK} - |1.1 + A_{\mK}| \\
                          B_{\mK}C_{\mK} &> 1.1 A_{\mK} - 1\end{aligned}\right\}. \\
\end{aligned}
\end{equation*}
Note that we should have $1.1A_{\mK}-1<1+1.1A_{\mK}-|1.1+A_\mK|$ to guarantee $\mathcal{C}_1 \neq \emptyset$, which gives $-3.1 < A_{\mK} < 0.9$. Furthermore, it is not difficult to verify that $1 + 1.1 A_{\mK} - |1.1 + A_{\mK}| < 0, \forall A_{\mK} \in (-3.1,0.9)$.
One can then verify that the set $\mathcal{C}_1$ has two path-connected components: $\mathcal{C}_1 = \mathcal{C}_1^+ \cup \mathcal{C}_1^-$ with $\mathcal{C}_1^+ \cap \mathcal{C}_1^- = \emptyset$, where
$$
\begin{aligned}
    \mathcal{C}_1^+ &:= \left\{ \left. \mK = \begin{bmatrix} 0 & C_{\mK} \\
                          B_{\mK} & A_{\mK}\end{bmatrix} \in \mathbb{R}^{2 \times 2} \right|  \begin{aligned}B_{\mK}C_{\mK} &< 1 + 1.1 A_{\mK} - |1.1 + A_{\mK}| \\
                          B_{\mK}C_{\mK} &> 1.1 A_{\mK} - 1, \quad B_{\mK} > 0\end{aligned}\right\} ,
                          \\
                          \mathcal{C}_1^- &:=  \left\{ \left. \mK = \begin{bmatrix} 0 & C_{\mK} \\
                          B_{\mK} & A_{\mK}\end{bmatrix} \in \mathbb{R}^{2 \times 2} \right|  \begin{aligned}B_{\mK}C_{\mK} &< 1 + 1.1 A_{\mK} - |1.1 + A_{\mK}| \\
                          B_{\mK}C_{\mK} &> 1.1 A_{\mK} - 1, \quad B_{\mK} < 0\end{aligned}\right\}.
                         \\
\end{aligned}
$$
In addition, as expected from \cref{Theo:Cn_homeomorphic_discrete}, it can be verified that $\mathcal{C}_1^{+}$ and $\mathcal{C}_1^{-}$ are homeomorphic under the mapping $\mathscr{T}_T$ for any $T<0$. \hfill\qed
\end{example}

\subsubsection{Proofs of~\cref{Theo:disconnectivity_discrete,Theo:Cn_homeomorphic_discrete,Theo:connectivity_conditions_discrete} }

The proof ideas are almost identical to those for the continuous-time case, and we highlight the main steps here. We utilize the following discrete-time Lyapunov stability criterion: given a square matrix $M \in \mathbb{R}^{n \times n}$, we have $\rho(M) < 1$ if and only if the following LMI is feasible
\begin{equation} \label{eq:Lyapunov_discrete}
    M^\tr X M - X \prec\,0, \quad X \succ\, 0.
\end{equation}
Upon defining $P = X^{-1}$ and using the Schur complement, the discrete-time Lyapunov LMI~\eqref{eq:Lyapunov_discrete} is equivalent to
\begin{equation} \label{eq:Lyapunov_discrete_v2}
    \begin{bmatrix}
         P & MP \\
         PM^\tr & P
    \end{bmatrix} \succ 0
\end{equation}

Now, we can use the same change of variables, defined in~\eqref{eq:change_of_variables_output}, to prove~\cref{Theo:disconnectivity_discrete}. Given the system dynamics $(A, B, C)$ in~\eqref{eq:dynamics_discrete}, we first introduce the following convex set

\begin{equation*} 
\begin{aligned}
\mathcal{F}_n
\coloneqq
\bigg\{
(X,Y,M,&\,G,H,F) \mid
X,Y\in\mathbb{S}^n,\
M \in \mathbb{R}^{n \times n}, G=0_{m\times p}, H\in \mathbb{R}^{n \times p}, F \in \mathbb{R}^{m \times n},\\
&
\left[\begin{array}{c:c}
\begin{bmatrix}
X & I \\ I & Y
\end{bmatrix} & \begin{bmatrix}
AX \!+\! BF & A \!+\! BGC \\ M & YA \!+\! HC
\end{bmatrix} \\
    \hdashline  
\begin{bmatrix}
AX \!+\! BF & A \!+\! BGC \\ M & YA \!+\! HC
\end{bmatrix}^{\!\tr}
 &  \begin{bmatrix}
X & I \\ I & Y
\end{bmatrix}
\end{array}\right] \succ 0
\bigg\},
\end{aligned}
\end{equation*}
and the extended set
\begin{equation*} 
\mathcal{G}_n
:=\left\{\mZ=(X,Y,M,G,H,F,\Pi,\Xi) \left|\; \begin{aligned}(X,Y,M,G,H,F)\in\mathcal{F}_n, \\
\Pi,\Xi\in\mathbb{R}^{n\times n},\
\Xi\Pi =I-YX
\end{aligned}
\right. \right\}.
\end{equation*}
We then see that there exists a continuous surjective map from $\mathcal{G}_n$ to $\mathcal{C}_n$, as summarized in the following proposition. The proof follows the idea in~\cref{proposition:Phi_surjective}, and we omit the details here.
\begin{proposition}\label{proposition:Phi_surjective_discrete}
The mapping $\Phi$ in~\eqref{eq:change_of_variables_output} is a continuous and surjective mapping from $\mathcal{G}_n$ to $\mathcal{C}_n$.
\end{proposition}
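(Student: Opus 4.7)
The plan is to follow the same strategy as in the continuous-time proof in~\cref{app:surjective_proof}, but replace the continuous-time Lyapunov stability criterion with its discrete-time counterpart~\eqref{eq:Lyapunov_discrete_v2}. Continuity of $\Phi$ is immediate as in the continuous-time case, since by~\cref{lemma:connectivity_preliminary} the matrices $\begin{bmatrix} I & 0 \\ YB & \Xi \end{bmatrix}$ and $\begin{bmatrix} I & CX \\ 0 & \Pi \end{bmatrix}$ are invertible on $\mathcal{G}_n$ and their inverses have entries that are rational in the entries of $\mZ$, so each entry of $\Phi(\mZ)$ is a rational (and hence continuous) function of $\mZ$. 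Surjectivity amounts to showing (i) for every $\mK \in \mathcal{C}_n$ there exists $\mZ \in \mathcal{G}_n$ with $\Phi(\mZ)=\mK$ and (ii) $\Phi(\mZ) \in \mathcal{C}_n$ for every $\mZ \in \mathcal{G}_n$.

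For direction (i), I would start from $\mK \in \mathcal{C}_n$ and invoke the discrete-time Lyapunov criterion~\eqref{eq:Lyapunov_discrete_v2}: stability of the closed-loop matrix $A_{\mathrm{cl},\mK}=\begin{bmatrix} A & BC_{\mK} \\ B_{\mK}C & A_{\mK} \end{bmatrix}$ is equivalent to the existence of $\begin{bmatrix} X & \Pi^\tr \\ \Pi & \hat X \end{bmatrix} \succ 0$ such that
\begin{equation*}
\begin{bmatrix}
\begin{bmatrix} X & \Pi^\tr \\ \Pi & \hat X \end{bmatrix}
&
A_{\mathrm{cl},\mK}\begin{bmatrix} X & \Pi^\tr \\ \Pi & \hat X \end{bmatrix}
\\[4pt]
\begin{bmatrix} X & \Pi^\tr \\ \Pi & \hat X \end{bmatrix} A_{\mathrm{cl},\mK}^\tr
&
\begin{bmatrix} X & \Pi^\tr \\ \Pi & \hat X \end{bmatrix}
\end{bmatrix} \succ 0.
\end{equation*}
Without loss of generality I would perturb $\Pi$ slightly to ensure invertibility while preserving the strict inequality. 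Then I would define $(Y,\Xi)$, the transformation matrix $T=\begin{bmatrix} I & Y \\ 0 & \Xi^\tr \end{bmatrix}$, and the new variables $(M,G,H,F)$ exactly via~\eqref{eq:change_of_var} (with $D_\mK=0$, i.e.\ $G=0$). Applying the congruence $\operatorname{diag}(T,T)^\tr(\,\cdot\,)\operatorname{diag}(T,T)$ to the block matrix above, the identity~\eqref{eq:proof_Phi_map_temp} and its analog for the diagonal block transform this inequality into precisely the defining LMI of $\mathcal{F}_n$ in the discrete-time setting, showing $\mZ \in \mathcal{G}_n$. The relation $\Phi(\mZ)=\mK$ then follows from rearranging~\eqref{eq:change_of_var} and the invertibility guaranteed by~\cref{lemma:connectivity_preliminary}, exactly as in the continuous-time proof.

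For direction (ii), I would run the same argument in reverse: given $\mZ \in \mathcal{G}_n$, set $\hat X = \Pi(X-Y^{-1})^{-1}\Pi^\tr$ and verify $\begin{bmatrix} X & \Pi^\tr \\ \Pi & \hat X \end{bmatrix} \succ 0$ via Schur complements, then apply the congruence $\operatorname{diag}(T,T)^{-\tr}(\,\cdot\,)\operatorname{diag}(T,T)^{-1}$ to the defining LMI of $\mathcal{G}_n$ to conclude that the closed-loop matrix associated with $\Phi(\mZ)$ satisfies~\eqref{eq:Lyapunov_discrete_v2}, hence is stable. The main obstacle is bookkeeping: I must check that the defining LMI of $\mathcal{F}_n$ in the discrete-time case (with the $2{\times}2$ block structure Schur-dual to $P - A_{\mathrm{cl}}PA_{\mathrm{cl}}^\tr \succ 0$) transforms correctly under the block-triangular congruence, so that the $(1,1)$ and $(2,2)$ blocks both reduce to $\begin{bmatrix} X & I \\ I & Y \end{bmatrix}$ and the off-diagonal block reduces to $\begin{bmatrix} AX + BF & A + BGC \\ M & YA + HC \end{bmatrix}$. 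This is an exercise of the same flavor as the continuous-time identity~\eqref{eq:proof_Phi_map_temp}, but carried out on a $4{\times}4$ block matrix rather than on a single Lyapunov expression; once verified, the rest of the proof transfers verbatim.
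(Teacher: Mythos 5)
Your proposal is correct and takes essentially the same route the paper intends: the paper itself only remarks that the discrete-time proof ``follows the idea'' of the continuous-time argument in \cref{app:surjective_proof}, and your adaptation --- replacing the Lyapunov inequality by the Schur-complement form \eqref{eq:Lyapunov_discrete_v2}, reusing the change of variables \eqref{eq:change_of_var}, and applying the congruence $\operatorname{diag}(T,T)^\tr(\,\cdot\,)\operatorname{diag}(T,T)$ so that the identities \eqref{eq:Gset_cond1} and \eqref{eq:proof_Phi_map_temp} convert the $4\times 4$ block LMI into the defining LMI of the discrete-time $\mathcal{F}_n$ --- is exactly the omitted bookkeeping. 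The key point you correctly identify is that both identities are purely algebraic and independent of the continuous/discrete distinction, so the argument transfers without change.
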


The rest of the proof for~\cref{Theo:disconnectivity_discrete} is identical to the proof of~\cref{Theo:disconnectivity}. Meanwhile, the proofs of~\cref{Theo:Cn_homeomorphic_discrete,Theo:connectivity_conditions_discrete} follow the same arguments as those for~\cref{Theo:Cn_homeomorphic,Theo:connectivity_conditions}.

\subsection{Stationary Points of the LQG Cost Function}

It is not difficult to see that~\cref{lemma:Jn_invariance} and~\cref{proposition:sim_trans_submanifold} hold for the discrete-time LQG cost function $J_q(\mK)$ as well. Thus, the globally optimal solutions to the discrete-time LQG problem~\eqref{eq:LQG_reformulation_KX_discrete} are non-isolated and disconnected in $\mathcal{C}_n$.


Here, we present formulas for computing the gradient and Hessian of  the LQG cost function in~\eqref{eq:LQG_cost_formulation_discrete}, and briefly discuss the non-minimal and minimal stationary points of~\eqref{eq:LQG_reformulation_KX_discrete}.

\subsubsection{Gradient and Hessian of the Discrete-Time LQG Cost Function}

The following two lemmas give closed forms for the gradient and Hessian of the discrete-time LQG cost function $J_q$.

\begin{lemma}[Gradient of LQG cost $J_q$] \label{lemma:gradient_LQG_Jn_discrete}
    Fix $q \geq 1$ such that $\mathcal{C}_q \neq \varnothing$.
    For every $\mK = \begin{bmatrix} 0 & C_{\mK} \\B_{\mK} & A_{\mK} \end{bmatrix} \in \mathcal{C}_q$, the gradient of $J_q(\mK)$ is given by
    $$
    \nabla J_q(\mK)
    =\left[\!\begin{array}{cc}
    0 & \mfrac{\partial J_q(\mK)}{\partial C_{\mK}} \\[6pt]
    \mfrac{\partial J_q(\mK)}{\partial B_{\mK}} & \mfrac{\partial J_q(\mK)}{\partial A_{\mK}}
    \end{array}\!\right],
    $$
    with
    \begin{subequations} \label{eq:gradient_Jn_discrete}
        \begin{align}
         \frac{\partial J_q(\mK)}{\partial A_{\mK}} &= 2\left(Y_{12}^\tr(AX_{12}+BC_{\mK}X_{22}) + Y_{22}A_{\mK}X_{22} + Y_{22}B_{\mK}CX_{12}\right), \label{eq:partial_Ak_discrete}\\
        \frac{\partial J_q(\mK)}{\partial B_{\mK}} &= 2\left(Y_{12}^\tr(AX_{11} + BC_{\mK}X_{12}^\tr)C^\tr +  Y_{22} A_{\mK} X_{12}^\tr C^\tr +  Y_{22}B_{\mK}(CX_{11}C^\tr+V)\right), \label{eq:partial_Bk_discrete}\\
        \frac{\partial J_q(\mK)}{\partial C_{\mK}} &= 2\left(B^\tr Y_{12}(A_{\mK}X_{22} + B_{\mK} CX_{12}) + B^\tr Y_{11}AX_{12} + (B^\tr Y_{11}B + R)C_{\mK}X_{22}\right), \label{eq:partial_Ck_discrete}
        \end{align}
    \end{subequations}
    where $X_{\mK}$ and $Y_{\mK}$, partitioned as
    \begin{equation} \label{eq:LyapunovXY_block_discrete}
        X_{\mK} = \begin{bmatrix}
        X_{11} & X_{12} \\ X_{12}^\tr & X_{22}
        \end{bmatrix},  \qquad Y_{\mK} = \begin{bmatrix}
        Y_{11} & Y_{12} \\ Y_{12}^\tr & Y_{22}
        \end{bmatrix}
    \end{equation}
    are the unique positive semidefinite
    solutions to~\eqref{eq:LyapunovX_discrete} and~\eqref{eq:LyapunovY_discrete}, respectively.
\end{lemma}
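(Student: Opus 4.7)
The plan is to adapt the continuous-time perturbation argument of \cref{app:gradien_Jq} to the discrete-time setting. First I would establish a discrete-time analog of \cref{lemma:Taylor_Lyapunov_eq}: if $M(t)$ is a $C^\infty$ family of Schur stable matrices and $G(t)$ is a $C^\infty$ family of symmetric matrices, then the unique solution $X(t)$ of $X(t)=M(t)X(t)M(t)^\tr+G(t)$ is $C^\infty$ in $t$, and its first derivative at $t=0$ satisfies the linear equation
$$X'(0)-M(0)X'(0)M(0)^\tr = M'(0)X(0)M(0)^\tr + M(0)X(0)M'(0)^\tr + G'(0).$$
This is obtained by vectorizing the discrete Lyapunov equation and differentiating, using that $I-M(t)\otimes M(t)$ is invertible whenever $M(t)$ is Schur stable.

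Next I would fix a perturbation direction $\Delta=\begin{bmatrix}0 & \Delta_{C_\mK}\\ \Delta_{B_\mK} & \Delta_{A_\mK}\end{bmatrix}\in\mathcal{V}_q$ and denote the closed-loop matrix by $A_{\mathrm{cl}}$ and its first-order perturbation by $\tilde\Delta=\begin{bmatrix}0 & B\Delta_{C_\mK}\\ \Delta_{B_\mK}C & \Delta_{A_\mK}\end{bmatrix}$. Applying the lemma above to~\eqref{eq:LyapunovX_discrete} with $M(t)=A_{\mathrm{cl}}+t\tilde\Delta$ and the appropriate noise term, the first-order Taylor coefficient $X'_{\mK,\Delta}(0)$ of $X_{\mK+t\Delta}$ satisfies a discrete Lyapunov equation with forcing
$$M_1 \;=\; \tilde\Delta\, X_\mK A_{\mathrm{cl}}^\tr + A_{\mathrm{cl}} X_\mK \tilde\Delta^\tr + \begin{bmatrix}0 & 0\\ 0 & \Delta_{B_\mK}V B_\mK^\tr + B_\mK V\Delta_{B_\mK}^\tr\end{bmatrix}.$$
The key reduction is the discrete-time duality identity: if $X=MXM^\tr+N$ and $Y=M^\tr YM+L$ with $M$ Schur stable, then $\operatorname{tr}(LX)=\operatorname{tr}(YN)$, because both sides equal $\sum_{j\ge 0}\operatorname{tr}(L M^j N (M^\tr)^j)$. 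Applying this identity with $L=\begin{bmatrix}Q & 0\\ 0 & C_\mK^\tr R C_\mK\end{bmatrix}$ converts the leading-order expansion of $J_q(\mK+t\Delta)$ into
$$\left.\frac{dJ_q(\mK+t\Delta)}{dt}\right|_{t=0} \;=\; \operatorname{tr}(Y_\mK M_1) + 2\operatorname{tr}\!\left(\Delta_{C_\mK}^\tr R C_\mK X_{22}\right),$$
where the last term comes from directly differentiating the $C_\mK^\tr R C_\mK$ factor inside the trace.

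Finally I would extract the three partial derivatives by testing against $\Delta$ with only one nonzero block at a time and expanding the resulting $2\times 2$ block products using the partitions~\eqref{eq:LyapunovXY_block_discrete}. For the $\Delta_{A_\mK}$ direction, direct block multiplication of $Y_\mK\tilde\Delta X_\mK A_{\mathrm{cl}}^\tr$ followed by repeated cycling of the trace exposes exactly the four terms $Y_{12}^\tr A X_{12}$, $Y_{12}^\tr B C_\mK X_{22}$, $Y_{22} B_\mK C X_{12}$, and $Y_{22} A_\mK X_{22}$, whose sum (times two) is~\eqref{eq:partial_Ak_discrete}. Analogous block expansions handle the $\Delta_{B_\mK}$ and $\Delta_{C_\mK}$ directions, with the $\tilde N$-contribution in $M_1$ producing the $(CX_{11}C^\tr+V)$-piece of~\eqref{eq:partial_Bk_discrete} and the extra $R$-dependent trace producing the $(B^\tr Y_{11}B+R)C_\mK X_{22}$-piece of~\eqref{eq:partial_Ck_discrete}.

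The main obstacle is the bookkeeping in this last step. In contrast to the continuous-time case, where $M_1$ has the clean form $\tilde\Delta X_\mK+X_\mK\tilde\Delta^\tr+\tilde N$, here every summand carries an extra $A_{\mathrm{cl}}^\tr$ factor on the right, so identifying each partial derivative requires multiplying four block matrices and carefully transposing/cycling the trace to expose the correct adjoint pairing with $\Delta_{A_\mK}$, $\Delta_{B_\mK}$, and $\Delta_{C_\mK}$. No new conceptual ingredient is needed beyond the discrete Taylor lemma and the duality identity above.
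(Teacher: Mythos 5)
Your proposal is correct and follows essentially the same route as the paper's proof in \cref{appendix:discrete_time}: the same discrete-time Taylor lemma for the Lyapunov solution, the same forcing term $M_1$ with the extra $A_{\mathrm{cl}}^\tr$ factor, and the same trace-duality identity $\operatorname{tr}(LX)=\operatorname{tr}(YN)$ to convert the first-order expansion into $\operatorname{tr}(Y_\mK M_1)$ plus the direct $R$-term. The only (immaterial) difference is that you extract the three partials by testing single-block directions, whereas the paper first assembles a compact matrix expression for $\nabla J_q$ and then partitions it.
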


\begin{lemma}\label{lemma:Jn_Hessian_discrete}
Fix $q\geq 1$ such that $\mathcal{C}_q\neq\varnothing$.
Let $\mK=\begin{bmatrix}
0 & C_{\mK} \\
B_{\mK} & A_{\mK}
\end{bmatrix} \in \mathcal{C}_q$. Then for any $\Delta=\begin{bmatrix}
0 & \Delta_{C_\mK} \\
\Delta_{B_\mK} & \Delta_{A_\mK}
\end{bmatrix}\in\mathcal{V}_q$, we have
$$
\begin{aligned}
\operatorname{Hess}_{\,\mK}(\Delta,\Delta)
=\ &
2\operatorname{tr}
\Bigg(
2\begin{bmatrix}
0 & B\Delta_{C_\mK} \\
\Delta_{B_\mK} C & \Delta_{A_\mK}
\end{bmatrix}
\hat{X}_{\mK,\Delta}A^\tr_{{\mathrm{cl},\mK}}Y_{\mK}
+\begin{bmatrix}
0 & B\Delta_{C_\mK} \\
\Delta_{B_\mK} C & \Delta_{A_\mK}
\end{bmatrix} {X}_{\mK}\begin{bmatrix}
0 & B\Delta_{C_\mK} \\
\Delta_{B_\mK} C & \Delta_{A_\mK}
\end{bmatrix}^\tr Y_{\mK} \\
&\qquad \qquad  +  2\begin{bmatrix}
0 & 0 \\ 0 & C_{\mK}^\tr R \Delta_{C_\mK}
\end{bmatrix}\hat{X}_{\mK,\Delta} +
+\begin{bmatrix}
0 & 0 \\
0 & \Delta_{B_\mK}V\Delta_{B_\mK}^\tr
\end{bmatrix} Y_{\mK}
+
\begin{bmatrix}
0 & 0 \\ 0 & \Delta_{C_\mK}^\tr R \Delta_{C_\mK}
\end{bmatrix}X_{\mK}
\Bigg).
\end{aligned}
$$
where $X_{\mK}$ and $Y_{\mK}$ are the solutions to the Lyapunov equations~\eqref{eq:LyapunovX_discrete} and~\eqref{eq:LyapunovY_discrete}, and $\hat{X}_{\mK,\Delta}\in\mathbb{R}^{(n+q)\times(n+q)}$ is the solution to the following Lyapunov equation
\begin{equation*} 
 \hat{X}_{\mK,\Delta}=   A_{\mathrm{cl},\mK}  \hat{X}_{\mK,\Delta}A_{\mathrm{cl},\mK}^\tr
+M_1,
\end{equation*}
with $ A_{\mathrm{cl},\mK} :=\begin{bmatrix}
A & BC_{\mK} \\
B_{\mK} C & A_{\mK}
\end{bmatrix}$ and
\begin{align*}
M_1
\coloneqq &
\begin{bmatrix}
0 & B\Delta_{C_{\mK}} \\ \Delta_{B_{\mK}}C & \Delta_{A_{\mK}}
\end{bmatrix} X_{\mK}A_{{\mathrm{cl},\mK}}^\tr
+ A_{{\mathrm{cl},\mK}}X_{\mK}\begin{bmatrix}
0 & B\Delta_{C_{\mK}} \\ \Delta_{B_{\mK}}C & \Delta_{A_{\mK}}
\end{bmatrix}^\tr
 +
\begin{bmatrix}
0 & 0 \\
0 & B_{\mK}V\Delta_{B_\mK}^\tr
\!+\!
\Delta_{B_\mK} V B_{\mK}^\tr
\end{bmatrix}.
\end{align*}
\end{lemma}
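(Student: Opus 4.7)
The strategy will mirror the continuous-time derivation given in \cref{app:gradien_Jq}, with the continuous Lyapunov equation $MX+XM^\tr+G=0$ replaced throughout by its discrete counterpart $X=MXM^\tr+G$. Set $\tilde\Delta\coloneqq\begin{bmatrix}0 & B\Delta_{C_\mK}\\ \Delta_{B_\mK}C & \Delta_{A_\mK}\end{bmatrix}$, so the closed-loop matrix associated with $\mK+t\Delta$ is $M(t)\coloneqq A_{\mathrm{cl},\mK}+t\tilde\Delta$, which is affine in $t$, while the process-noise block has covariance $(B_\mK+t\Delta_{B_\mK})V(B_\mK+t\Delta_{B_\mK})^\tr$, a quadratic polynomial in $t$. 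Let $X_{\mK,\Delta}(t)$ denote the unique positive-semidefinite solution of \eqref{eq:LyapunovX_discrete} with closed-loop matrix $M(t)$, defined for all $t$ sufficiently small so that $\mK+t\Delta\in\mathcal{C}_q$. My first step is to establish a discrete-time analogue of \cref{lemma:Taylor_Lyapunov_eq}: vectorizing yields $\operatorname{vec}X_{\mK,\Delta}(t)=(I-M(t)\otimes M(t))^{-1}\operatorname{vec}G(t)$, so $X_{\mK,\Delta}(t)$ is indefinitely differentiable in $t$, and its Taylor coefficients can be obtained by formally differentiating $X=MXM^\tr+G$ and matching orders. At $t=0$ this gives $X'_{\mK,\Delta}(0)=A_{\mathrm{cl},\mK}X'_{\mK,\Delta}(0)A_{\mathrm{cl},\mK}^\tr+M_1$, identifying $\hat X_{\mK,\Delta}=X'_{\mK,\Delta}(0)$; and, using $M''(t)\equiv 0$,
\[
X''_{\mK,\Delta}(0)=A_{\mathrm{cl},\mK}X''_{\mK,\Delta}(0)A_{\mathrm{cl},\mK}^\tr+2\!\left(\tilde\Delta\hat X_{\mK,\Delta}A_{\mathrm{cl},\mK}^\tr+A_{\mathrm{cl},\mK}\hat X_{\mK,\Delta}\tilde\Delta^\tr+\tilde\Delta X_{\mK}\tilde\Delta^\tr+\begin{bmatrix}0 & 0\\ 0 & \Delta_{B_\mK}V\Delta_{B_\mK}^\tr\end{bmatrix}\right).
\]

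Next, I would Taylor-expand the cost formula in \cref{lemma:LQG_cost_formulation_discrete},
\[
J_q(\mK+t\Delta)=\operatorname{tr}\!\left(\begin{bmatrix}Q & 0\\ 0 & (C_\mK+t\Delta_{C_\mK})^\tr R(C_\mK+t\Delta_{C_\mK})\end{bmatrix}X_{\mK,\Delta}(t)\right),
\]
and extract the coefficient of $t^2/2$, which equals $\operatorname{Hess}_{\mK}(\Delta,\Delta)$. This produces $\operatorname{tr}(Q_\mK X''_{\mK,\Delta}(0))+2\operatorname{tr}(Q^{(1)}_{\mK,\Delta}\hat X_{\mK,\Delta})+2\operatorname{tr}(Q^{(2)}_{\mK,\Delta}X_\mK)$, where $Q_\mK=\begin{bmatrix}Q & 0\\ 0 & C_\mK^\tr RC_\mK\end{bmatrix}$, $Q^{(1)}_{\mK,\Delta}=\begin{bmatrix}0 & 0\\ 0 & C_\mK^\tr R\Delta_{C_\mK}+\Delta_{C_\mK}^\tr RC_\mK\end{bmatrix}$, and $Q^{(2)}_{\mK,\Delta}=\begin{bmatrix}0 & 0\\ 0 & \Delta_{C_\mK}^\tr R\Delta_{C_\mK}\end{bmatrix}$. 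To avoid explicitly solving for $X''_{\mK,\Delta}(0)$, I would invoke the discrete-time duality identity $\operatorname{tr}(QX)=\operatorname{tr}(YG)$, valid whenever $X=AXA^\tr+G$ and $Y=A^\tr YA+Q$, which follows from the explicit series expansions $X=\sum_{k\ge 0}A^kG(A^\tr)^k$ and $Y=\sum_{k\ge 0}(A^\tr)^kQA^k$ together with cyclicity of the trace. Applied to the Lyapunov equation for $X''_{\mK,\Delta}(0)$ with $Y=Y_\mK$ (the solution of \eqref{eq:LyapunovY_discrete}), this rewrites $\operatorname{tr}(Q_\mK X''_{\mK,\Delta}(0))$ entirely in terms of $Y_\mK$, $\hat X_{\mK,\Delta}$, $X_\mK$ and $\tilde\Delta$.

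Substituting back and using the symmetries of $X_\mK$, $\hat X_{\mK,\Delta}$, $Y_\mK$ together with cyclicity of the trace collapses the two adjoint terms $\operatorname{tr}(Y_\mK\tilde\Delta\hat X_{\mK,\Delta}A_{\mathrm{cl},\mK}^\tr)$ and $\operatorname{tr}(Y_\mK A_{\mathrm{cl},\mK}\hat X_{\mK,\Delta}\tilde\Delta^\tr)$ into one term, and likewise symmetrizes $Q^{(1)}_{\mK,\Delta}$ to effectively become $2\begin{bmatrix}0 & 0\\ 0 & C_\mK^\tr R\Delta_{C_\mK}\end{bmatrix}$; factoring out the overall $2$ then recovers exactly the stated formula. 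The main structural novelty compared to the continuous-time case is the additional term $\tilde\Delta X_\mK\tilde\Delta^\tr Y_\mK$: because $M(t)$ appears on both sides of the discrete Lyapunov equation, differentiating produces a contribution involving two copies of $M'(0)=\tilde\Delta$ sandwiching $X_\mK$, which has no continuous-time analogue. The main obstacle is therefore purely bookkeeping—tracking the additional cross terms and reconciling the factors of $2$ coming from second-order Taylor coefficients with the factor of $\tfrac{1}{2}$ in $G(t)=G(0)+tG'(0)+\tfrac{t^2}{2}G''(0)$; no new control-theoretic ideas beyond the trace-duality identity and the Taylor expansion of the discrete Lyapunov solution are required.
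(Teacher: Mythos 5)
Your proposal follows essentially the same route as the paper's own derivation in \cref{appendix:discrete_time}: establish the discrete-time analogue of \cref{lemma:Taylor_Lyapunov_eq} for the first and second derivatives of the parameter-dependent Lyapunov solution, Taylor-expand the cost from \cref{lemma:LQG_cost_formulation_discrete}, and convert $\operatorname{tr}(Q_\mK X''_{\mK,\Delta}(0))$ via the series-based trace-duality identity $\operatorname{tr}(QX)=\operatorname{tr}(YG)$ into an expression in $Y_\mK$, $\hat X_{\mK,\Delta}$ and $X_\mK$. Your identification of the extra cross term $\tilde\Delta X_\mK\tilde\Delta^\tr Y_\mK$ as the genuinely new discrete-time contribution, and the bookkeeping of the factors of $2$, match the paper's argument exactly.
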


For the proof of~\cref{lemma:gradient_LQG_Jn_discrete} and~\cref{lemma:Jn_Hessian_discrete}, we first need the following lemma.
\begin{lemma} \label{lemma:Taylor_Lyapunov_discrete}
    Let $M:(-\delta,\delta)\rightarrow\mathbb{R}^{k\times k}$ and $G:(-\delta,\delta)\rightarrow\mathbb{S}^k$ be two indefinitely differentiable matrix-valued functions for some $\delta>0$ and $k\in\mathbb{N}\backslash\{0\}$. Suppose $\rho(M(t)) < 1, \forall t\in(-\delta,\delta)$, and let $X(t)$ denote the solution to the following Lyapunov equation
$$
X(t) = M(t)X(t)M(t)^\tr+G(t).
$$
Then $X(t)$ is indefinitely differentiable over $t\in(-\delta,\delta)$, and its first order derivative and second-order derivative at $t=0$, denoted by $\dot{X}(0)$ and $\ddot{X}(0)$, are the solutions to the following Lyapunov equations
\begin{equation}\label{eq:Taylor_Lyapunov_eq_discrete}
\begin{aligned}
\dot{X}(0) &= M(0)\dot{X}(0)M^\tr(0) + \left( M(0)X(0)\dot{M}^\tr(0) + \dot{M}(0)X(0)M^\tr(0) + \dot{G}(0) \right) \\
\ddot{X}(0)&= M(0)\ddot{X}(0)M^\tr(0) + \bigg( \ddot{M}(0)X(0)M^\tr(0) + M(0)X(0)\ddot{M}^\tr(0) \\
        & \qquad \qquad  + 2\dot{M}(0)\dot{X}(0)M^\tr(0) + 2{M}(0)\dot{X}(0)\dot{M}^\tr(0) + 2\dot{M}(0)X(0)\dot{M}^\tr(0) + \ddot{G}(0)\bigg).
\end{aligned}
\end{equation}
\end{lemma}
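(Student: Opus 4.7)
The plan is to prove smoothness of $X(t)$ by direct inversion in vectorized form, and then obtain the derivative formulas by differentiating the Lyapunov equation once and twice and solving the resulting discrete Lyapunov equations. The result mirrors the continuous-time \cref{lemma:Taylor_Lyapunov_eq}, with the same pattern of differentiation but using the discrete-time Lyapunov operator.

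First, I would vectorize the equation $X(t) = M(t)X(t)M(t)^\tr + G(t)$ to get
$$
\bigl(I_{k^2} - M(t)\otimes M(t)\bigr)\operatorname{vec}(X(t)) = \operatorname{vec}(G(t)).
$$
Since $\rho(M(t)) < 1$ for all $t\in(-\delta,\delta)$, we have $\rho(M(t)\otimes M(t)) = \rho(M(t))^2 < 1$, so $I_{k^2} - M(t)\otimes M(t)$ is invertible. Hence
$$
\operatorname{vec}(X(t)) = \bigl(I_{k^2} - M(t)\otimes M(t)\bigr)^{-1}\operatorname{vec}(G(t)),
$$
and since the right-hand side is a composition of indefinitely differentiable mappings (matrix inversion is smooth on invertible matrices), $X(t)$ is indefinitely differentiable on $(-\delta,\delta)$.

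Next, I would differentiate the Lyapunov equation once with respect to $t$ and evaluate at $t=0$:
$$
\dot{X}(t) = \dot{M}(t)X(t)M(t)^\tr + M(t)\dot{X}(t)M(t)^\tr + M(t)X(t)\dot{M}(t)^\tr + \dot{G}(t).
$$
Rearranging gives
$$
\dot{X}(0) = M(0)\dot{X}(0)M(0)^\tr + \bigl(\dot{M}(0)X(0)M(0)^\tr + M(0)X(0)\dot{M}(0)^\tr + \dot{G}(0)\bigr),
$$
which is exactly the first equation in \eqref{eq:Taylor_Lyapunov_eq_discrete}. For the second derivative, I would differentiate the above identity once more, applying the product rule to each of the three $t$-dependent factors in $M(t)\dot X(t)M(t)^\tr$ and in $\dot M(t)X(t)M(t)^\tr$, collect the six cross-terms and the $\ddot G(t)$ term, evaluate at $t=0$, and rearrange to isolate the $M(0)\ddot{X}(0)M(0)^\tr$ term, yielding the second equation in \eqref{eq:Taylor_Lyapunov_eq_discrete}.

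There is no real obstacle here: both derivative formulas follow from straightforward differentiation of a linear matrix equation, and the smoothness argument reduces to invertibility of $I - M\otimes M$ under the stability hypothesis. The only minor care needed is in bookkeeping the six second-order Leibniz terms when computing $\ddot{X}(0)$, to ensure the factors of $2$ in front of $\dot M(0)\dot X(0)M(0)^\tr$, $M(0)\dot X(0)\dot M(0)^\tr$, and $\dot M(0)X(0)\dot M(0)^\tr$ match the claimed expression.
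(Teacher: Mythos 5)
Your proposal is correct and follows essentially the same route as the paper: the smoothness argument via invertibility of $I_{k^2}-M(t)\otimes M(t)$ is identical, and your direct differentiation of the Lyapunov equation (once and twice, with the Leibniz bookkeeping you describe) is equivalent to the paper's device of plugging Taylor expansions of $M$, $G$, $X$ into the equation and matching coefficients of $t^j$. The six second-order cross-terms you list do collapse to the claimed factors of $2$, so the computation goes through as stated.
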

\begin{proof}
    The differentiability of $X(t)$ follows from the observation that the unique solution to the Lyapunov equation can be written as
    $$
        \operatorname{vec}(X(t)) =(I-M(t) \otimes M(t))^{-1} \operatorname{vec}(G(t)),
    $$
    where we have applied the fact that $I-M(t) \otimes M(t)$ is invertible thanks to $\rho(M(t)) < 1$.
    Since $M(t), G(t)$ and $X(t)$ are indefinitely differentiable, they admit Taylor expansions around $t = 0$ as
    \begin{align*}
M(t) = \ &  M(0) + t\dot{M}(0) + \frac{t^2}{2}\ddot{M}(0) + o(t^2),\\
G(t) = \ &  G(0) + t\dot{G}(0) + \frac{t^2}{2}\ddot{G}(0) + o(t^2), \\
X(t) = \ &  X(0) + t\dot{X}(0) + \frac{t^2}{2}\ddot{X}(0) + o(t^2). 
\end{align*}
Upon plugging these Taylor expansions into the original Lyapunov equation, some algebraic manipulations lead to
$$
    \begin{aligned}
         - X(0) + M(0)X(0)M^\tr(0) + G(0) \\
        + t\left(-\dot{X}(0) +  M(0)\dot{X}(0)M^\tr(0) + \dot{M}(0)X(0)M^\tr(0) + M(0)X(0)\dot{M}^\tr(0) + \dot{G}(0)\right) \\
        + \frac{t^2}{2} \Bigg(-\ddot{X}(0) + M(0)\ddot{X}(0)M^\tr(0) + \ddot{M}(0)X(0)M^\tr(0) + M(0)X(0)\ddot{M}^\tr(0) + 2\dot{M}(0)\dot{X}(0)M^\tr(0) \\
        + 2{M}(0)\dot{X}(0)\dot{M}^\tr(0) + 2\dot{M}(0)X(0)\dot{M}^\tr(0) + \ddot{G}(0) \Bigg) + o(t^2) = 0.
    \end{aligned}
$$
The equation above holds for all sufficiently small $t$. Therefore, we know that~\eqref{eq:Taylor_Lyapunov_eq_discrete} holds true.

\end{proof}

Recall that the discrete-time LQG cost is given by
$$
J_q(\mK)
=\operatorname{tr}
\left(
\begin{bmatrix}
Q & 0 \\ 0 & C_{\mK}^\tr R C_{\mK}
\end{bmatrix}X_\mK
\right),
$$
where $X_\mK$ is the unique positive semidefinite solution to the Lyapunov equation~\eqref{eq:LyapunovX_discrete}.
Consider an arbitrary direction $\Delta
=\begin{bmatrix}
0 & \Delta_{C_{\mK}} \\
\Delta_{B_{\mK}} & \Delta_{A_{\mK}}
\end{bmatrix}
\in\mathcal{V}_q$. For sufficiently small $t>0$ such that $\mK + t \Delta \in \mathcal{C}_q$, the corresponding closed-loop matrix is
$$
A_{\text{cl},\mK+t\Delta} = {A}_{\text{cl},\mK}
+t
\begin{bmatrix}
B & 0 \\ 0 & I
\end{bmatrix}\Delta
\begin{bmatrix}
C & 0 \\ 0 & I
\end{bmatrix},
$$
and we let $X_{\mK,\Delta}(t)$ denote the solution to the Lyapunov equation~\eqref{eq:LyapunovX_discrete} with closed-loop matrix $A_{\mathrm{cl},\mK+t\Delta}$, i.e.,
\begin{equation*} 
    \begin{aligned}
X_{\mK,\Delta}(t) &= \left({A}_{\text{cl},\mK} + t \begin{bmatrix}
B & 0 \\ 0 & I
\end{bmatrix}\Delta \begin{bmatrix}
C & 0 \\ 0 & I
\end{bmatrix} \right)X_{\mK,\Delta}(t) \left(A_{\text{cl},\mK}
+t \begin{bmatrix}
B & 0 \\ 0 & I
\end{bmatrix}\Delta
\begin{bmatrix}
C & 0 \\ 0 & I
\end{bmatrix}
\right)^\tr \\
&\qquad \qquad \qquad\qquad \qquad \qquad\qquad \qquad \qquad + \begin{bmatrix}
W & 0 \\
0 & (B_\mK + t\Delta_{B_\mK})V
(B_\mK + t\Delta_{B_\mK})^\tr
\end{bmatrix}.
\end{aligned}
\end{equation*}
By \cref{lemma:Taylor_Lyapunov_discrete}, we see that $X_{\mK,\Delta}(t)$ admits a Taylor expansion of the form
\begin{equation} \label{eq:Lyapunov_X_Taylor_discrete}
X_{\mK,\Delta}(t)
= X_{\mK}
+
t\cdot \dot{X}_{\mK,\Delta}(0)
+\frac{t^2}{2}
\cdot \ddot{X}_{\mK,\Delta}(0)
+o(t^2),
\end{equation}
and the derivatives $\dot{X}_{\mK,\Delta}(0)$ and $\ddot{X}_{\mK,\Delta}(0)$ are the solutions to the following Lyapunov equations
\begin{align*}
\dot{X}_{\mK,\Delta}(0) & = A_{\mathrm{cl},\mK} \dot{X}_{\mK,\Delta}(0) A_{\mathrm{cl},\mK}^\tr + M_1,
\\
\ddot{X}_{\mK,\Delta}(0) &= A_{\mathrm{cl},\mK} \ddot{X}_{\mK,\Delta}(0) A_{\mathrm{cl},\mK}^\tr
+2M_2,
\end{align*}
where
\begin{align*}
M_1
\coloneqq &
\begin{bmatrix}
B & 0 \\ 0 & I
\end{bmatrix}\Delta
\begin{bmatrix}
C & 0 \\ 0 & I
\end{bmatrix} X_{\mK}A_{{\mathrm{cl},\mK}}^\tr
+ A_{{\mathrm{cl},\mK}}X_{\mK}\begin{bmatrix}
C & 0 \\ 0 & I
\end{bmatrix}^\tr
\!\Delta^\tr\!
\begin{bmatrix}
B & 0 \\ 0 & I
\end{bmatrix}^\tr
\\
&\qquad \qquad\qquad \qquad\qquad \qquad\qquad \qquad\qquad \qquad +
\begin{bmatrix}
0 & 0 \\
0 & B_{\mK}V\Delta_{B_\mK}^\tr
\!+\!
\Delta_{B_\mK} V B_{\mK}^\tr
\end{bmatrix},
\\
M_2
\coloneqq &
\begin{bmatrix}
B & 0 \\ 0 & I
\end{bmatrix}\Delta
\begin{bmatrix}
C & 0 \\ 0 & I
\end{bmatrix} \dot{X}_{\mK,\Delta}(0)A^\tr_{{\mathrm{cl},\mK}}
+ A_{{\mathrm{cl},\mK}}\dot{X}_{\mK,\Delta}(0)\begin{bmatrix}
C & 0 \\ 0 & I
\end{bmatrix}^\tr
\!\Delta^\tr\!
\begin{bmatrix}
B & 0 \\ 0 & I
\end{bmatrix}^\tr
\\
&\qquad \qquad \qquad + \begin{bmatrix}
B & 0 \\ 0 & I
\end{bmatrix}\Delta
\begin{bmatrix}
C & 0 \\ 0 & I
\end{bmatrix} {X}_{\mK}\begin{bmatrix}
C & 0 \\ 0 & I
\end{bmatrix}^\tr
\!\Delta^\tr\!
\begin{bmatrix}
B & 0 \\ 0 & I
\end{bmatrix}^\tr  + \begin{bmatrix}
0 & 0 \\
0 & \Delta_{B_\mK}V\Delta_{B_\mK}^\tr
\end{bmatrix}.
\end{align*}

By plugging the Taylor expansion~\eqref{eq:Lyapunov_X_Taylor_discrete} into the expression~\eqref{eq:LQG_cost_formulation_discrete} for $J_q(\mK)$, we get
$$
\begin{aligned}
J_q(\mK+t\Delta)
=\ &
\operatorname{tr}
\left(
\begin{bmatrix}
Q & 0 \\ 0 & (C_{\mK}+t\Delta_{C_\mK})^\tr R (C_{\mK}+t\Delta_{C_\mK})
\end{bmatrix} X_{\mK,\Delta}(t)\right)
\\
=\ &
J_q(\mK)
+
t
\cdot\operatorname{tr}
\left(
\begin{bmatrix}
Q & 0 \\ 0 & C_{\mK}^\tr R C_{\mK}
\end{bmatrix} \dot{X}_{\mK,\Delta}(0)
+
\begin{bmatrix}
0 & 0 \\ 0 & C_{\mK}^\tr R \Delta_{C_\mK} + \Delta_{C_\mK}^\tr RC_{\mK}
\end{bmatrix}X_{\mK}
\right) \\
& +
\frac{t^2}{2}
\cdot \operatorname{tr}\Bigg(
\begin{bmatrix}
Q & 0 \\ 0 & C_{\mK}^\tr R C_{\mK}
\end{bmatrix} \ddot{X}_{\mK,\Delta}(0)
+2
\begin{bmatrix}
0 & 0 \\ 0 & C_{\mK}^\tr R \Delta_{C_\mK} + \Delta_{C_\mK}^\tr RC_{\mK}
\end{bmatrix}\dot{X}_{\mK,\Delta}(0) \\
&\qquad\qquad\qquad\qquad
+
2\begin{bmatrix}
0 & 0 \\ 0 & \Delta_{C_\mK}^\tr R \Delta_{C_\mK}
\end{bmatrix}X_{\mK}
\Bigg)
+o(t^2),
\end{aligned}
$$
from which we can directly recognize $\left.\mfrac{dJ_q(\mK+t\Delta)}{dt}\right|_{t=0}$ and $\left.\mfrac{d^2J_q(\mK+t\Delta)}{dt^2}\right|_{t=0}$.

Now suppose $X$ is the solution to the following Lyapunov equation
$$
X = A_{\mathrm{cl},\mK} X A_{\mathrm{cl},\mK}^\tr
+M
$$
for some $M\in\mathbb{S}^{n+q}$. Similar to~\cref{lemma:Lyapunov}, it is known that the unique solution to the Lyapunov equation above is
$$
X = \sum_{k=0}^\infty
{A_{\mathrm{cl},\mK}^k}
\cdot M \cdot {(A_{\mathrm{cl},\mK}^k)^\tr}\,,
$$
and consequently
$$
\begin{aligned}
\operatorname{tr}\left(\begin{bmatrix}
Q & 0 \\ 0 & C_{\mK}^\tr R C_{\mK}
\end{bmatrix}
X\right)
=\ &
\sum_{k=0}^\infty\operatorname{tr}\left(\begin{bmatrix}
Q & 0 \\ 0 & C_{\mK}^\tr R C_{\mK}
\end{bmatrix}
{A^k_{\mathrm{cl},\mK}}
\cdot  M \cdot  {(A^k_{\mathrm{cl},\mK})^\tr}\right) \\
=\ &
\sum_{k=0}^\infty \operatorname{tr}\left(
{(A^k_{\mathrm{cl},\mK})^\tr}\begin{bmatrix}
Q & 0 \\ 0 & C_{\mK}^\tr R C_{\mK}
\end{bmatrix} {A_{\mathrm{cl},\mK}^k}\cdot
M \right)
=\operatorname{tr}(Y_{\mK}M),
\end{aligned}
$$
in which we recall that $Y_{\mK}$ is the unique positive semidefinite solution to Lyapunov equation~\eqref{eq:LyapunovY_discrete}. Therefore, the first-order derivative $\left.\mfrac{dJ_q(\mK+t\Delta)}{dt}\right|_{t=0}$ can be alternatively given by
$$
\begin{aligned}
& \left.\frac{d J_q(\mK+t\Delta)}{dt}\right|_{t=0} \\
=\ &
\operatorname{tr}
\left(Y_{\mK} M_1
+\begin{bmatrix}
0 & 0 \\ 0 & C_{\mK}^\tr R \Delta_{C_\mK} + \Delta_{C_\mK}^\tr RC_{\mK}
\end{bmatrix}X_{\mK}\right) \\
=\ &
2\operatorname{tr}\left[
\left(
\begin{bmatrix}
0 & RC_{\mK} \\ 0 & 0
\end{bmatrix}X_{\mK} \begin{bmatrix}
0 & 0 \\ 0 & I
\end{bmatrix}
+
\begin{bmatrix}
B & 0 \\ 0 & I
\end{bmatrix}^\tr
Y_{\mK} A_{\mathrm{cl},\mK} X_{\mK}
\begin{bmatrix}
C & 0 \\ 0 & I
\end{bmatrix}^\tr
+\begin{bmatrix}
0 & 0 \\ 0 & I
\end{bmatrix}
Y_{\mK}
\begin{bmatrix}
0 & 0 \\ B_{\mK}V & 0
\end{bmatrix}
\right)^\tr\Delta\right].
\end{aligned}
$$
One can readily recognize the gradient $\nabla J_q(\mK)$ by noticing that
$$
\left.\frac{dJ_q(\mK+t\Delta)}{dt}\right|_{t=0}
=\operatorname{tr}\left(
\nabla J_q(\mK)^\tr\Delta\right).
$$
Upon partitioning $X_{\mK}$ and $Y_{\mK}$ as~\eqref{eq:LyapunovXY_block_discrete}, a few calculations lead to the gradient formula of $J_q(\mK)$ in~\eqref{eq:gradient_Jn_discrete}.

Similarly, we can show that the second-order derivative can be alternatively given by
\begin{equation*} 
\begin{aligned}
& \left.\frac{d^2J_q(\mK+t\Delta)}{dt^2}\right|_{t=0} \\
=\ &
2\operatorname{tr}
\left(Y_{\mK} M_2
+
\begin{bmatrix}
0 & 0 \\ 0 & C_{\mK}^\tr R \Delta_{C_\mK} + \Delta_{C_\mK}^\tr RC_{\mK}
\end{bmatrix}\dot{X}_{\mK,\Delta}(0)
+\begin{bmatrix}
0 & 0 \\ 0 & \Delta_{C_\mK}^\tr R \Delta_{C_\mK}
\end{bmatrix}X_{\mK}\right) \\
=\ &
2\operatorname{tr}
\Bigg(
2\begin{bmatrix}
B & 0 \\ 0 & I
\end{bmatrix}
\Delta\begin{bmatrix}
C & 0 \\ 0 & I
\end{bmatrix}
\dot{X}_{\mK,\Delta}(0)A^\tr_{{\mathrm{cl},\mK}}Y_{\mK}
+\begin{bmatrix}
B & 0 \\ 0 & I
\end{bmatrix}\Delta
\begin{bmatrix}
C & 0 \\ 0 & I
\end{bmatrix} {X}_{\mK}\begin{bmatrix}
C & 0 \\ 0 & I
\end{bmatrix}^\tr
\!\Delta^\tr\!
\begin{bmatrix}
B & 0 \\ 0 & I
\end{bmatrix}^\tr Y_{\mK} \\
&\qquad \qquad  +  2\begin{bmatrix}
0 & 0 \\ 0 & C_{\mK}^\tr R \Delta_{C_\mK}
\end{bmatrix}\dot{X}_{\mK,\Delta}(0) +
+\begin{bmatrix}
0 & 0 \\
0 & \Delta_{B_\mK}V\Delta_{B_\mK}^\tr
\end{bmatrix} Y_{\mK}
+
\begin{bmatrix}
0 & 0 \\ 0 & \Delta_{C_\mK}^\tr R \Delta_{C_\mK}
\end{bmatrix}X_{\mK}
\Bigg).
\end{aligned}
\end{equation*}
We now finish the proof of~\cref{lemma:gradient_LQG_Jn_discrete,lemma:Jn_Hessian_discrete}.

\subsubsection{Non-minimal Stationary Points}

Since~\cref{lemma:gradient_simi_tran_linear} and~\cref{theorem:non_globally_optimal_stationary_point} are direct consequences of the similarity transformation $\mathscr{T}_q(T,\mK)$, these two results also naturally hold for the discrete-time LQG cost function. This suggests that the discrete-time LQG cost $J_n(\mK)$ over the full-order stabilizing controller $\mathcal{C}_n$ is likely to have non-minimal stationary points that are strict saddle points. One may further establish similar results in~\cref{theorem:zero_stationary_hessian} for the discrete-time LQG cost $J_n(\mK)$. 



\subsubsection{Minimal Stationary Points Are Globally Optimal}

For minimal stationary points, we have the following result.
\begin{theorem} \label{theo:stationary_points_globally_optimal_discrete}
 Under \cref{assumption:stabilizability_discrete}, all minimal stationary points $\mK\in\mathcal{C}_n$ of the discrete-time LQG problem~\eqref{eq:LQG_reformulation_KX_discrete} are globally optimal, and they are in the form of
 \begin{equation} \label{eq:stationary_points_discrete}
        A_{\mK} = T(A - BK - LC)T^{-1}, \qquad    B_{\mK} = -TL, \qquad   C_{\mK} =  KT^{-1},
 \end{equation}
where $T \in \mathbb{R}^{n \times n}$ is an invertible matrix, and
\begin{equation} \label{eq:LQG_KL_discrete}
     K = (B^\tr SB + R)^{-1}B^\tr SA, \qquad L = APC^\tr (CPC^\tr + V)^{-1},
\end{equation}
with $P$ and $S$ being the unique positive definite solutions to the Riccati equations~\eqref{eq:Riccati_P_discrete} and~\eqref{eq:Riccati_S_discrete}.

\end{theorem}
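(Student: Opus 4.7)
The plan is to mirror the strategy used in the proof of \cref{theo:stationary_points_globally_optimal} for the continuous-time case, making appropriate adjustments for the discrete-time Lyapunov and Riccati structure. First, I would establish the discrete-time analog of \cref{lemma:LyapunovXY_pd}: under \cref{assumption:stabilizability_discrete}, if $\mK \in \mathcal{C}_n$ is minimal, then the unique positive semidefinite solutions $X_\mK$ and $Y_\mK$ to the discrete Lyapunov equations \eqref{eq:LyapunovX_discrete} and \eqref{eq:LyapunovY_discrete} are in fact positive definite. This follows from the discrete version of \cref{lemma:Lyapunov}, together with the fact that the closed-loop system, formed by combining a minimal plant and a minimal controller, remains observable with respect to the appropriate output matrix.

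Given this, I would partition $X_\mK, Y_\mK$ as in \eqref{eq:LyapunovXY_block_discrete} and, by the Schur complement, define $P = X_{11} - X_{12}X_{22}^{-1}X_{12}^\tr \succ 0$ and $S = Y_{11} - Y_{12}Y_{22}^{-1}Y_{12}^\tr \succ 0$. The next step is to show that the condition $\partial J_n/\partial A_\mK = 0$ from \eqref{eq:partial_Ak_discrete} implies the existence of an invertible matrix $T$ satisfying $T = Y_{22}^{-1}Y_{12}^\tr$ and $T^{-1} = -X_{12}X_{22}^{-1}$, analogously to the continuous-time case. Here the discrete-time gradient involves extra terms of the form $Y_{12}^\tr(AX_{12}+BC_\mK X_{22}) + Y_{22}B_\mK CX_{12}$, so I would first use the (1,2) block of the discrete Lyapunov equation \eqref{eq:LyapunovX_discrete} to rewrite $AX_{12}+BC_\mK X_{22}$ in terms of the off-diagonal block on the left-hand side, which will allow the stationarity condition to collapse to a clean relation between $Y_{12}^\tr X_{12}$ and $Y_{22}X_{22}$.

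Once $T$ is identified, I would plug into $\partial J_n/\partial B_\mK = 0$ and $\partial J_n/\partial C_\mK = 0$ from \eqref{eq:partial_Bk_discrete}–\eqref{eq:partial_Ck_discrete} and solve for $B_\mK$ and $C_\mK$, using the identities $T X_{12} = -X_{22}$ and $Y_{12}^\tr = Y_{22} T$. The discrete-time Kalman-gain structure $B_\mK = -T \cdot APC^\tr(CPC^\tr+V)^{-1}$ and $C_\mK = (B^\tr SB+R)^{-1}B^\tr SA \cdot T^{-1}$ should drop out after the Schur complement manipulations. Then I would combine the (1,1), (1,2), and (2,2) blocks of \eqref{eq:LyapunovX_discrete} (and similarly \eqref{eq:LyapunovY_discrete} for $S$), exactly as in the continuous-time proof: multiply the (2,2) block by $T^{-1}$ on the left and $T^{-\tr}$ on the right, use the (1,2) block to eliminate $A_\mK$, and substitute into the (1,1) block to obtain the discrete algebraic Riccati equation \eqref{eq:Riccati_P_discrete} for $P$; an analogous computation on $Y_\mK$ yields \eqref{eq:Riccati_S_discrete} for $S$. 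The form \eqref{eq:stationary_points_discrete} for $A_\mK$ then follows by substituting the closed-form expressions for $B_\mK$ and $C_\mK$ into the (1,2) block of the discrete Lyapunov equation.

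The main obstacle will be the additional cross-terms in the discrete-time Lyapunov and gradient equations that are absent in continuous time. In particular, the elimination step used to pass from the blocks of the Lyapunov equation to the discrete Riccati equation for $P$ requires more bookkeeping, because the discrete Riccati equation \eqref{eq:Riccati_P_discrete} contains the nonlinear term $APC^\tr(CPC^\tr+V)^{-1}CPA^\tr$ with the full matrix inverse $(CPC^\tr+V)^{-1}$ rather than the simpler $V^{-1}$ of the continuous case. Handling this inverse cleanly will likely require applying a matrix-inversion lemma at the right stage, so that the Schur-complement identities on $X_\mK$ can be matched up with the discrete Riccati form. Once the algebra is in place, global optimality follows from the classical fact that the discrete-time LQG problem admits a globally optimal controller given by \eqref{eq:LQGcontroller_discrete-time}, and \cref{lemma:Jn_invariance} (whose discrete-time version is immediate) implies that every similarity transformation of this controller yields the same optimal cost, so any minimal stationary point of the form \eqref{eq:stationary_points_discrete} is globally optimal.
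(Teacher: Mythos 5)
Your proposal follows essentially the same route as the paper's proof: positive definiteness of $X_{\mK},Y_{\mK}$ for minimal controllers, Schur-complement definitions of $P$ and $S$, extraction of $T$ and of $B_{\mK},C_{\mK}$ from the stationarity conditions, and block-wise manipulation of the discrete Lyapunov equations to recover the discrete Riccati equations, with global optimality concluded from the classical Riccati solution and invariance under similarity transformation. One small adjustment: in the paper the identity $Y_{12}^\tr X_{12}+Y_{22}X_{22}=0$ (hence the invertibility of $T$) is obtained by combining the $(1,2)$ and $(2,2)$ blocks of \eqref{eq:LyapunovX_discrete} with \emph{both} the $A_{\mK}$- and $B_{\mK}$-stationarity conditions, not from $\partial J_n/\partial A_{\mK}=0$ and the $(1,2)$ block alone as you suggest, but this is bookkeeping rather than a change of strategy.
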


\begin{proof}
Consider a stationary point $\mK = \begin{bmatrix}
0 & C_{\mK} \\
B_{\mK} & A_{\mK}
\end{bmatrix} \in \mathcal{C}_n$ such that the gradient~\eqref{eq:gradient_Jn_discrete} vanishes. If the controller $\mK$ is minimal, similar to \cref{lemma:LyapunovXY_pd}, we can show that the solutions $X_{\mK}$ and $Y_{\mK}$ to~\eqref{eq:LyapunovX_discrete} and~\eqref{eq:LyapunovY_discrete} are unique and positive definite.

Upon partitioning $X_{\mK}$ and $Y_\mK$ in~\eqref{eq:LyapunovXY_block_discrete}, by the Schur complement, the following matrices are well-defined and positive definite
\begin{equation} \label{eq:riccatiPS_discrete}
    \begin{aligned}
        P &: = X_{11} - X_{12}X_{22}^{-1}X_{12}^\tr \succ 0, \\
        S &: = Y_{11} - Y_{12}Y_{22}^{-1} Y_{12}^\tr \succ 0.
    \end{aligned}
\end{equation}
Now, letting $ \frac{\partial J_n(\mK)}{\partial A_{\mK}} = 0$, from~\eqref{eq:partial_Ak_discrete}, we have
\begin{equation*}
    A_{\mK} = - (Y_{22}^{-1}Y_{12}^\tr A X_{12}X_{22}^{-1} + Y_{22}^{-1}Y_{12}^\tr BC_{\mK} + B_{\mK}CX_{12}X_{22}^{-1}).
\end{equation*}
Similarly, at a stationary point, some algebraic manipulations from~\eqref{eq:partial_Bk_discrete} and~\eqref{eq:partial_Ck_discrete} lead to
$$
\begin{aligned}
    B_{\mK} &= - Y_{22}^{-1}Y_{12}^\tr APC^\tr (V + CPC^\tr)^{-1}, \\
    C_{\mK} &= - (R+ B^\tr S B)^{-1}B^\tr SA X_{12}X_{22}^{-1}.
\end{aligned}
$$
We now define
\begin{equation} \label{eq:similarity_T_discrete}
    T := Y_{22}^{-1} Y_{12}^\tr.
\end{equation}

To show that all minimal stationary points are identical up to a similarity transformation and they are in the form of~\eqref{eq:stationary_points_discrete} and~\eqref{eq:LQG_KL_discrete}, it remains to prove that
\begin{enumerate}
    \item Matrix $T$ in~\eqref{eq:similarity_T_discrete} is invertible and
$
T^{-1} = - X_{12}X_{22}^{-1}.
$
    \item The matrices $P$ and $S$, defined in~\eqref{eq:riccatiPS_discrete}, satisfy the algebraic Riccati equations~\eqref{eq:Riccati_P_discrete} and~\eqref{eq:Riccati_S_discrete}, respectively.
\end{enumerate}

We start with the first claim on the matrix $T$. Since $X_{\mK}$ is the solution to the Lyapunov equation~\eqref{eq:LyapunovX_discrete}, expanding the blocks leads to three matrix equations
\begin{subequations}
\begin{align}
X_{11}=\ & AX_{11}A^{\tr}+ BC_\mK X_{12}^\tr A^\tr + AX_{12}C^\tr_{\mK}B^\tr + BC_{\mK}X_{22}C^\tr_{\mK}B^\tr + W,
\label{eq:minimal_stationary_Lyapunov_d1}
\\
X_{12}=\ & AX_{11}C^{\tr}B_{\mK}^\tr + BC_{\mK}X_{12}^\tr C^\tr B_{\mK}^\tr  +AX_{12}A_{\mK}^\tr + B C_{\mK}X_{22}A_{\mK}^\tr,
\label{eq:minimal_stationary_Lyapunov_d2}
\\
X_{22}=\ &
B_{\mK} C X_{11} C^\tr B_{\mK}^\tr + A_{\mK}X_{12}^\tr C^\tr B_{\mK}^\tr + B_{\mK} CX_{12} A_{\mK}^\tr + A_{\mK} X_{22} A_{\mK}^\tr + B_{\mK}VB_{\mK}^\tr.
\label{eq:minimal_stationary_Lyapunov_d3}
\end{align}
\end{subequations}
To prove the invertibility of $T$, it suffices to establish
$
    Y_{12}^\tr X_{12} + Y_{22}X_{22} = 0.
$
Now, $Y_{12}^\tr \times$~\eqref{eq:minimal_stationary_Lyapunov_d2} $+Y_{22} \times$~\eqref{eq:minimal_stationary_Lyapunov_d3} leads to
\begin{equation} \label{eq:Tinverse_discrete_step1}
\begin{aligned}
     & Y_{12}^\tr X_{12} + Y_{22}X_{22} \\
    =&Y_{12}^\tr (AX_{11}C^{\tr}B_{\mK}^\tr + BC_{\mK}X_{12}^\tr C^\tr B_{\mK}^\tr  +AX_{12}A_{\mK}^\tr + B C_{\mK}X_{22}A_{\mK}^\tr) \\
    &\qquad \qquad  + Y_{22}(B_{\mK} C X_{11} C^\tr B_{\mK}^\tr + A_{\mK}X_{12}^\tr C^\tr B_{\mK}^\tr + B_{\mK} CX_{12} A_{\mK}^\tr + A_{\mK} X_{22} A_{\mK}^\tr + B_{\mK}VB_{\mK}^\tr). \\
\end{aligned}
\end{equation}
From the expression of $A_{\mK}$, we have
\begin{equation} \label{eq:Tinverse_discrete_step2}
    \begin{aligned}
        Y_{22}A_{\mK} &= -Y_{12}^\tr A X_{12}X_{22}^{-1} -Y_{12}^\tr BC_{\mK} - Y_{22}B_{\mK}CX_{12}X_{22}^{-1}
    \end{aligned}
\end{equation}
leading to
$$
   Y_{22}A_{\mK} X_{22} A_{\mK}^\tr  + Y_{12}^\tr AX_{12}A_{\mK}^\tr + Y_{12}^\tr BC_{\mK}X_{22} A_{\mK}^\tr + Y_{22}B_{\mK}CX_{12}A_{\mK}^\tr = 0.
$$
Substituting the equation above to~\eqref{eq:Tinverse_discrete_step1}, we have
\begin{equation} \label{eq:Tinverse_discrete_step3}
    \begin{aligned}
     & Y_{12}^\tr X_{12} + Y_{22}X_{22} \\
    =\,&Y_{12}^\tr (AX_{11}C^{\tr}B_{\mK}^\tr + BC_{\mK}X_{12}^\tr C^\tr B_{\mK}^\tr) + Y_{22}(B_{\mK} C X_{11} C^\tr B_{\mK}^\tr + A_{\mK}X_{12}^\tr C^\tr B_{\mK}^\tr + B_{\mK}VB_{\mK}^\tr) \\
    =\,&(Y_{12}^\tr AX_{11}C^{\tr} + Y_{12}^\tr BC_{\mK}X_{12}^\tr C^\tr  + Y_{22}B_{\mK} C X_{11} C^\tr + Y_{22}A_{\mK}X_{12}^\tr C^\tr + Y_{22}B_{\mK}V)B_{\mK}^\tr \\
    =\,& (Y_{12}^\tr AX_{11}C^{\tr} + Y_{22}B_{\mK} C X_{11} C^\tr+ Y_{22}B_{\mK}V -(Y_{12}^\tr A + Y_{22}B_{\mK}C)X_{12}X_{22}^{-1}X_{12}^\tr C^\tr)B_{\mK}^\tr \\
    =\,& (Y_{12}^\tr APC^{\tr} + Y_{22}B_{\mK}CPC^\tr + Y_{22}B_kV)B_{\mK}^\tr,
\end{aligned}
\end{equation}
where the second to last equation applied the fact in~\eqref{eq:Tinverse_discrete_step2}, and the last equation used the definition $P = X_{11} - X_{12}X_{22}^{-1}X_{12}^\tr$. We now consider the expression of $B_{\mK}$, which leads to
$
Y_{22}B_{\mK} = -Y_{12}^\tr APC^\tr (V + CPC^\tr)^{-1},
$
and then~\eqref{eq:Tinverse_discrete_step3} becomes
$$
    Y_{12}^\tr X_{12} + Y_{22}X_{22} = 0.
$$
Therefore, matrix $T$ in~\eqref{eq:similarity_T_discrete} is invertible and
$
T^{-1} = - X_{12}X_{22}^{-1}.
$

We next proceed to show the matrix $P$ being the solution to~\eqref{eq:Riccati_P_discrete}.
From~\eqref{eq:minimal_stationary_Lyapunov_d1}, by the definition of $P:= X_{11} - X_{12}X_{22}^{-1}X_{12}^\tr$, we have
$$
    \begin{aligned}
   P =&\,  AX_{11}A^{\tr}+ BC_\mK X_{12}^\tr A^\tr + AX_{12}C^\tr_{\mK}B^\tr + BC_{\mK}X_{22}C^\tr_{\mK}B^\tr + W - X_{12}X_{22}^{-1}X_{12}^\tr  \\
   = &\, APA^{\tr} - APC^\tr (CPC^\tr + V)^{-1}CPA^\tr + W  \\
   &  + AX_{12}X_{22}^{-1}X_{12}^\tr A^\tr + APC^\tr (CPC^\tr + V)^{-1}CPA^\tr \\
   &+ BC_\mK X_{12}^\tr A^\tr + AX_{12}C^\tr_{\mK}B^\tr + BC_{\mK}X_{22}C^\tr_{\mK}B^\tr - X_{12}X_{22}^{-1}X_{12}^\tr.
    \end{aligned}
$$
It suffices to prove that
\begin{equation}\label{eq:Riccati_discrete_st1}
\begin{aligned}
   X_{12}X_{22}^{-1}X_{12}^\tr =&\, AX_{12}X_{22}^{-1}X_{12}^\tr A^\tr + APC^\tr (CPC^\tr + V)^{-1}CPA^\tr \\
   &\qquad \qquad + BC_\mK X_{12}^\tr A^\tr + AX_{12}C^\tr_{\mK}B^\tr + BC_{\mK}X_{22}C^\tr_{\mK}B^\tr.
\end{aligned}
\end{equation}
We multiply~\eqref{eq:minimal_stationary_Lyapunov_d2} by $-T^{-\tr} =  X_{22}^{-1}X_{12}^\tr$ on the right and get
\begin{equation} \label{eq:Riccati_discrete_st2}
\begin{aligned}
    X_{12}X_{22}^{-1}X_{12}^{\tr}= &\, AX_{11}C^{\tr} (V + CPC^\tr)^{-1}CPA^\tr  + BC_{\mK}X_{12}^\tr C^\tr  (V + CPC^\tr)^{-1}CPA^\tr \\
    & \qquad \qquad \qquad \qquad \qquad \qquad + AX_{12}A_{\mK}^\tr X_{22}^{-1}X_{12}^\tr + B C_{\mK}X_{22}A_{\mK}^\tr X_{22}^{-1}X_{12}^\tr.\\
\end{aligned}
\end{equation}
Now, it is sufficient to show that the right hand side of~\eqref{eq:Riccati_discrete_st1} equals to the right hand side of~\eqref{eq:Riccati_discrete_st2}. For this, we take a difference
\begin{equation*}
    \begin{aligned}
    &AX_{12}X_{22}^{-1}X_{12}^\tr A^\tr + APC^\tr (CPC^\tr + V)^{-1}CPA^\tr \\
    & \qquad \qquad \qquad \qquad + BC_\mK X_{12}^\tr A^\tr + AX_{12}C^\tr_{\mK}B^\tr + BC_{\mK}X_{22}C^\tr_{\mK}B^\tr \\
    &-\bigg(AX_{11}C^{\tr} (V + CPC^\tr)^{-1}CPA^\tr  + BC_{\mK}X_{12}^\tr C^\tr  (V + CPC^\tr)^{-1}CPA^\tr \\
    & \qquad \qquad \qquad \qquad + AX_{12}A_{\mK}^\tr X_{22}^{-1}X_{12}^\tr + B C_{\mK}X_{22}A_{\mK}^\tr X_{22}^{-1}X_{12}^\tr\bigg)\\
    =& -AT^{-1}X_{12}^\tr A^\tr + APC^\tr L^\tr+ BC_\mK X_{12}^\tr A^\tr + AX_{12}C^\tr_{\mK}B^\tr + BC_{\mK}X_{22}C^\tr_{\mK}B^\tr\\
    &-\bigg(AX_{11}C^{\tr} L^\tr  + BC_{\mK}X_{12}^\tr C^\tr  L^\tr  - AX_{12} (T^{-1}A_{\mK})^\tr  - B C_{\mK}X_{22}(T^{-1}A_{\mK})^\tr\bigg),\\
    \end{aligned}
\end{equation*}
where we applied the definition of $L$ and $T$. Considering the expressions of $A_{\mK}, B_{\mK}, C_{\mK}$ in~\eqref{eq:stationary_points_discrete}, the equation above becomes
$$
\begin{aligned}
    & -AT^{-1}X_{12}^\tr A^\tr + APC^\tr L^\tr + AX_{12}C^\tr_{\mK}B^\tr -\bigg(AX_{11}C^{\tr} L^\tr   - AX_{12} (T^{-1}A_{\mK})^\tr\bigg) \\
    & \qquad \qquad + B C_{\mK}X_{22}\left( X_{22}^{-1}X_{12}^\tr A^\tr + C^\tr_{\mK}B^\tr- X_{22}^{-1}X_{12}^\tr C^\tr  L^\tr  +(T^{-1}A_{\mK})^\tr\right)\\
    =&   -AT^{-1}X_{12}^\tr A^\tr + APC^\tr L^\tr + AX_{12}C^\tr_{\mK}B^\tr -\bigg(AX_{11}C^{\tr} L^\tr  - AX_{12} (T^{-1}A_{\mK})^\tr\bigg) \\
    =&\,  AX_{12}X_{22}^{-1}X_{12}^\tr A^\tr - AX_{12}X_{22}^{-1}X_{12}^\tr C^\tr L^\tr + AX_{12}C^\tr_{\mK}B^\tr + AX_{12} (T^{-1}A_{\mK})^\tr\\
    =&\,AX_{12}(X_{22}^{-1}X_{12}^\tr A^\tr - X_{22}^{-1}X_{12}^\tr C^\tr L^\tr + C^\tr_{\mK}B^\tr +  (T^{-1}A_{\mK})^\tr)\\
    =&\,0.
    \end{aligned}
$$
This proves~\eqref{eq:Riccati_discrete_st1}, and thus $P:= X_{11} - X_{12}X_{22}^{-1}X_{12}^\tr$ satisfies the Riccati equation~\eqref{eq:Riccati_P_discrete}. Through similar steps, we can derive from~\eqref{eq:LyapunovY_discrete} that $S$ satisfies the Riccati equation~\eqref{eq:Riccati_S_discrete}.
\end{proof}

Finally, from~\cref{theo:stationary_points_globally_optimal_discrete}, it is easy to see that~\cref{corollary:non-minimal-globally-optimal} and~\cref{corollary:Gradient_Descent_Convergence} also hold for the discrete-time LQG problem~\eqref{eq:LQG_reformulation_KX_discrete}.

\end{document}